\numberwithin{equation}{section}
\def\bbR{\mathbb{R}}
\def \kxi{K_{\mathbb{X}_i}}
\def \f1t{\mathcal{F}_1^T}
\def \d1t{\mathcal{D}_1^T}
\newcommand{\vo}{\vec{o}\@ifnextchar{^}{\,}{}}
\newcommand{\ip}[1]{\langle #1 \rangle} 
\theoremstyle{plain}
\newtheorem{Theorem}{Theorem}[section]
\newtheorem{Corollary}[Theorem]{Corollary}
\newtheorem{Proposition}[Theorem]{Proposition}
\newtheorem{Assumption}{Assumption}
\newtheorem{lemma}{\indent Lemma}
\newtheorem{proposition}{\indent Proposition}
\newtheorem{Remark}{Remark}[section]
\theoremstyle{definition}
\def\theequation{\arabic{section}.\arabic{equation}}
\begin{document}
\begin{frontmatter}

\title{Statistical Inference on Panel Data Models:
A Kernel Ridge Regression Method}

\runtitle{Panel Data Models with KRR}

\runauthor{Zhao et al.}

\author[add1]{Shunan Zhao}
\ead{szhao15@binghamton.edu}
\author[add2]{Ruiqi Liu}
\ead{rliu14@binghamton.edu}
\author[add2]{Zuofeng Shang\thanks{Corresponding author. Assistant Professor at Department of Mathematical Sciences, Binghamton University. Address: PO Box 6000, Binghamton, New York 13902-6000, USA; 
Email: zshang@binghamton.edu; Tel: (607)777-4263;
Fax: (607)777-2450. Research Sponsored by a start-up grant from Binghamton University.}}

\address[add1]{Department of Economics, Binghamton University}
\address[add2]{Department of Mathematical Sciences, Binghamton University}



\begin{abstract}
We propose statistical inferential procedures for
panel data models with interactive fixed effects in a 
kernel ridge regression framework. 
Compared with traditional sieve methods, 
our method is automatic in the sense that it does not require the choice of basis functions
and truncation parameters. Model complexity is controlled by a continuous regularization parameter
which can be automatically selected by generalized cross validation.
Based on empirical processes theory and functional analysis tools,
we derive joint asymptotic distributions for the estimators
in the heterogeneous setting. These joint asymptotic results are then used to construct
confidence intervals for the regression means and prediction intervals for the future observations,
both being the first provably valid intervals in literature.
Marginal asymptotic normality of the 
functional estimators in homogeneous setting is also obtained.
Simulation and real data analysis demonstrate 
the advantages of our method.

\vspace{1pt}
\noindent\textbf{Keywords and Phrases:} kernel ridge regression, panel data models with interactive fixed effects, 
joint asymptotic distribution, empirical processes, Functional Bahadur representation
\vspace{1pt}
\noindent\textbf{JEL Classification:} C13, C14, C23


\end{abstract}


\end{frontmatter}

\section{Introduction}
Panel data models with interactive fixed effects (IFE) have many applications in econometrics and statistics,
e.g., individuals' education decision \citep{carneiroetal2003, cunhaetal2005}, house price analysis \citep{hollyetal2010}, prediction of investment returns \citep{eberhardtetal2013}, risk-return relation \citep{ludvigsonNg2009,ludvigsonNg2016}, etc.
The IFE can capture both cross section dependence and heterogeneity in the data,
which makes these models more flexible than the classic fixed or random effect models.
Cross section dependence is usually characterized by 
time-varying common factors, and heterogeneity is
captured by individual-specific factor loadings.
There has been an increasing literature in this area addressing
various statistical inferential problems.
Earlier studies focused on parametric settings;
see \cite{pesaran2006}, \cite{baiNg2006}, \cite{bai2009}, \cite{moonWeidner2010}, \cite{moonWeidner2015},
among others. A common crucial assumption in these papers
is that the response and predictor variables are
linearly related and the parameters of interest are finite-dimensional.

Recently, efforts were devoted to nonparametric/semiparametric panel data models with IFE.
For instance, \cite{freberger2012}, \cite{suJin2012}, \cite{jinandsu2013} and \citet{suZhang2013},
\citet{suJinzhang2015}, among others, 
proposed sieve methods for estimating or testing the infinite-dimensional regression functions;
\citet{huang2013} and \citet{caietal2016} proposed
local polynomial methods, differing from sieves by their local feature. 
The success of the sieve methods hinges on
a good choice of basis functions; see \cite{chen2007} 
for a comprehensive introduction. 
The truncation parameter, i.e., number of basis functions
used for model fitting, changes in a discrete fashion, which
may yield an imprecise control on the model complexity, as pointed out in \cite{RamseySilverman2005}.
For these reasons, it is worthwhile to explore a
method that relies less on the choice of basis
or the choice of discrete truncation parameters,
which will possess computational, theoretical
and conceptual advantages.

In this paper, we propose a new kernel-based nonparametric method,
called as kernel ridge regression (KRR), for handling
panel data models. Our method relies on the assumption
that the regression function belongs to a reproducing kernel Hilbert space (RKHS)
driven by a kernel function called as reproducing kernel.
The KRR estimator is ``basis-free" in the sense that it
can be explicitly expressed by the kernels rather than the basis functions.
Our method is applicable to a broad class of RKHS, 
e.g., Euclidean space, Sobolev space, Gaussian kernel space, or spaces with
advanced structures such as semiparametric/additive structures;
see \cite{W90}, \cite{SC13}, \cite{ZCL16} about more descriptions of these spaces. 
Reproducing kernels corresponding to the above mentioned
RKHS have (approximately) explicit expressions which can be directly used
in our inferential procedures. 
For applications of RKHS in other fields such as statistical machine learning,
see \citet{hofmann2008}.

In contrast to the method of sieves,
our KRR method does not involve the discrete truncation parameter.
Instead, it directly searches the estimator in the entire (possibly infinite-dimensional)
function spaces, though the process of searching requires the use
of a continuous regularization parameter which controls the smoothness
of the estimators. In smoothing splines or KRR,
regularization parameters are usually selected by generalized cross validation
(GCV); see \citet{cravenWahba1978}, \cite{W90}, \cite{Gu11}, \cite{wang2011}.  The selection procedure
proceeds by searching a global minimizer of the GCV criteria function
which provides a more accurate management on model complexity. 
As a result, the estimator may yield better performance
as observed by \cite{SC15} in functional data analysis.
In this paper, we adapt the traditional GCV criteria to 
semiparametric panel data models in both heterogeneous and homogeneous
settings.
The selection algorithms are easy-to-use with
satisfactory performance as demonstrated in our simulation study
and real data analysis (Section \ref{sec:numerical:study}).

Besides numerical advantages, the proposed method is theoretically
powerful. For instance, based on our RKHS framework, it is theoretically more convenient to
derive the joint asymptotic distributions for the estimators of the
linear and nonlinear components; see Theorems \ref{theorem: joint distr 1.}
 and \ref{theorem: joint distr 3.}. 
Nonetheless,
joint asymptotic distributions are more difficult to prove
in the sieve or local polynomial framework.
Our joint asymptotic results can be used to 
design novel statistical procedures
such as confidence intervals for the regression means 
and prediction intervals for the future observations,
though they can naturally imply the marginal asymptotic results
obtained by \cite{suJin2012} as a corollary.
The theory developed in this paper relies on nonstandard
technical tools such as empirical processes theory
and functional analysis.
Specifically, functional Bahadur representations (FBR)
in heterogeneous and homogeneous settings,
i.e., Theorems \ref{lemma: hetero -- leading term} and \ref{theorem: rate:ghat:minus:g0:plus:SMetag0}, 
are proved based on the aforementioned technical tools, 
which can extract the leading terms from the estimators.
Our FBR theory is a nontrivial extension of 
\cite{S10} and \cite{SC13} to panel data models,
which plays a central role in our theoretical study.

The rest of this paper is structured as follows. 
Section \ref{sec:prelim} contains some technical preliminaries
including an introduction to panel data models with IFE and an
RKHS framework.
Sections \ref{sec:model:preliminary} and \ref{sec: homo}
contain the main results.
Specifically, in heterogeneous setting, Section \ref{sec:model:preliminary}
includes estimation procedures for each individual parameters,
and derives joint asymptotic normality for the estimators.
Constructions of confidence interval and prediction interval
are also mentioned.
In homogeneous setting,
Section \ref{sec: homo} includes an estimation procedure
for the common regression function and derives its marginal asymptotic normality.
Section \ref{sec:numerical:study} examines the proposed methods
based on a simulation study and real data analysis.
Proofs of the main theorems are deferred to Section \ref{sec:appendix},
and proofs of other results are separated as a supplement document.
 
\section{Preliminary}\label{sec:prelim}
\subsection{Panel Data Models with Interactive Fixed Effects}\label{sec:PDM}
Let $Y_{it}$ be a real-valued observation and 
$X_{it}\in\mathcal{X}_i\subseteq\bbR^d$ be a real vector of
observed covariates, both collected on the $i$th unit at time $t$,
for $i\in[N]:=\{1,2,\ldots,N\}, t\in[T]:=\{1,\ldots,T\}$.
Suppose that the observations  
follow a semiparametric regression model
\begin{equation}\label{basic:model}
Y_{it}=g_i(X_{it})+\boldsymbol{\gamma}_{1i}'{f}_{1t}+\boldsymbol{\gamma}_{2i}'{f}_{2t}+\epsilon_{it},\,\,\,\,
i\in[N], t\in[T],
\end{equation}
where $g_i$ is an unknown regression function,
${f}_{1t}\in\bbR^{q_1}$ is a vector of observed common factors including intercept, 
${f}_{2t}\in\bbR^{q_2}$ is a vector of unobserved common factors,
$\boldsymbol{\gamma}_{1i}$ and $\boldsymbol{\gamma}_{2i}$ are unobserved fixed vectors called as
factor loadings, and $\epsilon_{it}$ are unobserved random noise. 
The term $\boldsymbol{\gamma}_{2i}'{f}_{2t}$ is called as interactive fixed effect.
In general, we allow $g_i$ to be varying across the units, i.e., the panels demonstrate a
heterogeneous structure. The special case $g_i=g$ for all $i\in[N]$ implies that the panels
are homogeneous. In this paper, we will consider both cases
from theoretical and methodological aspects.
Since the factors in model (\ref{basic:model}) are not identifiable in the sense that they cannot
be consistently estimated, further constraints are needed. 
Similar to \cite{pesaran2006}, suppose that $X_{it}$ are related to the factors
through the following data generating equation:
\begin{equation} \label{DGP of X}
X_{it}=\Gamma_{1i}'{f}_{1t} + \Gamma_{2i}'{f}_{2t} + {v}_{it},\,\,\,\, i\in[N], t\in[T],
\end{equation}
where $\Gamma_{1i}\in\bbR^{q_1 \times d}$ and $\Gamma_{2i}
\in\bbR^{q_2 \times d}$ are unobserved but fixed matrices,
${v}_{it}\in\bbR^{d}$ is a vector of random noise.
Other constraints to guarantee identifiability were proposed by 
\cite{bai2009} based on principle component analysis.
Equation (\ref{DGP of X}) provides a convenient way to remove the unobserved factor $f_{2t}$
from model (\ref{basic:model}).
Specifically, averaging (\ref{DGP of X}) as done by \cite{pesaran2006}, we get
\begin{align}\label{DGP of X 2}
\bar{X}_t  = \bar{\Gamma}_1' {f}_{1t} + \bar{\Gamma}_2' {f}_{2t} + \bar{{v}}_t,
\,\,\,\,t\in[T],
\end{align}
where $\bar{X}_t=\frac{1}{N}\sum_{i=1}^{N}X_{it}$, $\bar{\Gamma}_1=\frac{1}{N}\sum_{i=1}^{N}\Gamma_{1i}$, $\bar{\Gamma}_2=\frac{1}{N}\sum_{i=1}^{N}\Gamma_{2i}$, and $\bar{v}_t=\frac{1}{N}\sum_{i=1}^{N}v_{it}$. Throughout we assume
that $\bar{\Gamma}_2 \bar{\Gamma}_2'$ is invertible which may hold true if $q_2 \leq d$.
It follows from (\ref{DGP of X 2}) that 
\begin{equation}\label{DGP X 3}
{f}_{2t}=(\bar{\Gamma}_2 \bar{\Gamma}_2^{\prime})^{-1} \bar{\Gamma}_2 \left(
\bar{X}_t-\bar{\Gamma}_1' {f}_{1t}  - \bar{{v}}_t \right).
\end{equation}
Replacing ${f}_{2t}$ in (\ref{basic:model}) by (\ref{DGP X 3}), 
we get the following
\begin{align} \label{basic:model 2}
Y_{it} & = g_i(X_{it}) + \gamma_{1i}'{f}_{1t}+\gamma_{2i}' (\bar{\Gamma}_2\bar{\Gamma}_2')^{-1}\bar{\Gamma}_2\left(
\bar{X}_t-\bar{\Gamma}_1'{f}_{1t}-\bar{v}_t
\right)+\epsilon_{it} \nonumber \\
&=g_i(X_{it})+Z_t'\beta_i+e_{it},\,\,\,\,i\in[N], t\in[T],
\end{align}
where $Z_t=({f}_{1t}', \bar{X}_t')'$, 
$e_{it}=\epsilon_{it}-\gamma_{2i}'(\bar{\Gamma}_2\bar{\Gamma}_2')^{-1}\bar{\Gamma}_2\bar{v}_t$, and 
\[
\beta_i={
\gamma_{1i}-\bar{\Gamma}_{1}\bar{\Gamma}_2'(\bar{\Gamma}_2\bar{\Gamma}_2')^{-1}\gamma_{2i}
\choose \bar{\Gamma}_2'(\bar{\Gamma}_2\bar{\Gamma}_2')^{-1}\gamma_{2i}.
}
\]
The equations (\ref{basic:model}),
(\ref{DGP of X}) and (\ref{basic:model 2}) play an important role in the proof of our main results. 

\subsection{Kernel Ridge Regression}\label{sec:prelim:RKHS}
Suppose $g_i\in\mathcal{H}_i$, where $\mathcal{H}_i$ is
a Reproducing Kernel Hilbert Space (RKHS).
Specifically, $\mathcal{H}_i$ is a Hilbert space of real-valued functions
on $\mathcal{X}_i$, endowed with an inner product $\langle\cdot,\cdot\rangle_{\mathcal{H}_i}$,
satisfying
the property: for any $x\in\mathcal{X}_i$, there exists a unique element 
$\bar{K}^{(i)}_x\in\mathcal{H}_i$ such that for every $g\in\mathcal{H}_i$, 
$\langle \bar{K}^{(i)}_x,g\rangle_{\mathcal{H}_i}=g(x)$.
The reproducing kernel function 
is defined by $\bar{K}^{(i)}(x_1,x_2)=\bar{K}^{(i)}_{x_1}(x_2)$, for any $x_1,x_2\in\mathcal{X}_i$.
The kernel $\bar{K}^{(i)}$ is symmetric, i.e., $\bar{K}^{(i)}(x_1,x_2)=\bar{K}^{(i)}(x_2,x_1)$,
and the matrix $\bar{\mathcal{K}}^{(i)}:=[\bar{K}(x_i,x_i)]_{i,j=1}^n$ is semi-positive definite
for any $x_1,\ldots,x_n\in\mathcal{X}_i$. \cite{BT04}
provides a nice introduction to RKHS.

By Mercer's theorem, $\bar{K}^{(i)}$ admits a spectral expansion:
\begin{equation}\label{kernel:expansion}
\bar{K}^{(i)}(x_1,x_2)=\sum_{\nu=1}^\infty\varphi^{(i)}_\nu(x_1)\varphi^{(i)}_\nu(x_2)/\rho^{(i)}_\nu,
\,\,x_1,x_2\in\mathcal{X}_i,
\end{equation}
where $0<\rho^{(i)}_1\le\rho^{(i)}_2\le\cdots$ are eigenvalues and $\varphi^{(i)}_\nu$ are eigenfunctions
which form an $L^2(P_{X_i})$ orthonormal basis with
$X_i\equiv X_{i1}$. This paper focuses on RKHS generated by the following kernels.
For simplicity, we use $a_n \asymp b_n$ to represent $a_n=O(b_n)$ and $b_n=O(a_n)$.

\textit{Finite Rank Kernel} (FRK): The kernel $\bar{K}^{(i)}$ is said to have rank $k>0$ if
$\rho^{(i)}_\nu=\infty$ for $\nu>k$. For instance, 
the $(k-1)$-order polynomial kernel $\bar{K}^{(i)}(x_1,x_2)=(1+x_1'x_2)^{k-1}$ for $x_1,x_2\in\bbR^d$
has rank $k$. Clearly, an FRK of rank $k$ corresponds to a parametric space of dimension $k$.

\textit{Polynomially Diverging Kernel} (PDK): The kernel $\bar{K}^{(i)}$ is said to be polynomially diverging
of order $k>0$ if it
has eigenvalues satisfying $\rho^{(i)}_\nu\asymp\nu^{2k}$ for $\nu\ge1$.
For instance, the $k$-order Sobolev space
is an RKHS with a kernel polynomially diverging of order $k$; see \cite{W90}.

\textit{Exponentially Diverging Kernel} (EDK): 
The kernel $\bar{K}^{(i)}$ is said to be exponentially diverging 
of order $k>0$ if its eigenvalues satisfy $\rho^{(i)}_\nu\asymp\exp(b\nu^k)$ for $\nu\ge1$,
for a constant $b>0$. For instance, Gaussian kernel $\bar{K}^{(i)}(x_1,x_2)=\exp(-|x_1-x_2|^2)$
corresponds to $k=1$; see \cite{LCL17}.

The results of this paper can be applied to more complicated RKHS such as \textit{Additive RKHS},
as described in Remark \ref{rem:additive:rkhs}. 

Let $\Theta_i:=\bbR^{q_1+d}\times\mathcal{H}_i$.
We estimate $\theta=(\beta,g)\in\Theta_i$ via the following \textit{Kernel Ridge Regression}:
\begin{eqnarray} \label{eq: hetero-likelihood fn}
\widehat{\theta}_i =(\widehat{\beta}_i, \widehat{g}_i)
&=&\arg\min_{\theta\in {\Theta}_i}\ell_{i,M,\eta_i}(\theta)
\nonumber\\
&\equiv&\arg\min_{\theta\in {\Theta}_i}\left\{\frac{1}{2T}\sum_{t=1}^T(Y_{it}-g(X_{it})-Z_t'\beta)^2+\frac{\eta_i}{2}
\|g\|_{\mathcal{H}_i}^2\right\},
\end{eqnarray}
where $M=(N,T)$ and $\eta_i>0$ is called as a regularization parameter. 

Our results will rely on an RKHS structure on 
$\Theta_i$.
Specifically, we will follow \cite{CS15} to construct two operators
$R_i:\mathcal{U}_i\to\Theta_i$ and $P_i:\Theta_i\to\Theta_i$, where
$\mathcal{U}_i\equiv \{u=(x,z): x\in\mathcal{X}_i, z\in\bbR^{q_1+d}\}$,
such that for any 
$u=(x,z)\in\mathcal{U}_i$ and 
$\theta=(\beta,g)\in\Theta_i$, the following holds:
\begin{eqnarray}\label{semi:reproducing:prop}
\langle R_iu,\widetilde{\theta}\rangle_i=\widetilde{g}(x)+z'\widetilde{\beta},\,\,\,\,
\langle P_i\theta,\widetilde{\theta}\rangle_i=\eta_i\langle g,\widetilde{g}\rangle_{\mathcal{H}_i},\,\,
\,\,\textrm{for any $\widetilde{\theta}=(\widetilde{\beta},\widetilde{g})\in\Theta_i$},
\end{eqnarray}
where $\langle\cdot,\cdot\rangle_i$ is an inner product on $\Theta_i$
to be defined later in (\ref{inner:product:Theta:i}). 

For any $x\in\mathcal{X}_i$, define $G_i(x)=E\{Z|X_i=x\}$ and $\Omega_i=E\{(Z-G_i(X_i))(Z-G_i(X_i))'\}$,
where $Z=Z_1$.
Clearly, $\Omega_i$ is a square matrix of dimension $q_1+d$.
In the below we require the eigenvalues of $\Omega_i$ to be bounded away from zero and infinity,
a standard condition to guarantee semiparametric efficiency;
see, e.g., \cite{MVG97,CS15}. Besides, $G_i$ are assumed to be $L^2$ integrable.
\begin{Assumption}\label{A3} For $i\in[N]$, $G_i\in L^2(P_{X_i})$.
Furthermore, 
$c_1\le \lambda_{\min}(\Omega_i)\le\lambda_{\max}(\Omega_i)\le c_2$ 
for positive constants $c_1,c_2$,
where $\lambda_{\min}(\cdot)$ and $\lambda_{\max}(\cdot)$ are minimal and maximal eigenvalues.
\end{Assumption}

For any $\theta_k=(\beta_k,g_k)\in\Theta_i$, $k=1,2$,
define
\begin{equation}\label{inner:product:Theta:i}
\langle\theta_1,\theta_2\rangle_i=E\{(g_1(X_i)+Z'\beta_1)(g_2(X_i)+Z'\beta_2)\}+\eta_i\langle
g_1,g_2\rangle_{\mathcal{H}_i}.
\end{equation}
Define $\langle g_1,g_2\rangle_{\star,i}=\langle(0,g_1),(0,g_2)\rangle_i$.
Following \citet{CS15}, Assumption \ref{A3} implies that
$\langle\cdot,\cdot\rangle_i$ and $\langle\cdot,\cdot\rangle_{\star,i}$ 
are valid inner products on $\Theta_i$ and $\mathcal{H}_i$, respectively.
Meanwhile, $(\mathcal{H}_i,\langle\cdot,\cdot\rangle_{\star,i})$ is an RKHS 
with kernel $K^{(i)}(x,y)\equiv\sum_{\nu=1}^\infty\varphi_\nu^{(i)}(x)
\varphi_\nu^{(i)}(y)/(1+\eta_i\rho_\nu^{(i)}),
\,\,x,y\in\mathcal{X}_i$.
We can further find a positive definite self-adjoint operator $W_i: \mathcal{H}_i\to\mathcal{H}_i$
and an element $A_i\in\mathcal{H}_i^{q_1+d}$ such that
$\langle W_i g_1, g_2\rangle_{\star,i}=\eta_i\langle g_1,g_2\rangle_{\mathcal{H}_i}$,
$\langle A_i,g\rangle_{\star,i}=V_i(G_i,g)$, for any $g,g_1,g_2\in\mathcal{H}_i$,
where $V_i(g_1,g_2)=E\{g_1(X_i)g_2(X_i)\}$.
Define $\Sigma_i=E\{G_i(X_i)(G_i(X_i)-A_i(X_i))'\}$, a symmetric matrix of dimension
$q_1+d$. Following \cite{CS15}, Proposition \ref{prop:Ri:Pi} below guarantees (\ref{semi:reproducing:prop}).
\begin{Proposition}\label{prop:Ri:Pi}
For any $u=(x,z)\in\mathcal{U}_i$ and for any
$\theta=(\beta,g)\in\Theta_i$, (\ref{semi:reproducing:prop}) holds for $R_iu=(H^{(i)}_u,T^{(i)}_u)$
and $P_i\theta=(H^{(\star i)}_g,T^{(\star i)}_g)$, where
\begin{eqnarray*}
H^{(i)}_u&=&(\Omega_i+\Sigma_i)^{-1}(z-V_i(G_i,K^{(i)}_x)),\\
T^{(i)}_u&=&K^{(i)}_x-A_i'(\Omega_i+\Sigma_i)^{-1}(z-V_i(G_i,K^{(i)}_x)),\\
H^{(\star i)}_g&=&-(\Omega_i+\Sigma_i)^{-1}V_i(G_i,W_ig),\\
T^{(\star i)}_g&=&W_ig+A_i'(\Omega_i+\Sigma_i)^{-1}V_i(G_i,W_ig).
\end{eqnarray*}
\end{Proposition}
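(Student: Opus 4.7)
The strategy is to verify the two identities in (\ref{semi:reproducing:prop}) by direct expansion, using the inner product definition (\ref{inner:product:Theta:i}) and the defining properties of the auxiliary objects: the reproducing kernel $K^{(i)}_x$ of $(\mathcal{H}_i,\langle\cdot,\cdot\rangle_{\star,i})$, the representer $A_i$ of the linear functional $g\mapsto V_i(G_i,g)$, and the self-adjoint operator $W_i$ characterized by $\langle W_ig,\widetilde g\rangle_{\star,i}=\eta_i\langle g,\widetilde g\rangle_{\mathcal{H}_i}$. Since each identity is linear in $\widetilde\theta=(\widetilde\beta,\widetilde g)$, it suffices to split each computation into a $\widetilde g$-part and a $\widetilde\beta$-part and verify them separately. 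Throughout I will repeatedly use $E\{Z\mid X_i\}=G_i(X_i)$, the decomposition $E\{ZZ'\}=\Omega_i+E\{G_i(X_i)G_i(X_i)'\}$, the identity $E\{G_i(X_i)A_i(X_i)'\}=E\{G_i(X_i)G_i(X_i)'\}-\Sigma_i$ coming from the definition and symmetry of $\Sigma_i$, and the symmetry of $\Omega_i+\Sigma_i$.

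For the first identity, set $c=(\Omega_i+\Sigma_i)^{-1}(z-V_i(G_i,K^{(i)}_x))$, so $H^{(i)}_u=c$ and $T^{(i)}_u=K^{(i)}_x-A_i'c$. Expanding $\langle R_iu,\widetilde\theta\rangle_i$ produces five terms. Two of them combine into $\langle T^{(i)}_u,\widetilde g\rangle_{\star,i}=\widetilde g(x)-c'V_i(G_i,\widetilde g)$, via the two reproducing-type identities $\langle K^{(i)}_x,\widetilde g\rangle_{\star,i}=\widetilde g(x)$ and $\langle A_i,\widetilde g\rangle_{\star,i}=V_i(G_i,\widetilde g)$. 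This cancels against the cross term $c'E\{Z\widetilde g(X_i)\}=c'V_i(G_i,\widetilde g)$, so that the $\widetilde g$-part collapses to $\widetilde g(x)$. For the $\widetilde\beta$-part, after using the expression for $E\{ZZ'\}$ and the $E\{G_iA_i'\}$ identity above, the coefficient of $\widetilde\beta$ simplifies to $V_i(G_i,K^{(i)}_x)'+c'(\Omega_i+\Sigma_i)$, and by the definition of $c$ (and symmetry of $\Omega_i+\Sigma_i$) this is exactly $z'$.

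The second identity is handled the same way. Let $d=(\Omega_i+\Sigma_i)^{-1}V_i(G_i,W_ig)$, so $H^{(\star i)}_g=-d$ and $T^{(\star i)}_g=W_ig+A_i'd$. In the $\widetilde g$-part, combining $E\{T^{(\star i)}_g(X_i)\widetilde g(X_i)\}$ with $\eta_i\langle T^{(\star i)}_g,\widetilde g\rangle_{\mathcal{H}_i}$ produces $\langle W_ig,\widetilde g\rangle_{\star,i}+\langle A_i'd,\widetilde g\rangle_{\star,i}=\eta_i\langle g,\widetilde g\rangle_{\mathcal{H}_i}+d'V_i(G_i,\widetilde g)$, using the defining property of $W_i$ and the $A_i$-identity; the second summand cancels against the $-E\{Z'd\cdot\widetilde g(X_i)\}=-d'V_i(G_i,\widetilde g)$ contribution, leaving exactly $\eta_i\langle g,\widetilde g\rangle_{\mathcal{H}_i}$. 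In the $\widetilde\beta$-part, after the same substitutions, the coefficient of $\widetilde\beta$ reduces to $V_i(W_ig,G_i)'-d'(\Omega_i+\Sigma_i)=V_i(W_ig,G_i)'-V_i(G_i,W_ig)'$, which vanishes by the obvious symmetry of $V_i(\cdot,\cdot)$.

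The only genuine difficulty is bookkeeping: tracking which quantities are scalars, $(q_1+d)$-vectors, matrices, functions in $\mathcal{H}_i$, or elements of $\mathcal{H}_i^{q_1+d}$, and applying the symmetry of $\Omega_i+\Sigma_i$ consistently when transposing. Invertibility of $\Omega_i+\Sigma_i$, implicit in the definitions of $c$ and $d$, follows from $\Omega_i\succ 0$ under Assumption \ref{A3} together with the positive semidefiniteness of $\Sigma_i$, which is a short Cauchy--Schwarz argument in $(\mathcal{H}_i,\langle\cdot,\cdot\rangle_{\star,i})$ as in \cite{CS15}.
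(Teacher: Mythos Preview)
Your proposal is correct and is exactly the direct verification one expects: expand $\langle R_iu,\widetilde\theta\rangle_i$ and $\langle P_i\theta,\widetilde\theta\rangle_i$ from the definition (\ref{inner:product:Theta:i}), group terms into the $\langle\cdot,\cdot\rangle_{\star,i}$ pieces so that the reproducing property of $K^{(i)}_x$, the defining identity of $A_i$, and the defining identity of $W_i$ can be invoked, and then use $E\{ZZ'\}=\Omega_i+E\{G_iG_i'\}$ together with the symmetry of $\Sigma_i$ to collapse the $\widetilde\beta$-coefficient. The paper does not supply its own proof of this proposition but simply cites \cite{CS15}, where the same computation is carried out in an essentially identical semiparametric RKHS framework; your argument matches that approach.
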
 

A direct application of Proposition \ref{prop:Ri:Pi} is to exactly calculate
the Fr\'{e}chet derivatives of $\ell_{i,M,\eta_i}$. Define $U_{it}=(X_{it},Z_t)$ for $i\in[N]$, $t\in[T]$.
For $\theta=(\beta,g),\Delta\theta=(\Delta\beta,\Delta g)\in\Theta_i$, we have
\begin{eqnarray*}
D\ell_{i,M,\eta_i}(\theta)\Delta\theta&=&\langle -\frac{1}{T}\sum_{t=1}^T(Y_{it}-\langle R_iU_{it},\theta\rangle_i)R_iU_{it}+P_i\theta,
\Delta\theta\rangle_i
\equiv\langle S_{i,M,\eta_i}(\theta),\Delta\theta\rangle_i,\\
DS_{i,M,\eta_i}(\theta)\Delta\theta&=&\frac{1}{T}\sum_{t=1}^T\langle R_iU_{it},\Delta\theta\rangle_iR_iU_{it}+P_i\Delta\theta,\\
D^2S_{i,M,\eta_i}(\theta)&=&0.
\end{eqnarray*}

\section{Heterogeneous Model}\label{sec:model:preliminary}
In this section, we consider heterogeneous model (\ref{basic:model})
where the $g_i$'s are assumed to be different across the units.
We will estimate each $g_i$ through penalized estimations,
and develop a joint asymptotic theory for the estimators.
Our joint asymptotic results lead to novel
statistical procedures as well as rediscover the existing 
marginal asymptotic results.

\subsection{Estimation Procedure}

By representer theorem \citep{W90}, the minimizer $\widehat{g}_i$ of (\ref{eq: hetero-likelihood fn})
has the expression
\begin{equation}\label{represent:thm:heter}
g(x) = \sum_{t=1}^{T} a_{t} \bar{K}^{(i)}(X_{it},x) = a' \bar{K}_x^{(i)},
\,\,\,\,x\in\mathcal{X}_i,
\end{equation}
where $a=(a_{1}, \cdots, a_{T})'$ and $\bar{K}^{(i)}_x=(\bar{K}^{(i)}(X_{i1},x), \cdots, \bar{K}^{(i)}(X_{iT},x))'$. Based on (\ref{represent:thm:heter}) we have
\begin{equation}\label{hetero:g:H:i}
\|g\|_{\mathcal{H}_i}^2 = \langle \sum_{t=1}^{T} a_{t} \bar{K}^{(i)}(X_{it},\cdot), \sum_{t=1}^{T} a_{t} \bar{K}^{(i)}(X_{it},\cdot) \rangle_{\mathcal{H}_i} = a' \ \bar{\mathcal{K}}^{(i)} \ a,
\end{equation}
where $\bar{\mathcal{K}}^{(i)}= ( \bar{K}^{(i)}_{X_{i1}}, \cdots, \bar{K}^{(i)}_{X_{iT}} )\in\bbR^{T\times T}$
is semi-positive definite. 
So, \eqref{eq: hetero-likelihood fn} can be equally transformed to the following:
\begin{align}\label{krr:optimization:function}
(\widehat{a}_i,\widehat{\beta}_i)= & 
\arg\min_{a\in\bbR^T,\beta\in\bbR^{q_1+d}}
\frac{1}{2T} \left(Y_i- \bar{\mathcal{K}}^{(i)} a - Z \beta \right)'\left(Y_i- \bar{\mathcal{K}}^{(i)} a - Z \beta \right) +\frac{\eta_i}{2} 
 a' \bar{\mathcal{K}}^{(i)} a,
\end{align}
where $Y_i=(Y_{i1}, \cdots, Y_{iT})'$ and $Z=(Z_1, \cdots, Z_T)'$. 
The solution to (\ref{krr:optimization:function}) has expression
\begin{align}
&\widehat{a}_i = \left( \left( I_T - Z(Z'Z)^{-1}Z' \right) \bar{\mathcal{K}}^{(i)} + T\eta_i I_T \right)^{-1} \left( I_T - Z(Z'Z)^{-1}Z' \right) Y_i, \nonumber \\
&\widehat{\beta}_i = \left( Z'Z \right)^{-1}Z' \left( Y_i - \bar{\mathcal{K}}^{(i)} \widehat{a}_i \right).
\end{align}
Then we estimate $g_i$ by $\widehat{g}_i(x) = \widehat{a}_i' \bar{K}_x^{(i)}$
for any $x\in\mathcal{X}_i$.

\begin{Remark}\label{gcv:1}
In practice, we choose $\eta_i$ by minimizing the following GCV function:
\begin{equation*}
\widehat{\eta}_i=\arg\min_{\eta_i>0}\textrm{GCV}_i(\eta_i)\equiv
\arg\min_{\eta_i>0}\frac{\|(I_{T}-B_{\eta_i})Y_i\|_2^2}{T[1-tr(B_{\eta_i})/(T)]^2},
\end{equation*}
where $B_{\eta_i}$ is the so-called smoothing matrix,
i.e., a $T\times T$ matrix satisfying $\widehat{Y}_i=B_{\eta_i} Y_i$,
where $\widehat{Y}_i=\bar{\mathcal{K}}^{(i)}\widehat{a}_i+Z\widehat{\beta}_i$
is the fitted response vector.
\end{Remark}


\subsection{Rate of Convergence} \label{subsec:consist-Hetero}
We will derive the rate of convergence for $\widehat{\theta}_i$.
Before that, let us assume some technical conditions.
For $t\ge j\ge 1$, define 
$\mathcal{F}_j^t=\sigma\left({f}_{1l},{f}_{2l}: j\le l\le t\right)$.
Define $\phi$-mixing coefficients
\[
\phi(t)=\sup_{t_1\ge1}\sup_{\substack{A\in \mathcal{F}_1^{t_1}
B\in\mathcal{F}_{t_1+t}^\infty\\
P(B)>0}}
\big|P(A|B)-P(A)\big|,\,\,\,\,t\ge0.
\]
 
\begin{Assumption}\label{A1}
\begin{enumerate}[label=(\alph*)]
\item\label{A1:a} $\{v_{it}: i\in[N], t\in[T]\}$ are i.i.d.,
and $\{\epsilon_{it}: i\in[N], t\in[T]\}$ are i.i.d., both of zero means.
Furthermore, $v_{it}$'s and $\epsilon_{it}$'s are independent.  

\item\label{A1:b} Both $\{{f}_{1t}:t\in[T]\}$ and $\{{f}_{2t}:t\in[T]\}$
are strictly stationary process satisfying the following $\phi$-mixing condition:
$\sum_{t=0}^\infty\phi(t)^{1-4/\alpha}<\infty$, 
where $\alpha>4$ is a constant. $(f_{1t}, f_{2t})$ is distributed independently of $v_{it}$'s and $\epsilon_{it}$'s.

\item\label{A1:c} 
$E\{\|{f}_{1t}\|_2^\alpha\}<\infty$, $E\{\|{f}_{1t}\|_2^\alpha\}<\infty$,
$E\{|v_{it}|^\alpha\}<\infty$, $E\{|\epsilon_{it}|^\alpha\}<\infty$,
where $\|\cdot\|_2$ denotes the Euclidean norm.

\item\label{A1:d} $\sup_{i\ge1}\|\Delta_i\|_2<\infty$, where 
$\Delta_i=\gamma_{2i}'(\bar{\Gamma}_2\bar{\Gamma}_2')^{-1}\bar{\Gamma}_2$.

\end{enumerate}
\end{Assumption}

Assumption \ref{A1}\ref{A1:a} requires that the variables $\varepsilon_{it},v_{it}$ are zero-mean
independent. Assumption \ref{A1}\ref{A1:b} specify that the factors are strictly stationary and $\phi$-mixing, and independent of $v_{it}$'s and $\epsilon_{it}$'s. Independence assumption 
can be relaxed to mixing conditions with more tedious technical arguments.
Assumption \ref{A1}\ref{A1:c} requires that the variables have finite $\alpha$-moments. Assumption 
\ref{A1}\ref{A1:d} requires that the vectors $\Delta_i$ based on 
``true" factor loadings are uniformly bounded.

The following assumption says that $\varphi^{(i)}_\nu$ are uniformly bounded
and $\varphi^{(i)}_\nu,\rho^{(i)}_\nu$ simultaneously diagonalize $V_i$ and $\langle\cdot,\cdot\rangle_{\mathcal{H}_i}$,
a standard assumption in kernel ridge regression literature, e.g., \cite{SC13}. 
This condition holds for polynomially diverging kernels, exponentially diverging kernels,
or finite rank kernels on compactly supported $\mathcal{X}_i$;
see, e.g., \cite{W90,SC13,ZCL16}. 
Besides, we need $1\notin\mathcal{H}_i$ for identifiability.

\begin{Assumption}\label{A2}
For any $i\in[N]$,
$\sup_{\nu\ge1}\sup_{x\in\mathcal{X}_i}|\varphi_\nu^{(i)}(x)|<\infty$
and 
\[
V_i(\varphi^{(i)}_\nu,\varphi^{(i)}_\mu)=\delta_{\nu\mu},\,\,\,\,
\langle\varphi^{(i)}_\nu,\varphi^{(i)}_\mu\rangle_{\mathcal{H}_i}=\rho_\nu^{(i)}\delta_{\nu\mu},
\,\,\,\,\nu,\mu\ge1,
\]
where $\delta_{\nu\mu}$ is the Kronecker's delta. Furthermore,
$1\notin\mathcal{H}_i$,
and any $g\in\mathcal{H}_i$ satisfies $g=\sum_{\nu=1}^\infty g_\nu\varphi^{(i)}_\nu$,
where $g_\nu=V_i(g,\varphi^{(i)}_\mu)$ is a real sequence satisfying $\sum_{\nu=1}^\infty\rho_\nu^{(i)}g_\nu^2<\infty$.
\end{Assumption}

For any $\theta=(\beta,g)\in\Theta_i$, define $\|\theta\|_{i,\sup}=\sup_{x\in\mathcal{X}_i}|g(x)|+\|\beta\|_2$. 
For $p,\delta>0$,
define $\mathcal{G}_i(p)=\{\theta=(\beta,g)\in\Theta_i: \|\theta\|_{i,\sup}\le1,\|g\|_{\mathcal{H}_i}\le p\}$
and the corresponding entropy integral
\[
J_i(p,\delta)=\int_0^\delta\psi_2^{-1}\left(D_i(\varepsilon,\mathcal{G}_i(p),\|\cdot\|_{i,\sup})\right)d\varepsilon
+\delta\psi_2^{-1}\left(D_i(\delta,\mathcal{G}_i(p),\|\cdot\|_{i,\sup})^2\right),
\]
where $\psi_2(s)=\exp(s^2)-1$ and $D_i(\varepsilon,\mathcal{G}_i(p),\|\cdot\|_{i,\sup})$
is the $\varepsilon$-packing number of $\mathcal{G}_i(p)$
in terms of $\|\cdot\|_{i,\sup}$-metric. 
Let $\theta_{i0}=(\beta_{i0},g_{i0})$ denote the ``true" value of $(\beta,g)$
in (\ref{basic:model 2}). 
Define
\[
h_i=\sum_{\nu=1}^\infty (1+\eta_i\rho_\nu^{(i)})^{-1},
\,\,\,\,r_{i,M}=(Th_i)^{-1/2}+\eta_i^{1/2}+(Nh_i)^{-1/2}.
\]
It can be shown that 
$h_i\asymp\eta_i^{1/(2k)}$ for $k$-order PDK;
$h_i\asymp(\log(1/\eta_i))^{-1/k}$ for $k$-order EDK.
We use $(N, T) \rightarrow\infty$ to represent 
both $N \rightarrow\infty$ and $T \rightarrow\infty$.
\begin{Theorem}\label{them:convergence:rate:result:1}
Suppose that Assumptions \ref{A3}--\ref{A2} are satisfied.
Furthermore, as $(N, T)\rightarrow\infty$, the following conditions hold:
\begin{eqnarray}\label{Rate:Cond:0}
&&\eta_i=o(1),\,\,\,\,h_i=o(1), \,\,\,\, T^{2/\alpha-1}h_i^{-1}=o(1), \,\,\,\, \eta_i+ \frac{1}{N h_i} = O(h_i),\nonumber\\
&&T^{-1/2+1/\alpha}h_i^{-1/2}\max\{h_i^{-1/2},T^{1/\alpha}\}
J_i((\eta_i^{-1}h_i)^{1/2},1)\nonumber\\
&&\times\sqrt{\log{N}+\log\log(TJ_i((\eta_i^{-1}h_i)^{1/2},1))}=o(1).
\end{eqnarray}
Then for any $i\in[N]$,
$\|\widehat{\theta}_i-\theta_{i0}\|_i=O_P(r_{i,M})$, as 
$(N, T)\rightarrow\infty$.
\end{Theorem}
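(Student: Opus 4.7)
Since the loss $\ell_{i,M,\eta_i}$ is quadratic in $\theta$, one has $D^2 S_{i,M,\eta_i}\equiv 0$, so the Fr\'{e}chet operator $DS_{i,M,\eta_i}$ is independent of its base point and the Taylor expansion around $\theta_{i0}$ is exact. Combined with the first-order condition $S_{i,M,\eta_i}(\widehat\theta_i)=0$ this gives, whenever $DS_{i,M,\eta_i}$ is invertible,
\begin{equation*}
\widehat\theta_i-\theta_{i0} \;=\; -\,DS_{i,M,\eta_i}^{-1}\,S_{i,M,\eta_i}(\theta_{i0}).
\end{equation*}
A short calculation using Proposition \ref{prop:Ri:Pi}, the reproducing identity (\ref{semi:reproducing:prop}), and the definition (\ref{inner:product:Theta:i}) of $\langle\cdot,\cdot\rangle_i$ shows that $E[DS_{i,M,\eta_i}]=\mathrm{id}_{\Theta_i}$. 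The proof therefore reduces to two pieces: (a) bound the score $\|S_{i,M,\eta_i}(\theta_{i0})\|_i$ by $r_{i,M}$, and (b) show that the random operator $E_{M}:=DS_{i,M,\eta_i}-\mathrm{id}$ satisfies $\|E_{M}\|_{\mathrm{op}}=o_P(1)$, so that $\|DS_{i,M,\eta_i}^{-1}\|_{\mathrm{op}}=O_P(1)$.

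\textbf{Bounding the score.} Substituting (\ref{basic:model 2}) gives
\begin{equation*}
S_{i,M,\eta_i}(\theta_{i0}) \;=\; -\frac{1}{T}\sum_{t=1}^T \epsilon_{it}\,R_iU_{it} \;+\; \frac{1}{T}\sum_{t=1}^T \Delta_i\bar v_t\,R_iU_{it} \;+\; P_i\theta_{i0}.
\end{equation*}
The penalty piece satisfies $\|P_i\theta_{i0}\|_i\le\eta_i^{1/2}\|g_{i0}\|_{\mathcal{H}_i}$ by Cauchy-Schwarz in $\mathcal{H}_i$, contributing $\eta_i^{1/2}$. For the $\epsilon$-piece, independence of $\{\epsilon_{it}\}$ from $(f_{1t},f_{2t},v_{it})$ (Assumption \ref{A1}\ref{A1:a}-\ref{A1:b}) and $\phi$-mixing of $\{R_iU_{it}\}$ in $t$ yield an $O_P((Th_i)^{-1/2})$ bound via a second-moment computation, using $E\|R_iU\|_i^2\asymp h_i$ and the summability $\sum_t\phi(t)^{1-4/\alpha}<\infty$. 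The $\bar v$-piece combines cross-section averaging ($\mathrm{Var}(\bar v_t)=O(1/N)$) with the fact that $\bar v_t$ also enters $Z_t$ (hence $R_iU_{it}$) linearly; decomposing into ``uncorrelated'' and ``correlated'' parts, invoking the uniform bound on $\|\Delta_i\|_2$ (Assumption \ref{A1}\ref{A1:d}), and using $\eta_i+(Nh_i)^{-1}=O(h_i)$ produces the $O_P((Nh_i)^{-1/2})$ contribution. Together these yield $\|S_{i,M,\eta_i}(\theta_{i0})\|_i=O_P(r_{i,M})$.

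\textbf{Bounding the operator error.} Step (b) amounts to the uniform estimate
\[
\sup_{\|\Delta\theta\|_i\le 1}\Big\|\tfrac{1}{T}\sum_{t=1}^T\langle R_iU_{it},\Delta\theta\rangle_i R_iU_{it}\;-\;E[\langle R_iU,\Delta\theta\rangle_i R_iU]\Big\|_i \;=\; o_P(1).
\]
I would prove this by chaining in the $\|\cdot\|_{i,\sup}$ metric on the ball $\mathcal{G}_i((\eta_i^{-1}h_i)^{1/2})$, controlling increments via a $\psi_2$-Orlicz maximal inequality for $\phi$-mixing sums and handling only-$\alpha$-moment tails by a truncation argument. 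The entropy integral $J_i(\cdot,\cdot)$ quantifies the chaining cost, and the last line of (\ref{Rate:Cond:0}), $T^{-1/2+1/\alpha}h_i^{-1/2}\max\{h_i^{-1/2},T^{1/\alpha}\}J_i((\eta_i^{-1}h_i)^{1/2},1)\sqrt{\log N+\log\log(\cdot)}=o(1)$, is precisely calibrated so that this supremum vanishes. Composing steps (a) and (b) through the exact inversion identity then gives $\|\widehat\theta_i-\theta_{i0}\|_i=O_P(r_{i,M})$.

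\textbf{Main obstacle.} The delicate step is the uniform operator bound in (b) under $\phi$-mixing with only $\alpha$-moment tails rather than boundedness. The usual i.i.d.\ Bernstein/bracketing toolkit has to be replaced by a mixing, unbounded analogue, and the truncation threshold must be tuned so that the truncated part admits an exponential concentration while the discarded tail is negligible; this is exactly where the technical condition $T^{2/\alpha-1}h_i^{-1}=o(1)$ enters. Keeping the resulting maximal inequality compatible with the $\psi_2$-Orlicz entropy integral $J_i$, without leaking polynomial factors that would degrade the target rate $r_{i,M}$, is the principal source of technical difficulty. In effect, one must extend the Functional Bahadur Representation machinery of \cite{S10,SC13} from i.i.d.\ data to $\phi$-mixing panel data with cross-section averaging noise entering both the regressor $Z_t$ and the residual $e_{it}$.
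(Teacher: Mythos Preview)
Your proposal is correct and coincides with the paper's proof in substance: both use $E[DS_{i,M,\eta_i}]=\mathrm{id}$ (the paper's Lemma~\ref{lemma:id}), bound the score $S_{i,M,\eta_i}(\theta_{i0})$ by $r_{i,M}$ through the same three pieces (penalty, $\bar v$-bias, centered fluctuation), and establish the uniform operator bound by $\psi_2$-Orlicz chaining over $\mathcal{G}_i((\eta_i^{-1}h_i)^{1/2})$ with truncation on $\|Z_t\|_2$ (the paper's Proposition~\ref{prop:concentration}). The paper packages the last step as a two-stage contraction mapping---first locating $\theta_{\eta_i}$ with $S^\star_{i,M,\eta_i}(\theta_{\eta_i})=0$, then $\widehat\theta_i$ near $\theta_{\eta_i}$---rather than your direct inversion, but since the loss is quadratic these are equivalent; note also the slip $E\|R_iU\|_i^2\asymp h_i^{-1}$, not $h_i$.
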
 

Theorem \ref{them:convergence:rate:result:1} presents a rate of convergence 
for $\widehat{\theta}_i$ in
$\|\cdot\|_i$-norm which relies on $h_i,N,T$.  
The $\|\cdot\|_i$-norm is stronger than the commonly used $L^2$-norm 
in literature; see \cite{suJin2012}, \cite{suZhang2013}.
The optimal choice of $h_i$,
denoted $h_i^\star$, 
relies on the type of kernels
and the relationship of $N,T$, i.e., $N\ge T$ or $N<T$. 
The optimal convergence rate, denoted $r^\star_{i,M}$, can be calculated accordingly;
see Table \ref{table:rate:hetero}.
We observe that $h_i^\star,r^\star_{i,M}$ only depend on the smaller value of $N,T$
in both PDK and EDK. Rate conditions (\ref{Rate:Cond:0}) are satisfied
under $h_i\asymp h_i^\star$.

\begin{table}[htp]
\begin{center}
\begin{tabular}{ccccc}
&\multicolumn{2}{c}{$N\ge T$}&\multicolumn{2}{c}{$N<T$}\\ \cline{2-3} \cline{4-5}
& PDK & EDK & PDK & EDK\\ \hline
$h^\star_i$ &$T^{-1/(2k+1)}$&$(\log{T})^{-1/k}$&$N^{-1/(2k+1)}$&$(\log{N})^{-1/k}$\\
$r^\star_{i,M}$&$T^{-k/(2k+1)}$ & $T^{-1/2}(\log{T})^{1/(2k)}$ 
&$N^{-k/(2k+1)}$ & $N^{-1/2}(\log{N})^{1/(2k)}$\\ \hline
\end{tabular}
\caption{A summary of $h^\star_i$ and $r^\star_i$ in $k$-order PDK and $k$-order EDK.}
\label{table:rate:hetero}
\end{center}
\end{table}

\subsection{Joint Asymptotic Distribution} \label{subsec:AsympDist-Hetero}

The aim of this section is to derive
joint asymptotic normality for $\widehat{\theta}_i=(\widehat{\beta}_i,\widehat{g}_i)$
which is new in literature. Our result
naturally leads to marginal asymptotic normality obtained by \cite{suJin2012}.
More importantly, our joint asymptotic result can be used to construct 
confidence interval for regression mean 
and prediction interval for future response variable.
As far as we know, these are the first asymptotic valid
intervals for prediction purposes.
Before proceeding further, we give a technical result, called as Functional Bahadur Representation (FBR),
to characterize the leading term of the estimator. 

\begin{Theorem}[FBR for Heterogeneous Model] \label{lemma: hetero -- leading term}
Suppose that Assumptions \ref{A3}--\ref{A2} are satisfied. Then we have 
\begin{equation}
\| \widehat{\theta}_i -\theta_{i0} + S_{i,M,\eta_i}(\theta_{i0}) \|_i = O_P(a_M),\,\,\,\,
\textrm{as $(N,T)\rightarrow\infty$,}
\end{equation}
where 
$
a_M=r_{i,M}h_i^{-1/2} T^{1/\alpha-1/2}(h_i^{-1/2}+T^{1/\alpha})J_i(p_i,1)\sqrt{\log{N}+\log\log(TJ_i(p_i,1))} + h_i^{-1/2}T^{2/\alpha-1}
$
and $p_i=(\eta_i^{-1}h_i)^{1/2}$. 
\end{Theorem}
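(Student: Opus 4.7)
The plan is to exploit the quadratic structure of $\ell_{i,M,\eta_i}$, which, as recorded just before the theorem, makes $S_{i,M,\eta_i}$ an affine map with $D^2S_{i,M,\eta_i}\equiv 0$. Hence the Taylor expansion at $\theta_{i0}$ is exact, and combining it with the first-order condition $S_{i,M,\eta_i}(\widehat\theta_i)=0$ gives
\[
DS_{i,M,\eta_i}(\widehat\theta_i - \theta_{i0}) = -S_{i,M,\eta_i}(\theta_{i0}).
\]
Writing $\widehat T\Delta = T^{-1}\sum_{t=1}^T \langle R_i U_{it},\Delta\rangle_i R_i U_{it}$, we have $DS_{i,M,\eta_i} = \widehat T + P_i$. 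A direct computation via Proposition \ref{prop:Ri:Pi} and the inner product in (\ref{inner:product:Theta:i}) shows that $E\widehat T + P_i$ is the identity on $\Theta_i$: for any $\Delta,\tilde\theta\in\Theta_i$,
\[
\langle (E\widehat T + P_i)\Delta,\tilde\theta\rangle_i = E[(\Delta g(X_i)+Z'\Delta\beta)(\tilde g(X_i)+Z'\tilde\beta)] + \eta_i\langle \Delta g,\tilde g\rangle_{\mathcal{H}_i} = \langle \Delta,\tilde\theta\rangle_i.
\]
Substituting and rearranging yields the exact identity
\[
\widehat\theta_i - \theta_{i0} + S_{i,M,\eta_i}(\theta_{i0}) = -(\widehat T - E\widehat T)(\widehat\theta_i - \theta_{i0}),
\]
which reduces the FBR to bounding a centered empirical operator acting on the random direction $\widehat\theta_i - \theta_{i0}$.

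To bound the right-hand side I would combine Theorem \ref{them:convergence:rate:result:1}, giving $\|\widehat\theta_i-\theta_{i0}\|_i=O_P(r_{i,M})$, with the a priori RKHS bound $\|\widehat g_i-g_{i0}\|_{\mathcal H_i}=O_P(p_i)$ that can be extracted from the same proof via the penalty term, in order to confine attention to the shell
\[
\mathcal B = \{\Delta\in\Theta_i:\ \|\Delta\|_i\le c\,r_{i,M},\ \|\Delta\|_{\mathcal H_i}\le c\,p_i\}.
\]
Dualizing against $\tilde\theta$ with $\|\tilde\theta\|_i\le 1$ converts the operator-valued bound into a supremum of a real-valued empirical process indexed by $\mathcal B$. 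I would then (i) truncate the summands at level $T^{1/\alpha}$, trading the $\alpha$-moment condition A1(c) for sub-Gaussian tails at the cost of a deterministic remainder of order $h_i^{-1/2}T^{2/\alpha-1}$, (ii) apply a Bernstein-type maximal inequality for $\phi$-mixing sequences justified by A1(b), and (iii) chain the fluctuation bound via the $\psi_2$-entropy integral $J_i(p_i,\cdot)$ of Assumption \ref{A2}. The first summand of $a_M$ is produced by the chaining at scale $r_{i,M}$, and the second by the truncation remainder; the $\sqrt{\log N + \log\log(TJ_i(p_i,1))}$ factor reflects a union bound across the $N$ units so the representation holds jointly in $i$.

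The principal obstacle is the empirical process step under $\phi$-mixing. Unlike the i.i.d.\ FBR of \cite{S10} and \cite{SC13}, one must simultaneously (a) pass from an operator- to a scalar-valued process while preserving the two-scale geometry $(\|\cdot\|_i,\|\cdot\|_{\mathcal H_i})$ of $\mathcal B$, so the chaining uses $\|\cdot\|_{i,\sup}$-packing on an RKHS ball of radius $p_i$; (b) exploit mixing via Berbee-type coupling or blocking without degrading the chaining entropy, which forces the moment-blocking size to interact with the $\alpha$-moment truncation; and (c) propagate the extra $(Nh_i)^{-1/2}$ term coming from $\bar X_t$ inside $Z_t$, which couples observations across units and already appears inside $r_{i,M}$. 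Once the $\phi$-mixing maximal inequality is in place, the concentration error composes multiplicatively with $r_{i,M}$ to produce the first summand of $a_M$, and assembling the two contributions into the stated rate is bookkeeping.
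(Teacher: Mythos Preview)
Your proposal is correct and follows essentially the same route as the paper: the exact identity $\widehat\theta_i-\theta_{i0}+S_{i,M,\eta_i}(\theta_{i0})=-(\widehat T-E\widehat T)(\widehat\theta_i-\theta_{i0})$, localization to a shell via Theorem \ref{them:convergence:rate:result:1}, truncation on the event $\{\|Z_t\|_2\le \widetilde C T^{1/\alpha}\}$, and chaining through the entropy integral $J_i(p_i,\cdot)$ all match the paper's argument step for step. The only implementation difference is that the paper does not dualize to a scalar process or use Berbee coupling; instead it proves a Pinelis-type sub-Gaussian bound directly for the $\Theta_i$-valued martingale under $\phi$-mixing (Proposition \ref{prop:concentration}, via the decomposition $f_{iTj}=E(\mathbb M_{iT}|\mathcal F_j)-E(\mathbb M_{iT}|\mathcal F_{j-1})$), so chaining runs only over $\mathcal G_i(p_i)$ and no entropy for the dual variable is needed.
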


Theorem \ref{lemma: hetero -- leading term} 
provides a higher order approximation for
$\widehat{\theta}_i -\theta_{i0}$ with leading term $S_{i,M,\eta_i}(\theta_{i0})$,
which generalizes \cite{S10} and \cite{SC13} to panel data settings.
The rate of the remainder term $a_M$ can be shown to be $O(N^{-1/2})$
or $O(T^{-1/2})$ if we choose $h_i\asymp h_i^\star$, where the values of $h_i^\star$
are summarized in Table \ref{table:rate:hetero}. Thus, the remainder term
is asymptotically negligible compared to $S_{i,M,\eta_i}(\theta_{i0})$. 
This lemma can be used to prove the following joint asymptotic normality 
for $\widehat{\theta}_i$. The proof relies on a central limit theorem
on $S_{i,M,\eta_i}(\theta_{i0})$.
Let $\theta_{i0}^\star=(\beta_{i0}^\star, g_{i0}^\star)\equiv(id-P_i)\theta_{i0}$.
\begin{Theorem} \label{theorem: joint distr 1.}
Suppose that Assumptions \ref{A3}--\ref{A2} are all satisfied. 
Furthermore, $h_i=o(1)$, $(Nh_i)^{-1}=o(1)$, $a_M=o(T^{-1/2}h_i^{1/2})$, 
and for $x_0 \in\mathcal{X}_i$, $h_iV_i(K^{(i)}_{x_0}, K^{(i)}_{x_0})
\to \sigma_{x_0}^2$, $h_i^{1/2}(W_iA_i)(x_0) \to \alpha_{x_0}$, and $h_i^{1/2}A_{i}(x_0)\to -\beta_{x_0}$, where $\sigma_{x_0}^2>0,\alpha_{x_0},\beta_{x_0}\in\bbR^{q_1+d}$
are nonrandom constants. Then, 
\begin{equation}
\left(
\begin{array}{c}
\sqrt{T}(\widehat{\beta}_i - \beta_{i0}^\star) - \sqrt{T}E\{e_{it}H^{(i)}_{U_{it}}\}\\
\sqrt{Th_i}(\widehat{g}_i(x_0) - g_{i0}^\star(x_0)) - \sqrt{Th_i} E\{e_{it} T_{U_{it}}^{(i)}(x_0)\}
\end{array}
\right) \overset{d}{\to} N\left(0 , \Psi^\star \right),\,\,\,\,\textrm{as $(N,T)\rightarrow\infty$,}
\end{equation} \label{eq: hetero- joint dist 1}
where 
\begin{equation}
\Psi^\star= 
\sigma^2_{\epsilon} \left(
\begin{array}{cc}
\Omega_i^{-1} & \Omega_i^{-1}(\alpha_{x_0} + \beta_{x_0}) \\
(\alpha_{x_0} + \beta_{x_0})'\Omega_i^{-1} & \sigma_{x_0}^2 + 2\beta_{x_0}'\Omega_i^{-1}\alpha_{x_0} + \beta_{x_0}'\Omega_i^{-1}\beta_{x_0}
\end{array}
\right),\,\,\textrm{and}\,\, 
\sigma^2_{\epsilon} = Var(\epsilon_{it}).
\end{equation}
\end{Theorem}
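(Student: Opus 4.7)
The plan is to derive joint normality from the Functional Bahadur Representation (Theorem \ref{lemma: hetero -- leading term}) by extracting the two linear functionals of interest and applying a central limit theorem for $\phi$-mixing sequences to the leading stochastic term. First, by (\ref{basic:model 2}) and (\ref{semi:reproducing:prop}), $Y_{it} - \langle R_iU_{it}, \theta_{i0}\rangle_i = e_{it}$, so the score at the truth equals $S_{i,M,\eta_i}(\theta_{i0}) = -T^{-1}\sum_t e_{it} R_i U_{it} + P_i\theta_{i0}$. Combining with $\theta_{i0}^\star = (id - P_i)\theta_{i0}$, Theorem \ref{lemma: hetero -- leading term} gives
\[
\widehat{\theta}_i - \theta_{i0}^\star = \frac{1}{T}\sum_{t=1}^T e_{it}\, R_i U_{it} + \mathcal{R}_M, \qquad \|\mathcal{R}_M\|_i = O_P(a_M).
\]

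I would then extract the two components of interest using $R_i U_{it} = (H^{(i)}_{U_{it}}, T^{(i)}_{U_{it}})$. For any unit vector $a \in \bbR^{q_1+d}$, the functional $\theta = (\beta, g) \mapsto a'\beta$ on $(\Theta_i, \|\cdot\|_i)$ has a Riesz representer with squared norm equal to $a'(\Omega_i+\Sigma_i)^{-1}a = O(1)$ by Assumption \ref{A3}, so the $\beta$-component $\mathcal{R}_M^\beta$ of $\mathcal{R}_M$ satisfies $\|\mathcal{R}_M^\beta\|_2 = O_P(a_M)$. The evaluation functional $\theta \mapsto g(x_0)$ is represented by $R_i(x_0,0)$ via (\ref{semi:reproducing:prop}), whose squared $\|\cdot\|_i$-norm equals $T^{(i)}_{(x_0,0)}(x_0) = K^{(i)}(x_0,x_0) + A_i(x_0)'(\Omega_i+\Sigma_i)^{-1}A_i(x_0) = O(h_i^{-1})$ under the imposed limits on $V_i(K^{(i)}_{x_0}, K^{(i)}_{x_0})$ and $A_i(x_0)$; hence the $g$-remainder satisfies $|\mathcal{R}_M^g(x_0)| = O_P(h_i^{-1/2} a_M)$. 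Multiplying by the scalings $\sqrt{T}$ and $\sqrt{Th_i}$ turns both remainders into $O_P(\sqrt{T}\,a_M) = o_P(h_i^{1/2}) = o_P(1)$ under the hypotheses $a_M = o(T^{-1/2}h_i^{1/2})$ and $h_i = o(1)$.

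By Cram\'er--Wold, it suffices to prove that for arbitrary $(a, b)$, the scalar sum $T^{-1/2}\sum_t \zeta_t$ with $\zeta_t = a'e_{it}H^{(i)}_{U_{it}} + b\sqrt{h_i}\,e_{it}T^{(i)}_{U_{it}}(x_0)$ converges to a Gaussian with the appropriate variance. Since $e_{it} = \epsilon_{it} - \Delta_i'\bar{v}_t$ is i.i.d.\ in $t$ by Assumption \ref{A1}\ref{A1:a}, while $(H^{(i)}_{U_{it}}, T^{(i)}_{U_{it}}(x_0))$ is measurable with respect to $(f_{1t}, f_{2t}, v_{it}, \bar{v}_t)$, the process $\{\zeta_t\}$ inherits $\phi$-mixing from Assumption \ref{A1}\ref{A1:b}, and a standard $\phi$-mixing CLT applies. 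The limiting variance is then computed by expanding $H^{(i)}_{U_{it}}$ and $T^{(i)}_{U_{it}}(x_0)$ via Proposition \ref{prop:Ri:Pi}, using the reproducing-kernel identities $E\{K^{(i)}(X_i, x_0)\, Z\} = A_i(x_0)$ and $E\{K^{(i)}(X_i, x_0)\, A_i(X_i)\} = A_i(x_0) - (W_iA_i)(x_0)$, together with $E\{e_{it}^2\} = \sigma_\epsilon^2 + O(N^{-1}) \to \sigma_\epsilon^2$ and $\Sigma_i \to 0$ as $\eta_i \to 0$. Substituting the three prescribed limits for $h_iV_i(K^{(i)}_{x_0}, K^{(i)}_{x_0})$, $h_i^{1/2}(W_iA_i)(x_0)$, and $h_i^{1/2}A_i(x_0)$ then reads off the three blocks of $\Psi^\star$.

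The hardest step is the $\phi$-mixing CLT applied to $\sqrt{h_i}\,e_{it}T^{(i)}_{U_{it}}(x_0)$: the unscaled integrand $T^{(i)}_{U_{it}}(x_0)$ has $L^2$-norm of order $h_i^{-1/2}$, so only after $\sqrt{h_i}$-normalization is the variance bounded, and verifying a Lyapunov moment condition requires controlling higher moments of $K^{(i)}(X_{it}, x_0)$ uniformly in $h_i \to 0$; this is where the uniform eigenfunction bound in Assumption \ref{A2} combined with Assumption \ref{A1}\ref{A1:c} is essential. A secondary subtlety is that $\bar{v}_t$ appears simultaneously in $e_{it}$ and in $Z_t$, contributing an $O(N^{-1})$ piece to the covariance structure; the hypothesis $(Nh_i)^{-1} = o(1)$ ensures that this piece is absorbed into the $o_P(1)$ remainder and does not contaminate $\Psi^\star$.
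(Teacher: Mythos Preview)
Your proposal is correct and follows essentially the same route as the paper: both start from the FBR to isolate the leading term $T^{-1}\sum_t e_{it}R_iU_{it}$, reduce to a scalar via Cram\'er--Wold, and apply a $\phi$-mixing CLT (the paper invokes Theorem~2.21 of Fan and Yao) after deriving the pointwise bound $|L(U_{it})|\lesssim \|Z_t\|_2 + h_i^{-1/2}$. The only cosmetic difference is in controlling the scaled remainder: you bound the $\beta$- and $g(x_0)$-components separately via their Riesz representers, whereas the paper performs a single ``$h$-rescaling'' trick, setting $\widehat{\theta}_i^h=(\widehat{\beta}_i,h_i^{1/2}\widehat{g}_i)$ and $R_i^hu=(H_u^{(i)},h_i^{1/2}T_u^{(i)})$, then showing $\|Rem^h\|_i=O_P(a_M)$ by a triangle-inequality comparison with $h_i^{1/2}Rem$ before applying $\langle R_iu,\cdot\rangle_i$; both achieve the same $o_P(1)$ conclusion under $a_M=o(T^{-1/2}h_i^{1/2})$.
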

Theorem \ref{theorem: joint distr 1.} proves joint asymptotic normality
for $\widehat{\beta}_i$ and $\widehat{g}_i(x_0)$. The estimators are nonetheless not (asymptotically)
unbiased, i.e., they do not converge to the truth $\beta_{i0}$ and $g_{i0}(x_0)$.
To correct the bias, we need to assume $T/N=o(1)$
as in the following Theorem \ref{theorem: joint distr 3.}. 
This condition means that the number of observations within each individual unit
is strictly smaller than the number of units, which can provide
more cross section information.
We expect that the bias cannot be corrected if $T\ge N$.
Indeed, our theoretical analysis indicates 
a possibly sharp upper bound $\sqrt{T}E\{e_{it}H^{(i)}_{U_{it}}\}=O_P(\sqrt{T/N})$.
When $T\ge N$, this term will result in uncorrectable bias in estimating $\beta_i$.
Relevant assumptions exist in literature for bias correction,
e.g., $T/N^2=o(1)$ considered by \citet{pesaran2006} in parametric setting; $\kappa T/N=o(1)$ 
considered by \citet{suJin2012} in sieve estimation,
where $\kappa$ represents the discrete truncation parameter (or, number of basis functions).
Compared to the latter, our condition is weaker.
Another condition for bias correction is that
$G_i$ is sufficiently smooth, i.e., condition (\ref{smooth:G:i})
in Theorem \ref{theorem: joint distr 3.}.
Such condition holds if
the conditional distribution of the factor variable $f_{1t}$ given 
$X_{it}$ is smooth. As a by-product, $\widehat{\beta}_i$ and $\widehat{g}_i(x_0)$ 
become asymptotically independent which facilitates the applications,
e.g., one does not need to estimate the correlation between the two estimators. Define $G_i=(G_{i,1}, \dots , G_{i,q_1+d})'$. 
\begin{Theorem} \label{theorem: joint distr 3.}
Suppose that the conditions in Theorem \ref{theorem: joint distr 1.} hold, ${T\eta_i}=o(1)$
and $T/N=o(1)$. Furthermore, there exists a positive non-decreasing sequence 
$k_\nu$ with $\sum_{\nu\geq 1}1/k_\nu<\infty$ such that
\begin{equation}\label{smooth:G:i}
\sum_{\nu} |V_i(G_{i,k}, \varphi^{(i)}_{\nu})|^2 k_\nu< \infty, \qquad \textrm{for $k =  1,\ldots,q_1+d$.}
\end{equation}
Then we have, for any $x_0 \in\mathcal{X}_i$, 
\begin{equation}
\left(
\begin{array}{c}
\sqrt{T}(\widehat{\beta}_i - \beta_{i0}) \\
\sqrt{Th_i}(\widehat{g}_i(x_0) - g_{i0}(x_0) )
\end{array}
\right) \overset{d}{\to} N\left(0 , \Psi \right),
\,\,\,\,\textrm{as $(N,T)\rightarrow\infty$,}
\end{equation}
where 
\begin{equation}
\Psi= 
\sigma_\epsilon^2\left(
\begin{array}{cc}
\Omega_i^{-1} & 0 \\
0 & \sigma_{x_0}^2 
\end{array}
\right).
\end{equation}

\end{Theorem}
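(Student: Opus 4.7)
My plan is to derive Theorem \ref{theorem: joint distr 3.} as a corollary of Theorem \ref{theorem: joint distr 1.}. Under the additional hypotheses $T\eta_i=o(1)$, $T/N=o(1)$ and the smoothness condition (\ref{smooth:G:i}), three simplifications occur which together convert the conclusion of Theorem \ref{theorem: joint distr 1.} into that of Theorem \ref{theorem: joint distr 3.}: (a) $\alpha_{x_0}=\beta_{x_0}=0$, so $\Psi^\star$ collapses to the diagonal $\Psi$; (b) the ``projected truth'' $\theta_{i0}^\star=(id-P_i)\theta_{i0}$ can be interchanged with $\theta_{i0}$ after scaling; (c) the bias terms $\sqrt{T}\,E\{e_{it}H_{U_{it}}^{(i)}\}$ and $\sqrt{Th_i}\,E\{e_{it}T_{U_{it}}^{(i)}(x_0)\}$ vanish. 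Slutsky's theorem then assembles the pieces.

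For (a) and the $A_i$-dependent part of (b), I would carry out a spectral analysis of the objects in Proposition \ref{prop:Ri:Pi}. Combining the defining property $\langle A_{i,k},g\rangle_{\star,i}=V_i(G_{i,k},g)$ with Assumption \ref{A2} yields the expansions
\[
A_{i,k}=\sum_\nu\frac{V_i(G_{i,k},\varphi_\nu^{(i)})}{1+\eta_i\rho_\nu^{(i)}}\,\varphi_\nu^{(i)},\qquad (W_iA_{i,k})(x)=\sum_\nu\frac{\eta_i\rho_\nu^{(i)}V_i(G_{i,k},\varphi_\nu^{(i)})}{(1+\eta_i\rho_\nu^{(i)})^2}\,\varphi_\nu^{(i)}(x).
\]
Applying Cauchy--Schwarz weighted by the sequence $k_\nu$, together with the uniform bound on $|\varphi_\nu^{(i)}|$ from Assumption \ref{A2}, (\ref{smooth:G:i}) and $\sum_\nu k_\nu^{-1}<\infty$, delivers $|A_i(x_0)|=O(1)$ and $|(W_iA_i)(x_0)|=O(1)$. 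Consequently $h_i^{1/2}A_i(x_0)$ and $h_i^{1/2}(W_iA_i)(x_0)$ both tend to zero, forcing $\beta_{x_0}=\alpha_{x_0}=0$; substituting into the formula for $\Psi^\star$ in Theorem \ref{theorem: joint distr 1.} gives $\Psi$.

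For the remaining part of (b), $\theta_{i0}-\theta_{i0}^\star=P_i\theta_{i0}=(H_{g_{i0}}^{(\star i)},T_{g_{i0}}^{(\star i)})$ by Proposition \ref{prop:Ri:Pi}. Expanding $g_{i0}=\sum_\nu g_{i0,\nu}\varphi_\nu^{(i)}$ with $\sum_\nu\rho_\nu^{(i)}g_{i0,\nu}^2=\|g_{i0}\|_{\mathcal{H}_i}^2<\infty$ and noting $(W_ig_{i0})_\nu=\eta_i\rho_\nu^{(i)}g_{i0,\nu}/(1+\eta_i\rho_\nu^{(i)})$, a second weighted Cauchy--Schwarz argument (again leveraging (\ref{smooth:G:i})) yields $|V_i(G_i,W_ig_{i0})|=O(\eta_i^{1/2})$, and, sharpened by $\sum_\nu(1+\eta_i\rho_\nu^{(i)})^{-1}\asymp h_i$, $|(W_ig_{i0})(x_0)|=O((\eta_ih_i)^{1/2})$. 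Combined with $|A_i(x_0)|=O(1)$, these give $\sqrt{T}|\beta_{i0}^\star-\beta_{i0}|=O(\sqrt{T\eta_i})=o(1)$ and $\sqrt{Th_i}|g_{i0}^\star(x_0)-g_{i0}(x_0)|=O(h_i\sqrt{T\eta_i})+O(\sqrt{T\eta_i})=o(1)$, using $T\eta_i=o(1)$.

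For (c), Assumption \ref{A1}\ref{A1:a}--\ref{A1:b} makes $\epsilon_{it}$ independent of $U_{it}$ and of $\bar{v}_t$, so $E\{e_{it}H_{U_{it}}^{(i)}\}=-E\{(\Delta_i\bar{v}_t)H_{U_{it}}^{(i)}\}$. Decomposing $\bar{v}_t=v_{it}/N+\bar{v}_t^{(-i)}$, the component $\bar{v}_t^{(-i)}$ is independent of $X_{it}$ and correlates with $Z_t$ only because $\bar{X}_t\subset Z_t$ itself contains $\bar{v}_t^{(-i)}$, producing $E\{\bar{v}_t^{(-i)}(\bar{v}_t^{(-i)})'\}=O(1/N)$; the $v_{it}/N$ piece contributes a covariance of the same order. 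Hence $|E\{e_{it}H_{U_{it}}^{(i)}\}|=O(1/N)$, so $\sqrt{T}E\{e_{it}H_{U_{it}}^{(i)}\}=O(\sqrt{T}/N)=o(1)$ when $T/N=o(1)$ and $N\to\infty$. The same argument, combined with the sharper $|A_i(x_0)|=O(1)$ from (a), gives $\sqrt{Th_i}E\{e_{it}T_{U_{it}}^{(i)}(x_0)\}=O(\sqrt{T}/N)=o(1)$. I expect the main obstacle to be extracting the sharp rates in (a) and (b) from the spectral series: the crude bound $g_{i0}\in\mathcal{H}_i$ alone is insufficient, and (\ref{smooth:G:i}) must be inserted at the right places via weighted Cauchy--Schwarz. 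Once these bounds are in hand, Slutsky's theorem combined with Theorem \ref{theorem: joint distr 1.} yields the claimed joint normality.
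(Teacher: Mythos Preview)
Your approach matches the paper's: derive the result from Theorem \ref{theorem: joint distr 1.} by verifying $\alpha_{x_0}=\beta_{x_0}=0$, that the $P_i\theta_{i0}$ bias is negligible after scaling, and that the $e_{it}$ centering terms vanish, all via weighted Cauchy--Schwarz with the sequence $k_\nu$ together with $T\eta_i=o(1)$ and $T/N=o(1)$. One harmless slip to fix: by definition $\sum_\nu(1+\eta_i\rho_\nu^{(i)})^{-1}\asymp h_i^{-1}$ (not $h_i$), so the correct bound is $|(W_ig_{i0})(x_0)|=O((\eta_i/h_i)^{1/2})$---the paper obtains it from $|\langle W_ig_{i0},K_{x_0}^{(i)}\rangle_{\star,i}|\le\sqrt{\eta_i}\,\|g_{i0}\|_{\mathcal{H}_i}\,\|K_{x_0}^{(i)}\|_{\star,i}$---and this still yields $\sqrt{Th_i}\cdot(\eta_i/h_i)^{1/2}=\sqrt{T\eta_i}=o(1)$; for step (c) the paper simply recycles the Cauchy--Schwarz bound $E\{e_{it}L(U_{it})\}=O_P(N^{-1/2})$ already established in the proof of Theorem \ref{theorem: joint distr 1.}, which is shorter than your decomposition of $\bar v_t$ but leads to the same conclusion.
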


An application of Theorem \ref{theorem: joint distr 3.} is 
the construction of confidence interval for regression mean. 
Suppose $X_{i\, t+1}=x_{i0}$ and $f_{1\, t+1}=f_{10}$ with known $x_{i0}$ and $f_{10}$,
i.e., the predictor variables of each individual are observed at future time $t+1$.
By (\ref{basic:model 2}),
the conditional mean of $Y_{i\, t+1}$ is $\mu_{i0}\equiv E\{Y_{i\, t+1}|X_{i\, t+1}=x_{i0},
f_{1\, t+1}=f_{10}\}\approx g_{i0}(x_{i0})+z_{0}'\beta_{i0}$,
where $z_{0}=(f_{10}', N^{-1}\sum_{i=1}^{N}x_{i0}')'$. 
We propose the following $1-\alpha$ confidence interval for $\mu_{i0}$:
\begin{equation}\label{ci:for:mu:i}
\widehat{\mu}_i\pm z_{1-\alpha/2} \frac{\sigma_{x_0}\sigma_\epsilon}{\sqrt{Th_i}},
\,\,\,\,\textrm{where $\widehat{\mu}_{i}= \widehat{g}_{i}(x_{i0})+z_{0}'\widehat{\beta}_{i}$.}
\end{equation}
Here, $z_{1-\alpha/2}$ is the $(1-\alpha/2)$-percentile of standard normal distribution.
The following Corollary \ref{cor:ci:mu:i} guarantees the asymptotic validity of (\ref{ci:for:mu:i}).
\begin{Corollary}\label{cor:ci:mu:i}
Under the conditions of Theorem \ref{theorem: joint distr 3.}, we have, \textrm{as $(N,T)\rightarrow\infty$,}
\begin{equation}
\sqrt{Th_i}\left( \widehat{\mu}_i -\mu_{i0} \right) \overset{d}{\to} N\left(0, \sigma_{x_0}^2 \sigma_{\epsilon}^2\right).
\end{equation}
\end{Corollary}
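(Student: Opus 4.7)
The plan is to obtain Corollary \ref{cor:ci:mu:i} as a direct consequence of Theorem \ref{theorem: joint distr 3.} via a Slutsky-type argument. The key observation is that $\widehat{\mu}_i$ is a linear functional of the pair $(\widehat{\beta}_i,\widehat{g}_i(x_{i0}))$ for which joint asymptotic normality has already been established, and that the scaling $\sqrt{Th_i}$ appropriate for the nonparametric piece makes the parametric piece asymptotically negligible.

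First, I would decompose
\begin{equation*}
\sqrt{Th_i}\,(\widehat{\mu}_i-\mu_{i0})
=\sqrt{Th_i}\,\bigl(\widehat{g}_i(x_{i0})-g_{i0}(x_{i0})\bigr)
+\sqrt{h_i}\cdot z_0'\cdot \sqrt{T}\,(\widehat{\beta}_i-\beta_{i0}),
\end{equation*}
using $\widehat{\mu}_i=\widehat{g}_i(x_{i0})+z_0'\widehat{\beta}_i$ and the definition of $\mu_{i0}$. By Theorem \ref{theorem: joint distr 3.} the first term converges in distribution to $N(0,\sigma_\epsilon^2\sigma_{x_0}^2)$, while $\sqrt{T}(\widehat{\beta}_i-\beta_{i0})=O_P(1)$. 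Since $h_i=o(1)$ and $z_0=(f_{10}',N^{-1}\sum_{i=1}^N x_{i0}')'$ is a deterministic bounded vector, the second term is $o_P(1)$.

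The conclusion then follows from Slutsky's theorem. Note that the block-diagonal structure of $\Psi$ in Theorem \ref{theorem: joint distr 3.} is not actually used here: because the parametric contribution is of strictly smaller order than the nonparametric one under the scaling $\sqrt{Th_i}$, any asymptotic correlation between $\widehat{\beta}_i$ and $\widehat{g}_i(x_{i0})$ would wash out in the limit. The block-diagonality of $\Psi$ mostly pays off in Section \ref{subsec:AsympDist-Hetero} by allowing one to avoid estimating a cross-covariance when constructing the interval.

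There is no serious obstacle: the entire content of the corollary is contained in Theorem \ref{theorem: joint distr 3.}. The only mildly delicate point is the boundedness of $z_0$ as $N\to\infty$, which is implicit in the setup since $x_{i0}$ and $f_{10}$ are treated as known/fixed values of the predictors; under the moment assumption \ref{A1}\ref{A1:c} and the interpretation that the covariates take values in a fixed domain, this is automatic. Consequently the proof is essentially two lines after the decomposition above.
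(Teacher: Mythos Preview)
Your proposal is correct and is exactly the natural argument; the paper does not give an explicit proof of this corollary, treating it as an immediate consequence of Theorem~\ref{theorem: joint distr 3.}, and your decomposition plus Slutsky is precisely how one fills in the one-line gap. Your observation that the block-diagonality of $\Psi$ is not actually needed here (because the $\sqrt{h_i}$ factor kills the parametric contribution regardless of cross-covariance) is accurate and worth noting.
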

Another application of Theorem \ref{theorem: joint distr 3.} is to construct 
the prediction interval for $Y_{i\, t+1}$. 
By (\ref{basic:model 2}),
\begin{equation}\label{pi:decom}
Y_{i\, t+1}-\widehat{\mu}_{i}=(\mu_{i0}-\widehat{\mu}_i)+
\epsilon_{i\, t+1}-\gamma_{2i}'(\bar{\Gamma}_2\bar{\Gamma}_2')^{-1}\bar{\Gamma}_2\bar{v}_{t+1}.
\end{equation}
The proof of Theorem \ref{them:convergence:rate:result:1} indicates
that the last term of (\ref{pi:decom}) is $O_P(N^{-1/2})$,
whereas the first term is $O_P((Th_i)^{-1/2})$ thanks to Corollary \ref{cor:ci:mu:i}.
If $Th_i=o(N)$, i.e., the last term of (\ref{pi:decom}) is asymptotically negligible,
then the asymptotic distribution of (\ref{pi:decom}) is 
a convolution of $F_\epsilon$ and the distribution of $N(0,\sigma_{x_0}^2\sigma_\epsilon^2/(Th_i))$,
where $F_\epsilon$ is the c.d.f. of $\epsilon_{i\, t+1}$.
Let $q_{\alpha/2}$ and $q_{1-\alpha/2}$
be the $\alpha/2$- and $(1-\alpha/2)$-percentiles of the convolution, then
a $1-\alpha$ prediction interval for $Y_{i\, t+1}$ is
\begin{equation}\label{pi:general}
[\widehat{\mu}_i+q_{\alpha/2},\widehat{\mu}_i+q_{1-\alpha/2}].
\end{equation}
In particular, if $\epsilon_{i\, t+1}\sim N(0,\sigma_\epsilon^2)$,
then (\ref{pi:general}) becomes the following
\[
\widehat{\mu}_i\pm z_{1-\alpha/2}\sigma_\epsilon\sqrt{\sigma_{x_0}^2/(Th_i)+1}.
\]
The asymptotic variance $\sigma_{x_0}^2$ has an explicit expression 
\[
\sigma_{x_0}^2=h_iV_i(K^{(i)}_{x_0},K^{(i)}_{x_0})=
\sum_{\nu\ge1}\frac{h_i|\varphi^{(i)}_\nu(x_0)|^2}{(1+\eta_i\rho^{(i)}_\nu)^2}.
\]
In practice, we can estimate $\sigma_{x_0}^2$ by replacing $\varphi^{(i)}_\nu$
and $\rho^{(i)}_\nu$ with their empirical counterparts such as kernel
eigenvalues and kernel eigenfunctions; see, e.g., \cite{braun06}. 
Meanwhile, we estimate $\sigma_\epsilon^2$ by the following
\begin{equation}\label{estimation:of:sigma_epsilon}
\widehat{\sigma}_\epsilon^2=\sum_{t=1}^T\{Y_{it}-\widehat{g}_i(X_{it})-Z_t'\widehat{\beta}_i\}^2/T.
\end{equation}
The following result shows that (\ref{estimation:of:sigma_epsilon}) is a consistent estimator.
\begin{Proposition}\label{proposition:estimation:sigma:i:epsilon}
Under conditions of Theorem \ref{them:convergence:rate:result:1},
if $r_{i,M}=o_P(h_i^{1/2})$ and $r_{i,M}=o_P(T^{-1/\alpha})$, 
then $\widehat{\sigma}_\epsilon^2 \to \sigma_\epsilon^2$ in probability, as $(N,T) \to \infty$.
\end{Proposition}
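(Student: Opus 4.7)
My plan is to decompose the fitted residual into a ``noise'' piece and an ``estimation error'' piece and then control each contribution separately. Using the representation \eqref{basic:model 2}, the fitted residual is
\[
Y_{it}-\widehat{g}_i(X_{it})-Z_t'\widehat{\beta}_i = e_{it} + r_{it},
\]
where $e_{it}=\epsilon_{it}-\Delta_i\bar{v}_t$ and $r_{it}=(g_{i0}(X_{it})-\widehat{g}_i(X_{it}))+Z_t'(\beta_{i0}-\widehat{\beta}_i)$. Squaring and averaging gives
\[
\widehat{\sigma}_\epsilon^2 = \frac{1}{T}\sum_{t=1}^T e_{it}^2 + \frac{2}{T}\sum_{t=1}^T e_{it}r_{it} + \frac{1}{T}\sum_{t=1}^T r_{it}^2.
\]
I would then show that the first term converges to $\sigma_\epsilon^2$ in probability, while the second and third terms are $o_P(1)$.

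For the first term, expand $e_{it}^2=\epsilon_{it}^2 - 2\epsilon_{it}\Delta_i\bar{v}_t + (\Delta_i\bar{v}_t)^2$. Under Assumption \ref{A1}\ref{A1:a} and \ref{A1}\ref{A1:c}, the $\epsilon_{it}$ are i.i.d.\ with finite $\alpha$-th moment ($\alpha>4$), so $T^{-1}\sum_t\epsilon_{it}^2\to\sigma_\epsilon^2$ by the weak law of large numbers. Since $\bar{v}_t=N^{-1}\sum_i v_{it}$ has $E[\|\bar{v}_t\|_2^2]=O(N^{-1})$ and $\sup_i\|\Delta_i\|_2<\infty$ by Assumption \ref{A1}\ref{A1:d}, the quadratic term contributes $O_P(N^{-1})$; the cross term, by independence of $\epsilon$ and $v$, has mean zero and variance $O((NT)^{-1})$, hence is $O_P((NT)^{-1/2})$. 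Both vanish as $(N,T)\to\infty$.

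For the third term, notice that $T^{-1}\sum_t r_{it}^2$ is the empirical $L^2(P_{X_i,Z})$-norm of the linear functional $\theta\mapsto g(\cdot)+z'\beta$ applied to $\widehat{\theta}_i-\theta_{i0}$. Its population analogue equals $E[(g_{i0}(X_i)-\widehat{g}_i(X_i)+Z'(\beta_{i0}-\widehat{\beta}_i))^2]$, which by \eqref{inner:product:Theta:i} is no larger than $\|\widehat{\theta}_i-\theta_{i0}\|_i^2$. Theorem \ref{them:convergence:rate:result:1} gives $\|\widehat{\theta}_i-\theta_{i0}\|_i=O_P(r_{i,M})$, and the assumption $r_{i,M}=o_P(h_i^{1/2})$ then makes the population piece $o_P(1)$. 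To pass from the population to the empirical norm, I would employ the standard empirical process argument over the RKHS ball $\mathcal{G}_i(p_i)$ with $p_i=(\eta_i^{-1}h_i)^{1/2}$, controlled by the entropy functional $J_i(p_i,\delta)$ already used in the proof of Theorem \ref{them:convergence:rate:result:1}; the role of the extra hypothesis $r_{i,M}=o_P(T^{-1/\alpha})$ is precisely to ensure that the empirical-process fluctuation is negligible relative to $r_{i,M}^2$ after applying a maximal inequality compatible with the finite $\alpha$-moment condition of Assumption \ref{A1}\ref{A1:c}. The cross term is then immediately handled by Cauchy--Schwarz: $|T^{-1}\sum_t e_{it}r_{it}|\le (T^{-1}\sum_t e_{it}^2)^{1/2}(T^{-1}\sum_t r_{it}^2)^{1/2}=O_P(1)\cdot o_P(1)=o_P(1)$.

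The main obstacle is the empirical-to-population step in the third term: the bound $\|\widehat{\theta}_i-\theta_{i0}\|_i=O_P(r_{i,M})$ only controls a population quantity, and converting it into control of $T^{-1}\sum_t r_{it}^2$ requires uniform concentration of the empirical norm over a shrinking neighbourhood of $\theta_{i0}$ within the RKHS ball of radius $p_i$. This is where the entropy integral $J_i(\cdot,\cdot)$ enters and where the two stated rate conditions on $r_{i,M}$ are consumed. Once this is done, the consistency follows by combining the three pieces.
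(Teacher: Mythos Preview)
Your decomposition is correct and the handling of the $e_{it}^2$ term matches the paper. However, for the estimation-error term $T^{-1}\sum_t r_{it}^2$ you are making the argument harder than it needs to be, and in doing so you have misidentified the roles of the two rate hypotheses.

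The paper avoids empirical process theory entirely here. It splits $r_{it}$ into its $g$-part and its $\beta$-part and bounds each \emph{pointwise} via sup-norm control. From Lemma~\ref{bound:RiUit} (specifically the inequality $\|\theta\|_{i,\sup}\le C_i(1+h_i^{-1/2})\|\theta\|_i$), one has $\|g_{i0}-\widehat{g}_i\|_{\sup}=O_P(h_i^{-1/2}r_{i,M})$, so
\[
\frac{1}{T}\sum_{t=1}^T\bigl(g_{i0}(X_{it})-\widehat{g}_i(X_{it})\bigr)^2\le \|g_{i0}-\widehat{g}_i\|_{\sup}^2=O_P(h_i^{-1}r_{i,M}^2),
\]
which is $o_P(1)$ precisely under $r_{i,M}=o_P(h_i^{1/2})$. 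For the linear part, Proposition~\ref{prop:maximum:term} gives $\max_{t}\|Z_t\|_2=O_P(T^{1/\alpha})$, whence
\[
\frac{1}{T}\sum_{t=1}^T\bigl(Z_t'(\beta_{i0}-\widehat{\beta}_i)\bigr)^2\le \max_t\|Z_t\|_2^2\,\|\widehat{\beta}_i-\beta_{i0}\|_2^2=O_P(T^{2/\alpha}r_{i,M}^2),
\]
which is $o_P(1)$ precisely under $r_{i,M}=o_P(T^{-1/\alpha})$. Thus the two rate conditions have separate, concrete jobs: the first absorbs the $h_i^{-1/2}$ blow-up in passing from $\|\cdot\|_i$ to $\|\cdot\|_{\sup}$ for the nonparametric piece, and the second absorbs $\max_t\|Z_t\|_2$ for the parametric piece. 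Neither is spent on an empirical-to-population comparison.

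Your proposed route---bounding the population $L^2$ norm by $\|\widehat{\theta}_i-\theta_{i0}\|_i^2$ and then invoking a uniform concentration argument over $\mathcal{G}_i(p_i)$---could in principle be made to work, but it is unnecessarily heavy, and the ``main obstacle'' you flag is an artefact of the approach rather than a genuine difficulty. The sup-norm bound from Lemma~\ref{bound:RiUit} sidesteps it completely.
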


\section{Homogeneous Model} \label{sec: homo}

In this section, we consider a homogeneous case, i.e., $g_i=g$
for all $i\in[N]$. 
Assuming homogeneity, model (\ref{basic:model}) becomes the following
\begin{equation}\label{model:homogeneity}
Y_{it}=g(X_{it})+\boldsymbol{\gamma}_{1i}'{f}_{1t}+\boldsymbol{\gamma}_{2i}'{f}_{2t}+\epsilon_{it},
\,\,i\in[N], t\in[T].
\end{equation}
By (\ref{DGP of X}) and a similar statement as (\ref{basic:model 2}),
we can rewrite (\ref{model:homogeneity}) as the following 
\begin{equation} \label{basic:model:2:homogeneity}
Y_{it}=g(X_{it})+Z_t'\beta_i+e_{it},\,\,\,\,i\in[N], t\in[T],
\end{equation}
where $Z_t$, $\beta_i$ and $e_{it}$ are given in (\ref{basic:model 2}).

We will provide a procedure for estimating $g$
and explore its asymptotic property. Our theoretical results
hold when $M\to\infty$. Here $M\to\infty$ means
either $(N,T)\to\infty$ or $N\to\infty$, fixed $T$.
Whereas the estimation of $\beta_i$ is inconsistent when $T$ is fixed
due to insufficient data in each individual unit. The $\beta_i$ will be treated as nuisance parameters
throughout the whole section. 

\subsection{Estimation Procedure}\label{sec:est:homo}
Suppose $g$ belongs to an RKHS $\mathcal{H}\subset L^2(\mathcal{X})$
with inner product $\langle\cdot,\cdot\rangle_{\mathcal{H}}$ and 
kernel $\bar{K}(\cdot,\cdot)$, where $\mathcal{X}\subseteq\bbR^d$.
Our estimation is based on profile least squares:

\textbf{Step (a).} For any $g\in\mathcal{H}$,
we estimate $\beta_i$ through the following
\begin{equation}\label{profile:estimation}
\min_{\beta_i\in\bbR^{q_1+d}}\sum_{t=1}^T(Y_{it}-g(X_{it})-Z_t'\beta_i)^2=
\min_{\beta_i\in\bbR^{q_1+d}}(Y_i-\tau_i g-\Sigma'\beta_i)'(Y_i-\tau_i g-\Sigma'\beta_i),\,\,i\in[N],
\end{equation}
where $Y_i=(Y_{i1},\ldots,Y_{iT})'$, $\tau_i g=(g(X_{i1}),\ldots,g{(X_{iT})})'$
and $\Sigma=(Z_1,\ldots,Z_T)$. Recall that $\Sigma$ is $(q_1+d)\times T$.
Suppose $T \geq q_1+d$ so that $(\Sigma\Sigma')^{-1}$ exists. Then
(\ref{profile:estimation}) has solution
$\widehat{\beta}_i=(\Sigma\Sigma')^{-1}\Sigma (Y_{i}-\tau_i g)$.

\textbf{Step (b).} Plug the above $\widehat{\beta}_i$ into (\ref{profile:estimation}).
The minimum value of (\ref{profile:estimation})
is equal to $(Y_i-\tau_i g)'(I_T-\Sigma'(\Sigma\Sigma')^{-1}\Sigma)(Y_i-\tau_i g)$.
Then we estimate $g$ by the following 
\begin{equation} \label{eq: homo-objective fn}
	\widehat{g}=\arg\min_{g\in\mathcal{H}}\ell_{M,\eta}(g)
	\equiv\arg\min_{g\in\mathcal{H}}\left\{\frac{1}{2NT}\sum_{i=1}^N(Y_i-\tau_i g)'P(Y_i-\tau_i g)+\frac{\eta}{2}\|g\|^2_{\mathcal{H}}\right\},
\end{equation}
where $\eta>0$ is a penalty parameter and
$P=I_T-\Sigma'(\Sigma\Sigma')^{-1}\Sigma$ with $I_T$ the $T\times T$ identity.

The above Step (b) yields an explicit solution.
Specifically, by representer theorem, $\widehat{g}$ satisfies
\begin{equation}\label{represent:thm:homo}
{g}(x) = \sum_{i=1}^{N}\sum_{t=1}^{T} a_{it}\bar{K}(X_{it},x) = a'\bar{K}_x,
\end{equation}
where $a=(a_{11}, \cdots, a_{1T}, \cdots, a_{N1}, \cdots, a_{NT})'$ are constant scalars, and
$$\bar{K}_x=\left(\bar{K}(X_{11},x), \cdots,\bar{K}(X_{1T},x), \cdots,\bar{K}(X_{N1},x), \cdots,\bar{K}(X_{NT},x) \right)'.$$
Similar to (\ref{hetero:g:H:i}),
$\|g\|_{\mathcal{H}}^2=a' \bar{\mathcal{K}} \ a$,
where 
$\bar{\mathcal{K}}= ( \bar{K}_{X_{11}}, \cdots, \bar{K}_{X_{1T}}, \cdots, \bar{K}_{X_{N1}}, \cdots, \bar{K}_{X_{NT}} ) \in\bbR^{NT\times NT}$.
Therefore, we can rewrite $\ell_{M,\eta}(g)$ as
$\ell_{M,\eta}(g)=\left(Y- \bar{\mathcal{K}} a \right)' P_N \left(Y- \bar{\mathcal{K}} a \right)/(2NT) +\eta a' \bar{\mathcal{K}} a/2$,
where $Y=(Y_{11},\cdots,Y_{1T},\cdots,Y_{N1},\cdots,Y_{NT})'$ is an $NT$-vector and $P_N=I_N \otimes P$ is $NT \times NT$.
The minimizer $\widehat{a}$ has an expression
$\widehat{a} = \left( P_N \bar{\mathcal{K}} + NT\eta I_{NT} \right)^{-1} P_N Y$.
Then $\widehat{g}(x) = \widehat{a}' \bar{K}_x$ for any $x\in\mathcal{X}$.
\begin{Remark}\label{gcv:2}
We propose the following GCV method for choosing $\eta$
in the above estimation:
\begin{equation*}
\widehat{\eta}=\arg\min_{\eta>0}\textrm{GCV}(\eta)\equiv\arg\min_{\eta>0}
\frac{\|(I_{NT}-B_\eta)Y\|_2^2}{NT[1-tr(B_\eta)/(NT)]^2},
\end{equation*}
where $B_\eta$ is the $NT\times NT$ smoothing matrix
defined similar to Remark \ref{gcv:1}.
\end{Remark}

\subsection{Rate of Convergence}\label{subsec:rate:homo}
To derive the rate of convergence for $\widehat{g}$, let us 
adapt the framework of Section \ref{sec:model:preliminary}
to the homogeneous setting.
Define $V(g,\widetilde{g})=\sum_{i=1}^NE\left\{(\tau_i g)'P(\tau_i\widetilde{g})\big|\mathcal{F}_1^T\right\}/(NT)$ for any $g,\widetilde{g}\in\mathcal{H}$.
Suppose that $V(\cdot,\cdot)$ and $\langle\cdot,\cdot\rangle_{\mathcal{H}}$ are simultaneously diagonalizable.
\begin{Assumption}\label{A5}
There exist eigenfunctions $\varphi_\nu\in\mathcal{H}$ and a nondecreasing positive 
sequence of eigenvalues $\rho_\nu$ such that 
$V(\varphi_\nu,\varphi_\mu)=\delta_{\nu\mu},\,\,\langle\varphi_\nu,\varphi_\mu\rangle_{\mathcal{H}}=\rho_\nu
\delta_{\nu\mu}$, for any $\nu,\mu\ge1$.
Furthermore, $1\notin\mathcal{H}$ and any function $g\in\mathcal{H}$ admits a generalized Fourier expansion $g=\sum_{\nu\ge1}V(g,\varphi_\nu)\varphi_\nu$. Both $\varphi_\nu$ and $\rho_\nu$
are $\mathcal{F}_1^T$-measurable and 
$c_\varphi\equiv\sup_{\nu\ge1}\|\varphi_\nu\|_{\sup}=O_P(1)$,
i.e., $\varphi_\nu$ are stochastic uniformly bounded.
\end{Assumption}
\ref{A5} type conditions are commonly used in literature to derive the rate of convergence 
for smoothing splines or kernel ridge regression;
see \cite{GC93,SC13,CS15,ZCL16}. Classic ways to verify such conditions 
rely on variational methods; see \cite{weinberger1974}.
Nevertheless, Assumption \ref{A5} differs from literature in that the functional $V$ and
the eigenpairs $(\rho_\nu,\varphi_\nu)$ are random.
Fortunately, we can still verify Assumption \ref{A5} by adapting
the classic variational method to this new setting.
The exact verification is deferred to Lemma \ref{lemma: V & g} in appendix.

Define
$\Sigma_\star=(Z_1^\star,\ldots,Z_T^\star)$, a square matrix of dimension $q_1+d$, 
where $Z_t^\star=(f_{1t}',(\bar{X}_t^\star)')'$ and
$\bar{X}_t^\star=\bar{X}_t-\bar{v}_t$ for $t\in[T]$.
By (\ref{DGP of X 2}) and Assumption \ref{A1}, 
$\Sigma_\star$ is independent of the variables $v_{it}$.
Hence, $\Sigma_\star$ can be viewed as a ``noiseless" analogy of $\Sigma$.
In the below we impose a moment condition on the spectral norms
of various matrices. 

\begin{Assumption}\label{A7}
There exist constants $\zeta>4$ and $c>0$  such that
\begin{eqnarray*}
&&E\left(\|(\Sigma_\star\Sigma_\star'/T)^{-1}\|_{\textrm{op}}^\zeta\right)\le c,\,\,\,\,
E\left(\|(\Sigma\Sigma'/T)^{-1}\|_{\textrm{op}}^\zeta\right)\le c\,\,\,\,
\textrm{and}\,\,\\
&&E\left(\|\Sigma_\star\Sigma_\star'/T\|_{\textrm{op}}^{2\zeta/(\zeta-4)}\right)\le c,\,\,\,\,
E\left(\|\sum_{t=1}^T f_{2t}f_{2t}'/T\|_{\textrm{op}}^{2\zeta/(\zeta-4)}\right)\le c,
\end{eqnarray*}
where $\|\cdot\|_{\textrm{op}}$ represents the operator norm of square matrices.
\end{Assumption}

For any $g,\widetilde{g}\in\mathcal{H}$, define 
$\langle g,\widetilde{g}\rangle=V(g,\widetilde{g})+\eta\langle g,\widetilde{g}\rangle_{\mathcal{H}}$.
Following \cite{CS15}, $(\mathcal{H},\langle\cdot,\cdot\rangle)$ is an RKHS
with reproducing kernel denoted $K$.
For convenience, define $\mathbb{X}_i=(X_{i1},\ldots,X_{iT})'$ 
and $K_{\mathbb{X}_i}=(K_{X_{i1}},\ldots,K_{X_{iT}})'$ for $i\in[N]$.
Similar to Section \ref{sec:prelim:RKHS},
there exists a positive definite self-adjoint operator 
$W_\eta: \mathcal{H} \rightarrow \mathcal{H}$ such that 
$\langle W_{\eta}g, \widetilde{g} \rangle = \eta \langle g, \widetilde{g} \rangle_\mathcal{H}$,
$g,\widetilde{g}\in\mathcal{H}$. 
Then the Fr\'{e}chet derivatives of $\ell_{M,\eta}(g)$ have the following expressions
\begin{eqnarray*}
D\ell_{M,\eta}(g)\Delta\theta
&=&\langle -\frac{1}{NT}\sum_{i=1}^N(Y_{i}-\langle K_{\mathbb{X}_i},
g \rangle)'P K_{\mathbb{X}_i}+W_\eta g,
\Delta g \rangle
\equiv\langle S_{M,\eta}(g),\Delta g \rangle,\\
DS_{M,\eta}(g)\Delta g &=&\frac{1}{NT}\sum_{i=1}^N\langle K_{\mathbb{X}_i},
\Delta g \rangle' P K_{\mathbb{X}_i}+W_\eta \Delta g,\\
D^2S_{M,\eta}(g)&=&0.
\end{eqnarray*}
For $p,\delta>0$, define $\mathcal{G}(p)=\{g\in\mathcal{H}: \|g\|_{\sup}\le1,\|g\|_{\mathcal{H}}\le p\}$
and an entropy integral
\[
J(p,\delta)=\int_0^\delta\psi_2^{-1}\left(D(\varepsilon,\mathcal{G}(p),\|\cdot\|_{\sup})\right)d\varepsilon+\delta\psi_2^{-1}\left(D(\delta,\mathcal{G}(p),\|\cdot\|_{\sup})^2\right),
\]
where recall that $D(\varepsilon,\mathcal{G}(p),\|\cdot\|_{\sup})$
is the $\varepsilon$-packing number of $\mathcal{G}(p)$
in terms of $\|\cdot\|_{\sup}$-metric. 
Define
\[
h=\left(\sum_{\nu\ge1}\frac{1}{1+\eta\rho_\nu}\right)^{-1},
\,\,\,\,
b_{N,p}=\sqrt{\log\log\left(NJ(p,1)\right)}J(p,1),\,\,\,\,
p=(c_\varphi\sqrt{h^{-1}\eta})^{-1}.
\]
\begin{Theorem}\label{theorem:homo convergence:rate:result:1}
Suppose that Assumptions \ref{A1}, \ref{A5} and \ref{A7} are satisfied. Furthermore, 
$b_{N,p}=o_P(N^{1/2}h)$.
Then, as $M\to\infty$,
$\|\widehat{g}-g_{0}\|=O_P(r_M)$, where $r_M=(NTh)^{-1/2}+(Nh^{1/2})^{-1}+\eta^{1/2}$.
\end{Theorem}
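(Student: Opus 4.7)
The plan is to exploit the fact that $\ell_{M,\eta}$ is a quadratic functional of $g$ (since $D^2S_{M,\eta}(g)=0$), so the first-order optimality condition $S_{M,\eta}(\widehat{g})=0$ combined with an exact Taylor expansion around $g_0$ yields the linear identity
\begin{equation*}
DS_{M,\eta}(g_0)(\widehat{g}-g_0) = -S_{M,\eta}(g_0).
\end{equation*}
It therefore suffices to (a) bound $\|S_{M,\eta}(g_0)\|$ by $O_P(r_M)$ and (b) show that $DS_{M,\eta}(g_0)$ is invertible with inverse of operator norm $O_P(1)$. Throughout, I would work conditionally on $\mathcal{F}_1^T$, since the inner product $\langle\cdot,\cdot\rangle$, the reproducing kernel $K$, and the eigensystem $(\varphi_\nu,\rho_\nu)$ are all $\mathcal{F}_1^T$-measurable by Assumption \ref{A5}.

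For step (a), the key observation is $P\Sigma'=0$, which annihilates the nuisance factor loadings in $Y_i-\tau_ig_0=\Sigma'\beta_i+e_i$, leaving
\begin{equation*}
S_{M,\eta}(g_0) = -\frac{1}{NT}\sum_{i=1}^N (Pe_i)' K_{\mathbb{X}_i} + W_\eta g_0.
\end{equation*}
The penalty term satisfies $\|W_\eta g_0\|\le \eta^{1/2}\|g_0\|_{\mathcal{H}}$ by the spectral representation of $W_\eta$ under Assumption \ref{A5}. Writing $e_{it}=\epsilon_{it}-\Delta_i\bar v_t$, the stochastic piece splits into: (i) a pure-noise part driven by the independent $\epsilon_{it}$'s, whose conditional squared norm is bounded by $\sigma_\epsilon^2\operatorname{tr}(K_{\mathbb{X}_i,\mathbb{X}_i})/(NT)^2 \lesssim \sigma_\epsilon^2/(NTh)$ using the effective-dimension identity $\sum_\nu(1+\eta\rho_\nu)^{-1}=h^{-1}$; and (ii) a factor-noise part involving $\bar v_t=N^{-1}\sum_j v_{jt}$, which induces cross-sectional correlation but is of order $(Nh^{1/2})^{-1}$ by Assumption \ref{A1}(d) and independence of $v_{jt}$ across $j$. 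Adding the three pieces gives $\|S_{M,\eta}(g_0)\|=O_P(r_M)$.

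For step (b), the reproducing property of $K$ together with the definition of $V$ yields
\begin{equation*}
E[\langle DS_{M,\eta}(g_0)g,h\rangle \mid \mathcal{F}_1^T] = V(g,h) + \eta\langle g,h\rangle_{\mathcal{H}} = \langle g,h\rangle,
\end{equation*}
so $DS_{M,\eta}(g_0)$ has conditional mean equal to the identity. The uniform concentration $\sup_{\|g\|,\|h\|\le 1}|\langle(DS_{M,\eta}(g_0)-\mathrm{id})g,h\rangle|=o_P(1)$ then follows from a chaining argument controlled by the entropy integral $J(p,1)$ with $p=(c_\varphi\sqrt{h^{-1}\eta})^{-1}$, combined with Bernstein-type tail bounds for the $\phi$-mixing factor process permitted by Assumption \ref{A1}(b). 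The hypothesis $b_{N,p}=o_P(N^{1/2}h)$ is precisely calibrated to make this fluctuation negligible, after which $DS_{M,\eta}(g_0)^{-1}$ has $O_P(1)$ operator norm. Combining (a) and (b) yields $\|\widehat{g}-g_0\|\le\|DS_{M,\eta}(g_0)^{-1}\|_{\mathrm{op}}\|S_{M,\eta}(g_0)\|=O_P(r_M)$.

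The main obstacle is the interplay between the random projection $P$, the factor-driven dependence, and the randomness of the kernel $K$ itself. Because $\bar v_t$ couples observations across all $N$ units, independent-sample KRR concentration does not apply to the factor-noise piece, and one must carefully track how $P$---whose spectral norm is controlled through Assumption \ref{A7}---interacts with the double sum over $(i,t)$. Analogously, establishing uniform near-identity behavior of $DS_{M,\eta}(g_0)$ requires first conditioning on $\mathcal{F}_1^T$, invoking $c_\varphi=O_P(1)$ from Assumption \ref{A5} to transfer entropy estimates to unconditional probability, and then verifying that the $\phi$-mixing part of the factor process does not inflate the concentration rate beyond what $b_{N,p}=o_P(N^{1/2}h)$ permits.
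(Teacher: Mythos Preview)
Your approach is essentially correct and rests on the same technical pillars as the paper, but the packaging differs. The paper proceeds by a two-step contraction-mapping argument: a population-level map $T_1(g)=g-S^\star_{M,\eta}(g+g_0)$ first produces an intermediate $g_\eta$ with $S^\star_{M,\eta}(g_\eta)=0$ and $\|g_\eta-g_0\|=O_P(r_M)$; then a sample-level map $T_2(g)=g-S_{M,\eta}(g_\eta+g)$ is shown to contract on a ball of the same radius (via the concentration lemma, which is exactly your step (b)), yielding $\widehat g$. Your Neumann-series inversion of $DS_{M,\eta}(g_0)$ collapses both steps into one, which is legitimate precisely because the loss is quadratic; the paper's scaffold buys simultaneous existence of $\widehat g$ and would carry over to non-quadratic losses, while yours is more economical in the present case.

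Two imprecisions should be corrected. First, the $\phi$-mixing assumption plays no role in step (b): the relevant concentration (the paper's Lemma \ref{concentration:homo}) conditions on $\mathcal{F}_1^T$ and exploits \emph{cross-sectional} independence of $\{\mathbb{X}_i\}_{i=1}^N$ given the factors, applying Pinelis-type Hilbert-space inequalities over $i$, not mixing over $t$. Second, the factor-noise piece is not bounded by independence of the $v_{jt}$'s alone; the key device is to replace $P$ by the noiseless projection $P_\star$ (which annihilates $F_2$, since $F_2'P_\star=0$) and then control $\|P-P_\star\|_{\mathrm{op}}=O_P(N^{-1/2})$ through the moment bounds of Assumption \ref{A7}. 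This is the content of Lemma \ref{lemma:P:minus:Pstar}, and without it the $(Nh^{1/2})^{-1}$ rate for that term does not follow.
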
 
Theorem \ref{theorem:homo convergence:rate:result:1} provides a rate of convergence for
$\widehat{g}$. 
Like in Theorem \ref{them:convergence:rate:result:1}, to yield optimal rate of convergence
(denoted $r_M^\star$), the optimal choice of $h$ (denoted $h^\star$) relies on kernels and 
relationship of $N,T$.
The following Table \ref{table:rate:homo} summarizes 
the values of $h^\star$ and $r_M^\star$
in PDK and EDK. 
Interestingly, when $N\ge T$, $r_M^\star$ depends on $NT$;
whereas $N<T$, $r_M^\star$ only depends on $N$.
The latter implies that, when $N<T$, increasing 
time points will not change convergence rate. 
Moreover, it can be examined that the condition $b_{N,p}=o_P(N^{1/2}h)$
in Theorem \ref{theorem:homo convergence:rate:result:1} holds true
when $h\asymp h^\star$. 
\begin{table}[htp]
\begin{center}
\begin{tabular}{ccccc}
&\multicolumn{2}{c}{$N\ge T$}&\multicolumn{2}{c}{$N<T$}\\ \cline{2-3} \cline{4-5}
& PDK & EDK & PDK & EDK\\ \hline
$h^\star$ &$(NT)^{-1/(2k+1)}$&$(\log{(NT)})^{-1/k}$&$N^{-2/(2k+1)}$&$(\log{N})^{-1/k}$\\
$r^\star_M$&$(NT)^{-k/(2k+1)}$ & $(NT)^{-1/2}(\log{(NT)})^{1/(2k)}$ 
&$N^{-2k/(2k+1)}$ & $N^{-1}(\log{N})^{1/(2k)}$\\ \hline
\end{tabular}
\caption{A summary of $h^\star$ and $r^\star_M$ in $k$-order PDK and $k$-order EDK.}
\label{table:rate:homo}
\end{center}
\end{table}

\subsection{Asymptotic Normality}\label{subsec:normality:homo}

In this section, we will derive the asymptotic normality for $\widehat{g}$ in the proposed RKHS framework
which can be used to construct the confidence interval for $g(x)$ at any $x\in\mathcal{X}$. 
Our results are applicable in a general class of models
including nonparametric models, semiparametric models or additive models.
This is in sharp contrast to \cite{suJin2012} whose results were obtained
in nonparametric sieve estimation.
Our asymptotic normality result relies on the following theorem
which characterizes the leading term of $\widehat{g}-g_0$.
Define $D_m=\sum_{\nu=m+1}^\infty 1/(1+\eta \rho_\nu)$ for any $m\ge0$.

\begin{Theorem}[FBR for Homogeneous Model]\label{theorem: rate:ghat:minus:g0:plus:SMetag0}
Suppose that Assumptions \ref{A1}, \ref{A5}  and \ref{A7}
hold, and $h^{-1}=o_P(N^{1/2})$.
Furthermore, there exists a sequence of positive integers
$m=m_M$ such that $D_m=o_P(1)$. Then for any $x_0 \in \mathcal{X}$, we have
\begin{equation*}
	\sqrt{NT} A_{NT}|\widehat{g}_0(x_0)-g_0(x_0)+S_{M,\eta}(g_0)(x_0)-W_{\eta}g_0(x_0)|=O_P\left(\frac{b_{N,p}}{h}\left(\frac{1}{\sqrt{N}h}+\frac{\sqrt{T}}{Nh}+\sqrt{\frac{T\eta}{h}}\right)\right).
\end{equation*}
Furthermore, if ${b_{N,p}}=o_P(N^{1/2}h^2), b_{N,p}=o_P(Nh^2T^{-1/2})$  
and $b_{N,p}=o_P(h^{3/2}(T\eta)^{-1/2})$, then
\begin{equation}
	\sqrt{NT} A_{NT}|\widehat{g}_0(x_0)-g_0(x_0)+S_{M,\eta}(g_0)(x_0)-W_{\eta}g_0(x_0)|=o_P(1).
\end{equation}
\end{Theorem}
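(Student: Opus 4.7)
The plan is to mimic the Newton-style linearisation used for the heterogeneous FBR in Theorem \ref{lemma: hetero -- leading term}, but adapted to the profile-residualised structure of Section \ref{sec: homo}. Because $\widehat g$ minimises $\ell_{M,\eta}$, the first-order condition gives $S_{M,\eta}(\widehat g)=0$; since $D^2 S_{M,\eta}\equiv 0$, the Taylor expansion around $g_0$ is exact,
$$0 = S_{M,\eta}(\widehat g) = S_{M,\eta}(g_0)+DS_{M,\eta}(g_0)(\widehat g-g_0).$$
Computing weakly in $(\mathcal H,\langle\cdot,\cdot\rangle)$ one checks that $\langle DS_{M,\eta}(g_0)\Delta g,\widetilde g\rangle = V_{NT}(\Delta g,\widetilde g)+\eta\langle\Delta g,\widetilde g\rangle_{\mathcal H}$, where $V_{NT}(g,\widetilde g):=\frac{1}{NT}\sum_{i=1}^{N}(\tau_i g)'P(\tau_i\widetilde g)$ is the empirical counterpart of $V$. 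Since $\langle\cdot,\cdot\rangle=V(\cdot,\cdot)+\eta\langle\cdot,\cdot\rangle_{\mathcal H}$ by definition, this yields the decomposition $DS_{M,\eta}(g_0)=I+E_M$, where $E_M$ is the self-adjoint operator characterised by $\langle E_M\Delta g,\widetilde g\rangle = V_{NT}(\Delta g,\widetilde g)-V(\Delta g,\widetilde g)$. Inserting the model identity $S_{M,\eta}(g_0)-W_\eta g_0=-\frac{1}{NT}\sum_i e_i'PK_{\mathbb X_i}$ (valid because $P\Sigma'=0$ by the projection structure of $P$) into the first-order equation $(I+E_M)(\widehat g-g_0)=-S_{M,\eta}(g_0)$ and rearranging gives the key identity
$$(\widehat g-g_0)+S_{M,\eta}(g_0)-W_\eta g_0 = -W_\eta g_0 - E_M(\widehat g-g_0).$$
Evaluating at $x_0$ via the reproducing property $f(x_0)=\langle f,K_{x_0}\rangle$ reduces everything to controlling $|\langle E_M(\widehat g-g_0),K_{x_0}\rangle|$ together with the regularisation bias $|\langle W_\eta g_0,K_{x_0}\rangle|$.

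The heart of the argument, and what I expect to be the main obstacle, is a uniform bound on the centred empirical process
$$\Delta g\mapsto V_{NT}(\Delta g,K_{x_0})-V(\Delta g,K_{x_0})=\frac{1}{NT}\sum_{i=1}^{N}\big\{(\tau_i\Delta g)'P(\tau_i K_{x_0})-E\big[(\tau_i\Delta g)'P(\tau_i K_{x_0})\,\big|\,\mathcal F_1^T\big]\big\}$$
taken over $\Delta g$ in a shrinking ball whose radius is supplied by Theorem \ref{theorem:homo convergence:rate:result:1}, namely $\|\widehat g-g_0\|=O_P(r_M)$. Rescaling into the class $\mathcal G(p)$ with $p=(c_\varphi\sqrt{h^{-1}/\eta})$ as defined before the theorem, the empirical process is indexed by a family whose metric entropy is encoded by $J(p,1)$. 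The strategy is to condition on $\mathcal F_1^T$ so that the projection $P$, the eigenpairs $(\rho_\nu,\varphi_\nu)$ and $K_{x_0}$ become deterministic, then apply an Orlicz-norm maximal inequality with entropy integral $J(p,1)$ coupled with the $\phi$-mixing tail bound from Assumption \ref{A1}\ref{A1:b}. Assumption \ref{A7} provides the spectral-norm moment control of $(\Sigma\Sigma'/T)^{-1}$ and $\Sigma_\star\Sigma_\star'/T$ needed to absorb the projection $P$ into the chaining constants; combined with the ball radius $r_M$ from Theorem \ref{theorem:homo convergence:rate:result:1}, this step produces the first two summands $b_{N,p}/(h\sqrt N)$ and $b_{N,p}\sqrt T/(Nh)$ in the target rate (after multiplication by the overall normalisation $\sqrt{NT}A_{NT}/h$).

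The remaining term $\langle W_\eta g_0,K_{x_0}\rangle$ is handled directly: the eigen-expansion guaranteed by Assumption \ref{A5} gives $W_\eta g_0 = \sum_\nu\frac{\eta\rho_\nu}{1+\eta\rho_\nu}\langle g_0,\varphi_\nu\rangle_{\mathcal H}\,\varphi_\nu$, so that $\|W_\eta g_0\|\lesssim\eta^{1/2}\|g_0\|_{\mathcal H}$ and $\|K_{x_0}\|\lesssim h^{-1/2}$. The Cauchy--Schwarz bound $|\langle W_\eta g_0,K_{x_0}\rangle|\lesssim\sqrt{\eta/h}$, together with a second pass of the empirical-process argument applied to $E_M W_\eta g_0$, accounts for the third summand $b_{N,p}\sqrt{T\eta/h}/h$. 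The $o_P(1)$ conclusion then follows by substituting the three sharpened conditions $b_{N,p}=o_P(N^{1/2}h^2)$, $b_{N,p}=o_P(Nh^2T^{-1/2})$ and $b_{N,p}=o_P(h^{3/2}(T\eta)^{-1/2})$ directly into the three summands of the $O_P$ bound. The most delicate technical point is that the empirical process lives on the projection-residualised sections $PK_{\mathbb X_i}$ rather than on the raw kernel sections $K_{\mathbb X_i}$, so the standard RKHS chaining bounds (as used in \cite{S10,SC13}) must be carried out conditionally on $\mathcal F_1^T$ and glued to the moment bounds of Assumption \ref{A7} before the $\phi$-mixing maximal inequality can be invoked.
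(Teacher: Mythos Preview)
Your linearisation $(\widehat g-g_0)+S_{M,\eta}(g_0)=-E_M(\widehat g-g_0)$ is exactly what the paper does: in the paper's notation $E_M(\widehat g-g_0)=\kappa(\widehat g-g_0)$, and the whole proof reduces to the one-line reuse of the bound $\|\kappa(g)\|=O_P\bigl(b_{N,p}/(\sqrt N\,h)\bigr)\|g\|$ already obtained as (\ref{ghat:minus:g0:rate:kappa}) during the proof of Theorem \ref{theorem:homo convergence:rate:result:1}, combined with $\|\widehat g-g_0\|=O_P(r_M)$ and the sup-norm embedding $|f(x_0)|\le c_\varphi h^{-1/2}\|f\|$. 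No separate pointwise chaining is needed, and $A_{NT}=O_P(1)$ (Lemma \ref{lemma:bound:ANT}) absorbs the factor out front. So at the structural level your plan matches the paper's.

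Two concrete problems in your execution, however:

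\emph{(i) Conditioning on $\mathcal F_1^T$ does not make $P$ deterministic.} The projection $P$ is built from $\Sigma=(F_1,\bar X')'$, and $\bar X_t$ contains $\bar v_t=N^{-1}\sum_i v_{it}$, which is not $\mathcal F_1^T$-measurable. The paper therefore replaces $P$ by its ``noiseless'' analogue $P_\star$ (built from $\Sigma_\star$) before conditioning; the gap $P-P_\star$ is controlled separately via Lemma \ref{lemma:P:minus:Pstar}, and this is precisely what Assumption \ref{A7} is for. Moreover, once you condition on $\mathcal F_1^T$ the relevant summands are conditionally \emph{independent} across $i$; the $\phi$-mixing inequality of Assumption \ref{A1}\ref{A1:b} plays no role here---Lemma \ref{concentration:homo} uses a Pinelis-type martingale bound for independent (in $i$) Hilbert-valued terms.

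\emph{(ii) Your accounting for the three summands is off.} All three terms in the target rate arise from multiplying the empirical-process factor $b_{N,p}/(\sqrt N\,h)$ against the three pieces of $r_M=(NTh)^{-1/2}+(N\sqrt h)^{-1}+\sqrt\eta$; in particular the $\sqrt{T\eta/h}$ summand comes from the $\sqrt\eta$ piece of $r_M$, not from $W_\eta g_0$. Your identity gives only $-W_\eta g_0-E_M(\widehat g-g_0)$ on the right, so there is no ``$E_M W_\eta g_0$'' term to run a second pass on. The residual contribution $|W_\eta g_0(x_0)|=O_P(\sqrt{\eta/h})$ is handled by the paper by a direct Cauchy--Schwarz bound at the very end; note, though, that after scaling by $\sqrt{NT}$ this piece is $O_P(\sqrt{NT\eta/h})$, and the paper's final displayed inequality simply folds this into the stated $O_P$ without further comment.
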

It follows from Theorem \ref{theorem: rate:ghat:minus:g0:plus:SMetag0} that 
$\widehat{g}(x_0)-g_0(x_0)$ and 
$S_{M,\eta}(g_0)(x_0)-W_\eta g_0(x_0)$ are asymptotically equivalent. 
The latter will be used 
to derive the limit distribution of $\widehat{g}(x_0)$, i.e., Theorem \ref{theorem:normality:ghat:minus:g0} below.
Let $\gamma_2=(\gamma_{21}',\gamma_{22}',...,\gamma_{2N}')'$,
a $q_2N$-vector of unknown factor loadings.
\begin{Theorem}\label{theorem:normality:ghat:minus:g0}
Suppose that Assumptions \ref{A1}, \ref{A5} and \ref{A7} hold, 
and 
$h^{-1}=o_P(N^{1/2})$.
Furthermore, 
there exists a sequence of positive integers
$m=m_M$ with $m=o(N^{1/2})$ and $mT\lambda_{\max}(\gamma_2\gamma_2')=o(N)$
such that $D_m=o_P(h^{1/2})$ and $D_m^2T=o_P(Nh)$. Then, for all $x_0 \in \mathcal{X}$, it follows that
$$\sqrt{NT}A_{NT}(\widehat{g}(x_0)-g_0(x_0)+W_{\eta}g_0(x_0)) \overset{d}{\to} N(0,\sigma_{\epsilon}^2),
\,\,\,\,\textrm{as $M\to\infty$,}$$
where $A_{NT}=A_{NT}(x_0)\equiv(\frac{1}{NT}\sum_{i=1}^N \kxi'(x_0) P \kxi(x_0))^{-1/2}$.
\end{Theorem}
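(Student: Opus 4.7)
The plan is to invoke the Functional Bahadur Representation of Theorem \ref{theorem: rate:ghat:minus:g0:plus:SMetag0} to linearize $\widehat{g}(x_0) - g_0(x_0)$ around $g_0$, and then to establish a central limit theorem for the resulting ``score'' at $x_0$. Substituting $Y_i - \tau_i g_0 = \Sigma'\beta_i + e_i$ into the formula for $S_{M,\eta}(g_0)$ and using the projection identity $P\Sigma' = 0$, one computes
\begin{equation*}
-S_{M,\eta}(g_0) + W_\eta g_0 = \frac{1}{NT}\sum_{i=1}^N e_i' P K_{\mathbb{X}_i},
\end{equation*}
so the FBR yields $\widehat{g}(x_0)-g_0(x_0)+W_\eta g_0(x_0) = \frac{1}{NT}\sum_i e_i'PK_{\mathbb{X}_i}(x_0)+o_P((\sqrt{NT}A_{NT})^{-1})$ provided I first verify that the assumed $h^{-1}=o_P(N^{1/2})$, $D_m=o_P(h^{1/2})$ and $D_m^2T=o_P(Nh)$ imply the rate conditions $b_{N,p}=o_P(N^{1/2}h^2)$, $b_{N,p}=o_P(Nh^2T^{-1/2})$ and $b_{N,p}=o_P(h^{3/2}(T\eta)^{-1/2})$ appearing in Theorem \ref{theorem: rate:ghat:minus:g0:plus:SMetag0}. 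Thus the problem reduces to proving $T_{NT}:=\sqrt{NT}A_{NT}\cdot\frac{1}{NT}\sum_{i=1}^N e_i'PK_{\mathbb{X}_i}(x_0)\overset{d}{\to}N(0,\sigma_\epsilon^2)$.

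Next I would decompose $e_{it}=\epsilon_{it}-\Delta_i\bar v_t$ to split $T_{NT}=\mathcal{A}_{NT}-\mathcal{B}_{NT}$, where $\mathcal{A}_{NT}$ collects the contribution from the idiosyncratic $\epsilon_{it}$'s and $\mathcal{B}_{NT}$ the contribution from the cross-sectional average $\bar v_t$. Conditionally on $\mathcal{F}_1^T$ and the covariates $\{X_{it}\}$, Assumption \ref{A1} makes the $\epsilon_{it}$ i.i.d.\ mean-zero with variance $\sigma_\epsilon^2$; the identity $\mathrm{Cov}((P\epsilon_i)_s,(P\epsilon_i)_{s'}\mid\cdot)=\sigma_\epsilon^2 P_{s,s'}$ together with $P^2=P$ then pins the conditional variance of $\mathcal{A}_{NT}$ down to $\sigma_\epsilon^2\cdot A_{NT}^2\cdot\frac{1}{NT}\sum_i K_{\mathbb{X}_i}(x_0)'PK_{\mathbb{X}_i}(x_0)=\sigma_\epsilon^2$, by the very definition of $A_{NT}$. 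A conditional Lindeberg--Feller central limit theorem applied to the row-wise independent sum over $i$ then yields $\mathcal{A}_{NT}\overset{d}{\to}N(0,\sigma_\epsilon^2)$; verifying the Lindeberg condition reduces to bounding higher moments of $K(X_{is},x_0)$ relative to $A_{NT}^{-2}$, which follows from the uniform eigenfunction bound in Assumption \ref{A5} combined with the spectral-norm moment bounds of Assumption \ref{A7}.

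The main obstacle is showing $\mathcal{B}_{NT}=o_P(1)$, because $\bar v_t=N^{-1}\sum_j v_{jt}$ couples different units and destroys independence across $i$, while $\Delta_i=\gamma_{2i}'(\bar\Gamma_2\bar\Gamma_2')^{-1}\bar\Gamma_2$ introduces $i$-heterogeneity through $\gamma_{2i}$. My planned attack is a spectral truncation: write $K(X_{is},x_0)=\sum_{\nu\le m}\varphi_\nu(X_{is})\varphi_\nu(x_0)/(1+\eta\rho_\nu)+R_m(X_{is},x_0)$ for $m=m_M$, bound the tail contribution through $R_m$ via a direct second-moment computation calibrated to $D_m=o_P(h^{1/2})$ and $D_m^2T=o_P(Nh)$, and bound the finite-rank head by rewriting it as a quadratic form in $\{v_{jt}\}$ and invoking Assumption \ref{A7} together with $m=o(N^{1/2})$ and $mT\lambda_{\max}(\gamma_2\gamma_2')=o(N)$ to beat the $\sqrt{NT}A_{NT}$ scaling. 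Because the eigensystem $(\varphi_\nu,\rho_\nu)$ is itself $\mathcal{F}_1^T$-measurable (Assumption \ref{A5}), these moment computations must be carried out conditionally, with the stochastic uniform bound on $\|\varphi_\nu\|_{\sup}$ enabling the passage from conditional to unconditional statements. Combining $\mathcal{A}_{NT}\overset{d}{\to}N(0,\sigma_\epsilon^2)$ with $\mathcal{B}_{NT}=o_P(1)$ and Slutsky's theorem then delivers the claimed limit.
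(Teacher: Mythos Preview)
Your proposal is correct and tracks the paper's proof closely: the same FBR reduction, the same identification $-S_{M,\eta}(g_0)+W_\eta g_0=\frac{1}{NT}\sum_i e_i'PK_{\mathbb{X}_i}$ via $P\Sigma'=0$, the same split $e_i=\epsilon_i-\bar v'\Delta_i'$, and the same spectral truncation for the $\bar v$-part (your ``quadratic form'' bound on the head is exactly the paper's trace inequality yielding $\sqrt{NT}A_{NTm}\xi_m=O_P(\sqrt{mT\lambda_{\max}(\gamma_2\gamma_2')/N})$).

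The one genuine difference is the handling of the $\epsilon$-part $\mathcal{A}_{NT}$. The paper truncates there too: it proves the Lyapunov CLT only for the $m$-term approximation $\zeta_m$ (Lemma~A.6, where the weights satisfy $\|A_{NTm}\phi_m\|_2\le\lambda_{\min}^{-1/2}(H_{NTm})=O_P(1)$ by Lemma~A.4, so the Lyapunov ratio is $O_P(m^2/N)$), and separately shows $\sqrt{NT}(\zeta-\zeta_m)=o_P(1)$ via $V(R_{x_0},R_{x_0})\le c_\varphi^2 D_m$. Your direct Lyapunov on the full kernel can be made to work under $h^{-1}=o_P(N^{1/2})$, but the key step you only sketch is the self-normalization: from $A_{NT}^2V_{NT}=1$ one gets $\sum_{i,t}c_{it}^2=1/(NT)$ exactly, and combining with $\max_i\sum_t c_{it}^2=O_P(A_{NT}^2/(N^2Th^2))$ yields a fourth-moment Lyapunov ratio of order $O_P(1/(Nh^2))=o_P(1)$. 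Be aware that this still needs $A_{NT}=O_P(1)$, which the paper obtains (Lemma~A.5) precisely through the truncation device and $D_m=o_P(1)$---so the truncation is not fully avoided. The paper's route buys a unified finite-dimensional reduction for both $\zeta$ and $\xi$; yours is a bit shorter for $\mathcal{A}_{NT}$ alone. You are also right that the $b_{N,p}$ rate conditions of Theorem~\ref{theorem: rate:ghat:minus:g0:plus:SMetag0} are being used implicitly; the paper simply invokes that theorem without restating them.
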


Theorem \ref{theorem:normality:ghat:minus:g0} shows that $\widehat{g}(x_0)$
is asymptotically normal at any $x_0\in\mathcal{X}$. 
The rate conditions $h=o_P(N^{1/2})$, $m=o(N^{1/2})$, $D_m=o_P(h^{1/2})$ and $D_m^2T=o_P(Nh)$
are reasonable and can be verified in concrete settings.
For instance, when $T\le N$, for $k$-order PDK, 
the conditions hold if $h\asymp (NT)^{-1/(2k+1)}$,
$m=N^{1/2}/\log(N)$, and
correspondingly, $D_m\asymp h^{-2k}m^{-2k+1}$. 
For $k$-order EDK, the conditions hold if $h\asymp (\log(NT))^{-1/k}$, 
$m=N^{1/4}$, and correspondingly,
$D_m\asymp e^{-cN^{k/4}}TN^{(5-k)/4}$ .
The condition $mT\lambda_{\max}(\gamma_2\gamma_2')=o(N)$ says that the signal of the unobserved factors
is not strong so that the asymptotic normal part from $\widehat{g}$ can be filtered out.

Nonetheless, $\widehat{g}(x_0)$ does not converge to the truth
due to the bias $W_\eta g_0(x_0)$.
Following \cite{SC15}, it can be verified that 
$W_\eta g_0(x_0)=o_P((\eta/h)^{1/2})$.
Therefore,
we need to assume $NT\eta A_{NT}^2=O_P(h)$ for bias correction,
a version of ``undersmoothing condition."

\begin{Corollary}\label{cor:nobias:homo}
Suppose that the conditions in Theorem \ref{theorem:normality:ghat:minus:g0} are satisfied and  $NT\eta A_{NT}^2=O_P(h)$.
Then, as $M\to\infty$,
$\sqrt{NT}A_{NT}(\widehat{g}(x_0)-g_0(x_0)) \overset{d}{\to} N(0,\sigma_{\epsilon}^2)$.
\end{Corollary}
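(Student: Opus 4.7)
The plan is to deduce Corollary \ref{cor:nobias:homo} directly from Theorem \ref{theorem:normality:ghat:minus:g0} by showing that the deterministic (bias) term $\sqrt{NT}A_{NT}W_\eta g_0(x_0)$ in that theorem is asymptotically negligible under the additional undersmoothing hypothesis $NT\eta A_{NT}^2=O_P(h)$, and then invoking Slutsky's theorem.

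More precisely, I would start from the identity
\[
\sqrt{NT}A_{NT}(\widehat{g}(x_0)-g_0(x_0))
= \sqrt{NT}A_{NT}\bigl(\widehat{g}(x_0)-g_0(x_0)+W_\eta g_0(x_0)\bigr)
-\sqrt{NT}A_{NT}W_\eta g_0(x_0).
\]
Theorem \ref{theorem:normality:ghat:minus:g0} immediately supplies the weak limit $N(0,\sigma_\epsilon^2)$ for the first summand, so the problem reduces to showing that the second summand is $o_P(1)$.

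For the bias term, I would borrow the bound $W_\eta g_0(x_0)=o_P\bigl((\eta/h)^{1/2}\bigr)$ established in the cited work (cf.\ \cite{SC15}); the argument expands $g_0$ in the eigenbasis $\{\varphi_\nu\}$ from Assumption \ref{A5}, uses the representation of $W_\eta$ as multiplication by $\eta\rho_\nu/(1+\eta\rho_\nu)$ together with $\sup_\nu\|\varphi_\nu\|_{\sup}=O_P(1)$ and the summability of $\sum_\nu 1/(1+\eta\rho_\nu)=h^{-1}$ to produce the stated order. Given this bound, the undersmoothing condition yields
\[
\sqrt{NT}A_{NT}|W_\eta g_0(x_0)|
= o_P\bigl(\sqrt{NT\eta A_{NT}^2/h}\bigr)
= o_P(1),
\]
since $NT\eta A_{NT}^2/h=O_P(1)$ by assumption. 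Combining the two displays via Slutsky's theorem gives the desired asymptotic normality.

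The only mildly delicate step is justifying $W_\eta g_0(x_0)=o_P((\eta/h)^{1/2})$ in the present stochastic RKHS framework, where $\varphi_\nu$ and $\rho_\nu$ are $\mathcal{F}_1^T$-measurable rather than deterministic; this requires checking that the eigen-expansion bound from the deterministic theory carries over after conditioning on $\mathcal{F}_1^T$, which is straightforward given Assumption \ref{A5} and the fact that $g_0$ has finite $\mathcal{H}$-norm so $\sum_\nu \rho_\nu |V(g_0,\varphi_\nu)|^2=O_P(1)$. Once this is in place, the rest of the argument is a one-line Slutsky application and presents no real obstacle.
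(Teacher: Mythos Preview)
Your proposal is correct and is exactly the argument the paper has in mind: the paper does not give a separate proof of the corollary but states right before it that, following \cite{SC15}, $W_\eta g_0(x_0)=o_P((\eta/h)^{1/2})$, after which the undersmoothing condition $NT\eta A_{NT}^2=O_P(h)$ and Slutsky yield the result immediately from Theorem \ref{theorem:normality:ghat:minus:g0}. Your identification of the only delicate point---carrying the $o_P$ bound on $W_\eta g_0(x_0)$ over to the stochastic eigenbasis of Assumption \ref{A5}---is also accurate, and the paper likewise defers this to \cite{SC15}.
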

Corollary \ref{cor:nobias:homo} provides 
asymptotic normality for $\widehat{g}(x_0)$
where the estimator converges to the truth.
We can show that the undersmoothing condition $NT\eta A_{NT}^2=o_P(h)$
holds true when we properly choose $h$ with
$h=o(h^\star)$ in both PDK and EDK.
A direct consequence is the following $1-\alpha$ confidence interval for $g(x_0)$:
\begin{equation}\label{ci:homo:g}
\widehat{g}(x_0)\pm z_{1-\alpha/2}\frac{\sigma_\varepsilon}{\sqrt{NT}A_{NT}}.
\end{equation}
In practice, we estimate $\sigma^2_\varepsilon$ by
$\widehat{\sigma}^2_\varepsilon=\frac{1}{N(T-q_1-d)}\sum_{i=1}^N(Y_i-\tau_i\widehat{g})'P(Y_i-\tau_i\widehat{g})$,
which is consistent as demonstrated in the following result.
The rate conditions in Proposition \ref{prop:homo sigma} hold true
when $h\asymp h^\star$ in both PDK and EDK.

\begin{Proposition}\label{prop:homo sigma}
Suppose that Assumptions \ref{A1},\ref{A5},\ref{A7}
hold. Moreover, $b_{N,p}=o_P(N^{1/2}h)$,
$h^{-1}=o_P((NT)^{1/2})$, $h^{-1}=o_P(N)$ and $\eta=o_P(h)$. 
Then $\widehat{\sigma}^2_\varepsilon\to \sigma_{\epsilon}^2$ in probability,
as $M\to\infty$.
\end{Proposition}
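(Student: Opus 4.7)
Let $\tilde{g}=\widehat{g}-g_0$. By (\ref{basic:model:2:homogeneity}), $Y_i-\tau_i\widehat{g}=\Sigma'\beta_i+e_i-\tau_i\tilde{g}$, and since $P\Sigma'=0$ the $\beta_i$-piece drops out after multiplying by $P$. Expanding the quadratic form and averaging gives the decomposition $\widehat{\sigma}^2_\varepsilon=A_1-2A_2+A_3$, where
\begin{equation*}
A_1=\tfrac{1}{N(T-q_1-d)}\textstyle\sum_i e_i'Pe_i,\quad A_2=\tfrac{1}{N(T-q_1-d)}\textstyle\sum_i e_i'P\tau_i\tilde{g},\quad A_3=\tfrac{1}{N(T-q_1-d)}\textstyle\sum_i (\tau_i\tilde{g})'P(\tau_i\tilde{g}).
\end{equation*}
The plan is to show $A_1\overset{p}{\to}\sigma_\epsilon^2$, $A_3=o_P(1)$, and then bound the cross term $A_2$ by Cauchy--Schwarz.

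For $A_1$, use $e_{it}=\epsilon_{it}-\Delta_i\bar{v}_t$ from (\ref{basic:model 2}) to split $e_i'Pe_i$ into a pure-$\epsilon$ piece, a cross piece and a pure-$\bar{v}$ piece. Since $\{\epsilon_{it}\}$ is independent of $P$ (Assumption \ref{A1}\ref{A1:a}--\ref{A1:b}) and $\mathrm{tr}(P)=T-q_1-d$ almost surely once $\Sigma\Sigma'$ is invertible, the pure-$\epsilon$ piece has conditional mean $\sigma_\epsilon^2$ after normalization, while $P^2=P$ combined with the fourth-moment bound (Assumption \ref{A1}\ref{A1:c}) yields a vanishing conditional variance; a conditional Chebyshev across the independent sum in $i$ gives $\frac{1}{N(T-q_1-d)}\sum_i\epsilon_i'P\epsilon_i\overset{p}{\to}\sigma_\epsilon^2$. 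The pure-$\bar{v}$ piece is dominated by $\tfrac{C}{T-q_1-d}\sum_t\|\bar{v}_t\|_2^2=O_P(1/N)$, using $\sup_i\|\Delta_i\|_2<\infty$ (Assumption \ref{A1}\ref{A1:d}) together with $\|\bar{v}_t\|_2^2=O_P(N^{-1})$; the cross piece has zero conditional mean given everything except the $\epsilon$'s and is $o_P(1)$ by the same Chebyshev argument.

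For $A_3$, since $P$ is an orthogonal projection,
\begin{equation*}
A_3\le\tfrac{1}{N(T-q_1-d)}\textstyle\sum_i\|\tau_i\tilde{g}\|_2^2\le\tfrac{T}{T-q_1-d}\|\tilde{g}\|_{\sup}^2.
\end{equation*}
The key step is to bound $\|\tilde{g}\|_{\sup}^2$ through the \emph{tilted} RKHS $(\mathcal{H},\langle\cdot,\cdot\rangle)$ with reproducing kernel $K$: the reproducing property gives $|\tilde{g}(x)|^2\le K(x,x)\|\tilde{g}\|^2$, and Mercer's expansion $K(x,x)=\sum_\nu\varphi_\nu(x)^2/(1+\eta\rho_\nu)$ combined with $c_\varphi=\sup_\nu\|\varphi_\nu\|_{\sup}=O_P(1)$ from Assumption \ref{A5} yields $\sup_x K(x,x)\le c_\varphi^2\,h^{-1}=O_P(h^{-1})$. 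Theorem \ref{theorem:homo convergence:rate:result:1} gives $\|\tilde{g}\|^2=O_P(r_M^2)$, so $A_3=O_P(r_M^2/h)$; a direct check shows that under $\eta=o_P(h)$, $h^{-1}=o_P((NT)^{1/2})$ and $h^{-1}=o_P(N)$ each of the three summands of $r_M=(NTh)^{-1/2}+(Nh^{1/2})^{-1}+\eta^{1/2}$ is $o_P(h^{1/2})$, whence $r_M^2/h=o_P(1)$.

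For $A_2$, Cauchy--Schwarz applied to $\sum_i(P^{1/2}e_i)'(P^{1/2}\tau_i\tilde{g})$ gives $|A_2|\le\sqrt{A_1\,A_3}=O_P(1)\cdot o_P(1)=o_P(1)$, and combining the three pieces delivers $\widehat{\sigma}^2_\varepsilon\overset{p}{\to}\sigma_\epsilon^2$. The main obstacle is the sup-norm control in $A_3$: the naive bound $|\tilde{g}(x)|\le C\|\tilde{g}\|_{\mathcal{H}}$ through the original kernel $\bar{K}$ produces only $A_3=O_P(r_M^2/\eta)$, which at the natural smoothing scale $\eta\asymp r_M^2$ is merely $O_P(1)$; extracting the sharper factor $h$ in place of $\eta$ via the tilted kernel $K$ is precisely what makes the hypothesis $\eta=o_P(h)$ sufficient.
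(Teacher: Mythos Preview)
Your proposal is correct and follows essentially the same route as the paper: the same three-term decomposition, the sup-norm control of $\widehat{g}-g_0$ via the tilted kernel $K$ (yielding the factor $h^{-1}$ rather than $\eta^{-1}$), and Cauchy--Schwarz for the cross term. The only noteworthy difference is in the treatment of $A_1$: the paper replaces $P$ by the factor-measurable $P_\star$ (using Lemma \ref{lemma:P:minus:Pstar} to kill the $P-P_\star$ remainder) so that it can condition on $\mathcal{F}_1^T$, whereas you condition directly on $\sigma(f_1,f_2,v)$ and use that $\epsilon$ is independent of everything else under Assumption \ref{A1}; both are valid, and your version is slightly more direct.
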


\begin{Remark}\label{rem:additive:rkhs}
It is of interest to apply our results to RKHS of delicate structures,
e.g., additivity.  
Suppose that, for $l=1,\ldots,r$, $\mathcal{H}_l$
is an RKHS with inner products and reproducing kernels denoted $\langle\cdot,\cdot\rangle_{\mathcal{H}_l}$
and $\bar{K}_l$, respectively. 
Define $\mathcal{H}=\{g_1(x_1)+\cdots+g_r(x_r): g_1\in \mathcal{H}_1,\cdots,g_r\in\mathcal{H}_r\}$. 
Then $\mathcal{H}$ is an Additive RKHS with kernel $\bar{K}((x_1,\ldots,x_r),(y_1,\ldots,y_r))\equiv
\bar{K}_1(x_1,y_1)+\cdots+\bar{K}_1(x_r,y_r)$ 
and inner product $\langle f, g \rangle_{\mathcal{H}}\equiv
\langle f_1, g_1 \rangle_{\mathcal{H}_1}+\cdots+\langle f_r, g_r \rangle_{\mathcal{H}_r}$.
In particular, $(\mathcal{H},\bar{K})$ becomes \textit{Partial Linear}
if some of $\mathcal{H}_l$'s are generated by linear kernels, i.e., $1$-order polynomial. 
\end{Remark}

\begin{Remark}\label{rem:semipara:ci}
When $g$ is partial linear, i.e., $g(x_1,x_2)=x_1\beta+g_2(x_2)$,
(\ref{ci:homo:g}) can be used to construct a confidence interval for $\beta$.
To illustrate this, suppose $x_1,x_2$ are univariate for simplicity.
Choose $x_2$ such that $g_2(x_2)=0$,
then $\beta=g(1,x_2)$. By (\ref{ci:homo:g}), the $1-\alpha$ confidence interval for $\beta$ is
\begin{equation}\label{ci:plm}
\widehat{g}(1,x_2)\pm z_{1-\alpha/2}\frac{\widehat{\sigma}_\varepsilon}{\sqrt{NT}A_{NT}}.
\end{equation}
Extensions can be easily done when $x_1,x_2$ are multidimensional.
\end{Remark}

\section{Numerical Study}\label{sec:numerical:study}

We examine our methods using simulated datasets and a real dataset.
 
\subsection{Simulation}\label{simulation:consistency}
A comparison will be first performed between our estimation procedure and Su and Jin's sieve estimation. 
We considered the same data generating process as \cite{suJin2012} for fair comparison.
That is, the data were generated as follows: for $i\in [N], t \in [T]$,
\begin{eqnarray*}
	y_{it}&=&g_i(x_{it,1},x_{it,2})+\gamma_{1i}+\gamma_{2i,1}f_{2t,1}+\gamma_{2i,2}f_{2t,2}+\epsilon_{it},\\
	g_i(x_{it,1},x_{it,2})&=&\exp(x_{it,1})/(1+\exp(x_{it,1}))+\delta_i(0.5x_{it,2}-0.25x_{it,2}^2),\\
	x_{it,s}&=&\Gamma_{1i,s}+\Gamma_{2i,s1}f_{2t,1}+\Gamma_{2i,s2}f_{2t,2}+v_{it,s}, s=1,2,
\end{eqnarray*}
where $\epsilon_{it}=\rho_{i}\epsilon_{i,t-1}+\sigma_i(1-\rho_i^2)\xi_{it}$
with $\rho_i,\sigma_i^2\overset{iid}{\sim}Unif[0,0.95]$ and
$\xi_{it}\overset{iid}{\sim}N(0,1)$;
$v_{it,1},v_{it,2}$ were generated similar to $\epsilon_{it}$;
$\delta_i\overset{iid}{\sim}Unif[0,1]$;
$f_{2t,s}=0.5f_{2,t-1,s}+(1-0.5^2)^{1/2}\zeta_{t,s}$ 
with $\zeta_{t,s}\overset{iid}{\sim}N(0,1)$ for $s=1,2$;
$\gamma_{1i}=0.5T^{-1}\sum_{t=1}^Tx_{it,1}+0.5T^{-1}\sum_{t=1}^Tx_{it,2}$ and 
$$\gamma_{21},\ldots,\gamma_{2N},\Gamma_{11},\ldots,\Gamma_{1N}\overset{iid}{\sim} N\left( \begin{pmatrix}
0\\0
\end{pmatrix}, \begin{pmatrix}
1&0.5\\
0.5&1
\end{pmatrix} \right);$$ 
the entries of $\Gamma_{2i}$ were generated from a multivariate normal distribution 
with mean $(1,0,0,1)$ and identity covariance matrix.

We chose an additive RKHS with kernel $\bar{K}((x_1,x_2),(y_1,y_2))=\bar{K_1}(x_1, y_1)+\bar{K}_2(x_2,y_2)$, 
where $\bar{K}_1$ is Gaussian kernel and $\bar{K}_2$ is a
2-order polynomial kernel. See Section \ref{sec:prelim:RKHS} for 
definitions of these kernels.
The smoothing parameter $\eta$ was chosen by the proposed GCV; see Remark \ref{gcv:1}. 
We considered $N=25,50,100$ in both heterogeneous and homogeneous cases, whereas
$T=25, 50, 100$ in heterogeneous case and $T=8, 25, 100$ in homogeneous case. 
Mean squared errors (MSE) were computed based on 1000 replications.

Table \ref{table:simulation:RMSE}
compares our estimator $\widehat{g}$ with Su and Jin's sieve estimator $\widehat{g}_{\textrm{sieve}}$.
We observe that, in heterogeneous setting, 
our estimator yields slightly smaller MSE when $N,T\le 50$,
and becomes comparable with $\widehat{g}_{\textrm{sieve}}$ when $N$ or $T$ is 100.
In homogeneous setting, it can be seen that
our estimator yields slightly smaller MSE when $T=8$,
and becomes comparable  when $T=25,100$.
\begin{table}[H]
\centering
\caption{MSE of two estimators in various settings.}
\label{table:simulation:RMSE}
\begin{tabular}{lllllllll}
\hline \hline
                           & \multicolumn{4}{c}{Heterogeneous Setting} & \multicolumn{4}{c}{Homogeneous Setting} \\\cline{3-5}\cline{7-9}
Estimator                  & N/T    & 25       & 50       & 100     & N/T    & 8       & 25      & 100     \\
$\widehat{g}$                  & 25     & 0.813    & 0.536    & 0.419   & 25     & 0.291   & 0.219   & 0.035   \\
                           & 50     & 0.793    & 0.518    & 0.394   & 50     & 0.171   & 0.133   & 0.030   \\
                           & 100    & 0.980    & 0.545    & 0.402   & 100    & 0.123   & 0.118   & 0.019   \\
$\widehat{g}_{\textrm{sieve}}$ & 25     & 1.061    & 0.736    & 0.538   & 25     & 0.528   & 0.245   & 0.143   \\
                           & 50     & 0.932    & 0.646    & 0.457   & 50     & 0.344   & 0.164   & 0.095   \\
                           & 100    & 0.996    & 0.674    & 0.47    & 100    & 0.245   & 0.115   & 0.065  \\
                           \hline \hline
\end{tabular}
 \begin{tablenotes}
      \small
      \centering
      \item
    \end{tablenotes}
\end{table}

Next, we examined the proposed confidence interval (\ref{ci:homo:g}). We 
only considered the homogeneous setting (\ref{basic:model:2:homogeneity}) with  $N=25, 50, 100, T=8, 25$,
and ``true" function $g(x)= 0.6\beta_{30,17}(x)+0.4\beta_{3,11}(x)$, where
$\beta_{a,b}(\cdot)$ is the beta density function with shape and scale $a$ and $b$;
$\epsilon_{it}, v_{it}\overset{iid}{\sim}N(0,1)$;
$f_{1t,s}=0.5f_{1t-1,s}+(1-0.5^2)^{1/2}\zeta_{t,s}$ for $s=1,2$,
where $\zeta_{t,1},\zeta_{t,2}\overset{iid}{\sim}N(0,1)$;
$f_{2t,s}$ was generated the same way as $f_{1t,s}$;
$\gamma_{1i}=T^{-1}\sum_{t=1}^Tx_{it}$ and $\Gamma_{1i}\overset{iid}{\sim}N(0,1)$;
$\gamma_{2i} \overset{iid}{\sim}N(0,1)$ and  $\Gamma_{2i}\overset{iid}{\sim}N(1,1)$.

Confidence intervals for $g(x)$ at $x\in[0,1]$ were constructed
based on Gaussian kernel and a 10-order polynomial kernel.
The smoothing parameter was
selected by the proposed GCV; see Remark \ref{gcv:2}.
The coverage probabilities (CP) of the intervals were examined
based on 1000 independent replications.
Figures \ref{figure:simulation:cover:rate:Gaussian} and \ref{figure:simulation:cover:rate:Poly}
display the CP of the 95\% confidence intervals
for $g(x)$ at 100 evenly spaced points in $[0,1]$ based on Gaussian kernel and polynomial kernel,
respectively.  
It can be seen that, when $N=100$ or $N=50,T=25$,
the CP approaches the 95\% nominal level at any $x\in[0,1]$,
demonstrating the validity of the confidence intervals. 
When $N=25$ or $N=50,T=8$, the CP 
is significantly less than the nominal level at $x\in[0,0.2]$. This is due to the peaks/trouts
of the true function which affect the small sample performance of the intervals. 
Such effect quickly vanishes in large sample setting, e.g., $N=100$ or $N=50,T=25$.


\begin{figure}[t!]
	\includegraphics[width=5 in, height=4 in ]{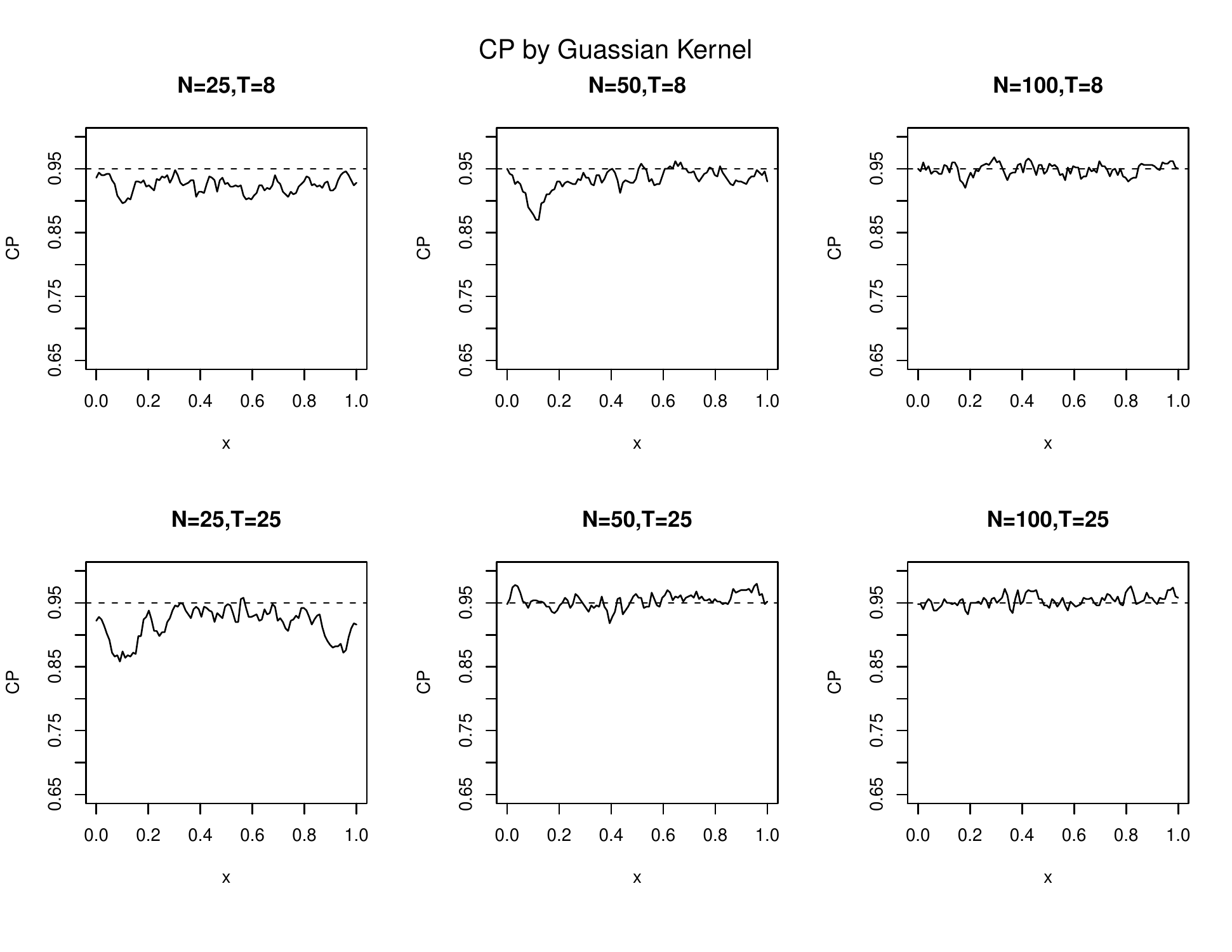}
	\caption{CP of the 95\% confidence intervals for $g(x)$ at $x\in[0,1]$ based on Gaussian kernel.
	Dashed lines indicate 95\% nominal level.}
	\label{figure:simulation:cover:rate:Gaussian}
\end{figure}

\begin{figure}[t!]
\includegraphics[width=5 in, height=4 in ]{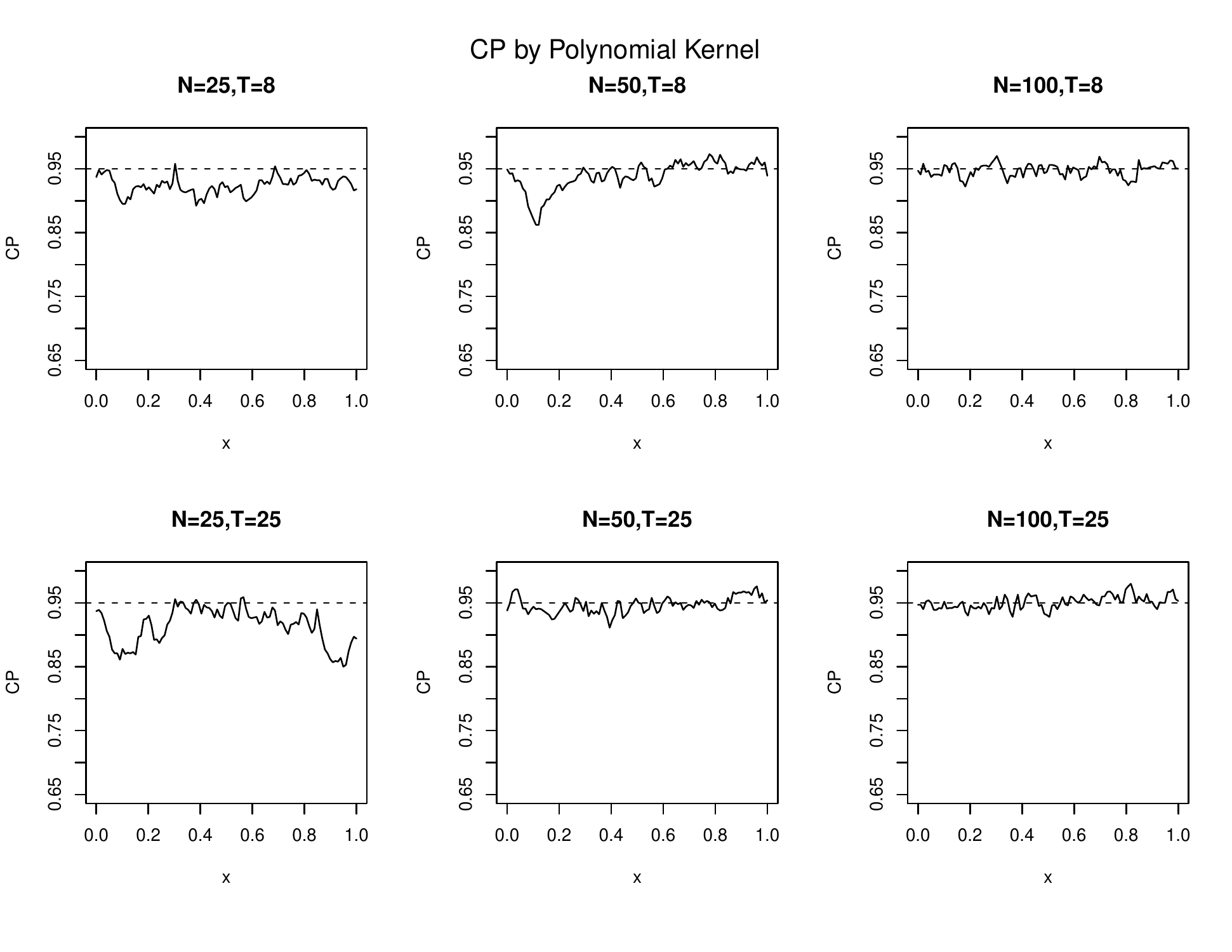}
	\caption{CP of the 95\% confidence intervals for $g(x)$ at $x\in[0,1]$ based on polynomial kernel.
	Dashed lines indicate 95\% nominal level.}
	\label{figure:simulation:cover:rate:Poly}
\end{figure}

\subsection{Export Productivity Premium}\label{sec:real:data}
 
In this section, we apply our method to examine the firm-level productivity difference between exporters and non-exporters based on a real dataset from computer and peripheral equipment manufacturing industry of Chinese Industrial Enterprises Database.
The data include observations collected from $N=100$ continuously operating firms in
$T=9$ years (1998 to 2007). For firm $i$ in year $t$, $Y_{it}$
is the log gross output,
$X_{it}=(X_{it1},X_{it2},X_{it3},X_{it4})$ 
with 
$X_{it1}$ the log capital defined as the net fixed asset,
$X_{it2}$ the log materials defined as the value of the intermediate inputs,
$X_{it3}$ the log labor defined as the total wage bill plus benefits,
and $X_{it4}$ the export intensity defined as the ratio of the
export value to the gross output value.
The aim is to investigate a relationship between $X_{it}$ and $Y_{it}$.

To enhance model flexibility, suppose that 
the log gross output and the export intensity are nonlinearly related. 
This leads us to consider the following model 
\begin{equation}\label{model:real:data}
Y_{it}=\beta_1X_{it1}+\beta_2X_{it2}+\beta_3X_{it3}+f(X_{it4})+\gamma_{1i}+\gamma_{2i}f_{2t}+
\textrm{error},
\end{equation}
where $\beta_1,\beta_2,\beta_3$ are unknown regression coefficients and
$f$ is unknown belonging to an RKHS $\mathcal{H}$ which represents productivity difference between exporters and non-exporters.
The variables $f_{2t}$ represent the unobserved common shocks, such as unobserved policy changes,
and $\gamma_{1i},\gamma_{2i}$
represent the individual specific responses to factor $f_{1t}=1$ and $f_{2t}$. 
The semiparametric structure of the regression function
$g(X_{it})\equiv X_{it1}\beta_1+\beta_2X_{it2}+\beta_3X_{it3}+f(X_{it4})$
can be naturally incorporated in an additive RKHS generated by
a polynomial kernel and a general RKHS $\mathcal{H}$;
see Remark \ref{rem:additive:rkhs}. 
In practice, we chose $\mathcal{H}$ as generated by linear kernel or polynomial kernel.

Table \ref{table:simulation:coef:k:l:m} summarizes the estimates and 95\% confidence intervals
of $\beta_1,\beta_2,\beta_3$. The intervals were calculated based on (\ref{ci:plm}). 
Overall, the results based on
linear kernel and polynomial kernel are quite similar.
The confidence intervals all exclude zero indicating the significance of the 
linear predictors, consistent with literature about Chinese manufacturing industries \citep[e.g.][]{hashiguchi2015}. 
Figure \ref{figure:realdata} displays the
95\% confidence intervals for export productivity premium
versus export intensity, based on linear kernel (left panel) and polynomial kernel (right panel). 
The red dashed lines display the upper and lower bounds of the intervals,
and the central dark lines demonstrate the estimations of $f$.
Overall, the estimations of $f$ are both increasing, consistent with the
folklore that ``exports stimulate productivity," e.g., \cite{melitz2003}. 
The red dashed lines are above zero,
indicating the significance of the export intensity effect on productivity.

\begin{table}
\centering
\caption{Estimation and 95\% confidence intervals for $\beta_1,\beta_2,\beta_3$
based on two kernels. }
\label{table:simulation:coef:k:l:m}
\begin{tabular}{llllllll}
\hline \hline
\multicolumn{4}{c}{Linear Kernel} & \multicolumn{4}{c}{Polynomial Kernel} \\
          & Estimate   & 95\% CI  & &    & Estimate  & 95\% CI  &   \\ 
$\beta_1$ & 0.1022 & [0.0624, 0.1420] &        & $\beta_1$  &   0.1010      &     [0.0606, 0.1414]      &           \\ 
$\beta_2$ & 0.0994 & [0.0672, 0.1316] &        & $\beta_2$  &      0.0989     &      [0.0663, 0.1315] &          \\ 
$\beta_3$ & 0.7300 & [0.6958, 0.7642] &       & $\beta_3$  &    0.7395       &   [0.7049, 0.7741] &        \\ \hline \hline
\end{tabular}
\end{table}

\begin{figure}[htp!]
	\includegraphics[width=3 in, height=2.5 in ]{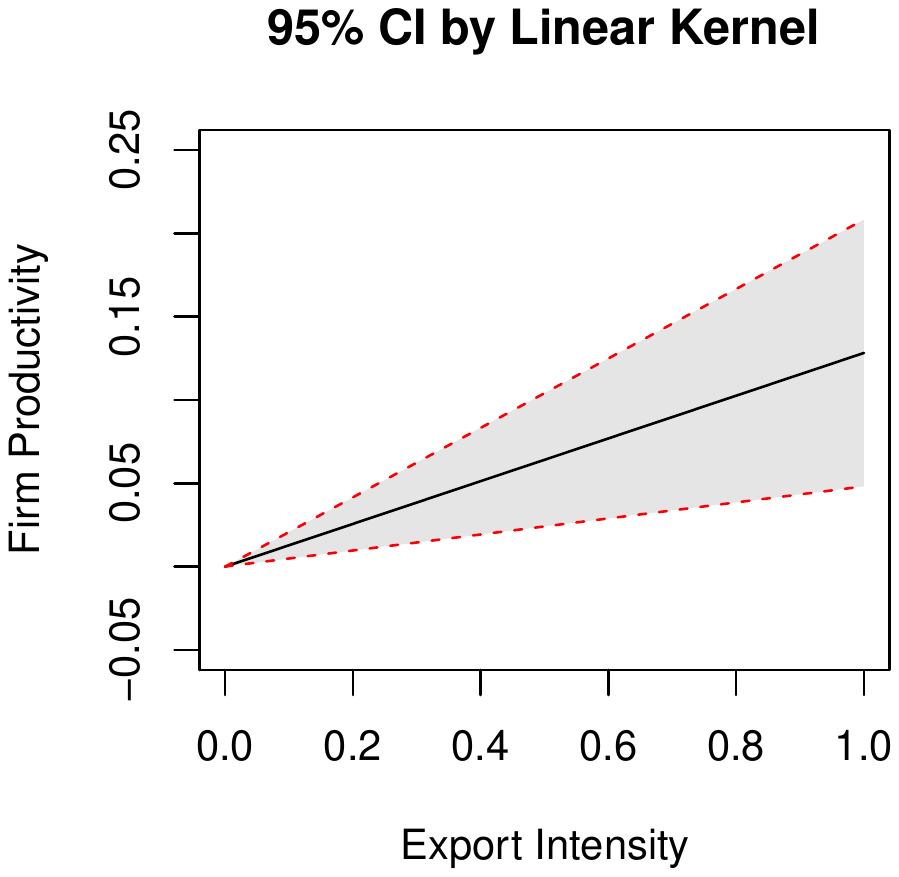}
	\includegraphics[width=3 in, height=2.5 in ]{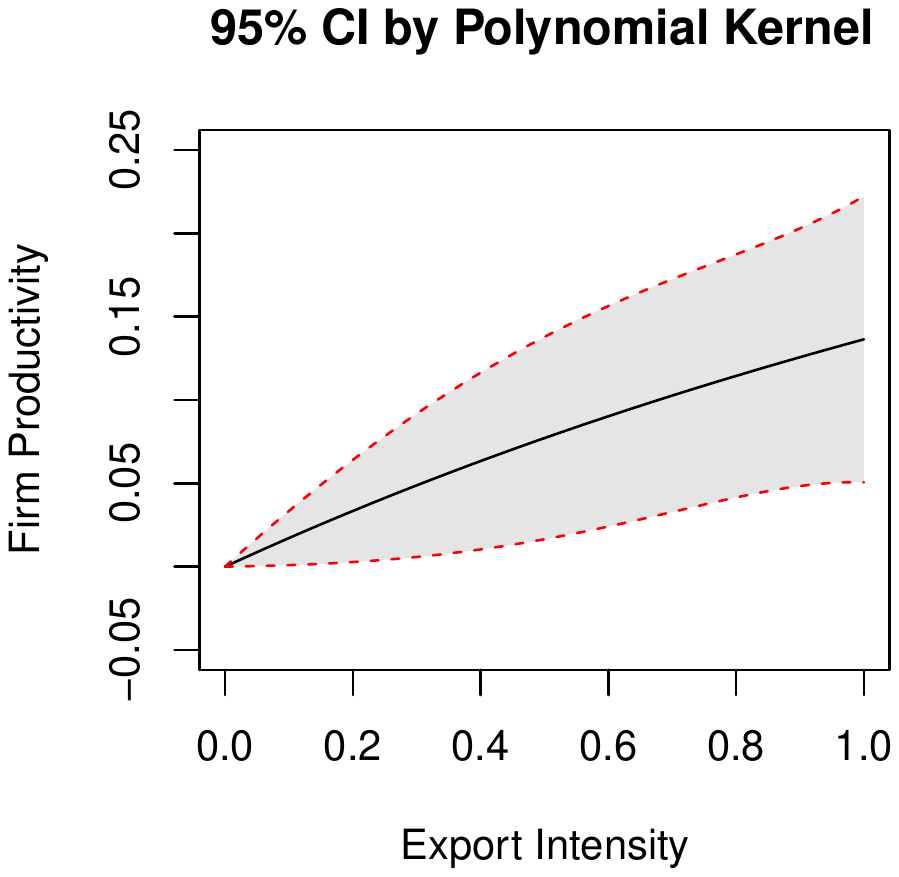}
	\caption{95\% confidence intervals for firm productivity
	versus export intensity. }
	\label{figure:realdata}
\end{figure}

\section{APPENDIX}\label{sec:appendix}
\setcounter{subsection}{0}
\renewcommand{\thesubsection}{A.\arabic{subsection}}
\setcounter{equation}{0}
\renewcommand{\theequation}{A.\arabic{equation}}
\setcounter{lemma}{0}
\renewcommand{\thelemma}{A.\arabic{lemma}}
\setcounter{theorem}{0}
\renewcommand{\thetheorem}{A.\arabic{theorem}}
\setcounter{proposition}{0}
\renewcommand{\theproposition}{A.\arabic{proposition}}

This appendix contains the proofs of the main results. In Section \ref{proof:section:consist-Hetero}, a proof of convergence rate in heterogeneous model is provided (Theorem \ref{them:convergence:rate:result:1}) 
and some auxiliary lemmas are stated. In Section \ref{proof:section:AsympDist-Hetero}, we prove FBR 
for heterogeneous model
(Theorem \ref{lemma: hetero -- leading term}) and joint asymptotic distributions of our estimators 
(Theorems \ref{theorem: joint distr 1.} and \ref{theorem: joint distr 3.}). 
Section \ref{proof:subsec:rate:homo} includes the
proof of convergence rate in homogeneous model
(Theorem \ref{theorem:homo convergence:rate:result:1}) as well as some auxiliary lemmas. In Section \ref{proof:section:normality:homo}, proofs of FBR in homogeneous model (Theorem \ref{theorem: rate:ghat:minus:g0:plus:SMetag0})
and corresponding asymptotic normality 
(Theorem \ref{theorem:normality:ghat:minus:g0}) are given. We also show that the variance estimator
is consistent.

\subsection{Proofs in Section \ref{subsec:consist-Hetero}}\label{proof:section:consist-Hetero}
In this section, we derive the rate of convergence for our estimator in the heterogeneous setting,
i.e., Theorem \ref{them:convergence:rate:result:1}. 
Before proving the results, we provide some preliminary results. 

 
\begin{lemma}\label{lemma:id}
For any $\theta\in\Theta_i$, $DS_{i,M,\eta_i}^\star(\theta)=id$,
where $S_{i,M,\eta_i}^\star(\theta)=E\{S_{i,M,\eta_i}(\theta)\}$.
\end{lemma}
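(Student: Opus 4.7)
The plan is to verify the identity $DS_{i,M,\eta_i}^\star(\theta)\Delta\theta = \Delta\theta$ for an arbitrary direction $\Delta\theta\in\Theta_i$ by testing against a second arbitrary element $\theta_1\in\Theta_i$ in the inner product $\langle\cdot,\cdot\rangle_i$. Since $\theta_1$ is arbitrary, equality of the two inner products is equivalent to the desired identity. This reduces the problem to an algebraic manipulation that combines (i) linearity of expectation, (ii) stationarity of $U_{it}=(X_{it},Z_t)$ in $t$ (which follows from Assumption \ref{A1}\ref{A1:a}--\ref{A1:b}, so that the expectation $E\{\langle R_iU_{it},\Delta\theta\rangle_i R_iU_{it}\}$ does not depend on $t$ and the average $\frac{1}{T}\sum_{t=1}^T$ collapses), (iii) the reproducing property (\ref{semi:reproducing:prop}) for $R_i$, and (iv) the defining property of $P_i$.

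First I would write, starting from $S_{i,M,\eta_i}$ in Section \ref{sec:prelim:RKHS},
\begin{equation*}
DS_{i,M,\eta_i}^\star(\theta)\Delta\theta
= \frac{1}{T}\sum_{t=1}^T E\{\langle R_iU_{it},\Delta\theta\rangle_i\, R_iU_{it}\} + P_i\Delta\theta.
\end{equation*}
Stationarity reduces the first term to $E\{\langle R_iU_{i1},\Delta\theta\rangle_i R_iU_{i1}\}$. Next, for arbitrary $\theta_1=(\beta_1,g_1)\in\Theta_i$ and $\Delta\theta=(\Delta\beta,\Delta g)$, apply $\langle\cdot,\theta_1\rangle_i$ and exchange with the (Bochner) expectation to get
\begin{equation*}
\langle E\{\langle R_iU_{i1},\Delta\theta\rangle_i R_iU_{i1}\},\theta_1\rangle_i
= E\{\langle R_iU_{i1},\Delta\theta\rangle_i \langle R_iU_{i1},\theta_1\rangle_i\}.
\end{equation*}
By (\ref{semi:reproducing:prop}), each factor inside the last expectation equals $\Delta g(X_i)+Z'\Delta\beta$ and $g_1(X_i)+Z'\beta_1$, respectively, so the expectation matches the first term in the definition (\ref{inner:product:Theta:i}) of $\langle\Delta\theta,\theta_1\rangle_i$.

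Finally, by the defining property of $P_i$ in (\ref{semi:reproducing:prop}), $\langle P_i\Delta\theta,\theta_1\rangle_i = \eta_i\langle \Delta g,g_1\rangle_{\mathcal{H}_i}$, which is exactly the second term in (\ref{inner:product:Theta:i}). Adding the two pieces gives $\langle DS_{i,M,\eta_i}^\star(\theta)\Delta\theta,\theta_1\rangle_i = \langle \Delta\theta,\theta_1\rangle_i$; since $\theta_1$ was arbitrary and $\langle\cdot,\cdot\rangle_i$ is an inner product on $\Theta_i$ (Assumption \ref{A3} together with \citet{CS15}), we conclude $DS_{i,M,\eta_i}^\star(\theta)\Delta\theta=\Delta\theta$.

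There is no real obstacle in this argument; the only point that deserves care is the interchange of expectation and inner product, which is standard because $R_iU_{it}$ is Bochner-integrable in $\Theta_i$ under the moment conditions in Assumption \ref{A1}\ref{A1:c}, so the Bochner-expectation satisfies $\langle E\{F\},\theta_1\rangle_i = E\{\langle F,\theta_1\rangle_i\}$. Beyond that, the proof is a bookkeeping verification that the two ingredients $E\{\langle R_iU_{it},\Delta\theta\rangle_i R_iU_{it}\}$ and $P_i\Delta\theta$ are precisely the Riesz representations of the two summands in the definition of $\langle\Delta\theta,\cdot\rangle_i$.
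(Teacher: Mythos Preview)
Your proposal is correct and follows essentially the same route as the paper: test $DS_{i,M,\eta_i}^\star(\theta)\Delta\theta$ against an arbitrary element via $\langle\cdot,\cdot\rangle_i$, exchange expectation with the inner product, invoke the reproducing property (\ref{semi:reproducing:prop}) for both $R_i$ and $P_i$, and recognize the result as the definition (\ref{inner:product:Theta:i}). The only cosmetic difference is that the paper writes the average $\frac{1}{T}\sum_{t=1}^T$ explicitly before collapsing it by stationarity, whereas you collapse it first; your added remark on Bochner integrability is a welcome justification the paper leaves implicit.
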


\begin{lemma}\label{bound:RiUit}
There exist universal constants $C_1,C_2,\ldots,C_N$ such that, 
\begin{eqnarray}
\|R_iU_{it}\|_i^2&\le& C_i^2(h_i^{-1}+Z_t'Z_t),\,\,\,\,\textrm{for any $i\in[N],t\in[N]$,}\label{bound:eqn:RiUit}\\
\|\theta\|_{i,\sup}&\le&C_i(1+h_i^{-1/2})\|\theta\|_i,\,\,\,\,\textrm{for any $\theta\in\Theta_i$.}
\label{two:norms:isup:i}
\end{eqnarray}
\end{lemma}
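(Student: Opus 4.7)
I treat the two bounds separately, but both rely on the same toolbox: the semi-reproducing identity (\ref{semi:reproducing:prop}) together with the explicit formulas for $R_iu$ from Proposition \ref{prop:Ri:Pi}; the uniform boundedness of the eigenfunctions $\varphi^{(i)}_\nu$ and the Mercer expansion of $K^{(i)}$ granted by Assumption \ref{A2}; and the orthogonal decomposition $Z=G_i(X_i)+(Z-G_i(X_i))$ enabled by Assumption \ref{A3}.

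\textbf{First bound.} Apply (\ref{semi:reproducing:prop}) with $\widetilde{\theta}=R_iu=(H^{(i)}_u,T^{(i)}_u)$ to obtain $\|R_iu\|_i^2=T^{(i)}_u(x)+z'H^{(i)}_u$. The key algebraic observation is the identity $V_i(G_i,K^{(i)}_x)=A_i(x)$, which holds componentwise because $K^{(i)}$ is the reproducing kernel of $(\mathcal{H}_i,\langle\cdot,\cdot\rangle_{\star,i})$, so $V_i(G_{i,k},K^{(i)}_x)=\langle A_{i,k},K^{(i)}_x\rangle_{\star,i}=A_{i,k}(x)$. Substituting the formulas of Proposition \ref{prop:Ri:Pi} and using this identity, the cross terms cancel and
\[
\|R_iu\|_i^2=K^{(i)}(x,x)+(z-A_i(x))'(\Omega_i+\Sigma_i)^{-1}(z-A_i(x)).
\]
Under Assumption \ref{A2}, Mercer's expansion gives $K^{(i)}(x,x)=\sum_\nu \varphi^{(i)}_\nu(x)^2/(1+\eta_i\rho^{(i)}_\nu)\le c_\varphi^2 h_i^{-1}$. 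The reproducing bound $|A_{i,k}(x)|^2\le\|A_{i,k}\|_{\star,i}^2\,K^{(i)}(x,x)$, combined with $\|A_{i,k}\|_{\star,i}\le\|G_{i,k}\|_{L^2(P_{X_i})}$ (obtained by setting $g=A_{i,k}$ in $\langle A_{i,k},g\rangle_{\star,i}=V_i(G_{i,k},g)$ and Cauchy--Schwarz in $L^2(P_{X_i})$), yields $\|A_i(x)\|_2^2=O(h_i^{-1})$. Assumption \ref{A3} together with the RKHS construction of Proposition \ref{prop:Ri:Pi} gives uniform well-conditioning of the information-type matrix $\Omega_i+\Sigma_i$, so expanding $\|z-A_i(x)\|_2^2\le 2\|z\|_2^2+2\|A_i(x)\|_2^2$ and specializing $u=U_{it}$ produces (\ref{bound:eqn:RiUit}).

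\textbf{Second bound.} Split $\|\theta\|_{i,\sup}=\sup_x|g(x)|+\|\beta\|_2$. Iterated expectation shows $g(X_i)+G_i(X_i)'\beta$ and $(Z-G_i(X_i))'\beta$ are uncorrelated, so
\[
\|\theta\|_i^2\ge E\{(g(X_i)+Z'\beta)^2\}=E\{(g(X_i)+G_i(X_i)'\beta)^2\}+\beta'\Omega_i\beta\ge c_1\|\beta\|_2^2,
\]
giving $\|\beta\|_2\le c_1^{-1/2}\|\theta\|_i$. For the function part, reproducing in $(\mathcal{H}_i,\langle\cdot,\cdot\rangle_{\star,i})$ combined with Cauchy--Schwarz and the sup-norm bound on $K^{(i)}$ yields $|g(x)|\le\sqrt{K^{(i)}(x,x)}\,\|g\|_{\star,i}\le c_\varphi h_i^{-1/2}\|g\|_{\star,i}$. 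It remains to control $\|g\|_{\star,i}^2=E\{g(X_i)^2\}+\eta_i\|g\|_{\mathcal{H}_i}^2$: the second summand is at most $\|\theta\|_i^2$ by definition, and the inequality $(a+b)^2\ge a^2/2-b^2$ with $a=g(X_i)$, $b=G_i(X_i)'\beta$ yields
\[
E\{g(X_i)^2\}\le 2E\{(g(X_i)+G_i(X_i)'\beta)^2\}+2E\{(G_i(X_i)'\beta)^2\}\le C\|\theta\|_i^2,
\]
using the already-established bound on $\|\beta\|_2$ together with $L^2$-boundedness of $G_i$. Adding the two contributions gives (\ref{two:norms:isup:i}).

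\textbf{Main obstacle.} The heart of the argument is the algebraic cancellation in the first bound that reduces $\|R_iu\|_i^2$ to the sum of $K^{(i)}(x,x)$ and a bounded quadratic form in $z-A_i(x)$; verifying the identity $V_i(G_i,K^{(i)}_x)=A_i(x)$ and the uniform well-conditioning of $\Omega_i+\Sigma_i$ (note $\Sigma_i$ need not be positive semidefinite when $\eta_i$ is small) requires carefully unpacking the RKHS construction of Proposition \ref{prop:Ri:Pi} and the defining property of $A_i$. Once these structural facts are in hand, the remaining estimates reduce to routine Cauchy--Schwarz applications and the uniform sup-norm bound on $K^{(i)}$.
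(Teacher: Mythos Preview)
Your argument for (\ref{bound:eqn:RiUit}) matches the paper's: both derive the closed form $\|R_iu\|_i^2=K^{(i)}(x,x)+(z-A_i(x))'(\Omega_i+\Sigma_i)^{-1}(z-A_i(x))$ via the identity $A_i(x)=V_i(G_i,K^{(i)}_x)$, then bound each piece using Assumption \ref{A2} and the lower eigenvalue bound on $\Omega_i+\Sigma_i$. One remark on your flagged obstacle: your worry that ``$\Sigma_i$ need not be positive semidefinite when $\eta_i$ is small'' is unfounded. For any vector $v$, writing $G_v=v'G_i$ and $A_v=v'A_i$, the Riesz-representer property gives $\|A_v\|_{\star,i}^2=V_i(G_v,A_v)$, while $\|A_v\|_{\star,i}=\sup_{\|g\|_{\star,i}=1}|V_i(G_v,g)|\le V_i(G_v,G_v)^{1/2}$ by Cauchy--Schwarz and $V_i(g,g)\le\|g\|_{\star,i}^2$. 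Hence $v'\Sigma_i v=V_i(G_v,G_v)-\|A_v\|_{\star,i}^2\ge 0$, so $\Omega_i+\Sigma_i\ge\Omega_i\ge c_1 I$ always.

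For (\ref{two:norms:isup:i}) your route is correct but takes a detour. You separately bound $\|\beta\|_2$ via the orthogonal decomposition $Z=G_i(X_i)+(Z-G_i(X_i))$, then bound $\sup_x|g(x)|$ through $\|g\|_{\star,i}$, which in turn you control by splitting $E\{g(X_i)^2\}$ and invoking the $\beta$-bound again. The paper instead observes in one line that $\|\theta\|_{i,\sup}=\sup_{x\in\mathcal{X}_i,\,\|z\|_2=1}|g(x)+z'\beta|=\sup_{x,\|z\|_2=1}|\langle R_i(x,z),\theta\rangle_i|$, then applies Cauchy--Schwarz in $(\Theta_i,\langle\cdot,\cdot\rangle_i)$ together with the already-established bound (\ref{bound:eqn:RiUit}) at $\|z\|_2=1$, giving $\|R_i(x,z)\|_i\le C_i(1+h_i^{-1/2})$ directly. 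Your approach has the virtue of being self-contained (it does not rely on the first part), but the paper's exploits the semi-reproducing structure to get (\ref{two:norms:isup:i}) as an immediate corollary of (\ref{bound:eqn:RiUit}).
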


\begin{proposition}\label{prop:maximum:term}
Under Assumption \ref{A1}, as $T\rightarrow\infty$, 
$\max_{1\le t\le T}\|Z_t\|_2=O_P(T^{1/\alpha})$.
\end{proposition}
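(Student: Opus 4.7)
The plan is to establish the claim via Markov's inequality combined with a union bound, after first controlling the $\alpha$-th moment of $\|Z_t\|_2$ uniformly in $t$ (and in $N$).

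First I would decompose $Z_t = (f_{1t}', \bar{X}_t')'$ and substitute the identity (\ref{DGP of X 2}) to write
\[
\|Z_t\|_2 \;\le\; \|f_{1t}\|_2 + \|\bar{\Gamma}_1\|_{\mathrm{op}}\|f_{1t}\|_2 + \|\bar{\Gamma}_2\|_{\mathrm{op}}\|f_{2t}\|_2 + \|\bar{v}_t\|_2.
\]
Using Assumption \ref{A1}\ref{A1:c}, the terms $E\|f_{1t}\|_2^\alpha$ and $E\|f_{2t}\|_2^\alpha$ are finite. For the average noise term, independence of the $v_{it}$'s across $i$ together with Jensen's inequality (applied to the convex function $x\mapsto |x|^\alpha$ for $\alpha>1$) gives $E\|\bar v_t\|_2^\alpha \le E\|v_{1t}\|_2^\alpha < \infty$. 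The matrices $\bar{\Gamma}_1,\bar{\Gamma}_2$ are treated as non-random (or at least not depending on $t$); their operator norms are bounded uniformly in $N$ under the standing identifiability/boundedness setup tied to Assumption \ref{A1}\ref{A1:d} (which forces $\bar{\Gamma}_2\bar{\Gamma}_2'$ to be non-degenerate). Combining, there is a constant $C_0 < \infty$ such that $E\|Z_t\|_2^\alpha \le C_0$ for every $t$ and every $N$.

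Next I would apply Markov's inequality and the union bound. For any $\lambda > 0$,
\[
P\!\left(\max_{1\le t\le T}\|Z_t\|_2 > \lambda T^{1/\alpha}\right)
\;\le\; \sum_{t=1}^T P\!\left(\|Z_t\|_2 > \lambda T^{1/\alpha}\right)
\;\le\; \frac{T \cdot C_0}{(\lambda T^{1/\alpha})^\alpha}
\;=\; \frac{C_0}{\lambda^\alpha}.
\]
Since the right-hand side does not depend on $T$ and can be made arbitrarily small by taking $\lambda$ large, this is exactly the definition of $\max_{1\le t\le T}\|Z_t\|_2 = O_P(T^{1/\alpha})$.

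The main potential snag is only bookkeeping: ensuring that the bound $E\|Z_t\|_2^\alpha \le C_0$ is indeed uniform in $N$, which requires that $\|\bar{\Gamma}_1\|_{\mathrm{op}}$ and $\|\bar{\Gamma}_2\|_{\mathrm{op}}$ remain bounded as $N$ grows. This is implicit in the paper's setup (the $\Gamma_{\ell i}$ are fixed matrices compatible with Assumption \ref{A1}\ref{A1:d}), so I expect no real difficulty. The argument does not require independence across $t$; in particular the $\phi$-mixing condition \ref{A1}\ref{A1:b} plays no role here, because the union bound is already sharp enough given only the marginal moment control.
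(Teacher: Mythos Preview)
Your argument is correct, and it is more elementary than the paper's. Both approaches begin the same way --- by establishing $\sup_{t,N} E\|Z_t\|_2^\alpha \le C_0$ from Assumption \ref{A1}\ref{A1:c} and the decomposition of $Z_t$ --- but then diverge. You finish with a straight Markov bound plus union bound, which yields $P(\max_t \|Z_t\|_2 > \lambda T^{1/\alpha}) \le C_0/\lambda^\alpha$ directly. The paper instead introduces the quantile function $C_T(\xi)=\inf\{x: T\,P(\|Z\|_2>x)\le \xi\}$ and then invokes the extreme-value theorem of O'Brien (1974) for $\phi$-mixing sequences to obtain $\liminf_T P(\max_t\|Z_t\|_2\le C_T(\xi))=\exp(-b\xi)$, from which the $O_P(T^{1/\alpha})$ bound is read off.

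The tradeoff: the paper's route gives sharper distributional information about the maximum (an actual limit statement, not just tightness) and genuinely uses the $\phi$-mixing condition \ref{A1}\ref{A1:b}; your route is shorter, needs no mixing structure whatsoever, and is exactly tailored to what the proposition claims. For the stated $O_P$ conclusion, your approach suffices and is arguably preferable. Your observation that \ref{A1}\ref{A1:b} plays no role here is accurate for your proof; it is essential for the paper's. The minor bookkeeping point you flag about uniform boundedness of $\|\bar\Gamma_1\|_{\mathrm{op}},\|\bar\Gamma_2\|_{\mathrm{op}}$ in $N$ is glossed over in the paper's proof as well, so it does not distinguish the two arguments.
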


The following proposition holds for both (1) $T,N\rightarrow\infty$; (2) $N\rightarrow\infty$, $T$ is fixed.
That is, the result holds for $M\to\infty$.
\begin{proposition}\label{prop:concentration}
Let Assumptions \ref{A1}--\ref{A2} hold.
For $i\in[N]$ and $t\in[T]$, let $p_i=p_i(M)\ge1$ be a deterministic sequence indexed by $M$, 
and let $\psi_{i,M,t}(U_{it};\theta)$ 
be a real-valued function defined on $\Theta_i$ 
such that $\psi_{i,M,t}(U_{it};0)\equiv0$, and for any
$\theta_1,\theta_2\in\Theta_i$,
\[
\|(\psi_{i,M,t}(U_{it};\theta_1)
-\psi_{i,M,t}(U_{it};\theta_2))R_iU_{it}\|_i\le \|\theta_1-\theta_2\|_{i,\sup}.
\]
Then there exists a universal constant $C_0>0$ such that, as $N\rightarrow\infty$,
\[
P\left(\max_{i\in[N]}\sup_{\theta\in\mathcal{G}_i(p_i)}\frac{\sqrt{T}\|\mathbb{Z}_{iM}(\theta)\|_i}
{\sqrt{T}J_i(p_i,\|\theta\|_{i,\sup})+1}\ge C_0\sqrt{\log{N}+\log\log(TJ_i(p_i,1))}\right)\rightarrow0,
\]
where
\[
\mathbb{Z}_{iM}(\theta)=\frac{1}{\sqrt{T}}\sum_{t=1}^T[\psi_{i,M,t}(U_{it};\theta)R_iU_{it}
-E\left(\psi_{i,M,t}(U_{it};\theta)R_iU_{it}\right)],\,\,\theta\in\Theta_i.
\]
\end{proposition}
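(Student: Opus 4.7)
The plan is to prove this uniform concentration bound in three stages. For each fixed $i \in [N]$, I would first establish a sub-Gaussian Orlicz-norm increment bound for the Hilbert-space valued process $\theta \mapsto \mathbb{Z}_{iM}(\theta)$ on $(\mathcal{G}_i(p_i), \|\cdot\|_{i,\sup})$, then apply generic chaining to convert the increment bound into a tail bound on the supremum involving the entropy integral $J_i(p_i, \|\theta\|_{i,\sup})$, and finally take a union bound over $i \in [N]$ using the sub-Gaussian tail to generate the $\sqrt{\log N}$ factor.

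For the increment bound, fix $i$ and note that $\mathbb{Z}_{iM}(0) = 0$ because $\psi_{i,M,t}(U_{it}; 0) \equiv 0$. For $\theta_1,\theta_2 \in \mathcal{G}_i(p_i)$, the Lipschitz hypothesis gives $\|[\psi_{i,M,t}(U_{it};\theta_1) - \psi_{i,M,t}(U_{it};\theta_2)] R_iU_{it}\|_i \le \|\theta_1 - \theta_2\|_{i,\sup}$ pointwise in $t$. The summands are not uniformly bounded because Lemma \ref{bound:RiUit} gives $\|R_iU_{it}\|_i^2 \le C_i^2(h_i^{-1} + \|Z_t\|_2^2)$ and Proposition \ref{prop:maximum:term} gives $\max_t \|Z_t\|_2 = O_P(T^{1/\alpha})$. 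I would therefore truncate the summands at the level $\sim h_i^{-1/2} + T^{1/\alpha}$, apply a Bernstein-type inequality for $\phi$-mixing sequences to $\langle w,\mathbb{Z}_{iM}(\theta_1)-\mathbb{Z}_{iM}(\theta_2)\rangle_i$ for unit $w \in \Theta_i$, and then supremize over $w$ to recover a Hilbert-norm bound. The mixing summability condition in Assumption \ref{A1} together with the finite $\alpha$-moments controls the truncation residual and yields
$$\bigl\|\|\mathbb{Z}_{iM}(\theta_1) - \mathbb{Z}_{iM}(\theta_2)\|_i\bigr\|_{\psi_2} \le c\,\|\theta_1 - \theta_2\|_{i,\sup}$$
for a universal constant $c$, up to an additive term that is absorbed into the $\log\log$ factor of the final bound.

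With the sub-Gaussian increment bound in hand, I would run Pisier/Dudley chaining along a sequence of $2^{-k}$-nets of $(\mathcal{G}_i(p_i), \|\cdot\|_{i,\sup})$. The maximal inequality $\|\max_{j \le K} X_j\|_{\psi_2} \le \psi_2^{-1}(K) \max_j \|X_j\|_{\psi_2}$, combined with the anchoring $\mathbb{Z}_{iM}(0) = 0$, yields the localized bound
$$\Bigl\|\sup_{\theta \in \mathcal{G}_i(p_i)} \frac{\sqrt{T}\|\mathbb{Z}_{iM}(\theta)\|_i}{\sqrt{T}\,J_i(p_i,\|\theta\|_{i,\sup}) + 1}\Bigr\|_{\psi_2} \le C$$
for a universal constant $C > 0$; the $+1$ in the denominator is the contribution of the boundedness parameter at the finest chaining level. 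Converting this $\psi_2$ bound to a tail estimate $P(V_i \ge \lambda) \le 2\exp(-\lambda^2/C^2)$ for the displayed normalized supremum $V_i$, and then taking a union bound over $i \in [N]$ with $\lambda = C_0\sqrt{\log N + \log\log(TJ_i(p_i,1))}$ and $C_0$ large, drives the probability to zero as $N \to \infty$.

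The main obstacle is the first stage: obtaining a sub-Gaussian $\psi_2$ increment bound, uniformly in $i$, for the Hilbert-space valued $\phi$-mixing partial sum process with unbounded summands. Existing Bernstein-type inequalities for $\phi$-mixing sequences are stated for real-valued bounded variables, so the real work is combining the truncation at the level dictated by Lemma \ref{bound:RiUit} and Proposition \ref{prop:maximum:term} with a moment-based residual control, and then lifting the scalar inequality to $\Theta_i$ via duality $\|\cdot\|_i = \sup_{\|w\|_i = 1} \langle w, \cdot\rangle_i$. Uniformity in $i$ is tractable because the Lipschitz constant of the hypothesis is $1$ independent of $i$, and because the $\phi$-mixing coefficients depend only on the common factors $(f_{1t}, f_{2t})$.
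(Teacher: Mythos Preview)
Your three-stage architecture matches the paper, and Stages~2 and~3 (chaining, then union bound over $i$) are essentially what the paper does. Stage~1, however, contains a misreading and a genuine gap.

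You do not need truncation. The hypothesis already says $\|l_{it}\|_i \le \|\theta_1-\theta_2\|_{i,\sup}$ for $l_{it} = (\psi_{i,M,t}(U_{it};\theta_1) - \psi_{i,M,t}(U_{it};\theta_2))R_iU_{it}$, so the summands are uniformly bounded in $\|\cdot\|_i$ regardless of how large $\|R_iU_{it}\|_i$ is. The truncation against $\|Z_t\|_2$ you describe is performed by the \emph{caller} of this proposition (see the proof of Theorem~\ref{them:convergence:rate:result:1}, where $\psi_{i,M,t}$ is constructed with the indicator $I_{\mathcal{E}_{M,t}}$ and a normalizing factor precisely so that the Lipschitz hypothesis here holds). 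Inside this proposition there is nothing to truncate and no residual to absorb.

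More seriously, your plan to obtain the $\psi_2$ increment bound by applying a scalar Bernstein inequality to $\langle w,\cdot\rangle_i$ and then taking $\sup_{\|w\|_i=1}$ does not work: the unit ball of $\Theta_i$ is infinite-dimensional, and pointwise sub-Gaussianity of projections does not give sub-Gaussianity of the norm. The paper avoids duality entirely and works directly in the Hilbert space. It writes the centered sum as a martingale $\sum_{j=1}^T f_{iTj}$ with $f_{iTj} = E(\mathbb{M}_{iT}|\mathcal{F}_j) - E(\mathbb{M}_{iT}|\mathcal{F}_{j-1})$, uses the $\phi$-mixing property to show $\|E(l_{it}|\mathcal{F}_k) - E(l_{it})\|_i \le 2\|l_{it}\|_i\,\phi(t-k)$ and hence $\|f_{iTj}\|_i \le 4C_\phi\|\theta_1-\theta_2\|_{i,\sup}$ where $C_\phi = \sum_{t\ge 0}\phi(t)$, and then applies the Pinelis (1994) $\cosh$-martingale argument, which delivers sub-Gaussian concentration for the norm of a Hilbert-valued martingale with bounded increments. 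This gives the clean increment bound $\bigl\|\|\mathbb{Z}_{iM}(\theta_1)-\mathbb{Z}_{iM}(\theta_2)\|_i\bigr\|_{\psi_2} \le c\,\|\theta_1-\theta_2\|_{i,\sup}$ with no truncation residual, after which chaining via \citet[Theorem~8.4]{kosorok2008} and a peeling argument over shells in $\|\theta\|_{i,\sup}$ produce the localized denominator $\sqrt{T}J_i(p_i,\|\theta\|_{i,\sup})+1$.
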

Proofs of Lemmas \ref{lemma:id} and \ref{bound:RiUit},
Propositions \ref{prop:maximum:term} and \ref{prop:concentration} 
can be found in supplement document.

\begin{proof}[Proof of Theorem \ref{them:convergence:rate:result:1}]

Since $Y_{it}=g_{i0}(X_{it})+Z_t'\beta_{i0}+e_{it}$, it follows that
\[ S_{i,M,\eta_i}^\star(\theta_{i0})=E\{S_{i,M,\eta_i}(\theta_{i0})\}=E\{-\frac{1}{T}\sum_{t=1}^{T}e_{it}R_iU_{it}+P_i\theta_{i0}\}.
\]
Also  $e_{it}=\epsilon_{it} - \gamma_{2i}'(\bar{\Gamma}_2\bar{\Gamma}_2')^{-1}\bar{\Gamma}_2\bar{v}_t =  \epsilon_{it} - \Delta_i \bar{v}_t$, so we have
\begin{eqnarray*}
\|S_{i,M,\eta_i}^\star(\theta_{i0})\|_i&=&\|E\{(\epsilon_{it} - \Delta_i \bar{v}_t)R_iU_{it}-P_i\theta_{i0}\}\|_i \\
&\leq& \| E\{(\epsilon_{it} - \Delta_i \bar{v}_t)R_iU_{it} \|_i + \|P_i\theta_{i0}\|_i \\
&=& \sup_{\|\theta\|_i=1} |\langle E\{(\epsilon_{it} - \Delta_i \bar{v}_t)R_iU_{it}\}, \theta \rangle_i| + \|P_i\theta_{i0}\|_i \\
&=& \sup_{\|\theta\|_i=1} | E\{(\epsilon_{it} - \Delta_i \bar{v}_t) (g(X_{it})+Z_t'\beta_i)\}| + \|P_i\theta_{i0}\|_i \\
&=& \sup_{\|\theta\|_i=1} | E\{\Delta_i \bar{v}_t (g(X_{it})+Z_t'\beta_i)\}| + \|P_i\theta_{i0}\|_i.
\end{eqnarray*}
Since 
\[
|g(X_{it})+Z_t'\beta_i| \leq (1 + \|Z_t\|_2)\|\theta\|_{i,sup} \leq C_i(1 + \|Z_t\|_2)(1+h_i^{-1/2})\|\theta\|_i
\]
and
\[
E\{\Delta_i \bar{v}_t h_i^{-1/2}\} \leq E\{(\Delta_i \bar{v}_t)^2 \}^{1/2} h_i^{-1/2}=O((Nh_i)^{-1/2}),
\]
there exists a constant $C'$, such that
\begin{equation}\label{prop:con:eqn1}
\sup_{\|\theta\|_i=1} | E\{\Delta_i \bar{v}_t (g(X_{it})+Z_t'\beta_i)\}| \leq \frac{C'}{(Nh_i)^{1/2}}.
\end{equation}
In the meantime, we have
\begin{equation}\label{prop:con:eqn2}
\|P_i\theta_{i0}\|_i=
\sup_{\|\theta\|_i=1}|\langle P_i\theta_{i0},\theta\rangle_i| =\sup_{\|\theta\|_i=1}|\eta_i\langle g_{i0},g\rangle_i|
\le\sqrt{\eta_i}\|g_{i0}\|_{\mathcal{H}_i},\,\,\,\, i\in[N].
\end{equation}

Consider an operator
\[
T_{1i}(\theta)=\theta-S_{i,M,\eta_i}^\star(\theta+\theta_{i0}),\,\,\,\,\theta\in\Theta_i.
\]
By Lemma \ref{lemma:id} we have for any $\theta\in\Theta_i$,
\[
T_{1i}(\theta)=\theta-DS_{i,M,\eta_i}^\star(\theta_{i0})\theta-S_{i,M,\eta_i}^\star(\theta_{i0})
=-S_{i,M,\eta_i}^\star(\theta_{i0}).
\]

Since $T_{1i}$ takes constant value and by (\ref{prop:con:eqn1}) and (\ref{prop:con:eqn2}), 
$T_{1i}$ is a contraction mapping
from $\mathbb{B}_i(\sqrt{\eta_i}\|g_{i0}\|_{\mathcal{H}_i} + \frac{C'}{(Nh_i)^{1/2}})$
to itself, where $\mathbb{B}_i(r)$ represents the $r$-ball in $(\Theta_i,\|\cdot\|_i)$.
By Contraction mapping theorem,
there exists a unique fixed point $\theta'\in\mathbb{B}_i(\sqrt{\eta_i}\|g_{i0}\|_{\mathcal{H}_i}+ \frac{C'}{(Nh_i)^{1/2}})$ such that
$T_{1i}(\theta')=\theta'$. Let $\theta_{\eta_i}=\theta'+\theta_{i0}$,
then $S_{i,M,\eta_i}^\star(\theta_{\eta_i})=0$.
Obviously, $\|\theta_{\eta_i}-\theta_{i0}\|_i\le \sqrt{\eta_i}\|g_{i0}\|_{\mathcal{H}_i} + \frac{C'}{(Nh_i)^{1/2}}$.

We fix an $i\in[N]$ and assume both $T,N$ to approach infinity.
Let $\mathcal{E}_M=\{\max_{1\le t\le T}\|Z_t\|_2\le \widetilde{C}T^{1/\alpha}\}$.
Proposition \ref{prop:maximum:term} says that when $\widetilde{C}$ is large,
$\mathcal{E}_M$ has probability approaching one.
Write $\mathcal{E}_{M,t}=\{\|Z_t\|_2\le \widetilde{C}T^{1/\alpha}\}$.
Then $\mathcal{E}_M=\cap_{t=1}^T\mathcal{E}_{M,t}$.
By Lemma \ref{bound:RiUit}, $\mathcal{E}_{M,t}$ implies that
$\|R_iU_{it}\|_i\le C_i(h_i^{-1/2}+\widetilde{C}T^{1/\alpha})$.

Consider another operator
\[
T_{2i}(\theta)=\theta-S_{i,M,\eta_i}(\theta_{\eta_i}+\theta),\,\,\,\,\theta\in\Theta_i.
\]
For $i\in[N],t\in[T]$, define
\[
\psi_{i,M,t}(U_{it};\theta)=\frac{\langle R_iU_{it},\theta\rangle_i I_{\mathcal{E}_{M,t}}}{\widetilde{C}C_iT^{1/\alpha}(h_i^{-1/2}+\widetilde{C}T^{1/\alpha})},\,\,\theta\in\Theta_i.
\]
It is easy to see that on $\mathcal{E}_M$, for any $\theta_1=(\beta_1,g_1),\theta_2=(\beta_2,g_2)\in\Theta_i$, by Proposition \ref{prop:Ri:Pi},
\begin{eqnarray}\label{prop:convergence:rate:eqn:1}
&&\|(\psi_{i,M,t}(U_{it};\theta_1)-\psi_{iMt}(U_{it};\theta_2))R_iU_{it}\|_i\nonumber\\
&=&\frac{|\langle R_iU_{it},\theta_1-\theta_2\rangle_i|\times\|R_iU_{it}\|_i}{C_i\widetilde{C}T^{1/\alpha}(h_i^{-1/2}
+\widetilde{C}T^{1/\alpha})}I_{\mathcal{E}_{M,t}}\nonumber\\
&=&\frac{|(g_1-g_2)(X_{it})+Z_t'(\beta_1-\beta_2)|\times\|R_iU_{it}\|_i}{C_i\widetilde{C}T^{1/\alpha}(h_i^{-1/2}
+\widetilde{C}T^{1/\alpha})}I_{\mathcal{E}_{M,t}}\nonumber\\
&\le&\frac{\|\theta_1-\theta_2\|_{i,\sup}\widetilde{C}T^{1/\alpha}C_i(h_i^{-1/2}+\widetilde{C}T^{1/\alpha})}{C_i\widetilde{C}T^{1/\alpha}(h_i^{-1/2}+\widetilde{C}T^{1/\alpha})}I_{\mathcal{E}_{M,t}}
\le \|\theta_1-\theta_2\|_{i,\sup}.
\end{eqnarray}

Notice the following decomposition:
\begin{eqnarray*}
T_{2i}(\theta)&=&\theta-S_{i,M,\eta_i}(\theta+\theta_{\eta_i})+S_{i,M,\eta_i}(\theta_{\eta_i})-S_{i,M,\eta_i}(\theta_{\eta_i})\\
&=&\theta-DS_{i,M,\eta_i}(\theta_{\eta_i})\theta-S_{i,M,\eta_i}(\theta_{\eta_i}).
\end{eqnarray*}
We first examine $S_{i,M,\eta_i}(\theta_{\eta_i})$ as follows:
\begin{eqnarray*}
S_{i,M,\eta_i}(\theta_{\eta_i})&=&S_{i,M,\eta_i}(\theta_{\eta_i})-E(S_{i,M,\eta_i}(\theta_{\eta_i}))\\
&=&-\frac{1}{T}\sum_{t=1}^T[(Y_{it}-\langle R_iU_{it},\theta_{\eta_i}\rangle_i)R_iU_{it}
-E((Y_{it}-\langle R_iU_{it},\theta_{\eta_i}\rangle_i)R_iU_{it})]\\
&=&-\frac{1}{T}\sum_{t=1}^T[e_{it}R_iU_{it}-E(e_{it}R_iU_{it})]\\
&&+\frac{1}{T}\sum_{t=1}^T[\langle R_iU_{it},\theta_{\eta_i}-\theta_{i0}\rangle_iR_iU_{it}
-E(\langle R_iU_{it},\theta_{\eta_i}-\theta_{i0}\rangle_iR_iU_{it})]
\end{eqnarray*}
Define $\xi_{it}=e_{it}R_iU_{it}$.
Following \citet[eqn. (3.2)]{dehling1983} and \citet[eqn. (1.11)]{bradley2005}, 
\begin{eqnarray*}
&&E\|\sum_{t=1}^T[e_{it}R_iU_{it}-E(e_{it}R_i U_{it})]\|_i^2\\
&=&E\|\sum_{t=1}^T[\xi_{it}-E(\xi_{it})]\|_i^2\\
&=&\sum_{t,t'=1}^T[E(\langle\xi_{it},\xi_{it'}\rangle_i)-\langle E(\xi_{it}),E(\xi_{it'})\rangle_i]\\
&\le& \sum_{t,t'=1}^T 15(\phi(|t-t'|)/2)^{1-4/\alpha}E(\|\xi_{it}\|_i^{\alpha/2})^{4/\alpha}.
\end{eqnarray*}
It follows from Assumption \ref{A1} \ref{A1:a}, \ref{A1:c}, \ref{A1:d},  and Lemma \ref{bound:RiUit} that
\begin{eqnarray*}
E(\|\xi_{it}\|_i^{\alpha/2})^2&=&
E(|e_{it}|^{\alpha/2}\|R_iU_{it}\|_i^{\alpha/2})^2\\
&\le& E(|e_{it}|^{\alpha/2})E(\|R_iU_{it}\|_i^\alpha)\le  c_0h_i^{-\alpha/2},
\end{eqnarray*}
where $c_0$ is an absolute constant. The existence of such $c_0$
is due to the fact $E(|e_{it}|^\alpha)<\infty$ and 
$E(\|Z_t\|_2^\alpha)<\infty$.
Therefore, it follows from Assumption \ref{A1} (b) that there exists an absolute constant $c_1$ such that
\[
E\|\sum_{t=1}^T[e_{it}R_iU_{it}-E(e_{it}R_iU_{it})]\|_i^2\le c_1Th_i^{-1}.
\]
Similarly, it can be shown that
\begin{eqnarray*}
&&E\|\sum_{t=1}^T[\langle R_iU_{it},\theta_{\eta_i}-\theta_{i0}\rangle_iR_iU_{it}
-E(\langle R_iU_{it},\theta_{\eta_i}-\theta_{i0}\rangle_iR_iU_{it})]\|_i^2\\
&\le&\sum_{t,t'=1}^T15(\phi(|t-t'|)/2)^{1-4/\alpha}E(\|R_iU_{it}\|_i^\alpha)^{4/\alpha}\|\theta_{\eta_i}-\theta_{i0}\|_i^2\\
&\le&c_1'Th_i^{-1},
\end{eqnarray*}
where $c_1'$ is an absolute constant. The last step follows from Proposition \ref{bound:RiUit},
i.e., 
\[
E(\|R_iU_{it}\|_i^\alpha)=O(h_i^{-\alpha/2}),
\]
and the fact $\|\theta_{\eta_i}-\theta_{i0}\|^2_i=O(\eta_i+\frac{1}{Nh_i})$, and the condition $\eta_i+\frac{1}{Nh_i}=O(h_i)$.

Therefore, we can choose $c_2$ to be large such that, with probability approaching one,
\[
\|S_{i,M,\eta_i}(\theta_{\eta_i})\|_i\le c_2(Th_i)^{-1/2}.
\]

On $\mathcal{E}_{M,t}$, for any unequal $\theta_1,\theta_2\in\Theta_i$, define
\[
\theta=\frac{\theta_1-\theta_2}{C_i(1+h_i^{-1/2})\|\theta_1-\theta_2\|_i}.
\] 
Write $\theta=(\beta,g)$.
Hence,  by Lemma \ref{bound:RiUit} (\ref{two:norms:isup:i}),
\begin{eqnarray*}
\|\theta\|_{i,\sup}&\le& 1,\\
\eta_i\|g\|_{\mathcal{H}_i}^2
&\le&\|\theta\|_i^2=\frac{\|\theta_1-\theta_2\|_i^2}{C_i^2(1+h_i^{-1/2})^2\|\theta_1-\theta_2\|_i^2}
\le C_i^{-2}h_i.
\end{eqnarray*}
This means that $\theta\in \mathcal{G}_i(p_i)$ with $p_i=C_i^{-1}(\eta_i^{-1}h_i)^{1/2}$.
Since $\eta_i^{-1}h_i$ tends to infinity as $(N,T)$ does, it is not of loss of
generality to assume that $p_i\ge1$.
Define 
\[
\mathbb{Z}_{iM}(\theta)=\frac{1}{\sqrt{T}}\sum_{t=1}^T[\psi_{i,M,t}(U_{it};\theta)R_iU_{it}
-E(\psi_{i,M,t}(U_{it};\theta)R_iU_{it})].
\]
It follows from (\ref{prop:convergence:rate:eqn:1}) and Proposition \ref{prop:concentration} that, with probability approaching one,
\begin{equation}\label{prop:convergence:rate:eqn:2}
\sup_{\theta\in\mathcal{G}_i(p_i)}\frac{\sqrt{T}\|Z_{iM}(\theta)\|_i}{\sqrt{T}J_i(p_i,\|\theta\|_{i,\sup})+1}\le C_0\sqrt{\log{N}+\log\log(TJ_i(p_i,1))}.
\end{equation}
Since $h_i=o(1)$, assume that $h_i^{-1}\ge1$. It follows from Lemma \ref{bound:RiUit} (\ref{bound:eqn:RiUit}) that
\begin{eqnarray}\label{prop:convergence:rate:eqn:3}
&&\|E\left(\langle R_iU_{it},\theta\rangle_iI_{\mathcal{E}_{M,t}^c}R_iU_{it}\right)\|_i\nonumber\\
&\le&E\left(|\langle R_iU_{it},\theta\rangle_i|I_{\mathcal{E}_{M,t}^c}
\|R_iU_{it}\|_i\right)\nonumber\\
&\le&E\left((1+\|Z_t\|_2)I_{\mathcal{E}_{M,t}^c}C_i(h_i^{-1/2}+\|Z_t\|_2)\right)\nonumber\\
&\le&C_ih_i^{-1/2}E\left((1+\|Z_t\|_2)^2I_{\mathcal{E}_{M,t}^c}\right)\nonumber\\
&\le&C_ih_i^{-1/2}E\left((1+\|Z_t\|_2)^\alpha\right)^{2/\alpha}
P(\mathcal{E}_{M,t}^c)^{1-2/\alpha}\nonumber\\
&\le&C_ih_i^{-1/2}E\left((1+\|Z_t\|_2)^\alpha\right)^{2/\alpha}\left(\frac{1}{\widetilde{C}^\alpha T}E(\|Z_t\|_2^\alpha)\right)^{1-2/\alpha}.
\end{eqnarray}
Consequently, with probability approaching one,
for any unequal $\theta_1,\theta_2\in\Theta_i$ On $\mathcal{E}_{M,t}$, it follows from 
(\ref{prop:convergence:rate:eqn:2}) and (\ref{prop:convergence:rate:eqn:3}) that
\begin{eqnarray}\label{prop:convergence:rate:eqn:4}
&&\|T_{2i}(\theta_1)-T_{2i}(\theta_2)\|_i\nonumber\\
&=&\bigg\|-\frac{1}{T}\sum_{t=1}^T[\langle R_iU_{it},\theta_1-\theta_2\rangle_iR_iU_{it}-E(\langle R_iU_{it},\theta_1-\theta_2\rangle_iR_iU_{it})]\bigg\|_i\nonumber\\
&=&\bigg\|-\frac{1}{T}\sum_{t=1}^T[\langle R_iU_{it},\theta\rangle_i R_iU_{it}-E(\langle R_iU_{it},
\theta\rangle_iR_iU_{it})]\times\|\theta_1-\theta_2\|_iC_i(1+h_i^{-1/2})\bigg\|_i\nonumber\\
&=&\|\theta_1-\theta_2\|_iC_i(1+h_i^{-1/2})
\bigg\|\left(-\frac{1}{T}\sum_{t=1}^T[\langle R_iU_{it},\theta\rangle_i I_{\mathcal{E}_{M,t}}R_iU_{it}-E(\langle R_iU_{it},
\theta\rangle_i I_{\mathcal{E}_{M,t}}R_iU_{it})]\right.\nonumber\\
&&\left.+E(\langle R_iU_{it},\theta\rangle_i I_{\mathcal{E}_{M,t}^c}R_iU_{it})\right)\bigg\|_i\nonumber\\
&=&\|\theta_1-\theta_2\|_iC_i(1+h_i^{-1/2})
\bigg\|\left(-T^{-1/2} C_i\widetilde{C}T^{1/\alpha}(h_i^{-1/2}+\widetilde{C}T^{1/\alpha})\mathbb{Z}_{iM}(\theta)+E(\langle R_iU_{it},\theta\rangle_i I_{\mathcal{E}_{M,t}^c}R_iU_{it})\right)\bigg\|_i\nonumber\\
&\le&\|\theta_1-\theta_2\|_iC_i(1+h_i^{-1/2})
\left(T^{-1/2} C_0C_i\widetilde{C}T^{1/\alpha}(h_i^{-1/2}+\widetilde{C}T^{1/\alpha})(J_i(p_i,1)+T^{-1/2})\right.\nonumber\\
&&\left.\times\sqrt{\log{N}+\log\log(TJ_i(p_i,1))}
+C_ih_i^{-1/2}E\left((1+\|Z_t\|_2)^\alpha\right)^{2/\alpha}\left(\frac{1}{\widetilde{C}^\alpha T}E(\|Z_t\|_2^\alpha)\right)^{1-2/\alpha}\right)\nonumber\\
&\le& c_3\|\theta_1-\theta_2\|_i,\nonumber\\
\end{eqnarray}
where $c_3$ is a constant in $(0,1/2)$. Note that (\ref{prop:convergence:rate:eqn:4})
holds also for $\theta_1=\theta_2$. The existence of such $c_3$ follows by 
condition $b_{N,p}=o_P(\sqrt{N}h)$.

In particular, letting $\theta_2=0$, one gets that for any $\theta_1\in\mathbb{B}(2c_2(Th_i)^{-1/2})$,
\begin{eqnarray*}
\|T_{2i}(\theta_1)\|_i&\le&\|T_{2i}(\theta_1)-T_{2i}(0)\|_i+\|T_{2i}(0)\|_i\\
&\le&c_3\|\theta_1\|_i+\|S_{i,M,\eta_i}(\theta_{\eta_i})\|_i\\
&\le&2c_2c_3(Th_i)^{-1/2}+c_2(Th_i)^{-1/2}<2c_2(Th_i)^{-1/2}.
\end{eqnarray*}
This implies that, with probability approaching one, $T_{2i}$ is a contraction mapping from
$\mathbb{B}(2c_2(Th_i)^{-1/2})$ to itself.
By contraction mapping theorem,
there exists uniquely a $\theta''\in\mathbb{B}(2c_2(Th_i)^{-1/2})$ such that
$T_{2i}(\theta'')=\theta''$, implying that $S_{i,M,\eta_i}(\theta_{\eta_i}+\theta'')=0$.
Thus, $\widehat{\theta}_i=\theta_{\eta_i}+\theta''$ is the penalized MLE of $\ell_{i,M,\eta_i}$.
This further shows that $\|\widehat{\theta}_i-\theta_{\eta_i}\|_i\le 2c_2(Th_i)^{-1/2}$.
Combined with $\|\theta_{\eta_i}-\theta_{i0}\|_i=O(\eta_i^{1/2}+(Nh_i)^{-1/2})$,
we have
$\|\widehat{\theta}_i-\theta_{i0}\|_i=O_P((Th_i)^{-1/2}+\eta_i^{1/2}+(Nh_i)^{-1/2})$.
\end{proof}

\subsection{Proofs in Section \ref{subsec:AsympDist-Hetero}}\label{proof:section:AsympDist-Hetero}
In this section, we prove Theorems \ref{lemma: hetero -- leading term},
\ref{theorem: joint distr 1.} and \ref{theorem: joint distr 3.}, and Corollary \ref{proposition:estimation:sigma:i:epsilon}.
\begin{proof}[Proof of Theorem \ref{lemma: hetero -- leading term}]
Define $$S_{i,M}(\theta) \equiv -\frac{1}{T}\sum_{t=1}^{T}(Y_{it}-\langle R_iU_{it}, \theta \rangle_i)R_iU_{it}$$ and $$S_{i}(\theta) \equiv E\{S_{i,M}(\theta)\}=E\{-\frac{1}{T}\sum_{t=1}^{T}(Y_{it}-\langle R_iU_{it}, \theta \rangle_i)R_iU_{it}\}.$$ 
Recall $S_{i,M,\eta_i}=S_{i,M}+P_i\theta$ and $S_{i,M,\eta_i}^\star(\theta)=S_i(\theta)+P_i\theta$.
Denote $\theta_i=\widehat{\theta}_i-\theta_{i0}$. Since $S_{i,M,\eta_i}(\widehat{\theta_i})=0$, we have 
$S_{i,M,\eta_i}(\theta_i+\theta_{i0})=0$. 
Therefore, 
\begin{eqnarray}
&& \| S_{i,M}(\theta_i+\theta_{i0})-S_{i}(\theta_i+\theta_{i0}) - (S_{i,M}(\theta_{i0})-S_{i}(\theta_{i0})) \|_i \nonumber \\
&=& \| S_{i,M,\eta_i}(\theta_i+\theta_{i0})-S_{i,M,\eta_i}^{\star}(\theta_i+\theta_{i0}) - (S_{i,M,\eta_i}(\theta_{i0})-S_{i,M,\eta_i}^{\star}(\theta_{i0})) \|_i \nonumber \\
&=& \|S_{i,M,\eta_i}^{\star}(\theta_i+\theta_{i0}) + S_{i,M,\eta_i}(\theta_{i0})-S_{i,M,\eta_i}^{\star}(\theta_{i0}) \|_i \nonumber \\
&=& \|DS_{i,M,\eta_i}^{\star}(\theta_{i0})\theta_i + S_{i,M,\eta_i}(\theta_{i0}) \|_i \nonumber \\
&=& \|\theta_i + S_{i,M,\eta_i}(\theta_{i0}) \|_i .
\end{eqnarray}

Consider an event $B_{i,M}=\{ \|\theta\|_i \leq r_{i,M} \equiv C_B ((Th_i)^{-1/2})+\eta_i^{1/2}+(Nh_i)^{-1/2} \}$. For some $C_B$ large enough, $B_{i,M}$ has probability approaching one. Let $d_{i,M}=C_i r_{i,M}(1+h_i^{-1/2})$, 
where $C_i$ is defined in lemma \ref{bound:RiUit}. We have $d_{i,M}=o(1)$. For any $\theta \in \Theta_i$, we further define $\bar{\theta}=(\bar{\beta}, \bar{g})=d_{i,M}^{-1} \theta /2$, where $\bar{\beta}=d_{i,M}^{-1}\beta/2$ and $\bar{g}=d_{i,M}^{-1}g/2$. Then, on event $B_{i,M}$, we have 
\begin{eqnarray*}
\| \bar{\theta} \|_{i,sup} \leq C_i (1+h_i^{-1/2}) \|\bar{\theta}\|_i= C_i (1+h_i^{-1/2}) d_{i,M}^{-1} \| \theta \|_i/2\leq\frac{1}{2} .
\end{eqnarray*}
Meanwhile,
\[
\| \bar{g} \|_{H_i}^2 = \frac{d_{i,M}^{-2}}{4} \eta_i^{-1} (\eta_i \| g \|_{H_i}^2)
\leq \frac{d_{i,M}^{-2}}{4} \eta_i^{-1} \|\theta\|_i^2 \leq \frac{d_{i,M}^{-2}}{4} \eta_i^{-1} r_{i,M}^2 \leq C_i^{-2}h_i \eta_i^{-1} .
\]
Let $p_i=C_i^{-1} (h_i \eta_i^{-1})^{1/2}$. Then $\|\bar{g}\|_{H_i} \leq p_i$. Therefore $\bar{\theta} \in \mathcal{G}_i(p_i)$. Since $(\eta_i h_i^{-1}) \to \infty$ as $(N, T) \to \infty$, $p_i > 1$ in general.

Recall $\mathcal{E}_{M,t}=\{\|Z_t\|_2\le \widetilde{C}T^{1/\alpha}\}$, as defined in the proof of Theorem \ref{them:convergence:rate:result:1}. Let
\[
\psi_{i,M,t}^d(U_{it};\bar{\theta})=\frac{\langle R_iU_{it},\theta\rangle_i I_{\mathcal{E}_{M,t}}}{2 d_M  \widetilde{C}C_iT^{1/\alpha}(h_i^{-1/2}+\widetilde{C}T^{1/\alpha})},\,\,\theta\in\Theta_i.
\]
Following the proof of Theorem \ref{them:convergence:rate:result:1}, on $\mathcal{E}_M$, for any $\theta_1=(\beta_1,g_1),\theta_2=(\beta_2,g_2)\in\Theta_i$, we have
\begin{eqnarray}
\|(\psi_{i,M,t}^d(U_{it};\bar{\theta}_1)-\psi_{iMt}^d(U_{it};\bar{\theta}_2))R_iU_{it}\|_i \le \|\bar{\theta}_1-\bar{\theta}_2\|_{i,\sup}.
\end{eqnarray}

Define
\[
\mathbb{Z}_{iM}^d(\bar{\theta})=\frac{1}{\sqrt{T}}\sum_{t=1}^T[\psi_{i,M,t}^d(U_{it};\bar{\theta})R_iU_{it}
-E(\psi_{i,M,t}^d(U_{it};\bar{\theta})R_iU_{it})].
\]
It follows from Proposition \ref{prop:concentration} that, with probability approaching one,
\begin{equation}
\sup_{\theta\in\mathcal{G}_i(p_i)}\frac{\sqrt{T}\|Z_{iM}^d(\bar{\theta})\|_i}{\sqrt{T}J_i(p_i,\|\bar{\theta}\|_{i,\sup})+1}\le C_0\sqrt{\log{N}+\log\log(TJ_i(p_i,1))}.
\end{equation}

Therefore 
\small
\begin{eqnarray}
&&\|\theta_i + S_{i,M,\eta_i}(\theta_{i0}) \|_i  \nonumber \\
&=& \| S_{i,M}(\theta_i+\theta_{i0})-S_{i}(\theta_i+\theta_{i0}) - (S_{i,M}(\theta_{i0})-S_{i}(\theta_{i0})) \|_i \nonumber \\
&=& \bigg\|\frac{1}{T} \sum_{t=1}^{T} [\langle R_i U_{it}, \theta_i \rangle_i R_iU_{it} - E\{\langle R_i U_{it}, \theta_i \rangle_i R_iU_{it}\}] \bigg\|_i \nonumber \\
&=& \bigg\|\frac{1}{T}\sum_{t=1}^T[\langle R_iU_{it},\theta_i\rangle_i I_{\mathcal{E}_{M,t}}R_iU_{it}-E(\langle R_iU_{it},
\theta_i\rangle_i I_{\mathcal{E}_{M,t}}R_iU_{it})] - E(\langle R_iU_{it},\theta_i\rangle_i I_{\mathcal{E}_{M,t}^c}R_iU_{it})\bigg\|_i \nonumber \\
&=& \bigg\| 2 d_M  \widetilde{C}C_iT^{1/\alpha-1}(h_i^{-1/2}+\widetilde{C}T^{1/\alpha})\left(\sqrt{T} Z_{i,M}^d(\bar{\theta}) \right) - E(\langle R_iU_{it},\theta_i\rangle_i I_{\mathcal{E}_{M,t}^c}R_iU_{it}) \bigg\|_i \nonumber \\
&\leq& 2 d_M  C_0\widetilde{C}C_iT^{1/\alpha-1}(h_i^{-1/2}+\widetilde{C}T^{1/\alpha})\left(\sqrt{T}J_i(p_i,\|\bar{\theta}\|_{i,\sup})+1\right)\sqrt{\log{N}+\log\log(TJ_i(p_i,1))} \nonumber \\
&& + C_ih_i^{-1/2}E\left((1+\|Z_t\|_2)^\alpha\right)^{2/\alpha}\left(\frac{1}{\widetilde{C}^\alpha T}E(\|Z_t\|_2^\alpha)\right)^{1-2/\alpha} .
\end{eqnarray}
\end{proof}


\begin{proof}[Proof of Theorem \ref{theorem: joint distr 1.}]
Define $\widehat{\theta}_i^h = ( \widehat{\beta}_i, h_i^{1/2}\widehat{g}_i )$, $\theta_{i0}^{*h} = ( \beta_{i0}^{\star}, h_i^{1/2}g_{i0}^{\star} )$, and $R_i^h u=( H_u^{(i)}, h_i^{1/2}T_u^{(i)} )$, where $\theta_{i0}^{\star}=(id-P_i)\theta_{i0}$. From Theorem \ref{lemma: hetero -- leading term}, we have 
\begin{equation}
\| \widehat{\theta}_i -\theta_{i0} + S_{i,M,\eta_i}(\theta_{i0}) \|_i = O_P(a_M).
\end{equation}
Since 
\[
S_{i,M,\eta_i}(\theta_{i0}) = - \frac{1}{T} \sum_{t=1}^{T} \left( Y_{it} - \langle R_iU_{it}, \theta_{i0} \rangle_i \right) R_iU_{it} + P_i\theta_{i0}
= - \frac{1}{T} \sum_{t=1}^{T} e_{it} R_iU_{it} + P_i\theta_{i0},
\]
Theorem \ref{lemma: hetero -- leading term} can be re-written as 
\begin{equation}
\| \widehat{\theta}_i -\theta_{i0}^{\star}  - \frac{1}{T} \sum_{t=1}^{T} e_{it} R_iU_{it} \|_i = O_P(a_M).
\end{equation} 
It implies $\| \widehat{\beta}_i - \beta_{i0}^{\star} - \frac{1}{T} \sum_{t=1}^{T}e_{it}H_{it}^{(i)} \|_{2} = O_P(a_M)$.
Further, we define $Rem = \widehat{\theta}_i -\theta_{i0}^{\star}  - \frac{1}{T} \sum_{t=1}^{T} e_{it} R_iU_{it}$ and $Rem^h = \widehat{\theta}_i^h -\theta_{i0}^{*h}  - \frac{1}{T} \sum_{t=1}^{T} e_{it} R_i^hU_{it}$. 
Then 
\begin{align*}
\| Rem^h - h_i^{1/2}Rem \|_i & = \left\| \left( (1-h_i^{1/2})(\widehat{\beta}_i - \beta_{i0}^{\star} - \frac{1}{T} \sum_{t=1}^{T} e_{it} H_{U_{it}}^{(i)}), 0 \right) \right\|_i \\
& \leq (1-h_i^{1/2})O\left(\left\| \widehat{\beta}_i - \beta_{i0}^{\star} - \frac{1}{T} \sum_{t=1}^{T} e_{it} H_{U_{it}}^{(i)}) \right\|_{2}\right)= O_P(a_M).
\end{align*}
Therefore, $\| Rem^h \|_i \leq \| Rem^h - h_i^{1/2}Rem \|_i + \|h_i^{1/2}Rem \|_i = O_P(a_M)$. 

The idea is to employ the Cram$\acute{\textrm{e}}$r-Wold device. For any $z$, we will obtain the limiting distribution of $T^{1/2}z'(\widehat{\beta}_i - \beta_{i0}^{\star}) + (Th_i)^{1/2}(\widehat{g}_i(x_0)-g_{i0}^{\star}(x_0))$, which is $T^{1/2}\langle R_iu, \widehat{\theta}_i^h - \theta_{i0}^{*h} \rangle_i$ by Proposition \ref{prop:Ri:Pi}, 
where $u=(x_0, z)$.

Since $T^{1/2}h^{-1/2}a_M=o(1)$, we have 
\begin{align*}
& \left| T^{1/2}\langle R_iu, \widehat{\theta}_i^h - \theta_{i0}^{*h} - \frac{1}{T}\sum_{t=1}^{T} e_{it}R_i^hU_{it} \rangle_i  \right| \\
& \leq T^{1/2} \| R_iu \|_i \| Rem^h \|_i \\
& = O_P(T^{1/2}h^{-1/2}a_M) = o_P(1).
\end{align*}
Then, to find the limiting distribution of $T^{1/2}\langle R_iu, \widehat{\theta}_i^h - \theta_{i0}^{*h} \rangle_i$, we only need to find the limiting distribution of $T^{1/2}\langle R_iu, \frac{1}{T}\sum_{t=1}^{T} e_{it}R_i^hU_{it} \rangle_i = T^{-1/2}\sum_{t=1}^{T} e_{it}(z'H_{U_{it}^{(i)}} + h^{1/2}T_{U_{it}}^{(i)}(x_0))$. Next we will use CLT to find its limiting distribution. 

Define $L(U_{it})=z'H_{U_{it}}^{(i)} + h^{1/2}T_{U_{it}}^{(i)}(x_0)$. Since $\epsilon_{it}$ and $v_{it}$ are 
i.i.d. across $t$, we have
\begin{align} \label{eq: variance}
s_T^2 & = Var\left( \sum_{t=1}^{T} e_{it}(z'H_{U_{it}}^{(i)} + h^{1/2}T_{U_{it}}^{(i)}(x_0)) \right) \nonumber \\
& = T \cdot Var\left( e_{it}L(U_{it}) \right) +  \sum_{t_1\neq t_2}^{T} Cov\left( e_{it_1}L(U_{it_1}), e_{it_2}L(U_{it_2}) \right) \nonumber \\
& = T\cdot E\left\{ e_{it}^2L(U_{it})^2 \right\} - T \cdot  E\left\{ e_{it}L(U_{it}) \right\}^2 +  \sum_{t_1\neq t_2}^{T} Cov\left( e_{it_1}L(U_{it_1}), e_{it_2}L(U_{it_2}) \right).
\end{align}

For the first term, 
\begin{align*}
E\left\{ e_{it}^2 L(U_{it})^2 \right\} = E\left\{ \left( \epsilon_{it} - \Delta_i \bar{v}_t \right)^2 L(U_{it})^2 \right\}
= \sigma^2_{\epsilon} E\left\{ L(U_{it})^2 \right\} + E\left\{ (\Delta_i \bar{v}_t)^2 L(U_{it})^2 \right\}.
\end{align*}
From Cauchy-Schwarz and H$\ddot{\textrm{o}}$lder's inequality, we can show that 
\begin{align*}
E\left\{ (\Delta_i \bar{v}_t)^2 L(U_{it})^2 \right\} & \leq \|\Delta_i\|_{2}^2 E\left\{ \|\bar{v}_t\|_{2}^2 L(U_{it})^2 \right\} \nonumber \\
& \leq \|\Delta_i\|_{2}^2 \left( E\left\{ \|\bar{v}_t\|_{2}^\alpha \right\} \right)^{2/\alpha} \left( E\left\{ |L(U_{it})|^{\frac{2\alpha}{\alpha-2}} \right\} \right)^{\frac{\alpha-2}{\alpha}}.
\end{align*}
We next will find the upper bound of $|L(U_{it})|$. 
\begin{align*}
L(U_{it}) = & z'H_{U_{it}}^{(i)} + h^{1/2}T_{U_{it}}^{(i)}(x_0) \\
= & z'(\Omega_i + \Sigma_i)^{-1} Z_t - z'(\Omega_i + \Sigma_i)^{-1} A_i(X_{it}) + h_i^{1/2}K_{X_{it}}^{(i)}(x_0) \\
& - h_i^{1/2} A_i'(x_{0})(\Omega_i + \Sigma_i)^{-1} Z_t + h_i^{1/2} A_i'(x_{0})(\Omega_i + \Sigma_i)^{-1} A_i(X_{it}).
\end{align*}
By the proof of lemma \ref{bound:RiUit}, we have
\begin{align*}
\left| z'(\Omega_i + \Sigma_i)^{-1} Z_t \right| &  \leq \left| c_1^{-1}z'Z_t \right| \leq c_{1z} \|Z_t\|_{2}, \\
\left| z'(\Omega_i + \Sigma_i)^{-1} A_i(X_{it}) \right| & = \left| z'(\Omega_i + \Sigma_i)^{-1/2} (\Omega_i + \Sigma_i)^{-1/2} A_i(X_{it}) \right| \\
& \leq \sqrt{z'(\Omega_i + \Sigma_i)^{-1} z} \sqrt{A_i'(X_{it})(\Omega_i + \Sigma_i)^{-1} A_i(X_{it})} \\
& \leq c_{2z}h_i^{-1/2}, \\
\left| A_i'(x_{0})(\Omega_i + \Sigma_i)^{-1} Z_t \right| & \leq c_{3z}h_i^{-1/2}\|Z_t\|_{2},  \\
\left|K_{X_{it}}^{(i)}(x_0)\right| & \leq C_{\varphi,i}^2h_i^{-1}, \\
\left| A_i'(x_{0})(\Omega_i + \Sigma_i)^{-1} A_i(X_{it}) \right| & \leq c_1^{-1}C_{\varphi,i}^2 C_{G_i}^2 h_i^{-1},
\end{align*}
where $c_{1z}=c_1^{-1}\|z\|_{2}$, $c_{2z}=c_1^{-1}C_{\varphi,i}C_{G_i} \|z\|_{2}$, and $c_{3z}=c_1^{-1}C_{\varphi,i}C_{G_i}$. Combine all these inequality together, we have 
\begin{align*} \label{eq: LU_it}
\left| L(U_{it}) \right| \leq c_{4z} \| Z_t \|_{2} + c_{5z}h_i^{-1/2},
\end{align*}
where $c_{4z}=c_{1z}+c_{3z}$ and $c_{5z}=c_{2z}+C_{\varphi,i}^2+c_1^{-1}C_{\varphi,i}^2C_{G_i}^2$.
Therefore, there exists a constant $c_{6z}$, such that 
\begin{align*}
E\left\{ |L(U_{it})|^{\frac{2\alpha}{\alpha-2}} \right\} & = E\left\{ \left|c_{4z} \| Z_t \|_{2} + c_{5z}h_i^{-1/2} \right|^{\frac{2\alpha}{\alpha-2}} \right\} \\
& \leq c_{6z} \left( E\left\{ \|Z_t\|_{2}^{\frac{2\alpha}{\alpha-2}} \right\} + h_i^{-\frac{\alpha}{\alpha-2}} \right) \\
& \leq c_{6z} \left( E\left\{ \|Z_t\|_{2}^{\alpha} \right\}^{\frac{2}{\alpha-2}} + h_i^{-\frac{\alpha}{\alpha-2}} \right).
\end{align*}
Since $Z_t=({f}_{1t}', \bar{X}_t')'$ and $\bar{X}_t  = \bar{\Gamma}_1' {f}_{1t} + \bar{\Gamma}_2' {f}_{2t} + \bar{{v}}_t$, we have
\begin{align*}
 E\left\{ \|Z_t\|_{2}^{\alpha} \right\} & = E\left\{ \left( \|f_{1t}\|_{2}^2 +  \|\bar{X}_{t}\|_{2}^2 \right)^{\alpha/2} \right\} \\
 & \leq c_{7} \left( E\left\{  \|f_{1t}\|_{2}^\alpha \right\} + E\left\{ \|\bar{X}_{t}\|_{2}^{\alpha}  \right\} \right) \\
 & \leq c_{8} \left( E\left\{  \|f_{1t}\|_{2}^\alpha \right\} + E\left\{ \|f_{1t}\|_{2}^{\alpha} \right\} + E\left\{ \|f_{2t}\|_{2}^{\alpha} \right\} + E\left\{ \|\bar{v}_{t}\|_{2}^{\alpha}  \right\} \right), 
\end{align*}
where $c_7$ and $c_8$ are constants. By Assumption \ref{A1}, $E\left\{ \|f_{1t}\|_{2}^{\alpha} \right\}$, $E\left\{ \|f_{2t}\|_{2}^{\alpha} \right\}$, and $E\left\{ |v_{it}|^{\alpha} \right\}$ are finite. Using Marcinkiewicz and Zygmund inequality in \citet{shao2003}, we can show that
\begin{align*}
E\left\{ \|\bar{v}_{t}\|_{2}^{\alpha}  \right\} & = \frac{1}{N^\alpha} E\left\{ \left( \sum_{l=1}^{d} \left(\sum_{i=1}^{N} v_{ilt}\right)^2 \right)^{\alpha/2} \right\} \\
& \leq c_9 \frac{1}{N^\alpha} \sum_{l=1}^{d} E\left\{ \left( \left(\sum_{i=1}^{N} v_{ilt}\right)^2 \right)^{\alpha/2} \right\} \\
& = c_9 \frac{1}{N^\alpha} \sum_{l=1}^{d} E\left\{ \left( \sum_{i=1}^{N} v_{ilt}\right) ^{\alpha} \right\} \\
& \leq c_{10} \frac{d}{N^\alpha} \frac{1}{N^{1-\alpha/2}} \sum_{i=1}^{N} E\left\{  v_{ilt} ^{\alpha} \right\} \\
& = O_P\left(\frac{1}{N^{\alpha/2}}\right),
\end{align*}
where $c_9$ and $c_{10}$ are two constants. As $N \to \infty$, $E\left\{ \|Z_t\|_{2}^{\alpha} \right\} = O_P(1)$ and $E\left\{ |L(U_{it})|^{\frac{2\alpha}{\alpha-2}} \right\} = O_P(h_i^{-\frac{\alpha}{\alpha - 2}})$.
Thus, we have 
\begin{equation} \label{eq: vt LUit}
E\left\{ (\Delta_i \bar{v}_t)^2 L(U_{it})^2 \right\} \leq \|\Delta_i\|_{2}^2 \left( E\left\{ \|\bar{v}_t\|_{2}^\alpha \right\} \right)^{2/\alpha} \left( E\left\{ |L(U_{it})|^{\frac{2\alpha}{\alpha-2}} \right\} \right)^{\frac{\alpha-2}{\alpha}} = O_P(\frac{1}{Nh_i}).
\end{equation} 
By assumption $(Nh_i)^{-1} = o_P(1)$, we have $E\left\{ (\Delta_i \bar{v}_t)^2 L(U_{it})^2 \right\} = o_P(1)$.

Next, we will find the order of $E\left\{ L(U_{it})^2 \right\}$. As it is shown in \citet{CS15}, as $\eta_i \to 0$ and $T \to \infty$, $E\left( |L(U_{it})|^2 \right) \to \alpha^2_{x_0} + 2 (z + \beta_{x_0})' \Omega^{-1}_i \alpha_{x_0} + (z + \beta_{x_0})'\Omega_i^{-1}(z + \beta_{x_0})$ for any given $z$.

From above derivatives, we have found the leading term of the first term in equation (\ref{eq: variance}). Now, we turn to the second term. It is straightforward to obtain that
$$E\left\{ e_{it}L(U_{it})\right\}^2 = E\left\{ (\epsilon_{it}-\Delta_i \bar{v}_t)L(U_{it})\right\}^2 \leq \|\Delta_i\|_2^2 E\left\{ \|\bar{v}_{t}\|_2^2\right\} E\left\{ L(U_{it})^2 \right\},$$ where $E\left\{ \|\bar{v}_{t}\|_2^2\right\} \leq E\left\{ \|\bar{v}_{t}\|_2^ \alpha\right\}^{2/\alpha} = O_P(1/N)$ and $E\left\{ L(U_{it})^2 \right\}=O_P(1)$. So $E\left\{ e_{it}L(U_{it})\right\}^2 = O_P(1/N)$. So the second term is of a smaller order than the first term.

For the last term of equation (\ref{eq: variance}), we can show that
\begin{align*}
&\sum_{t_1\neq t_2}^{T} Cov\left( e_{it_1}L(U_{it_1}), e_{it_2}L(U_{it_2}) \right) \\
\leq & \sum_{t_1\neq t_2}^{T} | Cov\left( \Delta_i \bar{v}_{t_1}L(U_{it_1}),  \Delta_i \bar{v}_{t_2}L(U_{it_2}) \right) | \\
\leq &  8 \sum_{t_1\neq t_2}^{T} \alpha(|t_1-t_2|)^{1-4/ \alpha} E\left\{ |\Delta_i\bar{v}_{t_1}L(U_{it_1})|^{\alpha/2} \right\} ^ {4/\alpha} \\
\leq & 8 \cdot 2^{1-4/ \alpha} \sum_{t_1=1}^{T} \ \sum_{t_2=1, t_2\neq t_1}^{T} \phi(|t_1-t_2|)^{1-4/ \alpha} E\left\{ |\Delta_i\bar{v}_{t_1}L(U_{it_1})|^{\alpha/2} \right\} ^ {4/\alpha},
\end{align*}
where $\alpha(|t_1-t_2|)$ and $\phi(|t_1-t_2|)$ are the $\alpha-mixing$ and $\phi-mixing$ coefficients for $\Delta_i \bar{v}_t L(U_{it})$. We have $2 \cdot  \alpha(|t_1-t_2|)< \phi(|t_1-t_2|)$. The second inequality is from Proposition 2.5 in \citet{fanYao2003}. Similar to equation (\ref{eq: vt LUit}), we can show that $E\left\{ |\Delta_i\bar{v}_{t_1}L(U_{it_1})|^{\alpha} \right\}=O_P((Nh_i)^{-\alpha/2})$. So $E\left\{ |\Delta_i\bar{v}_{t_1}L(U_{it_1})|^{\alpha/2} \right\} ^ {4/\alpha} = O_P(\frac{1}{Nh_i})$. From Assumption \ref{A1}, we have $\sum_{t_2=1,t_2\neq t_1}^{\infty} \phi(|t_1-t_2|)^{1-4/ \alpha} < \infty$. Thus, 
\begin{equation*}
\sum_{t_1\neq t_2}^{T} Cov\left( e_{it_1}L(U_{it_1}), e_{it_2}L(U_{it_2}) \right)=O_P(\frac{T}{Nh_i}).
\end{equation*}
Again, the third term of equation (\ref{eq: variance}) is of a smaller order than the first term. 

To combine all above equations together, we have
\[
\frac{1}{T} s^2_T \to \sigma_s^2 \qquad
or  \qquad
\frac{1}{T} s^2_T \to (z', 1) \Psi^{\star} (z', 1)',
\] 
where $\sigma_s^2=\sigma^2_{\epsilon} \left( \alpha^2_{x_0} + 2 (z + \beta_{x_0})' \Omega^{-1}_i \alpha_{x_0} + (z + \beta_{x_0})'\Omega_i^{-1}(z + \beta_{x_0}) \right)$.

In order to use the CLT with mixing conditions, we need to show $E\left\{ \left|e_{it}L(U_{it})\right|^{\alpha/2} \right\}$ is finite. 
\begin{align*}
E\left\{ \left|e_{it}L(U_{it})\right|^{\alpha/2} \right\} & \leq E\left\{ |e_{it}|^\alpha \right\}^{1/2} E\left\{ |L(U_{it})|^\alpha \right\}^{1/2} \\
& \leq c_{6z} E\left\{ |e_{it}|^\alpha \right\}^{1/2} E\left\{ \| Z_t \|_2^\alpha + h_i^{-\alpha/2} \right\}^{1/2}.
\end{align*}
Since $E\left\{ \|Z_t\|_{2}^{\alpha} \right\} = O_P(1)$ and $E\left\{ |e_{it}|^\alpha \right\} \leq \infty$, then $E\left\{ \left| e_{it}L(U_{it})\right|^{\alpha/2} \right\} \leq \infty$. From the above proof, we have $E\left\{ e_{it}L(U_{it})\right\}^2 = O_P(1/N)$, which implies 
\begin{equation}\label{eq:bound:eit:L:Uit}
	E\left\{ e_{it}L(U_{it})\right\} = O_P(1/\sqrt{N}). 
\end{equation}
Then $E\left\{ \left| e_{it}L(U_{it}) - E\left\{ e_{it}L(U_{it})\right\} \right|^{\alpha/2} \right\} < \infty$. Since $\phi-$mixing condition is stronger than $\alpha-$mixing, by the Theorem 2.21 of \citet{fanYao2003}, we have 
\begin{align*}
\frac{1}{\sqrt{T}} \left( \sum_{t=1}^{T} \left( e_{it}L(U_{it}) -  E\left\{ e_{it}L(U_{it})\right\} \right) \right) & \overset{d}{\to} N(0, \sigma_s^2).
\end{align*}
\end{proof}


\begin{proof}[Proof of Theorem \ref{theorem: joint distr 3.}]
Notice 
$$\theta_{i0}-\theta_{i0}^{\star}=P_i\theta_{i0}=\begin{pmatrix}
-(\Omega_i+\Sigma_i)^{-1}V_i(G_i,W_ig_{i0})\\
W_ig_{i0}+A_i'(\Omega_i+\Sigma_i)^{-1}V_i(G_i,W_ig_{i0})
\end{pmatrix}.
$$
Hence the result of the theorem holds if we can show that, for any $x\in \mathcal{X}_i$,
\begin{eqnarray*}
	\sqrt{T}E(e_{it}U_{it}^{(i)})=o_P(1),\\
	\sqrt{Th_i}E(e_{it}H_{it}^{(i)}(x)),\\
	\sqrt{T}(\Omega_i+\Sigma_i)^{-1}V_i(G_i,W_ig_{i0})&=&o_P(1),\\
	\sqrt{Th_i}A_i'(x)(\Omega_i+\Sigma_i)^{-1}V_i(G_i,W_ig_{i0})&=&o_P(1),\\
	\alpha_x=\beta_x=0,\\
	\lim_{T\to \infty}\sqrt{Th_i}W_ig_{i0}(x)=0.
\end{eqnarray*}
First by (\ref{eq:bound:eit:L:Uit}), we can see that the follow hold true:
\begin{eqnarray*}
	\sqrt{T}E(e_{it}U_{it}^{(i)})&=&O_P(\sqrt{T/N})=o_P(1),\\
	\sqrt{Th_i}E(e_{it}H_{it}^{(i)}(x))&=&O_P(\sqrt{T/N})=o_P(1).
\end{eqnarray*}
Similar to \cite{SC13}, we have
\begin{equation}\label{eq:W:i:on:basis}
W_i\varphi_\nu^{(i)}=\frac{\eta_i \rho_\nu^{(i)}}{1+\eta_i \rho_\nu^{(i)}}\varphi_\nu^{(i)},\,\,\nu\ge1.
\end{equation}
Now we see from \ref{A2} and (\ref{eq:W:i:on:basis})
\begin{eqnarray*}
	V_i(G_{i,k},W_ig_{i0})=\sum_{\nu\geq 1}V_i(G_{i,k},\varphi_\nu^{(i)})V_i(g_{i0},\varphi_\nu^{(i)})\frac{\eta_i \rho_\nu^{(i)}}{1+\eta_i\rho_\nu^{(i)}}.
\end{eqnarray*}
So by Cauchy's inequality,
\begin{eqnarray*}
	|V_i(G_{i,k},W_ig_{i0})|^2&\leq& \sum_{\nu\geq 1}|V_i(G_{i,k},\varphi_\nu^{(i)})|^2 \frac{\eta_i  \rho_\nu^{(i)}}{1+\eta_i\rho_\nu^{(i)}}\sum_{\nu\geq 1}|V_i(g_{i0},\varphi_\nu^{(i)})|^2\frac{\eta_i \rho_\nu^{(i)}}{(1+\eta_i\rho_\nu^{(i)})}\\
	&\leq& \eta_i \textrm{ const }  \sum_{\nu\geq 1}|V_i(G_{i,k},\varphi_\nu^{(i)})|^2 \frac{\eta_i  \rho_\nu^{(i)}}{1+\eta_i\rho_\nu^{(i)}}\\
	&\leq& \eta_i \textrm{ const } \sum_{\nu\geq 1}|V_i(G_{i,k},\varphi_\nu^{(i)})|^2k_\nu \frac{\eta_i  \rho_\nu^{(i)}}{(1+\eta_i\rho_\nu^{(i)})k_\nu}\\
	&\leq& \eta_i \textrm{ const },
\end{eqnarray*}
For $\|A_{i,k}\|_{\sup}$, by definition,
\begin{eqnarray*}
	A_{i,k}(x)=\langle A_{i,k}, K_{x}^{(i)}\rangle_{\star,i}
	\leq V_i(G_{i,k},K_{x}^{(i)})
	=\sum_{\nu \geq 1}\frac{V_i(G_{i,k}, \varphi_\nu^{(i)})}{1+\eta_i \rho_\nu^{(i)}} \varphi_\nu^{(i)}(x).
\end{eqnarray*}
By boundedness condition of $\varphi_{\nu}^{(i)}$ (Assumption \ref{A2}) and Cauchy's inequality, we have
\begin{eqnarray*}
	|A_{i,k}(x)|^2&\leq& \sum_{\nu \geq 1}|V_i(G_{i,k}, \varphi_\nu^{(i)})|^2k_{\nu}|\varphi_\nu^{(i)}(x)|^2\sum_{\nu \geq 1}\frac{1}{k_\nu (1+\eta_i \rho_\nu^{(i)})^2}\\
	&\leq& \textrm{ const } \sum_{\nu \geq 1}|V_i(G_{i,k}, \varphi_\nu^{(i)})|^2k_{\nu}\sum_{\nu \geq 1}\frac{1}{k_\nu}=O(1).
\end{eqnarray*}
The above holds uniformly for all $x\in \mathcal{X}_i$.
So 
\begin{eqnarray*}
		\sqrt{T}(\Omega_i+\Sigma_i)^{-1}V_i(G_i,W_ig_{i0})=O(\sqrt{T\eta_i})=o(1),
\end{eqnarray*}
and
\begin{eqnarray*}
\sqrt{Th_i}A_i'(x)(\Omega_i+\Sigma_i)^{-1}V_i(G_i,W_ig_{i0})=O(\sqrt{Th_i\eta_i})=o(1).
\end{eqnarray*}
By the above uniform boundedness of $A_{i,k}$, we have $\beta_x=0$. 
Similarly, by (\ref{eq:W:i:on:basis}), we have
\begin{eqnarray*}
	W_iA_{i,k}(x)&=&\sum_{\nu \geq 1}\frac{V_i(G_{i,k}, \varphi_\nu^{(i)})}{1+\eta_i \rho_\nu^{(i)}} \eta_i\rho_\nu^{(i)}\varphi_\nu^{(i)}(x).
\end{eqnarray*}
Hence
\begin{eqnarray*}
	|W_iA_{i,k}(x)|^2&\leq& \sum_{\nu \geq 1}|V_i(G_{i,k}, \varphi_\nu^{(i)})|^2k_{\nu}|\varphi_\nu^{(i)}(x)|^2\sum_{\nu \geq 1}\frac{(\eta_i\rho_\nu^{(i)})^2}{k_\nu (1+\eta_i \rho_\nu^{(i)})^2}\\
	&\leq& \textrm{ const } \sum_{\nu \geq 1}|V_i(G_{i,k}, \varphi_\nu^{(i)})|^2k_{\nu}
	\sum_{\nu \geq 1}\frac{1}{k_\nu }=O_(1),
\end{eqnarray*}
and this rate is uniform for all $x \in \mathcal{X}_i$. So $\alpha_x=0$.
By definition of RKHS and $W_i$,
\begin{eqnarray*}
|W_ig_{i0}(x)|&=&|\langle W_ig_{i0},K_x^{(i)}\rangle_{\star,i}|\\
&=&|\eta_i\langle g_{i0},K_x^{(i)}\rangle_{\mathcal{H}_i}|\\
&\leq&  \|g_{i0}\|_{\mathcal{H}_i}\|K_x^{(i)}\|_{\mathcal{H}_i}\\
&\leq&  \sqrt{\eta_i}\|g_{i0}\|_{\mathcal{H}_i}\sqrt{\langle K_x^{(i)}, K_x^{(i)} \rangle_{\star,i}}\\
&\leq&  \sqrt{\eta_i}\|g_{i0}\|_{\mathcal{H}_i}O({h_i^{-1/2}})\\
&=&O(\sqrt{\eta_i/h_i}).
\end{eqnarray*}
Thus $\sqrt{Th_i}W_ig_{i0}(x)=o(1)$.
\end{proof}

\begin{proof}[Proof of Corollary \ref{proposition:estimation:sigma:i:epsilon}]
By (\ref{basic:model 2}), 
\begin{eqnarray*}
 	Y_{it}-\widehat{g}_i(X_{it})-Z_t'\widehat{\beta}_i&=&\epsilon_{it}-\Delta_i \bar{v}_t+(g_{i0}(X_{it})-\widehat{g}_i(X_{it}))+Z_t'(\beta_{i0}-\widehat{\beta}_i).
\end{eqnarray*}
Hence $\sum_{t=1}^T\{Y_{it}-\widehat{g}_i(X_{it})-Z_t'\widehat{\beta}_i-\epsilon_{it}\}^2/T \leq 4\sum_{t=1}^T(A_{1t}+A_{2t}+A_{3t})/T$, where
$A_{1t}=\|\Delta_i\|_2^2\|\bar{v}_t\|_2^2$,
$A_{2t}=(g_{i0}(X_{it})-\widehat{g}_i(X_{it}))^2$,
$A_{3t}=\|Z_t\|_2^2\|\widehat{\beta}_i-\beta_{i0}\|_2^2$.
By uniform boundedness of $\Delta_i$ and i.i.d. of $v_{it}$ in Assumption \ref{A1}, we have
$E(A_{t1})\le \|\Delta_i\|_2^2 tr(\Sigma_{v})/N$,
where $\Sigma_v$ is the covariance matrix of $v_{it}$. So $\sum_{t=1}^T A_{1t}=O_P(1/N)=o_P(1)$.
Also, by Lemma \ref{bound:RiUit} and Theorem \ref{them:convergence:rate:result:1},
$$\|g_{i0}-\widehat{g}_i\|_{\sup}=O_P(h_i^{-1/2}\|\theta_{i0}-\widehat{\theta}_i\|_i)=O_P(h_i^{-1/2}r_{i,M}).$$ 
So it follows that $\sum_{t=1}^T A_{2t}/T=O_P(h_i^{-1}r_{i,M}^2)=o_P(1)$.
By Proposition \ref{prop:maximum:term}, Lemma \ref{bound:RiUit} and Theorem \ref{them:convergence:rate:result:1} , we have $A_{3t}=O_P(T^{2/\alpha}r_{i,M}^2)$ uniformly for all $t=1,2,\ldots,T$. So $\sum_{t=1}^T A_{3t}/T=O_P(T^{2/\alpha}r_{i,M}^2)=o_P(1)$.
Hence $\sum_{t=1}^T\{Y_{it}-\widehat{g}_i(X_{it})-Z_t'\widehat{\beta}_i\}^2/T-\sum_{t=1}^T\epsilon_{it}^2/T=o_P(1)$. The result follows  by applying Law of Large Number:
$\sum_{t=1}^T\epsilon_{it}^2/T \to \sigma_\epsilon^2$, in probability.
\end{proof}

\subsection{Proofs in Section \ref{subsec:rate:homo}}\label{proof:subsec:rate:homo}
We prove Theorem \ref{theorem:homo convergence:rate:result:1}.
Let us first introduce some notation. Define
\begin{align*}
	F_1'=(f_{11},f_{12},...,f_{1T}), F_2'=(f_{21},f_{22},...,f_{2T}), \\
	\epsilon_i=(\epsilon_{i1}, \epsilon_{i2},..., \epsilon_{iT})', e_i=(e_{i1}, e_{i2},..., e_{iT}),\\
	\bar{X}=(\bar{X}_1, \bar{X}_2, ..., \bar{X}_T),\bar{v}=(\bar{v}_1, \bar{v}_2,..., \bar{v}_T), \bar{X}^{\star}=\bar{X}-\bar{v},\\
	P_\star=I_T-\Sigma_\star'(\Sigma_\star \Sigma_\star')^{-1}\Sigma_\star, \widetilde{\Sigma}=\Sigma-\Sigma_\star.
\end{align*}
We can rewrite (\ref{basic:model}) as $(Y_i-\ip{\kxi, g_0})'=\gamma_{1i}'F_1'+\gamma_{2,i}'F_2'+\epsilon_i'$, (\ref{DGP of X}) as $\bar{X}^{*}=\bar{X}-\bar{v}=\bar{\Gamma}_1'F_1'+\bar{\Gamma}_2'F_2'$ and (\ref{basic:model 2}) as $Y_i-\ip{\kxi, g_0}=\Sigma'\beta_i+e_i$.
Notice that we have $\Sigma=(F_1, \bar{X}')'$, $\Sigma_{\star}=(F_1, \bar{X}^{*\prime})'$. By definition, we have $\Sigma P=0, \Sigma_{\star} P_{\star}=0$. Hence $F_1'P=0, \bar{X}P=0, F_1'P_{\star}=0, \bar{X}^{\star}P_{\star}=0, F_2'P_{\star}=0$.
Define $S_{M,\eta}^{\star}(g)=E\{ S_{M,\eta}(g)|\mathcal{F}_1^T \}$. By the proof of Lemma \ref{lemma:id}, it can be easily shown that $DS_{M,\eta}^{\star}(g)=id$, for any $g \in \mathcal{H}$. 
We also have 
\begin{equation}\label{eq:bound:for:norm:K:sup:norm}
\| K_{x} \|^2 \leq C^2_\varphi h^{-1},\,\,\,\,
\sup_{x\in \mathcal{X}} \|g(x)\| \leq C_\varphi h^{-1/2}\|g\|.
\end{equation}

The proof of Theorem \ref{theorem:homo convergence:rate:result:1} also relies on the following
Lemmas \ref{lemma:P:minus:Pstar}, \ref{concentration:homo} and \ref{lemma:S:M:eta:g:eta}.
Proofs of these lemmas are provided in supplement document.
 
\begin{lemma}\label{lemma:P:minus:Pstar}
Suppose that Assumptions \ref{A1}, \ref{A5} and \ref{A7} hold.
Then the following holds:
\begin{equation}\label{P:and:Pstar}
E\left(\|P-P_\star\|_{\textrm{op}}\right)=O(N^{-1/2}),
\end{equation}
\begin{eqnarray}\label{F2:P:and:Pstar}
\max_{1\le i\le N}\|E\{\gamma_{2i}'F_2'(P-P_\star)K_{\mathbb{X}_i}|\mathcal{F}_1^T\}\|
=O_P\left(\sqrt{\frac{T}{Nh}}
+\frac{T}{N\sqrt{h}}\right),
\end{eqnarray}
\begin{equation}\label{F3:and:Pstar}
E(\|\widetilde{\Sigma}\widetilde{\Sigma}'\|_{\textrm{op}})=O(T/N).
\end{equation}
where $F_2=(f_{21},\ldots,f_{2T})'$.
\end{lemma}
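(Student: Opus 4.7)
The three claims are of increasing difficulty, and I would prove them in the order (\ref{F3:and:Pstar}), (\ref{P:and:Pstar}), (\ref{F2:P:and:Pstar}), letting each feed into the next. The starting point is the block structure of $\widetilde{\Sigma}=\Sigma-\Sigma_\star$: its first $q_1$ rows vanish and its last $d$ rows equal $\bar v=(\bar v_1,\ldots,\bar v_T)$. Hence $\widetilde{\Sigma}\widetilde{\Sigma}'$ is block-diagonal with nontrivial block $\sum_{t=1}^T\bar v_t\bar v_t'$, so bounding operator norm by trace and using $E\|\bar v_t\|_2^2=O(1/N)$ (from Assumption \ref{A1}) gives (\ref{F3:and:Pstar}) at once.

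For (\ref{P:and:Pstar}), I would substitute $\Sigma=\Sigma_\star+\widetilde{\Sigma}$ into $P_\star-P=\Sigma'(\Sigma\Sigma')^{-1}\Sigma-\Sigma_\star'(\Sigma_\star\Sigma_\star')^{-1}\Sigma_\star$ and apply the resolvent identity $(\Sigma\Sigma')^{-1}-(\Sigma_\star\Sigma_\star')^{-1}=-(\Sigma\Sigma')^{-1}(\Sigma\Sigma'-\Sigma_\star\Sigma_\star')(\Sigma_\star\Sigma_\star')^{-1}$. This produces six summands, each carrying at least one factor of $\widetilde{\Sigma}$. Each summand can be controlled in operator norm via $\|\widetilde{\Sigma}\|_{\mathrm{op}}=O_P(\sqrt{T/N})$ (from the previous step), $\|\Sigma\|_{\mathrm{op}},\|\Sigma_\star\|_{\mathrm{op}}=O_P(\sqrt{T})$, and $\|(\Sigma\Sigma')^{-1}\|_{\mathrm{op}},\|(\Sigma_\star\Sigma_\star')^{-1}\|_{\mathrm{op}}=O_P(1/T)$, all from Assumption \ref{A7}. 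H\"older's inequality against the $\zeta$-th moment bounds in Assumption \ref{A7} then upgrades the in-probability bound to $E\|P-P_\star\|_{\mathrm{op}}=O(N^{-1/2})$.

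The delicate third part rests on the observation that $F_2$ is a deterministic linear function of $\Sigma_\star$: by (\ref{DGP X 3}) there is a constant matrix $A_0=[-(\bar\Gamma_2\bar\Gamma_2')^{-1}\bar\Gamma_2\bar\Gamma_1',\,(\bar\Gamma_2\bar\Gamma_2')^{-1}\bar\Gamma_2]$ with $F_2'=A_0\Sigma_\star$, so $F_2'P_\star=0$. Combined with $\Sigma P=0$, this collapses the object of interest to $\gamma_{2i}'F_2'(P-P_\star)K_{\mathbb{X}_i}=-\Delta_i\bar v\,P\,K_{\mathbb{X}_i}$. Splitting $P=I-\Sigma'(\Sigma\Sigma')^{-1}\Sigma$ handles the bound in two pieces. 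For the $I$-piece, since $v_{jt}$ with $j\ne i$ is independent of $(X_{it},\mathcal{F}_1^T)$ with zero mean, the conditional expectation collapses to $E\{\bar v_t K_{X_{it}}\mid \mathcal{F}_1^T\}=N^{-1}E\{v_{it}K_{X_{it}}\mid \mathcal{F}_1^T\}$; the bound $\|K_{X_{it}}\|=O(h^{-1/2})$ from (\ref{eq:bound:for:norm:K:sup:norm}), summation over $t$, and multiplication by $\Delta_i$ yield the $O_P(T/(N\sqrt h))$ contribution. For the projection piece $\Delta_i\bar v\,\Sigma'(\Sigma\Sigma')^{-1}\Sigma\, K_{\mathbb{X}_i}$, I would further condition on $\{v_{jt}:j\ne i\}\cup\mathcal{F}_1^T$, split $\bar v$ into a mean-zero piece of scale $1/N$ driven by $v_{it}$ and a measurable piece, apply Cauchy-Schwarz together with $\|K_{\mathbb{X}_i}\|^2=O(Th^{-1})$ and the operator-norm estimates already derived; the independence forces an $N^{-1/2}$ scaling, delivering the $O_P(\sqrt{T/(Nh)})$ contribution. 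A union bound using the $\zeta$-th moment conditions of Assumption \ref{A7} then handles the maximum over $i\in[N]$.

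The main obstacle will be the bookkeeping in this projection piece: carefully isolating the $i$-th individual's contribution in $\bar v$ so as to recover the $N^{-1/2}$ scale from what is otherwise a quadratic-form-type quantity in correlated random matrices, while simultaneously preserving that scale uniformly in $i$. The first two parts, by contrast, are essentially perturbation arguments once the block structure of $\widetilde{\Sigma}$ is exploited.
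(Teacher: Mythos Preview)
Your treatment of (\ref{F3:and:Pstar}) and (\ref{P:and:Pstar}) matches the paper's: both use the block structure of $\widetilde{\Sigma}$, the resolvent identity, and H\"older against the $\zeta$-th moment bounds of Assumption~\ref{A7}. For (\ref{F2:P:and:Pstar}) you take a genuinely different route. The paper keeps the four-term expansion $P_\star-P=\Sigma'[(\Sigma\Sigma')^{-1}-(\Sigma_\star\Sigma_\star')^{-1}]\Sigma+\Sigma_\star'(\Sigma_\star\Sigma_\star')^{-1}\widetilde{\Sigma}+\widetilde{\Sigma}'(\Sigma_\star\Sigma_\star')^{-1}\Sigma_\star+\widetilde{\Sigma}'(\Sigma_\star\Sigma_\star')^{-1}\widetilde{\Sigma}$ and bounds each piece against a test function $g$, with the second term requiring the refined computation of $E\{\|\widetilde{\Sigma}\tau_i g\|_2^2\mid\mathcal{F}_1^T\}$. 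Your collapse $\gamma_{2i}'F_2'(P-P_\star)=-\Delta_i\bar v P$ (using $F_2'P_\star=0$ and $\bar X P=F_1'P=0$) is correct and cleaner, and your $I$-piece analysis is exactly the paper's key step specialized to $\bar v K_{\mathbb{X}_i}$.

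Where your plan goes wrong is the projection piece. Conditioning on $\{v_{jt}:j\ne i\}\cup\mathcal{F}_1^T$ does \emph{not} isolate the $N^{-1/2}$ scaling: after the split $\bar v=N^{-1}v_{i\cdot}+w$, the ``measurable'' part $w$ is \emph{not} $\mathcal{F}_1^T$-measurable, so it does not pull out of the outer conditional expectation; and $\Sigma$ itself depends on $v_{it}$ through $\bar X_t$, so the inner expectation of the $w$-piece does not factor either. The $N^{-1/2}$ actually comes from a different place: grouping $\bar v\,\Sigma'$ and observing $\|\bar v\,\Sigma'\|_{\textrm{op}}=O_P(\sqrt{T/N})$ (this is the same calculation as $\|\Sigma_\star\widetilde{\Sigma}'\|_F$ in the paper). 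Then $\|\bar v\,\Sigma'(\Sigma\Sigma')^{-1}\Sigma\,\tau_i g\|_2\le\|\bar v\,\Sigma'\|_{\textrm{op}}\|(\Sigma\Sigma')^{-1}\|_{\textrm{op}}\|\Sigma\|_{\textrm{op}}\|\tau_i g\|_2=O_P(\sqrt{T/N})\cdot O_P(T^{-1})\cdot O_P(\sqrt{T})\cdot O(\sqrt{T/h})=O_P(\sqrt{T/(Nh)})$, which is what you need. So your decomposition works, but the mechanism you named is not the one doing the work.

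Finally, no union bound over $i$ is needed (and one would typically cost you extra factors). The bounds you derive depend on $i$ only through $\|\Delta_i\|_2$, which is uniformly bounded by Assumption~\ref{A1}\ref{A1:d}; hence the $O_P$ constants are automatically free of $i$, exactly as the paper notes.
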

\begin{lemma}\label{concentration:homo}
Suppose that Assumptions \ref{A1}, \ref{A5} and \ref{A7} hold.
Let $p=p(M)\ge1$ be an $\mathcal{F}_1^T$-measurable sequence indexed by $M$ and 
let $\psi(\mathbb{X},g): \bbR^T\times\mathcal{H}\to\bbR^T$ be a measurable function
satisfying $\psi(\mathbb{X},0)\equiv0$ and the following Lipschitz condition:
\[
\|\psi(\mathbb{X},g_1)-\psi(\mathbb{X},g_2)\|_2\le L\sqrt{h/T}\|g_1-g_2\|_{\sup},\,\,\,\,
\textrm{for any $g_1,g_2\in\mathcal{H}$,}
\] 
where $L>0$ is a constant.
Then with $M\rightarrow\infty$, 
\begin{eqnarray*}
\sup_{g\in\mathcal{G}(p)}\|Z_M(g)\|=
O_P\left(1+\sqrt{\log\log\left(NJ(p,1)\right)}(J(p,1)+N^{-1/2})\right),
\end{eqnarray*}
where 
\[
Z_M(g)=\frac{1}{\sqrt{N}}\sum_{i=1}^N[\psi(\mathbb{X}_i,g)'PK_{\mathbb{X}_i}-E\{\psi(\mathbb{X}_i,g)'P K_{\mathbb{X}_i}|\mathcal{F}_1^T\}].
\]
\end{lemma}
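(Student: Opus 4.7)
My plan is to view $g \mapsto Z_M(g)$ as an empirical process on the pseudo-metric space $(\mathcal{G}(p), \|\cdot\|_{\sup})$ and apply a chaining inequality after decoupling $P$ from the cross-sectional noise. First, I decompose $P = P_\star + (P - P_\star)$, with $P_\star$ being $\mathcal{F}_1^T$-measurable, and split $Z_M(g) = Z_M^\star(g) + \Delta_M(g)$ accordingly. The Lipschitz hypothesis gives $\|\psi(\mathbb{X}_i, g)\|_2 \leq L\sqrt{h/T}\|g\|_{\sup}$, while (\ref{eq:bound:for:norm:K:sup:norm}) implies $\|K_{\mathbb{X}_i}\|^2 \leq TC_\varphi^2 h^{-1}$; combined with $\|P - P_\star\|_{\textrm{op}} = O_P(N^{-1/2})$ from Lemma \ref{lemma:P:minus:Pstar}, a direct Cauchy--Schwarz bound yields $\sup_{g \in \mathcal{G}(p)}\|\Delta_M(g)\|_\mathcal{H} = O_P(1)$, which accounts for the leading ``$1$'' in the stated bound.

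Next, conditional on $\mathcal{F}_1^T$, the summands
$$
W_i(g) := \psi(\mathbb{X}_i,g)'P_\star K_{\mathbb{X}_i} - E\{\psi(\mathbb{X}_i,g)'P_\star K_{\mathbb{X}_i}\mid\mathcal{F}_1^T\}
$$
are i.i.d.\ in $i$, mean zero, and take values in the Hilbert space $\mathcal{H}$. The same norm bounds give $\|W_i(g_1) - W_i(g_2)\|_\mathcal{H} \leq 2LC_\varphi\|g_1-g_2\|_{\sup}$ almost surely on the high-probability event where $c_\varphi \leq C$. A Hoeffding-type inequality for Hilbert-space valued bounded random variables then yields the conditional sub-Gaussian increment bound
$$
P\!\left(\|Z_M^\star(g_1)-Z_M^\star(g_2)\|_\mathcal{H} \geq s \,\big|\, \mathcal{F}_1^T\right) \leq 2\exp\!\left(-cs^2/\|g_1-g_2\|_{\sup}^2\right),
$$
so that $Z_M^\star$ is sub-Gaussian on $(\mathcal{G}(p), \|\cdot\|_{\sup})$ in the Orlicz $\psi_2$-norm sense.

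Applying a standard Orlicz-norm chaining maximal inequality, with reference point $g = 0$ (so $Z_M^\star(0) = 0$ because $\psi(\mathbb{X},0)\equiv 0$) and noting that $\mathcal{G}(p)$ has $\|\cdot\|_{\sup}$-radius at most $1$, gives $\big\|\sup_{g\in\mathcal{G}(p)}\|Z_M^\star(g)\|_\mathcal{H}\big\|_{\psi_2} \lesssim J(p,1)$. A Markov-type tail bound at scale $r \asymp \sqrt{\log\log(NJ(p,1))}$ then produces the stated rate, with the $N^{-1/2}$ term arising from the correction (second summand) in the definition of $J$ evaluated at resolution $\delta \asymp 1/\sqrt{N}$. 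Unconditioning on $\mathcal{F}_1^T$ using the boundedness of $c_\varphi$ and $\|(\Sigma_\star\Sigma_\star'/T)^{-1}\|_{\textrm{op}}$ (guaranteed by Assumptions \ref{A5} and \ref{A7}) converts the conditional $O_P$ statement into an unconditional one. The principal obstacle is that every ingredient of the chaining---$c_\varphi$, $P_\star$, and the eigenpairs $(\rho_\nu,\varphi_\nu)$---is $\mathcal{F}_1^T$-measurable rather than deterministic, so all Orlicz norms, increment bounds, and packing numbers must be interpreted conditionally and then unconditioned, which is what forces the stochastic $O_P$ conclusion rather than a cleaner exponential tail statement.
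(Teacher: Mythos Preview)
Your proposal is correct and follows essentially the same route as the paper: split $P=P_\star+(P-P_\star)$, control the difference piece directly via Lemma~\ref{lemma:P:minus:Pstar} (the paper packages this as Lemma~\ref{lemma:P:Pstar}) to produce the $O_P(1)$ term, then use conditional independence given $\mathcal{F}_1^T$ together with a Pinelis/Hoeffding sub-Gaussian bound in Hilbert space and Orlicz-$\psi_2$ chaining to handle $Z_M^\star$. The paper's only substantive extra step is a peeling argument (in the style of the proof of Proposition~\ref{prop:concentration}) over shells $\{J(p,\|g\|_{\sup})\asymp N^{-1/2}e^l\}$, which is where the additive $N^{-1/2}$ actually enters---your attribution of it to the second summand of $J$ at resolution $\delta\asymp N^{-1/2}$ is not quite right, but since $J(p,1)$ dominates $N^{-1/2}$ anyway this does not affect the conclusion.
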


\begin{lemma}\label{lemma:S:M:eta:g:eta}
Under conditions in Theorem \ref{theorem:homo convergence:rate:result:1}, 
$$\|S_{M,\eta}(g_{\eta})\|=O_P(\frac{1}{\sqrt{NTh}}+\frac{1}{N\sqrt{h}})+o_P(\sqrt{\eta}).$$
\end{lemma}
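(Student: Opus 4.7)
The plan is to mimic the heterogeneous proof structure (where $\theta_{\eta_i}$ was introduced as the population minimizer) and split $S_{M,\eta}(g_\eta)$ into an approximation-error piece and a noise piece, then control each piece separately.

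First I would let $g_\eta\in\mathcal{H}$ be the (random, $\mathcal{F}_1^T$-measurable) population minimizer satisfying $S^\star_{M,\eta}(g_\eta)\equiv E\{S_{M,\eta}(g_\eta)\mid\mathcal{F}_1^T\}=0$. Its existence follows from a contraction-mapping argument on the operator $T(g)=g-S^\star_{M,\eta}(g+g_0)$, entirely parallel to the argument used for $\theta_{\eta_i}$ in the proof of Theorem \ref{them:convergence:rate:result:1}; the same argument also gives $\|g_\eta-g_0\|=O_P(\sqrt{\eta})$. With this in hand, because $S^\star_{M,\eta}(g_\eta)=0$ I can write $S_{M,\eta}(g_\eta)=S_{M,\eta}(g_\eta)-S^\star_{M,\eta}(g_\eta)$ and substitute the representation $Y_i=\tau_i g_0+\Sigma'\beta_i+e_i$ from (\ref{basic:model:2:homogeneity}). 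Since $\Sigma P=0$, the $\Sigma'\beta_i$ term disappears, leaving the decomposition $S_{M,\eta}(g_\eta)=-(\mathrm{I})-(\mathrm{II})$, where
\begin{equation*}
(\mathrm{I})=\frac{1}{NT}\sum_{i=1}^N\left\{(\tau_i(g_0-g_\eta))'P K_{\mathbb{X}_i}-E[(\tau_i(g_0-g_\eta))'P K_{\mathbb{X}_i}\mid\mathcal{F}_1^T]\right\},
\end{equation*}
\begin{equation*}
(\mathrm{II})=\frac{1}{NT}\sum_{i=1}^N\left\{e_i'P K_{\mathbb{X}_i}-E[e_i'P K_{\mathbb{X}_i}\mid\mathcal{F}_1^T]\right\}.
\end{equation*}

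For (II) I would use $e_{it}=\epsilon_{it}-\Delta_i\bar{v}_t$ and treat the two summands separately. The $\epsilon$-piece is automatically $\mathcal{F}_1^T$-conditionally centred by independence of $\epsilon_{it}$, so I would compute $E\{\|\cdot\|^2\mid\mathcal{F}_1^T\}$ directly: orthogonality across $(i,t)$ collapses it to $\sigma_\epsilon^2 (NT)^{-2}\sum_i\mathrm{tr}(P\,\mathcal{K}^{(i)}P)$, where $\mathcal{K}^{(i)}_{ts}=K(X_{it},X_{is})$; since the kernel $K$ has trace of order $h^{-1}$ per observation (by $\sum_\nu 1/(1+\eta\rho_\nu)=h^{-1}$), this yields $O_P(1/(NTh))$, i.e.\ the $1/\sqrt{NTh}$ rate. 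For the $\bar{v}$-piece I would write $\bar{v}_t=N^{-1}\sum_j v_{jt}$ and split the double sum at $i=j$ versus $i\neq j$. For $i\neq j$, $v_{jt}$ is independent of $\mathbb{X}_i$ and of $\mathcal{F}_1^T$, so the centring vanishes; a direct conditional variance computation, using $\|\Delta_i\|_2$ uniformly bounded (Assumption \ref{A1}\ref{A1:d}) together with the same $O_P(h^{-1})$ kernel-trace bound, gives $O_P(1/(N\sqrt{h}))$. The diagonal $i=j$ contribution carries an extra $1/N$ from the $\bar{v}_t$ averaging and is therefore of the smaller order $O_P(1/(N^{3/2}\sqrt{Th}))$.

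For (I), since $g_0-g_\eta$ is fixed given $\mathcal{F}_1^T$ and $\|g_0-g_\eta\|=O_P(\sqrt{\eta})$, I would invoke Lemma \ref{concentration:homo} after normalising $g_0-g_\eta$ to land in $\mathcal{G}(p)$ with $p=(c_\varphi\sqrt{h^{-1}\eta})^{-1}$. The Lipschitz hypothesis of that lemma is satisfied by $\psi(\mathbb{X}_i,g)=\tau_i g$ with constant $L\asymp\sqrt{T/h}\cdot C_\varphi h^{-1/2}$ (via \eqref{eq:bound:for:norm:K:sup:norm}), and the resulting bound on the centred empirical process is $\|(\mathrm{I})\|=O_P(\sqrt{\eta/N}\cdot b_{N,p}/\sqrt{h})$, which is $o_P(\sqrt{\eta})$ under the standing rate assumption $b_{N,p}=o_P(\sqrt{N}h)$ already imposed in Theorem \ref{theorem:homo convergence:rate:result:1}. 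Adding the three contributions gives exactly the claimed rate. The main technical obstacle will be bookkeeping in the $\bar{v}$-piece of (II): one must track separately the $i=j$ terms (where $v_{it}$ and $K_{X_{it}}$ are correlated, so centring is nontrivial) and the $i\neq j$ terms, and verify that the cross-unit covariances $E[\langle (PK_{\mathbb{X}_i})_t,(PK_{\mathbb{X}_{i'}})_t\rangle\mid\mathcal{F}_1^T]$ are $O_P(h^{-1})$ rather than larger; this is where Lemma \ref{lemma:P:minus:Pstar} (replacing $P$ by $P_\star$, which is independent of the $v$'s) will be needed.
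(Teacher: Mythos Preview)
Your decomposition into (I) and (II), the treatment of (I) via Lemma \ref{concentration:homo}, and the second-moment bound for the $\epsilon$-part of (II) match the paper's $T_3$ and $T_1$ exactly. Two points, however, need correction.

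First, the bound $\|g_\eta-g_0\|=O_P(\sqrt{\eta})$ is too optimistic. The contraction-mapping step you invoke does not give only $\sqrt{\eta}$: in the homogeneous setting $S^\star_{M,\eta}(g_0)$ contains, besides $W_\eta g_0$, the nonzero term $E\{\frac{1}{NT}\sum_i\gamma_{2i}'F_2'(P-P_\star)K_{\mathbb{X}_i}\mid\mathcal{F}_1^T\}$ coming from Lemma \ref{lemma:P:minus:Pstar}, so one only gets $\|g_\eta-g_0\|=O_P((NTh)^{-1/2}+(N\sqrt{h})^{-1}+\sqrt{\eta})$. This does not break the lemma, since (I) is still $o_P(\|g_\eta-g_0\|)$ and the extra terms are absorbed; but the intermediate claim is false.

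Second, and more substantively, your centring argument for the $i\neq j$ part of the $\bar{v}$-piece is wrong as written. Although $v_{jt}$ is independent of $\mathbb{X}_i$ and of $\mathcal{F}_1^T$, the projection $P=I_T-\Sigma'(\Sigma\Sigma')^{-1}\Sigma$ depends on $\bar{X}_t$ and hence on \emph{every} $v_{jt}$. Therefore $E[\Delta_i\, v_{j\cdot}\,P\,K_{\mathbb{X}_i}\mid\mathcal{F}_1^T]\neq 0$ in general, and the ``direct conditional variance computation'' you sketch does not apply. Your last sentence brings in Lemma \ref{lemma:P:minus:Pstar} only to bound cross-unit covariances, but in fact the $P\to P_\star$ replacement is needed \emph{before} you can claim the centring vanishes.

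The paper sidesteps this by an algebraic reduction you do not use: from $\Sigma P=0$ one has $\bar{X}P=0$ and $F_1'P=0$, hence $\Delta_i\bar{v}P=-\Delta_i\bar{\Gamma}_2'F_2'P$, so the entire $\bar{v}$-piece becomes $\frac{1}{NT}\sum_i(\text{const}_i)F_2'PK_{\mathbb{X}_i}$ minus its conditional mean. Since $F_2'P_\star=0$, this equals $\frac{1}{NT}\sum_i(\text{const}_i)F_2'(P-P_\star)K_{\mathbb{X}_i}$, and the explicit four-term expansion of $P-P_\star$ from Lemma \ref{lemma:P:minus:Pstar} yields the rate $O_P((NTh)^{-1/2}+(N\sqrt{h})^{-1})$ directly; an $i=j$ versus $i\neq j$ split appears only in the handling of one of the four sub-terms. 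Your route can be repaired by replacing $P$ by $P_\star$ at the outset (at cost $O_P(1/(N\sqrt{h}))$), after which your independence and variance arguments go through --- but this is both more work and a different order of operations than your write-up suggests.
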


\begin{proof}[Proof of Theorem \ref{theorem:homo convergence:rate:result:1}]
The proof consists of two parts.

\textit{Part one}: Define $T_1(g)=g-S^{\star}_{M,\eta}(g+g_0)$. So 
\[
T_{1}(g)=g-DS_{M,\eta}^\star(g_{0})g-S_{M,\eta}^\star(g_{0})
=-S_{M,\eta}^\star(g_{0}).
\]
So we have
\begin{align*}
\|S_{M,\eta}^\star(g_{0})\| &= \| E\{-\frac{1}{NT}\sum_{i=1}^N(Y_{i}-\langle K_{\mathbb{X}_i}
g_0 \rangle)'P K_{\mathbb{X}_i}+W_\eta g_0 | \mathcal{F}_1^T\} \|  \\
&= \| E\{-\frac{1}{NT}\sum_{i=1}^Ne_i'P K_{\mathbb{X}_i} - W_\eta g_0 | \mathcal{F}_1^T\} \|\\
&= \| E\{-\frac{1}{NT}\sum_{i=1}^N(\gamma_{1i}'F_1'+\gamma_{2,i}'F_2'+\epsilon_i')P K_{\mathbb{X}_i} - W_\eta g_0 | \mathcal{F}_1^T\} \|\\
&= \| E\{-\frac{1}{NT}\sum_{i=1}^N\gamma_{2,i}'F_2'P K_{\mathbb{X}_i} - W_\eta g_0 | \mathcal{F}_1^T\} \|,\\
&= \| E\{-\frac{1}{NT}\sum_{i=1}^N\gamma_{2,i}'F_2'(P-P_{\star}) K_{\mathbb{X}_i} - W_\eta g_0 | \mathcal{F}_1^T\}\|,
\end{align*}
where the second last equation is using independence of  $\epsilon_i$ and $\mathbb{X}_i, F_1, F_2$.
By directly calculations,
\begin{align*}
	\|W_{\eta}g_0\|=\sup_{\|g\|=1}|\ip{W_{\eta}g_0,g}|=\sup_{\|g\|=1}|\eta\ip{g_0,g}_{\mathcal{H}}|\leq \sup_{\|g\|=1}\sqrt{\eta}\|g_0\|_{\mathcal{H}} \sqrt{\eta}\|g\|_{\mathcal{H}}\leq \sqrt{\eta}\|g_0\|_{\mathcal{H}}.
\end{align*}
For the first term,  by Lemma \autoref{lemma:P:minus:Pstar},
\begin{eqnarray*}
\| E\{-\frac{1}{NT}\sum_{i=1}^N\gamma_{2,i}'F_2'(P-P_{\star}) K_{\mathbb{X}_i}|\f1t\}\|=O_P(\frac{1}{\sqrt{NTh}}+\frac{1}{N\sqrt{h}}).
\end{eqnarray*}
As a consequence, $\|S^{\star}_{M,\eta}(g_0)\|=O_P(1/\sqrt{NTh}+1/({N\sqrt{h}})+\sqrt{\eta})$.
Hence with probability approaching one, we have
$$\|T_1(g)\|\leq C(\frac{1}{\sqrt{NTh}}+\frac{1}{N\sqrt{h}}+\sqrt{\eta}) \equiv r,$$
for some constant $C>0$.
Notice that $T_1$ is also a contraction mapping from $\bar{B}(0,r)$ to $\bar{B}(0,r)$, so there exists a $\widetilde{g}_1 \in \bar{B}(0,r) \subset \mathcal{H}$ such that,
$T_1(\widetilde{g}_1)=\widetilde{g}_1$.
By Taylor expansion, 
\begin{align*}
	\widetilde{g}_1=T_1(\widetilde{g}_1)=\widetilde{g}_1-S^{\star}_{M,\eta}(\widetilde{g}_1+g_0),
\end{align*}
and hence it follows that
$S^{\star}_{M,\eta}(\widetilde{g}_1+g_0)=0$.
Let $g_{\eta}=\widetilde{g}_1+g_0$, we have with probability approaching one, 
\begin{align}
	\label{ghat:minus:g0:rate:geta:minus:g0}
	\|g_{\eta}-g_0\|\leq  C(\frac{1}{\sqrt{NTh}}+\frac{1}{N\sqrt{h}}+\sqrt{\eta}),\,\,\,\,
	S^{\star}_{M,\eta}(g_{\eta})=0.
\end{align}

\textit{Part two}: Define $T_2(g)=g-S_{M,\eta}(g_{\eta}+g)$, for any $g, \Delta g \in \mathcal{H}$. 
Since
\begin{eqnarray*}
	\Delta g&=& DS^{\star}_{M,\eta}(g)\Delta g\\
	&=&E(DS_{M,\eta}(g)\Delta g|\f1t)
	=\frac{1}{NT}\sum_{i=1}^N E\{\tau_i\Delta g'P\kxi+W_{\eta}\Delta g|\f1t\},
\end{eqnarray*}
we have that 
\begin{eqnarray*}
\|T_2(g_1)-T_2(g_2)\| &=& \|(g_1-g_2)-\left(S_{M,\eta}(g_{\eta}+g_1)-S_{M,\eta}(g_{\eta}+g_2)\right)\| \\
&=& \|DS^{\star}_{M,\eta}(g)(g_1-g_2)-\left(S_{M,\eta}(g_{\eta}+g_1)-S_{M,\eta}(g_{\eta}+g_2)\right)\| \\
& = &\|\frac{1}{NT}\sum_{i=1}^N\{\tau_i(g_1-g_2)'P\kxi+W_{\eta}(g_1-g_2)\\
&&-E\left( \tau_i(g_1-g_2)'P\kxi+W_{\eta}(g_1-g_2) |\f1t\right)\}\|\\
&=& \|\frac{1}{NT}\sum_{i=1}^N\{\tau_i(g_1-g_2)'P\kxi-E\left( \tau_i(g_1-g_2)'P\kxi |\f1t\right)\}\|\\
&\equiv&\|\kappa(g_1-g_2)\|,
\end{eqnarray*}
where $\kappa(g)=\sum_{i=1}^N(\tau_ig'P\kxi-E\{\tau_ig'P\kxi|\f1t\})/(NT)$.
Let $\Psi(\mathbb{X}_i,g)=\sqrt{h}\tau_ig/T$. It follows that, for any $g_1,g_2\in\mathcal{H}$,
\begin{align*}
	\|\Psi(\mathbb{X}_i,g_1)-\Psi(\mathbb{X}_i,g_1)\|_2&\leq \frac{\sqrt{h}}{T}\|\tau_i(g_1-g_2)\|_2\\
	&\leq \frac{\sqrt{h}}{T}\sqrt{\sum_{t=1}^T|g_1(X_{it})-g_2(X_{it})|^2}\leq \sqrt{\frac{h}{T}}\|g_1-g_2\|_{\sup}.
\end{align*}
Let $\widetilde{g} \equiv (g_1-g_2)/(c_\varphi h^{-1/2}\|g_1-g_2\|)$, then
$$\|\widetilde{g}\|_{\sup}=\frac{\|g_1-g_2\|_{\sup}}{c_\varphi h^{-1/2}\|g_1-g_2\|}\leq 1,$$
and
$$\|\widetilde{g}_{\eta}\|_{\mathcal{H}}=\frac{\|g_1-g_2\|_{\mathcal{H}}}{c_\varphi h^{-1/2}\|g_1-g_2\|}\leq \frac{1}{c_\varphi\sqrt{h^{-1}\eta}}.$$
So $\widetilde{g} \in \mathcal{G}(p)$, with $p=1/({c_\varphi\sqrt{h^{-1}\eta}})$.
By Lemma \ref{concentration:homo}, 
$$\frac{\|Z_M(g_1-g_2)\|}{c_\varphi h^{-1/2}\|g_1-g_2\|}=\|Z_M(\widetilde{g})\|=O_P\left(1+\sqrt{\log\log\left(NJ(p,1)\right)}(J(p,1)+N^{-1/2})\right)=O_P(b_{N,p}).$$
So by assumption $b_{N,p}=o_P(\sqrt{N}h)$, we have
\begin{align}
\|\kappa(g_1-g_2)\|&=\|\frac{1}{\sqrt{Nh}}Z_M(g_1-g_2)\|\nonumber\\
&=c_\varphi\frac{1}{\sqrt{N}h}\|g_1-g_2\|O_P\left(b_{N,p}\right)
\label{ghat:minus:g0:rate:kappa}
=O_P(\frac{b_{N,p}}{\sqrt{N}h})\|g_1-g_2\|=o_P(1)\|g_1-g_2\|,
\end{align}
where the terms $O_P, o_P$ do not depend on $g_1, g_2$.
Hence with probability approaching one, uniformly for any $g_1, g_2$,
$\|T_2(g_1)-T_2(g_2)\|\leq \frac{1}{2}\|g_1-g_2\|$.
Also with probability approaching one, uniformly for $g$,
$$\|T_2(g)\|\leq \|T_2(g)-T_2(0)\|+\|T_2(0)\|\leq \frac{1}{2}\|g\|+\|S_{M,\eta}(g_{\eta})\|.$$
By Lemma \ref{lemma:S:M:eta:g:eta}, with probability approaching one, 
$$\|S_{M,\eta}(g_{\eta})\|\leq C\left(\frac{1}{\sqrt{NTh}}+\frac{1}{N\sqrt{h}}+\sqrt{\eta}\right)\equiv R/2,$$
for some $C>0$.
Hence with probability approaching one, it follows that
$\sup_{\|g\|\leq R}\|T_2(g)\|\leq R/2+R/2=R$.
The above implies that $T_2$  is a contraction mapping from $\bar{B}(0,R)$ to itself.
By contraction mapping theorem, 
there exists $\widetilde{g}_2 \in \bar{B}(0,R)$ such that
$\widetilde{g}_2=T_2(\widetilde{g}_2)=\widetilde{g}_2-S_{M,\eta}(g_{\eta}+\widetilde{g}_2)$.
Hence $S_{M,\eta}(g_{\eta}+\widetilde{g}_2)=0$. So $\widehat{g}=g_{\eta}+\widetilde{g}_2$.
Therefore,
\[
\|\widehat{g}-g_0\| \leq \| g_{\eta}-g_0\|+\|\widetilde{g}_2\|
=O_P\left(\frac{1}{\sqrt{NTh}}+\frac{1}{N\sqrt{h}}+\sqrt{\eta}\right).
\]
\end{proof}

\subsection{Proofs in Section \ref{subsec:normality:homo}}\label{proof:section:normality:homo}
We will prove Theorems \ref{theorem: rate:ghat:minus:g0:plus:SMetag0} and \ref{theorem:normality:ghat:minus:g0},
and Proposition \ref{prop:homo sigma}.
Let us introduce some additional notation and preliminaries.
Let $m$ be the increasing sequence of integers provided in Theorems
\ref{theorem: rate:ghat:minus:g0:plus:SMetag0} and \ref{theorem:normality:ghat:minus:g0}. 
For any fixed $x_0 \in \mathcal{X}$, define
$V_{NT}=\frac{1}{NT}\sum_{i=1}^N\kxi(x_0)'P\kxi(x_0), A_{NT}=V_{NT}^{-1/2}$.
Let $V_{NTm}=\sum_{i=1}^N\phi_m^{\prime}\Phi_i'P\Phi_i\phi_m/(NT)$, $A_{NTm}=V_{NTm}^{-1/2}$
and $H_{NTm}=\sum_{i=1}^N\Phi_i' P \Phi_i/(NT)$,
where
\begin{eqnarray*}
\Phi_i=	\left(\begin{matrix}
\varphi_1(X_{i1}) &\varphi_2(X_{i1})&\cdots&\varphi_m(X_{i1})\cr
\varphi_1(X_{i2}) &\varphi_2(X_{i2})&\cdots&\varphi_m(X_{i2})\cr
&&\cdots&\cr
\varphi_1(X_{iT}) &\varphi_2(X_{iT})&\cdots&\varphi_m(X_{iT})\cr
\end{matrix}\right),\,\,\,\,
\phi_m=\left(\frac{\varphi_1(x_0)}{1+\eta \rho_1}, \frac{\varphi_2(x_0)}{1+\eta \rho_2},\cdots, 
\frac{\varphi_m(x_0)}{1+\eta \rho_m}\right)'.
\end{eqnarray*}

The proof of Theorems \ref{theorem: rate:ghat:minus:g0:plus:SMetag0} and
\ref{theorem:normality:ghat:minus:g0} rely on the following Lemmas \ref{lemma:ANTm:2:Im},
\ref{lemma:bound:ANT} and \ref{lemma:normality:epsilon:P:Kxi}.
Proofs of these lemmas can be found in supplement document.

\begin{lemma}\label{lemma:ANTm:2:Im}
Under Assumptions \ref{A1}, \ref{A5}  and \ref{A7}, suppose $m=o(\sqrt{N})$, then
$\|H_{NTm}-I_m\|_F=o_P(1)$, $\lambda_{\min}^{-1}(H_{NTm})=O_P(1)$ and $\lambda_{\max}(H_{NTm})=O_P(1)$.
\end{lemma}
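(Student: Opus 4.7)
The plan is to show $E\bigl[\|H_{NTm}-I_m\|_F^2\bigr]=o(1)$ and then invoke Markov's inequality. The starting observation is that, by Assumption \ref{A5}, the $(\nu,\mu)$-entry of $E[H_{NTm}\mid\mathcal{F}_1^T]$ equals $V(\varphi_\nu,\varphi_\mu)=\delta_{\nu\mu}$, so $H_{NTm}-I_m$ is a centered matrix conditionally on $\mathcal{F}_1^T$. Therefore
\[
E\bigl[\|H_{NTm}-I_m\|_F^2\bigr]=\sum_{\nu,\mu=1}^m E\bigl[\operatorname{Var}(H_{NTm}[\nu,\mu]\mid\mathcal{F}_1^T)\bigr],
\]
and the task reduces to bounding each conditional variance by $O_P(1/N)$; summing over the $m^2$ entries and using $m=o(\sqrt{N})$ would give $o(1)$.

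The main obstacle is that $P$ depends on all individuals through $\bar X_t$, so the summands $c_i=\Phi_i'P\Phi_i$ are not independent across $i$. To decouple, I would introduce the ``noiseless'' projection $P_\star=I_T-\Sigma_\star'(\Sigma_\star\Sigma_\star')^{-1}\Sigma_\star$ used earlier (which is $\mathcal{F}_1^T$-measurable by Assumption \ref{A1}), and split
\[
c_i=c_i^\star+d_i,\qquad c_i^\star=\Phi_i[\cdot,\nu]'P_\star\Phi_i[\cdot,\mu],\quad d_i=\Phi_i[\cdot,\nu]'(P-P_\star)\Phi_i[\cdot,\mu].
\]
Conditional on $\mathcal{F}_1^T$ the $\mathbb{X}_i$'s are independent (Assumption \ref{A1}), so the $c_i^\star$'s are independent. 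Using the boundedness $c_\varphi=\sup_\nu\|\varphi_\nu\|_{\sup}=O_P(1)$ from Assumption \ref{A5} gives $|c_i^\star|\le T c_\varphi^2\|P_\star\|_{\mathrm{op}}\le T c_\varphi^2$, hence $\operatorname{Var}\bigl(\tfrac{1}{NT}\sum_i c_i^\star\bigm|\mathcal{F}_1^T\bigr)\le c_\varphi^4/N$. For the correction, the elementwise bound $|d_i|\le c_\varphi^2 T\|P-P_\star\|_{\mathrm{op}}$ together with Lemma \ref{lemma:P:minus:Pstar}, which gives $\|P-P_\star\|_{\mathrm{op}}=O_P(N^{-1/2})$, yields $|\tfrac{1}{NT}\sum_i d_i|\le c_\varphi^2\|P-P_\star\|_{\mathrm{op}}=O_P(N^{-1/2})$ entrywise.

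Combining the two pieces via $\operatorname{Var}(X+Y)\le 2\operatorname{Var}(X)+2\operatorname{Var}(Y)$ gives $\operatorname{Var}(H_{NTm}[\nu,\mu]\mid\mathcal{F}_1^T)=O_P(1/N)$ per entry, and summing over $\nu,\mu=1,\ldots,m$ together with $m=o(\sqrt N)$ yields $E\bigl[\|H_{NTm}-I_m\|_F^2\bigr]=O(m^2/N)=o(1)$, so $\|H_{NTm}-I_m\|_F=o_P(1)$ by Markov. Finally, since $\|H_{NTm}-I_m\|_{\mathrm{op}}\le\|H_{NTm}-I_m\|_F=o_P(1)$, Weyl's inequality gives $|\lambda_i(H_{NTm})-1|=o_P(1)$ for every eigenvalue, whence $\lambda_{\min}(H_{NTm})\to 1$ and $\lambda_{\max}(H_{NTm})\to 1$ in probability, delivering $\lambda_{\min}^{-1}(H_{NTm})=O_P(1)$ and $\lambda_{\max}(H_{NTm})=O_P(1)$. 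The hard part throughout is the $P$-induced coupling of individuals; the $P_\star$ decomposition is the decisive technical move, and the quantitative bound on $\|P-P_\star\|_{\mathrm{op}}$ from Lemma \ref{lemma:P:minus:Pstar} is exactly what is needed to absorb the correction term without deteriorating the $m^2/N$ rate.
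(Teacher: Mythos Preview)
Your approach is essentially the same as the paper's: both introduce the noiseless projection $P_\star$ to restore conditional independence of the $\Phi_i$'s given $\mathcal{F}_1^T$, bound the main term by a variance calculation of order $c_\varphi^4/N$ per entry (or $m^2/N$ in Frobenius norm), control the $P-P_\star$ correction via Lemma~\ref{lemma:P:minus:Pstar}, and then read off the eigenvalue bounds from $\|H_{NTm}-I_m\|_{\mathrm{op}}\le\|H_{NTm}-I_m\|_F=o_P(1)$.

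One small technical wrinkle: you frame the argument as bounding the \emph{unconditional} expectation $E\bigl[\|H_{NTm}-I_m\|_F^2\bigr]$ and then applying Markov, but your per-entry bound $\operatorname{Var}(H_{NTm}[\nu,\mu]\mid\mathcal{F}_1^T)=O_P(1/N)$ is a statement in probability (through the random $c_\varphi$ and $\|P-P_\star\|_{\mathrm{op}}$), not a moment bound, so the passage to $E[\cdot]=O(m^2/N)$ is not justified as written. The paper sidesteps this by applying Chebyshev \emph{conditionally} on $\mathcal{F}_1^T$ and then taking expectations of the resulting probability bound, using $c_\varphi=O_P(1)$ at the end; your argument is easily repaired the same way, or more simply by dropping the Markov step entirely and arguing directly that $\|H_{NTm}-I_m\|_F^2\le 2\|H_{NTm\star}-\bar Q_\star\|_F^2+O_P(m^2/N)=O_P(m^2/N)$ in probability.
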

\begin{lemma}\label{lemma:bound:ANT}
Under Assumptions \ref{A1}, \ref{A5} and \ref{A7}, suppose $h^{-1}=o_P(\sqrt{N}), D_m=o_P(1)$ , then for any $x_0 \in \mathcal{X}$,
$A_{NT}=O_P(1)$.
\end{lemma}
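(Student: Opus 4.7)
The plan is to show that $V_{NT}$ stays bounded below by a strictly positive quantity with probability tending to one, from which $A_{NT}=V_{NT}^{-1/2}=O_P(1)$ follows immediately.

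First, I expand $K_{\mathbb{X}_i}(x_0)$ in the eigenfunction basis guaranteed by Assumption \ref{A5}. Writing $\phi_\nu=\varphi_\nu(x_0)/(1+\eta\rho_\nu)$, $\phi_m=(\phi_1,\ldots,\phi_m)'$, $\tilde\phi_m=(\phi_{m+1},\phi_{m+2},\ldots)'$ and $\Phi_i^{(\nu)}=(\varphi_\nu(X_{i1}),\ldots,\varphi_\nu(X_{iT}))'$, Mercer's expansion gives $K_{\mathbb{X}_i}(x_0)=\sum_{\nu\ge 1}\phi_\nu\Phi_i^{(\nu)}$. Splitting at level $m$ produces the block decomposition
\[
V_{NT}=V_{NTm}+2\phi_m'C_{NT}\tilde\phi_m+\tilde\phi_m'\tilde H_{NT}\tilde\phi_m,
\]
where $C_{NT}$ and $\tilde H_{NT}$ are the off-diagonal and tail empirical Gram matrices built from $\Phi_i$ and $P$, matching the block structure already implicit in $V_{NTm}=\phi_m'H_{NTm}\phi_m$.

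Next, I control each block. Lemma \ref{lemma:ANTm:2:Im} yields $H_{NTm}=I_m+o_P(1)$ in Frobenius norm under $m=o(\sqrt{N})$, so $V_{NTm}=\|\phi_m\|_2^2(1+o_P(1))$. For the tail, the $V$-orthonormality $V(\varphi_\nu,\varphi_\mu)=\delta_{\nu\mu}$ of Assumption \ref{A5} gives $E[\tilde\phi_m'\tilde H_{NT}\tilde\phi_m\mid\mathcal{F}_1^T]=\|\tilde\phi_m\|_2^2$, and since $\|\varphi_\nu\|_{\sup}\le c_\varphi=O_P(1)$ we have $\|\tilde\phi_m\|_2^2\le c_\varphi^2 D_m=o_P(1)$; Markov's inequality then gives $\tilde\phi_m'\tilde H_{NT}\tilde\phi_m=o_P(1)$. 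The cross term is dominated by the PSD Cauchy--Schwarz bound $|\phi_m'C_{NT}\tilde\phi_m|\le\sqrt{V_{NTm}\cdot\tilde\phi_m'\tilde H_{NT}\tilde\phi_m}$, which is $o_P(\sqrt{V_{NTm}})$. Combining these,
\[
V_{NT}\ge\bigl(\sqrt{V_{NTm}}-\sqrt{\tilde\phi_m'\tilde H_{NT}\tilde\phi_m}\bigr)^2=\|\phi_m\|_2^2\,(1+o_P(1)).
\]

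Finally, $\|\phi_m\|_2^2=V(K_{x_0},K_{x_0})-\|\tilde\phi_m\|_2^2=V(K_{x_0},K_{x_0})-o_P(1)$, and $V(K_{x_0},K_{x_0})$ stays bounded away from zero on $\mathcal{X}$ (indeed for polynomially or exponentially diverging kernels it is of exact order $h^{-1}$, yielding the much stronger $A_{NT}=O_P(\sqrt{h})$). Hence $V_{NT}\ge c>0$ with probability approaching one, and $A_{NT}=O_P(1)$ as claimed. The main obstacle is the last step: because $\{\varphi_\nu,\rho_\nu\}$ are themselves $\mathcal{F}_1^T$-measurable and random, one cannot simply appeal to a deterministic spectral argument to lower-bound $V(K_{x_0},K_{x_0})$ uniformly over $x_0\in\mathcal{X}$; this requires invoking the variational characterization of the eigensystem in the spirit of Lemma \ref{lemma: V & g} together with the identifiability condition $1\notin\mathcal{H}$.
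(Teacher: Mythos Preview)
Your proof is essentially correct and follows the same route as the paper: split $K_{\mathbb{X}_i}(x_0)$ into the first $m$ eigenfunctions and a tail, use Lemma~\ref{lemma:ANTm:2:Im} to show $V_{NTm}=\phi_m'H_{NTm}\phi_m\ge \lambda_{\min}(H_{NTm})\|\phi_m\|_2^2$ with $\lambda_{\min}(H_{NTm})=1+o_P(1)$, and show the tail contribution is $O_P(D_m)=o_P(1)$.  The paper bounds the cross and tail terms by writing $L_i=K_{\mathbb{X}_i}(x_0)-\Phi_i\phi_m$ and controlling $T_1=\tfrac{1}{NT}\sum_i L_i'PK_{\mathbb{X}_i}(x_0)$ via its conditional mean $V(K_{x_0},R_{x_0})\le c_\varphi^2 D_m$ plus a fluctuation term $O_P(D_m/(\sqrt{N}h))$, and $T_2=\tfrac{1}{NT}\sum_i L_i'PL_i$ by the crude pointwise bound $|R_{x_0}(x)|\le c_\varphi^2 D_m$; your PSD Cauchy--Schwarz bound $|\text{cross}|\le\sqrt{V_{NTm}\cdot\text{tail}}$ is a clean alternative that bypasses the fluctuation calculation for $T_1$ and yields directly $V_{NT}\ge(\sqrt{V_{NTm}}-\sqrt{\text{tail}})^2$.

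Regarding the ``main obstacle'' you flag: the paper's own proof handles it no differently.  It simply asserts $\|\phi_m\|_2^2=\sum_{\nu=1}^m\varphi_\nu^2(x_0)/(1+\eta\rho_\nu)^2\ge C$ for some constant $C>0$ and moves on, so your honesty here outpaces the paper's.  Note also that both your argument and the paper's invoke Lemma~\ref{lemma:ANTm:2:Im}, which requires $m=o(\sqrt{N})$; this condition is not listed in the hypotheses of Lemma~\ref{lemma:bound:ANT} but is assumed in Theorem~\ref{theorem:normality:ghat:minus:g0} where the lemma is actually used.
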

\begin{lemma}\label{lemma:normality:epsilon:P:Kxi}
Under Assumptions \ref{A1}, \ref{A5} and \ref{A7}, suppose $D_m=o_P(\sqrt{N}), m=o_P(\sqrt{N})$, for ant $x_0 \in \mathcal{X}$, we have
$$\sqrt{NT}A_{NTm}(\frac{1}{NT}\sum_{i=1}^N\phi_m' \Phi_i' P \epsilon_i)\overset{d}{\to} N(0,\sigma_{\epsilon}^2).$$
\end{lemma}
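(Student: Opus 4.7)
The plan is to establish this central limit theorem via a conditional Lyapunov--Lindeberg argument, conditioning on the $\sigma$-algebra $\mathcal{G} = \sigma(\mathbb{X}_1,\ldots,\mathbb{X}_N, F_1, F_2)$. Set $W_i = \phi_m'\Phi_i' P \epsilon_i$ so that the target quantity equals $A_{NTm}(NT)^{-1/2}\sum_{i=1}^N W_i$. By Assumption \ref{A1}, $\epsilon_i$ are i.i.d.\ mean-zero vectors with $E[\epsilon_i\epsilon_i'] = \sigma_\epsilon^2 I_T$ and are independent of $\mathcal{G}$; since $\Phi_i$, $P$ and $\phi_m$ are all $\mathcal{G}$-measurable, the $W_i$ are conditionally independent across $i$ given $\mathcal{G}$, each with conditional mean zero.

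Next, I would compute the conditional variance. Using the idempotence and symmetry of $P$,
\[
E[W_i^2 \mid \mathcal{G}] = \sigma_\epsilon^2\, \phi_m' \Phi_i' P \Phi_i \phi_m \equiv \sigma_\epsilon^2\, V_{NTm,i},
\]
so $\sum_i E[W_i^2\mid\mathcal{G}]/(NT) = \sigma_\epsilon^2\, V_{NTm}$ and hence the conditional variance of $A_{NTm}(NT)^{-1/2}\sum_i W_i$ equals $\sigma_\epsilon^2 A_{NTm}^2 V_{NTm} = \sigma_\epsilon^2$ exactly. So the variance normalization is automatic, and only the Lyapunov condition remains.

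For the Lyapunov step, writing $W_i = \sum_t c_{it}\epsilon_{it}$ with $c_{it} = (P\Phi_i\phi_m)_t$ being $\mathcal{G}$-measurable, the Marcinkiewicz--Zygmund/Rosenthal inequality together with the $\alpha>4$ moment from Assumption \ref{A1}\ref{A1:c} yields, with $\delta = 2$,
\[
E[|W_i|^{2+\delta}\mid\mathcal{G}] \le C\Bigl(\sum_t c_{it}^2\Bigr)^{(2+\delta)/2} = C\, V_{NTm,i}^{(2+\delta)/2}.
\]
Summing and using $\sum_i V_{NTm,i}^{(2+\delta)/2} \le (\max_i V_{NTm,i})^{\delta/2}\cdot NT\, V_{NTm}$, the Lyapunov ratio reduces to $O_P\bigl((A_{NTm}^2\max_i V_{NTm,i}/(NT))^{\delta/2}\bigr)$. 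To show this vanishes, I would use Lemma \ref{lemma:ANTm:2:Im} to get $A_{NTm}^2 = (\phi_m' H_{NTm}\phi_m)^{-1} \asymp \|\phi_m\|^{-2}$ in probability, and then bound $V_{NTm,i}\le \sum_t(\phi_m'\Phi_i(t,\cdot))^2 \le T\|\phi_m\|^2 \, m \cdot O_P(1)$ via $\sup_\nu\|\varphi_\nu\|_{\sup}=O_P(1)$ from Assumption \ref{A5}. This gives $A_{NTm}^2\max_i V_{NTm,i}/(NT) = O_P(m/N) = o_P(1)$ under the hypothesis $m = o_P(\sqrt{N})$.

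With the Lyapunov condition in hand, the conditional CLT for triangular arrays of independent random variables yields $A_{NTm}(NT)^{-1/2}\sum_i W_i \overset{d}{\to} N(0,\sigma_\epsilon^2)$ conditionally on $\mathcal{G}$, and since the limit law is non-random, this upgrades to unconditional convergence (e.g., by dominated convergence applied to characteristic functions). The main technical obstacle is the bound in Step~3: one must be careful that the chain $V_{NTm,i}\le T\|\phi_m\|^2 m\cdot O_P(1)$ is uniform in $i$, which relies crucially on the \emph{uniform} (in $\nu$ and $x$) boundedness of $\varphi_\nu$ in Assumption \ref{A5}; otherwise the Lyapunov ratio would not close under the assumed rate $m = o_P(\sqrt{N})$.
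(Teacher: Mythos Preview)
Your proof is correct and follows the same overall strategy as the paper: condition on the design $\sigma$-algebra (the paper calls it $\mathcal{D}_1^T$, which coincides with your $\mathcal{G}$), observe that the conditional variance is exactly $\sigma_\epsilon^2$, and verify a fourth-moment Lyapunov condition for the conditionally independent summands.

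The one genuine difference is in how the Lyapunov ratio is controlled. The paper expands $c_{it}=(A_{NTm}/NT)\,p_{t\cdot}\Phi_i\phi_m$ entrywise via $p_{ts}=\delta_{ts}-Z_s'(\Sigma\Sigma')^{-1}Z_t$ and then uses Assumption~\ref{A7} to bound $\|(\Sigma\Sigma'/T)^{-1}\|_{\textrm{op}}$ and moments of $\|Z_t\|_2$, arriving at $\sum_{i}E[(\sqrt{NT}c_i'\epsilon_i)^4\mid\mathcal{D}_1^T]=O_P(m^2/N)$. You instead exploit $P\le I_T$ together with Cauchy--Schwarz and $\sup_\nu\|\varphi_\nu\|_{\sup}=O_P(1)$ to get $V_{NTm,i}\le Tmc_\varphi^2\|\phi_m\|_2^2$, then combine with $A_{NTm}^2\|\phi_m\|_2^2\le\lambda_{\min}^{-1}(H_{NTm})=O_P(1)$ from Lemma~\ref{lemma:ANTm:2:Im}. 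Your route is shorter and avoids invoking Assumption~\ref{A7} directly in this step (it still enters through Lemma~\ref{lemma:ANTm:2:Im}); the paper's route, while longer, makes the dependence on the design matrices $\Sigma,\Sigma_\star$ more explicit. Both yield the same Lyapunov rate $O_P(m/N)$ and hence the conclusion.
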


\begin{proof}[Proof of Theorem \ref{theorem: rate:ghat:minus:g0:plus:SMetag0}]
	Let $g=\widehat{g}-g_0$. By the fact $DS_{M,\eta}^{\star}(g_0)=id$ (see Section \ref{proof:subsec:rate:homo}) 
	and $S_{M,\eta}(g+g_0)=0$, we have
	\begin{eqnarray*}
	\|g+S_{M,\eta}(g_0)\|&=&\|DS_{M,\eta}^{\star}(g_0)g+S_{M,\eta}(g_0)\|\\
	&=&\|S_{M,\eta}^{\star}(g+g_0)-S_{M,\eta}^{\star}(g_0)-S_{M,\eta}(g+g_0)+S_{M,\eta}(g_0)\|\\
	&=&\|\frac{1}{NT}\sum_{i=1}^N(\tau_ig'P\kxi-E(\tau_ig'P\kxi)) +W_\eta g -E(W_\eta g|\f1t)\|\\
	&\leq& \|\kappa(g)\|,
	\end{eqnarray*}
	where $\kappa(g)$ is defined in Part two of the proof of Theorem \ref{theorem:homo convergence:rate:result:1}.
	By (\ref{ghat:minus:g0:rate:kappa}) with $g_1-g_2$ therein replaced by $g$ we have $\|\kappa(g)\|=O_P(\frac{b_{N,p}}{\sqrt{N}h})\|g\|.$ Hence 
	\[
	\|g+S_{M,\eta}(g_0)\|=O_P\left(\frac{b_{N,p}}{\sqrt{N}h}\right)\|g\|=O_P\left(\frac{b_{N,p}}{\sqrt{N}h}
	\left(\frac{1}{\sqrt{NTh}}+\frac{1}{N\sqrt{h}}+\sqrt{\eta}\right)\right).
	\]
For fixed $x_0\in \mathcal{X}$,
	\begin{eqnarray*}
	A_{NT}|g(x_0)+S_{M,\eta}(g_0)(x_0)|&\leq&A_{NT}\|g+S_{M,\eta}(g_0)\|_{\sup}\\
	&\leq& c_\varphi h^{-1/2}\|g+S_{M,\eta}(g_0)\|\\
	&=&O_P\left(\frac{b_{N,p}}{\sqrt{N}h}\left(\frac{1}{\sqrt{NT}h}+\frac{1}{Nh}+\sqrt{\frac{\eta}{h}}\right)\right).
 	\end{eqnarray*}		
	Since $g_0 \in \mathcal{H}$ is fixed,
	\begin{eqnarray}
	\label{eq:opnorm:Weta}
		\|W_\eta g_0\|=\sup_{\|\widetilde{g}\|=1}\ip{W_\eta g_0, \widetilde{g}}=\sup_{\|\widetilde{g}\|=1} \eta\ip{g_0, \widetilde{g}}_{\mathcal{H}}\leq \sup_{\|\widetilde{g}\|=1}\eta \|g_0\|_{\mathcal{H}}\|\widetilde{g}\|_{\mathcal{H}}\leq \sqrt{\eta}\|g_0\|_{\mathcal{H}}.
	\end{eqnarray}
It follows from (\ref{eq:opnorm:Weta}) that $\|W_\eta g_0\|\leq \sqrt{\eta}\|g_0\|_{\mathcal{H}}=O_P(\sqrt{\eta})$, and 
hence 
\[
|W_\eta g_0(x_0)|\le c_\varphi h^{-1/2}\|W_\eta g_0\|=O_P(\sqrt{\eta/h}).
\] Hence, we have
	$$A_{NT}|g(x_0)+S_{M,\eta}(g_0)(x_0)-W_\eta g_0(x_0)|=O_P\left(\frac{b_{N,p}}{\sqrt{N}h}
	\left(\frac{1}{\sqrt{NT}h}+\frac{1}{Nh}+\sqrt{\frac{\eta}{h}}\right)\right).$$
\end{proof}

\begin{proof}[Proof of Theorem \ref{theorem:normality:ghat:minus:g0}]
	By Theorem \ref{theorem: rate:ghat:minus:g0:plus:SMetag0}, the asymptotic distribution of $\sqrt{NT}A_{NT}(\widehat{g}(x_0)-g_0(x_0)+W_{\eta}g_0)$ is the same as $\sqrt{NT}A_{NT}[-S_{M,\eta}(g_0)(x_0)+W_{\eta}g_0(x_0)]=\sqrt{NT}A_{NT}[\frac{1}{NT}\sum_{i=1}^N (Y_i-\tau_ig_0)'P\kxi(x_0)]$.
	
	By $Y_i=\tau_ig_0+\Sigma'\beta_i+e_i$ (\ref{basic:model:2:homogeneity}), 
	$\Sigma P=0$
	(see Section \ref{proof:subsec:rate:homo}) and $e_i=\epsilon_i-\bar{v}'\Delta_i'$, it yields that
	\begin{eqnarray*}
&&	A_{NT}[\frac{1}{NT}\sum_{i=1}^N(Y_i-\tau_ig_0)'P\kxi(x_0)]\\
	&=&A_{NT}(\frac{1}{NT}\sum_{i=1}^N\kxi'(x_0) P \epsilon_i)-A_{NT}(\frac{1}{NT}\sum_{i=1}^N\kxi'(x_0) P \bar{v}'\Delta_i')\\
	&\equiv& A_{NT}\zeta-A_{NT}\xi.
\end{eqnarray*}
	Let $\zeta_m=\sum_{i=1}^N \phi_m'\Phi_i' P \epsilon_i/(NT)$ and $\xi_m=\sum_{i=1}^N\phi_m'\Phi_i' P \bar{v}'\Delta_i'/(NT)$.
To prove the result of the theorem, it is sufficient to prove the following:
	\begin{eqnarray}
		\label{eq:normality:ghat:minus:g0:claim1}
		A_{NT}-A_{NTm}&=&o_P(1),\\
		\label{eq:normality:ghat:minus:g0:claim2}		
		A_{NT}&=&O_P(1),\\
		\label{eq:normality:ghat:minus:g0:claim3}
		\sqrt{NT} A_{NTm}\xi_m&=&o_P(1),\\
		\label{eq:normality:ghat:minus:g0:claim4}
		\sqrt{NT}(\xi-\xi_m)&=&o_P(1),\\
		\label{eq:normality:ghat:minus:g0:claim5}
		\sqrt{NT}(\zeta-\zeta_m)&=&o_P(1),\\
		\label{eq:normality:ghat:minus:g0:claim6}
		\sqrt{NT}A_{NTm}\zeta_m &\overset{d}{\to}& N(0,1).
	\end{eqnarray}
(\ref{eq:normality:ghat:minus:g0:claim1}) and (\ref{eq:normality:ghat:minus:g0:claim2}) are guaranteed by Lemma \ref{lemma:bound:ANT}; (\ref{eq:normality:ghat:minus:g0:claim6}) follows from Lemma \ref{lemma:normality:epsilon:P:Kxi}.	
For (\ref{eq:normality:ghat:minus:g0:claim3}), by the expression of $A_{NTm}$ and Lemma \ref{lemma:ANTm:2:Im}, we get,
\begin{eqnarray*}
	|A_{NTm}\xi_m|&=&|A_{NTm}\phi_m' \frac{1}{NT}\sum_{i=1}^N \Phi_i'P \bar{v}'\Delta_i'|\\
	&\leq&  \|A_{NTm}\phi_m \|_2\times\|\frac{1}{NT}\sum_{i=1}^N \Phi_i'P\bar{v}'\Delta_i'\|_2 \\
	&\leq& \sqrt{\frac{\|\phi_m\|_2^2}{\phi_m' H_{NTm} \phi_m}} \|\frac{1}{NT}\sum_{i=1}^N \Phi_i'P\bar{v}'\Delta_i'\|_2\\
	&\leq& \lambda_{\min}^{-1/2}(H_{NTm}) \|\frac{1}{NT}\sum_{i=1}^N \Phi_i'P\bar{v}'\Delta_i'\|_2.
\end{eqnarray*}
Recall $\Delta_i=\gamma_{2i}'(\bar{\Gamma}_2\bar{\Gamma}_2')^{-1}\bar{\Gamma}_2\equiv \gamma_{2i}'\widetilde{M}$
(see Assumption \ref{A1}\ref{A1:d}), which leads to 
$$\|\sum_{i=1}^N \Phi_i'P\bar{v}'\Delta_i'\|_2^2=Tr\{(\sum_{i=1}^N \Phi_i'P\bar{v}'\Delta_i')(\sum_{i=1}^N \Delta_i \bar{v}P \Phi_i)\}.$$ In matrix form, the above becomes
\begin{eqnarray*}
	&&Tr\{(\Phi_1', \Phi_2',...,\Phi_N'){P}_N {\bar{v}_N}'{\widetilde{M}_N}'{\gamma_2}{\gamma_2}'{\widetilde{M}_N}{\bar{v}_N}{P}_N (\Phi_1', \Phi_2',...,\Phi_N')'\}\\
	&\leq&\lambda_{\max}({\gamma_2}{\gamma_2}')\lambda_{\max}({\widetilde{M}_N}'{\widetilde{M}_N})\lambda_{\max}({\bar{v}_N}'{\bar{v}_N}) Tr\{\sum_{i=1}^N \Phi_i' P \Phi_i\},
\end{eqnarray*}
where ${P_N}$ is defined in Section \ref{sec:est:homo}, 
${\widetilde{M}_N}=I_N\otimes \widetilde{M}, {\bar{v}}_N=I_N\otimes \bar{v}$.

By Lemma \ref{lemma:P:minus:Pstar}, Assumption \ref{A1} and Lemma \ref{lemma:ANTm:2:Im} ,we have $$\lambda_{\max}({\bar{v}_N}'{\bar{v}_N})=\|\widetilde{\Sigma}\widetilde{\Sigma}'\|_{\textrm{op}}=O_P(T/N),
\,\,\,\, \lambda_{\max}({\widetilde{M}}_N'{\widetilde{M}}_N)=\lambda_{\max}((\bar{\Gamma}_2\bar{\Gamma}_2')^{-1})=O_P(1),$$ and $$Tr\{\sum_{i=1}^N \Phi_i' P \Phi_i\}=NTTr(H_{NTm})=O_P(NTm).$$ So it can be seen that,
$$|A_{NTm}\xi_m|=O_P\left(\frac{\sqrt{\lambda_{\max}({\gamma_2}{\gamma_2}')m}}{N}\right),$$
thus
$$\sqrt{NT}A_{NTm}\xi_m=O_P\left(\sqrt{\frac{\lambda_{\max}({\gamma_2}{\gamma_2}')mT}{N}}\right)=o_P(1),$$
and hence (\ref{eq:normality:ghat:minus:g0:claim3}) is true.

Similar to (\ref{eq:lemma:S:M:eta:g:eta:Rate:T2}) in
the proof of Lemma \ref{lemma:S:M:eta:g:eta} (see supplement document), 
it can be shown that
\begin{equation}\label{a:very:complex:eqn}
|\xi-\xi_m|=O_P\left(\frac{D_m}{\sqrt{NTh}}+\frac{D_m}{N\sqrt{h}}\right).
\end{equation} 
More explicitly, the proof of (\ref{a:very:complex:eqn}) follows by replacing 
$K_{\mathbb{X}_i}$ in the expression of $T_2$ with
$\Phi_i\phi_m$, and by a line-by-line check.
Therefore, we have
$$\sqrt{NT}|\xi-\xi_m|=O_P\left(\frac{D_m}{\sqrt{h}}+\frac{D_m\sqrt{T}}{\sqrt{Nh}}\right)=o_P(1),$$
i.e., (\ref{eq:normality:ghat:minus:g0:claim4}) holds.

Let $\d1t=\sigma(f_{1t},f_{2t},X_{it}: t\in[T], i\in[N])$.
By independence of $\epsilon_i$ and $\d1t$, 
we have
$E(|\zeta-\zeta_m|^2|\d1t)=\sum_{i=1}^NR_i' P R_i/(NT)$,
where $R_i=(\kxi(x_0)-\Psi_i \phi_m)'P\epsilon_i$. 
Let $R_{x_0}(\cdot)=\sum_{\nu \geq m+1}\frac{\varphi_\nu(x_0)\varphi_\nu(\cdot)}{1+\eta \rho_\nu}$, since $\f1t \subset \d1t$, it follows that,
\begin{eqnarray*}
	E(|\zeta-\zeta_m|^2|\f1t)&=&E(\frac{1}{N^2T^2}\sum_{i=1}^NR_i' P R_i|\f1t)\\
	&=&\frac{1}{NT} V(R_{x_0}, R_{x_0})\\
	&=&\frac{1}{NT} \sum_{\nu=m+1}^\infty \frac{\varphi_\nu^2(x_0)}{(1+\eta\rho_\nu)^2}\\
	&\leq& \frac{1}{NT} c_\varphi^2D_m\\
	&=&O_P(\frac{D_m}{NT}),
\end{eqnarray*}
so $\zeta-\zeta_m=O_P(\sqrt{D_m/(NT)})$ which implies that (\ref{eq:normality:ghat:minus:g0:claim5}) is valid.
Proof completed.
\end{proof}

\begin{proof}[Proof of Proposition \ref{prop:homo sigma}]
	By $Y_i=\tau_ig_0+\Sigma'\beta_i+e_i, \Sigma P=0, e_i=\epsilon_i-\bar{v}'\Delta_i'$, we have
	\begin{eqnarray*}
		&&\frac{1}{NT}\sum_{i=1}^N(Y_i-\tau_i\widehat{g})'P(Y_i-\tau_i\widehat{g})\\
		&=&\frac{1}{NT}\sum_{i=1}^N (\tau_i(g_0-\widehat{g}))'P(\tau_i(g_0-\widehat{g}))+\frac{2}{NT}\sum_{i=1}^N (\tau_i(g_0-\widehat{g}))'Pe_i+\frac{1}{NT}\sum_{i=1}^N e_i'Pe_i\\
		&\equiv& T_1+T_2+T_3.
	\end{eqnarray*}
By Theorem \ref{theorem:homo convergence:rate:result:1}, $$\|\widehat{g}-g_0\|_{\sup}\leq c_\varphi h^{-1/2}\|\widehat{g}-g_0\|=O_P\left(\frac{1}{\sqrt{NT}h}+\frac{1}{Nh}+\sqrt{\frac{\eta}{h}}\right)=o_P(1).$$
So $|T_1|\leq \sum_{i=1}^N\sum_{t=1}^T |g_0(X_{it}-\widehat{g}(X_{it}))|^2/(NT) \leq \|\widehat{g}-g_0\|_{\sup}^2=o_P(1)$.

By the definitions of $\bar{v}$ and $\widetilde{\Sigma}$ and by Lemma \ref{lemma:P:minus:Pstar}, we have 
$E(\|\bar{v}\bar{v}'\|_{\textrm{op}})=E(\|\widetilde{\Sigma}\widetilde{\Sigma}'\|_{\textrm{op}})=O(T/N))$.
Hence it holds that
\begin{eqnarray*}
	E(\|e_i\|_2^2)&=&E(\epsilon_i'\epsilon_i)+E(\Delta_i'\bar{v}\bar{v}'\Delta_i)-2E(\epsilon_i'\bar{v}'\Delta_i)\\
	&\leq& T\sigma_{\epsilon}^2+E(\|\bar{v}\bar{v}'\|_{\textrm{op}})\sup_{1\leq i \leq N}\|\Delta_i\|_2^2=O(T).
\end{eqnarray*}
It then follows from Cauchy inequality that
\begin{eqnarray*}
 |T_2|&\leq& \frac{2}{NT}\sum_{i=1}^N \|\tau_i(g_0-\widehat{g})\|_2\|e_i\|_2\\
 &\leq& \frac{2}{NT}\sum_{i=1}^N\|e_i\|_2 \sqrt{T \|\widehat{g}-g_0\|_{\sup}}\\
 &=&O_P(1)\sqrt{\|\widehat{g}-g_0\|_{\sup}}=o_P(1).
\end{eqnarray*}
Meanwhile, the following decomposition holds
\begin{eqnarray*}
	T_3&=&\frac{1}{NT}\sum_{i=1}^N\epsilon_i'P\epsilon_i+\frac{1}{NT}\sum_{i=1}^N\Delta_i'\bar{v}P\bar{v}'\Delta_i-\frac{2}{NT}\sum_{i=1}^N\Delta_i'\bar{v}P\epsilon_i\\
	&=&\frac{1}{NT}\sum_{i=1}^N\epsilon_i'P_{\star}\epsilon_i+\frac{1}{NT}\sum_{i=1}^N\epsilon_i'(P-P_{\star})\epsilon_i+\frac{1}{NT}\sum_{i=1}^N\Delta_i'\bar{v}P\bar{v}'\Delta_i-\frac{2}{NT}\sum_{i=1}^N\Delta_i'\bar{v}P\epsilon_i\\
	&\equiv&T_{31}+T_{32}+T_{33}-T_{34}.
\end{eqnarray*}
We handle the above terms $T_{31},T_{32},T_{33},T_{34}$ respectively.
By Lemma \ref{lemma:P:minus:Pstar}, it follows that
\begin{eqnarray*}
	|T_{32}|\leq\frac{1}{NT}\|P-P_{\star}\|_{\textrm{op}}\sum_{i=1}^N\epsilon_i'\epsilon_i=O_P(N^{-1/2})=o_P(1).
\end{eqnarray*}
In the meantime,
\begin{eqnarray*}
	|T_{33}|\leq \frac{1}{NT}\sup_{1\leq i \leq N}\|\Delta_i\|_2^2\sum_{i=1}^N\|\bar{v}\bar{v}'\|_{\textrm{op}}=O_P(\frac{1}{N})=o_P(1),
\end{eqnarray*}
and
\begin{eqnarray*}
	|T_{34}|\leq \frac{2}{NT}\|\epsilon_i\|_2\|\bar{v}'\Delta_i\|\leq 2\sqrt{\frac{\sum_{i=1}^N \epsilon_i'\epsilon}{NT}}\sqrt{\frac{\sum_{i=1}^N \Delta_i'\bar{v}\bar{v}'\Delta_i}{NT}}=O_P(\frac{1}{\sqrt{N}})=o_P(1).
\end{eqnarray*}

Next we look at $T_{31}$. By direct examinations,
\begin{eqnarray*}
	E(T_{31}|\f1t)=\frac{1}{T}Tr\{P_{\star}E(\epsilon_i \epsilon_i' |\f1t)\}=\sigma_{\epsilon}^2\frac{1}{T}Tr(P_{\star})=\sigma_{\epsilon}^2\frac{T-(q_1+d)}{T},
\end{eqnarray*}
and by Chebyshev's inequality, for any $\epsilon>0$,
 \begin{eqnarray*}
 	P(|T_{31}-E(T_{31}|\f1t)|>\epsilon |\f1t)&\leq& \frac{1}{N}\frac{E\{|\epsilon_1'P_{\star}\epsilon_1/T-E(T_{31}|\f1t)|^2\}|\f1t}{\epsilon^2}\\
 	&\leq&\frac{1}{N\epsilon^2}E(|\epsilon_1'P_{\star}\epsilon_1/T|^2|\f1t)\\
 	&\leq& \frac{1}{NT^2\epsilon^2}E(|\epsilon_1'\epsilon_1|^2|\f1t)\\
 	&\leq &\frac{1}{NT^2\epsilon^2} (TE(\epsilon_{11}^4)+T^2\sigma_{\epsilon}^4)\\
 	&=&o(1).
 \end{eqnarray*}
So $T_{31} \to \sigma_{\epsilon}^2(T-q_1-d)/{T}$ in probability.
Since $T_{32},T_{33},T_{34}$ are all $o_P(1)$ as shown in the above, 
we have $T_3 \to  \sigma_{\epsilon}^2(T-q_1-d)/{T}$ in probability.
Proof completed.
\end{proof}
\bibliographystyle{apalike}
\bibliography{reference}

\newpage
\setcounter{page}{1}
\setcounter{section}{0}
\renewcommand{\thesection}{S.\arabic{section}}
\setcounter{equation}{0}
\renewcommand{\theequation}{S.\arabic{equation}}
\setcounter{lemma}{0}
\renewcommand{\thelemma}{S.\arabic{lemma}}
\setcounter{proposition}{0}
\renewcommand{\theproposition}{S.\arabic{proposition}}

\begin{frontmatter}
\begin{center}
\textit{Supplement to}
\end{center}
\title{Statistical Inference on Panel Data Models:
A Kernel Ridge Regression Method}
\end{frontmatter}
This supplement document contains 
proofs and other relevant results that were not included in the main text and appendix.
In Section \ref{supplement:section:hetero:model}, we prove Lemmas \ref{lemma:id} and \ref{bound:RiUit}, Propositions \ref{prop:maximum:term} and \ref{prop:concentration}. In Section \ref{supplement:section:homo:model}, 
we prove Lemmas \ref{lemma:P:minus:Pstar}, \ref{concentration:homo}, \ref{lemma:S:M:eta:g:eta}
\ref{lemma:ANTm:2:Im}, \ref{lemma:bound:ANT} and \ref{lemma:normality:epsilon:P:Kxi}.
We also provide additional Lemmas \ref{lemma:Vfg:0}, \ref{lemma: V & g} and
\ref{lemma:P:Pstar} as well as their proofs.
Lemmas \ref{lemma:Vfg:0} and \ref{lemma: V & g} give mild conditions to guarantee the validity of Assumption \ref{A5};
Lemma \ref{lemma:P:Pstar} is useful for proving Lemma \ref{concentration:homo}. 

\section{Additional Proofs or Other Relevant Results for Heterogeneous Model}\label{supplement:section:hetero:model}
\begin{proof}[Proof of Lemma \ref{lemma:id}]
For any $\theta,\theta_k=(\beta_k,g_k)\in\Theta_i$ 
for $k=1,2$,  it holds from (\ref{prop:Ri:Pi}) that
\begin{eqnarray*}
&&\langle DS_{i,M,\eta_i}^\star(\theta)\theta_1,\theta_2\rangle_i\\
&=&\langle E\{DS_{i,M,\eta_i}(\theta)\theta_1\},\theta_2\rangle_i\\
&=&\frac{1}{T}\sum_{t=1}^TE\left(\langle R_iU_{it},\theta_1\rangle_i\langle R_iU_{it},\theta_2\rangle_i\right)+\langle P_i\theta_1,\theta_2\rangle_i\\
&=&\frac{1}{T}\sum_{t=1}^TE\left((g_1(X_{it})+Z_t'\beta_1)(g_2(X_{it})+Z_t'\beta_2)\right)
+\eta_i\langle g_1,g_2\rangle_{\mathcal{H}_i}\\
&=&E\left((g_1(X_i)+Z'\beta_1)(g_2(X_i)+Z'\beta_2)\right)
+\eta_i\langle g_1,g_2\rangle_{\mathcal{H}_i}\\
&=&\langle\theta_1,\theta_2\rangle_i,
\end{eqnarray*}
which implies that $DS_{i,M,\eta_i}^\star(\theta)=id$, the identity
operator on $\Theta_i$.
\end{proof}

\begin{proof}[Proof of Lemma \ref{bound:RiUit}]
It follows by Proposition \ref{prop:Ri:Pi} that
\[
\|R_iU_{it}\|_i^2=K^{(i)}(X_{it},X_{it})+\left(Z_t-A_i(X_{it})\right)'(\Omega_i+\Sigma_i)^{-1}(Z_t-A_i(X_{it})).
\]
By (\ref{kernel:expansion}) and $\langle A_i,g\rangle_{\star,i}=V_i(G_i,g)$ (see Section \ref{subsec:consist-Hetero}),
\begin{eqnarray*}
A_i(x)&=&\langle A_i,K_x^{(i)}\rangle_{\star,i}=V_i(G_i,K_x^{(i)})\\
&=&\sum_{\nu=1}^\infty\frac{\varphi_\nu^{(i)}(x)}{1+\eta_i\rho_\nu^{(i)}}V_i(G_i,\varphi_\nu^{(i)}).
\end{eqnarray*}
It follows by Assumption \ref{A2} that
$C_{\varphi,i}\equiv\sup_{\nu\ge1}\sup_{x\in\mathcal{X}_i}|\varphi_\nu^{(i)}(x)|<\infty$.
Then we have
\begin{eqnarray*}
K^{(i)}(X_{it},X_{it})&=&\sum_{\nu\ge1}\frac{|\varphi_\nu^{(i)}(X_{it})|^2}{1+\eta_i\rho_\nu^{(i)}}
\le C_{\varphi,i}^2h_i^{-1},\\
A_i(X_{it})'(\Omega_i+\Sigma_i)^{-1}A_i(X_{it})&\le&c_1^{-1}
A_i(X_{it})'A_i(X_{it})\\
&\le&c_1^{-1}\sum_{\nu\ge1}\frac{|\varphi_\nu^{(i)}(X_i)|^2}{(1+\eta_i\rho_\nu^{(i)})^2}
\sum_{\nu\ge1}V_i(G_i',\varphi_\nu^{(i)})V_i(G_i,\varphi_\nu^{(i)})\\
&\le&c_1^{-1}C_{\varphi,i}^2C_{G_i}^2h_i^{-1},\\
Z_t'(\Omega_i+\Sigma_i)^{-1}Z_t&\le& c_1^{-1}Z_t'Z_t,
\end{eqnarray*}
where $C_{G_i}^2=\sum_{\nu\ge1}V_i(G_i',\varphi_\nu^{(i)})V_i(G_i,\varphi_\nu^{(i)})$.
By Assumption \ref{A3}, $C_{G_i}^2$ is a finite positive constant.
Then (\ref{bound:eqn:RiUit}) holds for
$C_i^2=\max\{C_{\varphi,i}^2,2c_1^{-1}C_{\varphi,i}^2C_{G_i}^2,2c_1^{-1}\}$.

To show (\ref{two:norms:isup:i}), first notice that, for any $\theta=(\beta,g)\in\Theta_i$,
\[
\|\theta\|_{i,\sup}=\sup_{x\in\mathcal{X}_i,\|z\|_2=1}|g(x)+z'\beta|.
\]
The ``$\ge$" is obvious. To show ``$\le$", note that for any $x\in\mathcal{X}_i$, choose $z_x=\textrm{sign}(g(x))\beta/\|\beta\|_2$. Then
\[
|g(x)+z_x'\beta|=|g(x)|+\|\beta\|_2.
\]
Therefore,
\[
\sup_{x\in\mathcal{X}_i,\|z\|_2=1}|g(x)+z'\beta|\ge\sup_{x\in\mathcal{X}_i}|g(x)+z_x'\beta|=
\sup_{x\in\mathcal{X}_i}|g(x)|+\|\beta\|_2=\|\theta\|_{i,\sup}.
\]

Following Proposition \ref{prop:Ri:Pi} and the proof of (\ref{bound:eqn:RiUit}), for $u=(x,z)$ with $x\in\mathcal{X}_i$
and $\|z\|_2=1$,
\begin{eqnarray*}
|g(x)+z'\beta|=|\langle R_iu,\theta\rangle_i|\le\|R_iu\|_i\|\theta\|_i\le C_i(1+h_i^{-1/2})\|\theta\|_i.
\end{eqnarray*}
This proves (\ref{two:norms:isup:i}).
\end{proof}

\begin{proof}[Proof of Proposition \ref{prop:maximum:term}]
Since ${f}_{1t}$, ${f}_{2t}$, $v_{it}$ and $\epsilon_{it}$ 
all have finite $\alpha$th moments, it follows by (\ref{DGP of X}) that
$X_{it}$ and $Z_t$ both have finite $\alpha$th moments,
i.e., $E(\|X_i\|_2^\alpha)<\infty$ and $E(\|Z\|_2^\alpha)<\infty$.
Define
\[
C_T(\xi)=\inf\{x| TP(\|Z\|_2>x)\le\xi\},\,\,\,\,\xi>0.
\]
By Markov inequality,
\[
P\left(\|Z\|_2>[TE(\|Z\|_2^\alpha)/\xi]^{1/\alpha}\right)\le\frac{\xi}{T},
\]
therefore, 
\[
C_T(\xi)\le\left(\frac{TE(\|Z\|_2^\alpha)}{\xi}\right)^{1/\alpha}.
\]
Thanks to the $\phi$-mixing condition (see Assumption \ref{A1}), it follows by \citet[Theorem 1]{OBrien1974} that
for any $\xi>0$,
\[
\liminf_{T\rightarrow\infty}P\left(\max_{1\le t\le T}\|Z_t\|_2\le C_T(\xi)\right)=\exp(-b\xi),
\]
where $b>0$ is a constant.
For arbitrary $\varepsilon>0$, choose $\xi>0$ such that
$1-\exp(-b\xi)<\varepsilon/2$.
Then, as $T$ approaches infinity,
\[
P\left(\max_{1\le t\le T}\|Z_t\|_2\le C_T(\xi)\right)\ge\exp(-b\xi)-\varepsilon/2,
\]
leading to that
\[
P\left(\max_{1\le t\le T}\|Z_t\|_2> C_T(\xi)\right)\le 1-\exp(-b\xi)+\varepsilon/2\le\varepsilon.
\]
This proves that
\[
\max_{1\le t\le T}\|Z_t\|_2=O_P(C_T(\xi))=O_P(T^{1/\alpha}).
\]
\end{proof}

\begin{proof}[Proof of Proposition \ref{prop:concentration}]
For notation simplicity, denote 
\begin{eqnarray*}
\mathcal{F}_j&=&\mathcal{F}_1^j, \,\,\,\,j\in[T],\\
\mathcal{F}_0&=&\textrm{trivial $\sigma$-algebra consisting only of the empty set and full sample space.}
\end{eqnarray*}

For any $\theta_1,\theta_2\in\Theta_i$,
define $l_{it}=(\psi_{i,M,t}(U_{it};\theta_1)
-\psi_{i,M,t}(U_{it};\theta_2))R_iU_{it}$.
First of all, we will prove the following concentration inequality: for any $r>0$,
\begin{equation}\label{prop:concentration:eqn:0}
P\left(\bigg\|\sum_{t=1}^T[l_{it}-E(l_{it})]\bigg\|_i\ge r\right)
\le 2\exp\left(-\frac{r^2}{32TC_\phi^2\|\theta_1-\theta_2\|_{i,\sup}^2}\right),
\end{equation}
where $C_\phi\equiv\sum_{t=0}^\infty\phi(t)$.
It follows by Assumption \ref{A1} that $C_\phi$ is finite.
Clearly, (\ref{prop:concentration:eqn:0}) holds for $\theta_1=\theta_2$
since both sides equal to zero.
In what follows, we assume $\theta_1\neq\theta_2$.

Define $\mathbb{M}_{iT}=\sum_{t=1}^Tl_{it}$,
and $f_{iTj}=E(\mathbb{M}_{iT}|\mathcal{F}_j)-E(\mathbb{M}_{iT}|\mathcal{F}_{j-1})$,
$j\in[T]$.
It is easy to see that
\begin{eqnarray}\label{prop:concentration:eqn:1}
\mathbb{M}_{iT}-E(\mathbb{M}_{iT})&=&\sum_{j=1}^Tf_{iTj},\nonumber\\
f_{iTj}&=&\sum_{t=j}^T\left(E(l_{it}|\mathcal{F}_j)-E(l_{it}|\mathcal{F}_{j-1})\right).
\end{eqnarray}
Clearly, $f_{iTj}$ is $\mathcal{F}_j$-measurable.
For $k\in[T]$, define $\mathbb{N}_{iTk}=\sum_{j=1}^k f_{iTj}$
and $\mathbb{N}_{iT0}\equiv0$.
Then $\mathbb{N}_{iTk}=\mathbb{N}_{iTk-1}+f_{iTk}$.
For $\lambda>0$, 
let $u_{k-1}(x)=\lambda\|\mathbb{N}_{iTk-1}+xf_{iTk}\|_i$, $x\in[0,1]$.
Define 
\[
\varphi_{k-1}(x)=E\left(\cosh\left(u_{k-1}(x)\right)|\mathcal{F}_{k-1}\right),\,\,x\in[0,1].
\]
It is easy to see that
\begin{eqnarray*}
\varphi_{k-1}(1)&=&E\left(\cosh\left(\lambda\|\mathbb{N}_{iTk}\|_i\right)|\mathcal{F}_{k-1}\right)\\
\varphi_{k-1}(0)&=&E\left(\cosh\left(\lambda\|\mathbb{N}_{iTk-1}\|_i\right)|\mathcal{F}_{k-1}\right).
\end{eqnarray*}
By the proof of \citet[Theorem 3.2]{pinelis1994} and direct calculations, it can be shown that
\begin{eqnarray}\label{prop:concentration:eqn:2}
\varphi'_{k-1}(x)&=&E\left(\sinh(u_{k-1}(x))u'_{k-1}(x)|\mathcal{F}_{k-1}\right),\nonumber\\
\varphi''_{k-1}(x)&=&E\left(\cosh(u_{k-1})(u'_{k-1}(x))^2+\sinh(u_{k-1}(x))u''_{k-1}(x)|\mathcal{F}_{k-1}\right)\nonumber\\
&\le&E\left(\cosh(u_{k-1})(u'_{k-1}(x))^2+\cosh(u_{k-1}(x))u_{k-1}(x)u''_{k-1}(x)|\mathcal{F}_{k-1}\right)\nonumber\\
&=&\frac{1}{2}E\left(\cosh(u_{k-1}(x))(u_{k-1}(x)^2)''|\mathcal{F}_{k-1}\right)\nonumber\\
&=&\lambda^2E\left(\cosh(u_{k-1}(x))\|f_{iTk}\|_i^2|\mathcal{F}_{k-1}\right).
\end{eqnarray}
Next we will show that $\|f_{iTk}\|_i^2$ is almost surely bounded.
We will first examine the terms $E(l_{it}|\mathcal{F}_k)-E(l_{it})$ for $t\ge k$.
Arbitrarily choose $A\in\mathcal{F}_k$ and $\theta\in\Theta_i$ with $\|\theta\|_i=1$.
Define $X=\langle l_{it},\theta\rangle_i$. Write $X=X^+-X^-$,
where $X^+$ and $X^-$ represent the positive and negative parts of $X$, respectively.
Clearly, $|X|\le\|l_{it}\|_i\|\theta\|_i=\|l_{it}\|_i$ implying that both $X^+$ and $X^-$ belong to
$[0,\|l_{it}\|_i]$. Note that the $X^+$ is $\mathcal{F}_{t}^\infty$-measurable. Therefore,
\begin{eqnarray*}
|E(X^+|A)-E(X^+)|
&=&\bigg|\int_0^{\|l_{it}\|_i}[P(X^+>v|A)-P(X^+>v)]dv\bigg|\\
&\le&\int_0^{\|l_{it}\|_i}|P(X^+>v|A)-P(X^+>v)|dv
\le\|l_{it}\|_i\phi(t-k).
\end{eqnarray*}
Similarly, one can show that
$|E(X^-|A)-E(X^-)|\le\|l_{it}\|_i\phi(t-k)$.
Therefore,
\[
|E(X|A)-E(X)|\le 2\|l_{it}\|_i\phi(t-k).
\]
By arbitrariness of $A\in\mathcal{F}_k$ and by taking supremum over  $\theta\in\Theta_i$ with $\|\theta\|_i=1$, one gets that
\begin{equation}\label{prop:concentration:eqn:3}
\|E(l_{it}|\mathcal{F}_k)-E(l_{it})\|_i\le 2\|l_{it}\|_i\phi(t-k),\,\,t\ge k.
\end{equation}
Similar arguments lead to 
\[
\|E(l_{it}|\mathcal{F}_{k-1})-E(l_{it})\|_i\le 2\|l_{it}\|_i\phi(t-k+1),\,\,t\ge k.
\]
Therefore, for $t\ge k$,
\[
\|E(l_{it}|\mathcal{F}_k)-E(l_{it}|\mathcal{F}_{k-1})\|_i\le2\|l_{it}\|_i(\phi(t-k)+\phi(t-k+1)).
\]
Using (\ref{prop:concentration:eqn:1}) and
the assumption 
$\|l_{it}\|_i\le \|\theta_1-\theta_2\|_{i,\sup}$, it can be shown that
\begin{eqnarray*}
\|f_{iTk}\|_i&\le&\sum_{t=k}^T\|E(l_{it}|\mathcal{F}_k)-E(l_{it}|\mathcal{F}_{k-1})\|_i\\
&\le&\sum_{t=k}^T2\|l_{it}\|_i(\phi(t-k)+\phi(t-k+1))\\
&\le&2\|\theta_1-\theta_2\|_{i,\sup}\sum_{t=k}^T(\phi(t-k)+\phi(t-k+1))\\
&\le&4 C_\phi\|\theta_1-\theta_2\|_{i,\sup}.
\end{eqnarray*}
Therefore, it follows by (\ref{prop:concentration:eqn:2}) that
\[
\varphi''_{k-1}(x)\le 16\lambda^2C_\phi^2\|\theta_1-\theta_2\|_{i,\sup}^2\varphi_{k-1}(x).
\]
Meanwhile, note that $\mathbb{N}_{iTk-1}$ is $\mathcal{F}_{k-1}$-measurable, so we have
\begin{eqnarray*}
\varphi'_{k-1}(0)=\lambda\frac{\sinh(\lambda\|\mathbb{N}_{iTk-1}\|_i)}{\|\mathbb{N}_{iTk-1}\|_i}
E\left(\langle\mathbb{N}_{iTk-1},f_{iTk}\rangle_i|\mathcal{F}_{k-1}\right)=0,\,\,k\ge2,
\end{eqnarray*}
where the last equality follows from $E(f_{iTk}|\mathcal{F}_{k-1})=0$.
Directly using (\ref{prop:concentration:eqn:2}) one also has that $\varphi_{0}'(0)=0$.
So $\varphi'_{k-1}(0)=0$ for all $k\in[T]$.
By \citet[pp. 133, Lemma 3]{dudleyetal1992} we have for $k\in[T]$,
\[
\varphi_{k-1}(x)\le\varphi_{k-1}(0)
\exp\left(8\lambda^2C_\phi^2\|\theta_1-\theta_2\|_{i,\sup}^2x^2\right),
\,\,\,\,x\in[0,1].
\]
In particular, 
\[
\varphi_{k-1}(1)\le\varphi_{k-1}(0)
\exp\left(8\lambda^2C_\phi^2\|\theta_1-\theta_2\|_{i,\sup}^2\right).
\]
Taking expectations on both sides leading to that
\begin{equation}\label{prop:concentration:eqn:4}
E\left(\cosh\left(\lambda\|\mathbb{N}_{iTk}\|_i\right)\right)
\le\exp\left(8\lambda^2C_\phi^2\|\theta_1-\theta_2\|_{i,\sup}^2\right)
E\left(\cosh\left(\lambda\|\mathbb{N}_{iTk-1}\|_i\right)\right). 
\end{equation}
By repeatedly using (\ref{prop:concentration:eqn:4}) and the convention $\mathbb{N}_{iT0}=0$, and by (\ref{prop:concentration:eqn:1}),
we have
\begin{eqnarray*}
E\left(\cosh\left(\lambda\bigg\|\sum_{t=1}^T[l_{it}-E(l_{it})]\bigg
\|_i\right)\right)&=&E\left(\cosh\left(\lambda\|\mathbb{N}_{iTT}\|_i\right)\right)\\
&\le&\exp\left(8T\lambda^2C_\phi^2\|\theta_1-\theta_2\|_{i,\sup}^2\right).
\end{eqnarray*}
Therefore, 
\begin{eqnarray*}
P\left(\bigg\|\sum_{t=1}^T[l_{it}-E(l_{it})]\bigg\|_i\ge r\right)
&=&P\left(\cosh\left(\lambda\bigg\|\sum_{t=1}^T[l_{it}-E(l_{it})]\bigg\|_i\right)
\ge\cosh(\lambda r)\right)\\
&\le&\frac{1}{\cosh(\lambda r)}E\left(\cosh\left(\lambda\bigg\|\sum_{t=1}^T[l_{it}-E(l_{it})]\bigg
\|_i\right)\right)\\
&\le&e\exp\left(-\lambda r+8T\lambda^2C_\phi^2\|\theta_1-\theta_2\|_{i,\sup}^2\right).
\end{eqnarray*}
Then (\ref{prop:concentration:eqn:0}) follows by choosing 
\[
\lambda=\frac{r}{16TC_\phi^2\|\theta_1-\theta_2\|_{i,\sup}^2}.
\]

The rest of the proof follows by chaining argument. 
Let $\psi_2(x)=\exp(x^2)-1$. It follows by (\ref{prop:concentration:eqn:0}) and \citet[Theorem 8.1]{kosorok2008} that for any $\theta_1,\theta_2\in\Theta_i$,
\[
\bigg\|\|\mathbb{Z}_{iM}(\theta_1)-\mathbb{Z}_{iM}(\theta_2)\|_i\bigg\|_{\psi_2}
\le\sqrt{96}C_\phi\|\theta_1-\theta_2\|_{i,\sup}.
\]
It follows by \citet[Theorem 8.4]{kosorok2008} that there exists a universal constant $C>0$,
which only depends on $C_\psi$, such that
for any $\delta>0$,
\begin{eqnarray*}
&&\bigg\|\sup_{\substack{\theta_1,\theta_2\in\mathcal{G}_i(p_i)\\
\|\theta_1-\theta_2\|_{i,\sup}\le\delta}}\|\mathbb{Z}_{iM}(\theta_1)-\mathbb{Z}_{iM}(\theta_2)
\|_i\bigg\|_{\psi_2}\\
&\le& C\left(\int_0^\delta\psi_2^{-1}\left(D_i(\varepsilon,\mathcal{G}_i(p_i),\|\cdot\|_{i,\sup})\right)d\varepsilon+\delta\psi_2^{-1}\left(D_i(\delta,\mathcal{G}_i(p_i),\|\cdot\|_{i,\sup})^2\right)\right)=CJ_i(p_i,\delta).
\end{eqnarray*}
Therefore,
\[
\bigg\|\sup_{\substack{\theta\in\mathcal{G}_i(p_i)\\
\|\theta\|_{i,\sup}\le\delta}}\|\mathbb{Z}_{iM}(\theta)\|_i\bigg\|_{\psi_2}
\le CJ_i(p_i,\delta).
\]
It follows again from \citet[Lemma 8.1]{kosorok2008} that for all $\delta>0, s>0$,
\begin{equation}\label{prop:concentration:eqn:5}
P\left(\sup_{\theta\in\mathcal{G}_i(p_i),\|\theta\|_{i,\sup}\le\delta}
\|\mathbb{Z}_{iM}(\theta)\|_i>s\right)
\le2\exp\left(-\frac{s^2}{C^2J_i(p_i,\delta)^2}\right).
\end{equation}
It is easy to see that for any $\theta\in\mathcal{G}_i(p_i)$, $\|\theta\|_{i,\sup}\le1$.
Let $\sqrt{T}J_i(p_i,1)=\varepsilon^{-1}$, and $Q_\varepsilon=-\log\varepsilon-1$.
Let $\tau=3C\sqrt{\log{N}+\log\log(TJ_i(p_i,1))}$. Then it can be checked that
\begin{eqnarray*}
N(Q_\varepsilon+2)\exp\left(-\frac{\tau^2}{C^2\exp(2)}\right)\rightarrow0,\,\,\,\,N\rightarrow\infty.
\end{eqnarray*}
Since $J_i(p_i,\delta)$ is strictly increasing in $\delta$,
the function $J_i(\delta)\equiv J_i(p_i,\delta)$ has inverse denoted by $J_i^{-1}$.
Then we have
\begin{eqnarray}\label{eqn:chaining:arg}
&&P\left(\max_{i\in[N]}\sup_{\theta\in\mathcal{G}_i(p_i)}\frac{\sqrt{T}\|\mathbb{Z}_{iM}(\theta)\|_i}
{\sqrt{T}J_i(\|\theta\|_{i,\sup})+1}\ge\tau\right)\nonumber\\
&\le&\sum_{i=1}^N\left(P\left(\sup_{\|\theta\|_{i,\sup}\le J_i^{-1}(T^{-1/2})}\frac{\sqrt{T}\|\mathbb{Z}_{iM}(\theta)\|_i}
{\sqrt{T}J_i(\|\theta\|_{i,\sup})+1}\ge\tau\right)\right.\nonumber\\
&&\left.+\sum_{l=0}^{Q_\varepsilon}P\left(\sup_{J_i^{-1}(T^{-1/2}\exp(l))
\le\|\theta\|_{i,\sup}\le J_i^{-1}(T^{-1/2}\exp(l+1))}\frac{\sqrt{T}\|\mathbb{Z}_{iM}(\theta)\|_i}
{\sqrt{T}J_i(\|\theta\|_{i,\sup})+1}\ge\tau\right)\right)\nonumber\\
&\le&\sum_{i=1}^N\left(P\left(\sup_{\|\theta\|_{i,\sup}\le J_i^{-1}(T^{-1/2})}\|\mathbb{Z}_{iM}(\theta)\|_i\ge T^{-1/2}\tau\right)\right.\nonumber\\
&&\left.+\sum_{l=0}^{Q_\varepsilon}P\left(\sup_{\|\theta\|_{i,\sup}\le J_i^{-1}(T^{-1/2}\exp(l+1))}
\|\mathbb{Z}_{iM}(\theta)\|_i\ge T^{-1/2}\tau\exp(l)\right)\right)\nonumber\\
&\le&\sum_{i=1}^N\left(2\exp\left(-\tau^2/C^2\right)+\sum_{l=0}^{Q_\varepsilon}
2\exp\left(-\tau^2/(C^2\exp(2))\right)\right)\nonumber\\
&=&2N(Q_\varepsilon+2)\exp\left(-\frac{\tau^2}{C^2\exp(2)}\right)\rightarrow0,\,\,\textrm{ as $N\rightarrow\infty$.}
\end{eqnarray}
This proves the desirable conclusion with $C_0=3C$.
\end{proof}

\section{Additional Proofs or Other Relevant Results for Homogeneous Model}\label{supplement:section:homo:model}
The following lemma gives mild conditions that guarantee Assumption \ref{A5}. 
Before stating the lemma, we borrow the concept of complete continuity from \citet[page 50]{weinberger1974}. A bilinear functional $A(\cdot, \cdot)$  on $\mathcal{H} \times \mathcal{H}$ is said to be completely continuous w.r.t another bilinear functional $B(\cdot, \cdot)$ if for any $\epsilon>0$, there exists finite number of functionals $l_1,l_2,...,l_k$ on $\mathcal{H}$ such that $l_i(g)=0, i=1,2,..,k,$ implies $A(g,g)\leq \epsilon B(g,g)$. 

Let $U$ be an open subset of $\mathcal{X}$ and 
$U^{NT}\equiv\underbrace{U\times U\times\cdots\times U}_{\textrm{$NT$ items}}$. Let $C(\mathcal{X})$ be the set of all continuous functions on $\mathcal{X}$
and $\mathcal{H}\subseteq C(\mathcal{X})$. 
Let $\bold{x}$ denote the $NT$-vector $(x_{11},\ldots,x_{1T},\ldots,x_{N1},\ldots,x_{NT})$.

\begin{lemma}\label{lemma:Vfg:0}
Suppose $1\notin\mathcal{H}$,
and $p(\bold{x}|\f1t)>0$ for $\bold{x}\in U^{NT}$, where $p(\bold{x}|\f1t)$ is the joint conditional density of $X_{11},X_{12},...,X_{NT}$ given $\f1t$. If $V(f,g)=0$ for all $f\in \mathcal{H}$, then $g=0$.  
\end{lemma}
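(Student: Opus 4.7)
Taking $f = g$ in the hypothesis reduces matters to $V(g,g) = 0$. Since $V(g,g) = \frac{1}{NT}\sum_i E\{(\tau_i g)'P(\tau_i g)\mid \f1t\}$ with nonnegative summands and $P$ an orthogonal projection, this forces $P\tau_i g = 0$ almost surely for every $i \in [N]$. Equivalently, $\tau_i g$ lies in the column space of $\Sigma'$ a.s., so there exist random vectors $c_i = (a_i', b_i')' \in \bbR^{q_1}\times\bbR^d$ with
\[
g(X_{it}) = f_{1t}'a_i + \bar{X}_t'b_i, \quad t \in [T],\ i \in [N].
\]
The positive-density hypothesis then upgrades this identity to hold for a.e.\ $\bold{x} \in U^{NT}$, given a.s.\ realization of $\f1t$ and of the $f_{2t}$'s.

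Next I would peel off the $X_{i1}$-dependence. Fix a generic $\bold{x}\in U^{NT}$ and vary only the coordinate $X_{i1}$ inside $U$. For $t\ge 2$ the quantities $Z_t$ and $g(X_{it})$ are unchanged, since $\bar{X}_t$ mixes only coordinates at the same time index $t$. Assuming $T-1 \ge q_1+d$ and generic full rank of the $(T-1)\times(q_1+d)$ submatrix of $\Sigma'$ obtained by deleting the first row (which holds on a set of positive measure in $U^{NT}$), the $T-1$ equations for $t\ge 2$ uniquely pin down $c_i$, so $c_i$ is independent of $X_{i1}$. Substituting into the $t=1$ equation and using $\bar{X}_1 = (X_{i1} + \sum_{k\ne i}X_{k1})/N$ gives
\[
g(x) = \alpha + \beta' x \quad \text{on } U,
\]
with $\beta = b_i/N$ and $\alpha$ independent of $x$; since $U$ is open and $g|_U$ is a continuous function, $\alpha$ and $\beta$ are unambiguous constants determined by $g|_U$.

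Third, the plan is to kill $\beta$ by a swap with another unit (this is where $N\ge 2$ enters). Varying $X_{k1}$ in $U$ for some $k\ne i$ leaves $c_i$ unchanged, by the same rank argument applied to the equations at $t\ge 2$. But the affine relation $\alpha = f_{11}'a_i + \beta'\sum_{k'\ne i}X_{k'1}$ must hold with $a_i,\alpha$ independent of $X_{k1}$, forcing $\beta'X_{k1}$ to be constant as $X_{k1}$ ranges over $U$. Hence $\beta = 0$, $g|_U \equiv \alpha$, and $b_i = 0$ for every $i$. The identity then collapses to $\alpha = f_{1t}'a_i$ for every $t$, and — because $f_{1t}$ contains an intercept while its remaining coordinates vary with $t$ — this pins down $a_i$ to lie in the intercept direction with value $\alpha$. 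Thus $g$ agrees with the constant function $\alpha$ on (the essential support of $X_{it}$ inside) $U$. The hypothesis $1\notin\mathcal{H}$ together with linearity of $\mathcal{H}$ then rules out $\alpha\ne 0$, so $\alpha = 0$ and $g = 0$.

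The main technical hurdle will be the second paragraph: converting the almost-sure linear identity into a truly pointwise relation on $U^{NT}$ so that the $X_{i1}$-perturbation is rigorous. This is where the three ingredients in the hypotheses are simultaneously used — positivity of $p(\bold{x}\mid\f1t)$ on $U^{NT}$, continuity of $g\in C(\mathcal{X})$, and the generic nondegeneracy of the submatrix of $\Sigma'$ with one row deleted (which requires $T\ge q_1+d+1$, compatible with the asymptotic regime $T\to\infty$ of the paper). Once that pointwise identity is in place, every subsequent step reduces to elementary linear algebra plus the identifiability hypothesis $1\notin\mathcal{H}$.
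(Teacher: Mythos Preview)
Your proof is correct and follows essentially the same strategy as the paper: use $V(g,g)=0$ together with positivity of the conditional density and continuity to upgrade the $P\tau_ig=0$ constraint to a pointwise linear identity on $U^{NT}$, then perturb a single coordinate to force the $\bar X$-coefficient to vanish, and finally invoke $1\notin\mathcal{H}$. The only cosmetic difference is that the paper perturbs a \emph{cross-unit} coordinate (their $x_{NT}$ while reading off the equations for $i\neq N$) in one step to kill $\beta_i$ directly, whereas you first perturb $X_{i1}$ to extract the affine form $g(x)=\alpha+\beta'x$ on $U$ and then perturb $X_{k1}$, $k\neq i$, to kill $\beta$; your intermediate affine step is not needed but does no harm.
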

\begin{proof}[Proof of Lemma \ref{lemma:Vfg:0}]
For simplicity, we assume that $f_{1t}, X_{it}$ are both univariate.  
By assumption, 
$0=V(g,g)=\sum_{i=1}^NE\left\{(\tau_i g)'P(\tau_ig)\big|\mathcal{F}_1^T\right\}/(NT)$. Hence it follows that 
\begin{equation}\label{eq:lemma:Vfg:0:eq1}
0=\int (g(x_{i1}),g(x_{i2}),...,g(x_{iT}))P(g(x_{i1}),g(x_{i2}),...,g(x_{iT}))'p(\bold{x}|\f1t)d\bold{x}, \textrm{ for all } i\in [N].
\end{equation}
Since the integrand in (\ref{eq:lemma:Vfg:0:eq1}) is continuous and nonnegative, it holds that,
for all $i\in[N]$ and $\bold{x}$ with $p(\bold{x}|\f1t)>0$,
\begin{equation}\label{eq:lemma:Vfg:0:eq2}
(g(x_{i1}),g(x_{i2}),...,g(x_{iT}))P(g(x_{i1}),g(x_{i2}),...,g(x_{iT}))'=0.
\end{equation}
By definition, $P$ is a projection matrix whose image is the orthogonal space of 
the linear space spanned by $F_1$ and $\bar{X}$. Therefore, it yields that
\begin{equation}
(g(x_{i1}),g(x_{i2}),...,g(x_{iT}))=\alpha_i(f_{11},f_{12},...,f_{1T})+\beta_i(\bar{x}_1,\bar{x}_2,...,\bar{x}_T),
\label{comparing:the:two:equations:1}
\end{equation}
for some $\alpha_i, \beta_i \in \mathbb{R}$. Consider  
$\bold{x}=(x_{11},x_{12},...,x_{N,T-1},{x}_{NT})$ and 
$\widetilde{\bold{x}}=(x_{11},x_{12},...,x_{N,T-1},\widetilde{x}_{NT})\in U^{NT}$ with 
${x}_{NT}\neq \widetilde{x}_{NT}$ and $p(\bold{x}|\f1t)>0, p(\widetilde{\bold{x}}|\f1t)>0$, 
i.e., the two points differ only on the last element. Applying (\ref{eq:lemma:Vfg:0:eq2}) to 
point  $\widetilde{\bold{x}}$, we have 
\begin{equation}
(g(x_{i1}),g(x_{i2}),...,g(\widetilde{x}_{iT}))=\widetilde{\alpha}_i(f_{11},f_{12},...,f_{1T})+\widetilde{\beta}_i(\bar{x}_1,\bar{x}_2,...,\widetilde{\bar{x}}_T),\label{comparing:the:two:equations:2}
\end{equation}
for some $\widetilde{\alpha}_i, \widetilde{\beta}_i \in \mathbb{R}$.
Comparing (\ref{comparing:the:two:equations:1}) and (\ref{comparing:the:two:equations:2}), 
and by the fact $T>q_1+d=2$, it holds that $\alpha_i=\widetilde{\alpha}_i, \beta_i=\widetilde{\beta}_i=0$. Hence $(g(x_{i1}),g(x_{i2}),...,g({x}_{iT})) \in span((f_{11},f_{12},...,f_{1T}))$ for all $p(\bold{x}|\f1t)>0,  i \in [N]$,
and it happens if and only if $g=0$. 
\end{proof}

\begin{lemma} \label{lemma: V & g}
Suppose $\mathcal{X}$ is compact. Furthermore if $V(f,g)=0$ for all $f\in \mathcal{H}$ implies $g=0$, then Assumption \ref{A5} is valid.
\end{lemma}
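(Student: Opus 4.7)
The strategy is to apply the classical Rayleigh--Ritz variational framework of Weinberger (1974) to the two bilinear forms $V(\cdot,\cdot)$ and $\langle\cdot,\cdot\rangle_{\mathcal{H}}$ on $\mathcal{H}$, treating $V$ as a random but $\mathcal{F}_1^T$-measurable form. First I would check that $V$ is bounded, symmetric, and positive semidefinite on $(\mathcal{H},\langle\cdot,\cdot\rangle_{\mathcal{H}})$. Symmetry and positivity are immediate because $P$ is an orthogonal projection, so $(\tau_i f)'P(\tau_i f)=\|P\tau_i f\|_2^2\ge 0$. Boundedness follows from the reproducing property $|f(X_{it})|\le\|f\|_{\mathcal{H}}\sqrt{\bar{K}(X_{it},X_{it})}$ combined with $\sup_{x\in\mathcal{X}}\bar{K}(x,x)<\infty$, a consequence of compactness of $\mathcal{X}$ and continuity of $\bar{K}$. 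Combined with the injectivity hypothesis, $V(g,g)=0$ together with Cauchy--Schwarz forces $V(f,g)=0$ for every $f\in\mathcal{H}$, hence $g=0$, so $V$ is in fact strictly positive definite on $\mathcal{H}$.

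The decisive step is to verify that $V$ is completely continuous with respect to $\langle\cdot,\cdot\rangle_{\mathcal{H}}$, which is the hypothesis required by Weinberger's spectral theorem. On compact $\mathcal{X}$ the reproducing identity $|f(x)-f(y)|\le\|f\|_{\mathcal{H}}\|\bar{K}_x-\bar{K}_y\|_{\mathcal{H}}$ together with uniform continuity of $\bar{K}$ on $\mathcal{X}\times\mathcal{X}$ makes every norm-bounded subset of $\mathcal{H}$ uniformly equicontinuous and uniformly bounded, so Arzel\`{a}--Ascoli gives relative compactness in $C(\mathcal{X})$. Therefore any $f_n\to 0$ weakly in $\mathcal{H}$ converges to $0$ uniformly on $\mathcal{X}$, and since $\|P\|_{\mathrm{op}}\le 1$ supplies a trivial dominating bound, $V(f_n,f_n)\to 0$ by dominated convergence. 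Weinberger's theorem then yields a nondecreasing sequence $\rho_\nu>0$ with $\rho_\nu\to\infty$ and eigenfunctions $\varphi_\nu\in\mathcal{H}$ satisfying $V(\varphi_\nu,\varphi_\mu)=\delta_{\nu\mu}$ and $\langle\varphi_\nu,\varphi_\mu\rangle_{\mathcal{H}}=\rho_\nu\delta_{\nu\mu}$. Strict positivity of each $\rho_\nu^{-1}$ is exactly where the injectivity hypothesis rules out a nontrivial null space. Measurability of $\rho_\nu$ and $\varphi_\nu$ with respect to $\mathcal{F}_1^T$ is inherited from that of $V$ through the min--max variational characterization.

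The generalized Fourier expansion $g=\sum_\nu V(g,\varphi_\nu)\varphi_\nu$ for every $g\in\mathcal{H}$ is then obtained by showing that any residual $\tilde g$ orthogonal in the $\mathcal{H}$-norm to every $\varphi_\nu$ must satisfy $V(f,\tilde g)=0$ for every $f\in\mathcal{H}$ (by expanding $f$ in the $\varphi_\nu$ basis), and invoking the hypothesis to conclude $\tilde g=0$. The condition $1\notin\mathcal{H}$ is taken as given.

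The main obstacle, and the point at which the argument departs most sharply from the deterministic theory in Weinberger 1974, will be establishing the stochastic uniform bound $c_\varphi\equiv\sup_{\nu\ge 1}\|\varphi_\nu\|_{\sup}=O_P(1)$. The naive reproducing-kernel estimate $|\varphi_\nu(x)|=|\langle\varphi_\nu,\bar{K}_x\rangle_{\mathcal{H}}|\le\sqrt{\rho_\nu\bar{K}(x,x)}$ is useless because $\rho_\nu\to\infty$. My plan is to compare the random basis $\{\varphi_\nu\}$ with the deterministic Mercer basis of $\bar{K}$ on the compact $\mathcal{X}$, which is uniformly bounded for the PDK/EDK/FRK kernels of Section \ref{sec:prelim:RKHS}, and to bound the perturbation of $V$ from its conditional population analog uniformly in $\nu$ using empirical-process arguments of the same flavour as those behind Proposition \ref{prop:concentration}. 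This uniform perturbation control is the technically most delicate piece of the verification and is what makes the present setting genuinely harder than the classical one.
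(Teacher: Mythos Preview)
Your overall strategy---invoking Weinberger's variational spectral theorem after verifying complete continuity of $V$ relative to $\langle\cdot,\cdot\rangle_{\mathcal{H}}$---is exactly the paper's. The difference lies in how complete continuity is shown. The paper introduces an auxiliary form $W(f,g)=\frac{1}{NT}\sum_i E\{(\tau_i f)'(\tau_i g)\mid\mathcal{F}_1^T\}$ (i.e., $V$ with $P$ replaced by $I_T$), observes $V\le W$, and verifies Weinberger's definition of complete continuity for $W$ directly from the Mercer expansion $\bar K(x,y)=\sum_j\lambda_j e_j(x)e_j(y)$: the functionals $l_j(g)=W(g,e_j)$, $j\le k$, and $\lambda_j\to 0$ give the finite family required. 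Your route through Arzel\`a--Ascoli and the compact embedding $\mathcal{H}\hookrightarrow C(\mathcal{X})$ is equally valid and arguably more conceptual, since the bound $V(g,g)\le\|g\|_{\sup}^2$ is all that is needed once uniform convergence is in hand; the paper's route is more constructive and makes the comparison basis $\{e_j\}$ explicit without any weak-convergence bookkeeping.

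On the uniform bound $c_\varphi=\sup_\nu\|\varphi_\nu\|_{\sup}=O_P(1)$: you correctly single this out as the hardest ingredient of Assumption~\ref{A5}, and you should be aware that the paper's own proof does \emph{not} address it---the argument simply stops after producing the diagonalization and the Fourier expansion, then declares the assumption satisfied. So the perturbation-from-Mercer-basis program you sketch is going beyond what the paper actually supplies; you are being more careful here than the source. Measurability with respect to $\mathcal{F}_1^T$ is likewise asserted rather than argued in the paper, though your remark that it follows from the min--max characterization is the right idea.
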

\begin{Remark}
The compactness of $\mathcal{X}$ can be relaxed by Mercer's theorem; see \citet{sun2005}. 
\end{Remark}

\begin{proof}[Proof of Lemma \ref{lemma: V & g}]
Define bilinear functionals $W(g,\widetilde{g})=\sum_{i=1}^NE\left\{(\tau_i g)'(\tau_i\widetilde{g})\big|\mathcal{F}_1^T\right\}/(NT)$, and $J(g,\widetilde{g})=\langle g,\widetilde{g}\rangle_{\mathcal{H}}$. Clearly, $V(g,g)\leq W(g,g)$. Let $\mu$ be a measure  such that 
$$\int gd\mu=\frac{1}{NT}\sum_{i=1}^N\sum_{t=1}^TE(g(X_{it})|\f1t). $$
Hence, $\int g^2d\mu=W(g,g)$. By Mercer's theorem, the kernel $\bar{K}$ of $\mathcal{H}$ follows the expansion:
$$\bar{K}(x,y)=\sum_{i=1}^\infty\lambda_ie_i(x)e_i(y).$$
where $\lambda_i$ is a non-increasing positive sequence converging to zero and $\{e_i\}_{i=1}^\infty$  forms an orthonormal basis of $L_2(\mu)$, so that $W(e_i, e_j)=\delta_{ij}$. Moreover, $\{\sqrt{\lambda_i}e_i\}_{i=1}^\infty$ is also an orthonormal basis of $\mathcal{H}$, which is proved in \citet{cuckerSmale2001}. As a consequence, any $g\in \mathcal{H}$ simultaneously admits the following expansions:
$$g=\sum_{i=1}^\infty W(g,e_i)e_i,\,\,\,\,
g=\sum_{i=1}^\infty J(g, \sqrt{\lambda_i}e_i)\sqrt{\lambda_i}e_i
$$
with $\sum_{i=1}^\infty W^2(g,e_i)<\infty$ and 
$\sum_{i=1}^\infty J^2(g, \sqrt{\lambda_i}e_i)< \infty$. This implies $W(g,e_i)=\lambda_iJ(g,e_i)$.
 For any $\epsilon>0$, choose integer $k$ large enough so that $\lambda_i< \epsilon$ for $i>k$. Define functionals $l_i(g)=W(g,e_i) ,i=1,2,...,k$. By direct direct examinations, if $l_i(g)=0$ for $i=1,2,...,k$, then
  $$W(g,g)=\sum_{i=k+1}^\infty W^2(g,e_i)=\sum_{i=k+1}^\infty \lambda_i^2J^2(g,e_i)\leq \epsilon \sum_{i=k+1}^\infty \lambda_iJ^2(g,e_i)=\epsilon J(g,g).$$
Since $V(g,g)\leq W(g,g)\leq \epsilon J(g,g)$, $V$ is completely continuous w.r.t $J$. By \citet[Theorem 3.1, page 52]{weinberger1974}, there are positive eigenvalues $\{\alpha_i\}_{i=1}^{\infty}$ converging to zero and eigenfunctions $\{\widetilde{\varphi}_i\}_{i=1}^{\infty} \in \mathcal{H}$ such that $V(\widetilde{\varphi}_i,\widetilde{\varphi}_j)=\alpha_i \delta_{ij}$, $J(\widetilde{\varphi}_i, \widetilde{\varphi}_j)=\delta_{ij}$ and 
$$g=\sum_{i=1}^\infty J(g, \widetilde{\varphi}_i)\widetilde{\varphi}_i, \textrm{ for all } g \in \mathcal{H}.$$
The above implies $V(g, \widetilde{\varphi}_i)=\alpha_iJ(g, \widetilde{\varphi}_i)$. Take $\varphi_i=\widetilde{\varphi}_i/\sqrt{\alpha_i}$ and $\rho_i=1/\alpha_i$, then $\{\varphi_i\}_{i=1}^\infty$ and $\{\rho_i\}_{i=1}^{\infty}$ will satisfy Assumption \ref{A5}.
\end{proof}

\begin{proof}[Proof of Lemma \ref{lemma:P:minus:Pstar}]

Throughout we let $\|A\|_F=\sqrt{\textrm{Tr}(AA')}$ be the 
Frobenius norm.
Clearly, 
\[
\widetilde{\Sigma}=\left(\begin{matrix}
0&0&\cdots&0\cr
\bar{v}_1&\bar{v}_2&\cdots&\bar{v}_T
\end{matrix}\right),\,\,
\Sigma_\star\widetilde\Sigma'=\left(0_{(q_1+d)\times q_1},\sum_{t=1}^T Z_t^\star\bar{v}_t'\right).
\]
By direct examinations we have
\begin{eqnarray}\label{P:Pstar:eqn:-1}
\Sigma\Sigma'-\Sigma_\star\Sigma_\star'&=&\Sigma_\star\widetilde\Sigma'+\widetilde\Sigma'\Sigma_\star
+\widetilde\Sigma\widetilde\Sigma'\equiv R.
\end{eqnarray}
By independence of $v_{it}$ and $Z_t^\star$, it can be shown that
\begin{eqnarray}\label{P:Pstar:eqn:0}
&&E\left(\|\sum_{t=1}^T Z_t^\star\bar{v}_t'\|_F^2\right)=\sum_{t,l=1}^T\textrm{Tr}\left(E\left(
\bar{v}_t'\bar{v}_l\right)E\left((Z_t^\star)'Z_l^\star\right)\right)=O(T/N),\nonumber\\
&&E\left(\|\widetilde{\Sigma}\widetilde{\Sigma}'\|_F^2\right)\le 
E\left(\textrm{Tr}\left(\widetilde{\Sigma}\widetilde{\Sigma}'\widetilde{\Sigma}\widetilde{\Sigma}'\right)\right)=O(T^2/N^2).
\end{eqnarray}
Hence, 
\begin{eqnarray}\label{P:Pstar:eqn:1}
E\left(\|\Sigma_\star\widetilde{\Sigma}'\|_F^2
\right)&=&E\left(\|\sum_{t=1}^T Z_t^\star\bar{v}_t'\|_F^2\right)=O(T/N),\nonumber\\
E(\|R\|_F^2)&\le& 8E(\|\Sigma_\star\widetilde{\Sigma}'\|_F^2)+2
E(\|\widetilde{\Sigma}\widetilde{\Sigma}'\|_F^2)=O(T/N+(T/N)^2).
\end{eqnarray}
Since
\begin{eqnarray*}
\|(\Sigma\Sigma')^{-1}-(\Sigma_\star\Sigma_\star')^{-1}\|_{\textrm{op}}
&=&\|(\Sigma_\star\Sigma_\star')^{-1}
R(\Sigma\Sigma')^{-1}\|_{\textrm{op}}\\
&\le&\|(\Sigma_\star\Sigma_\star')^{-1}\|_{\textrm{op}}\|R\|_{\textrm{op}}
\|(\Sigma\Sigma')^{-1}\|_{\textrm{op}},
\end{eqnarray*}
it follows by Assumption \ref{A7} and (\ref{P:Pstar:eqn:-1}) and H\"{o}lder inequality that
\begin{equation}\label{P:Pstar:eqn:2}
E\left(\|(\Sigma\Sigma')^{-1}-(\Sigma_\star\Sigma_\star')^{-1}\|_{\textrm{op}}^{1+\omega}\right)=
O((T^3N)^{-(1+\omega)/2}+(TN)^{-(1+\omega)}),
\end{equation} 
where $\omega=(\zeta-4)/(\zeta+4)$.
Note that $E(\widetilde{\Sigma}'\widetilde{\Sigma})=
\sigma_v^2 I_T$ and $E\textrm{Tr}(\widetilde{\Sigma}'\widetilde{\Sigma})=O(T/N)$,
where $\sigma_v^2=E(v_{it}'v_{it})$ is a constant.
By direct examinations
\begin{eqnarray*}
&&P_\star-P\\
&=&\Sigma'\left((\Sigma\Sigma')^{-1}-(\Sigma_\star\Sigma_\star')^{-1}\right)\Sigma
+\Sigma_\star'(\Sigma_\star\Sigma_\star')^{-1}\widetilde{\Sigma}+
\widetilde{\Sigma}'(\Sigma_\star\Sigma_\star')^{-1}\Sigma_\star+
\widetilde{\Sigma}'(\Sigma_\star\Sigma_\star')^{-1}\widetilde{\Sigma}.
\end{eqnarray*} 
It follows by (\ref{P:Pstar:eqn:0}),
(\ref{P:Pstar:eqn:1}) and (\ref{P:Pstar:eqn:2}) and H\"{o}lder inequality that
\begin{eqnarray}
&&E\left(\|P-P_\star\|_{\textrm{op}}\right) \nonumber\\
&\le&E\left(\|\Sigma\Sigma'\|_{\textrm{op}}\|(\Sigma\Sigma')^{-1}-
(\Sigma_\star\Sigma_\star')^{-1}\|_{\textrm{op}}\right)
+2E\left(\|\widetilde{\Sigma}'(\Sigma_\star\Sigma_\star')^{-1}\Sigma_\star\|_{\textrm{op}}\right)
+E\left(\|\widetilde{\Sigma}'(\Sigma_\star\Sigma_\star')^{-1}\widetilde{\Sigma}\|_{\textrm{op}}\right)\nonumber\\
&\le&E\left(\|(\Sigma\Sigma')^{-1}-(\Sigma_\star\Sigma_\star')^{-1}\|_{\textrm{op}}^{1+\omega}\right)
^{1/(1+\omega)} E\left(\|\Sigma\Sigma'\|_{\textrm{op}}^{(1+\omega)/\omega}\right)^{\omega/(1+\omega)}\nonumber\\
&&+2E\left(\|(\Sigma_\star\Sigma_\star')^{-1}\|_{\textrm{op}}\right)^{1/2}
E\left(\textrm{Tr}\left(\widetilde{\Sigma}\widetilde{\Sigma}'\right)\right)^{1/2}+E\left(\|(\Sigma_\star\Sigma_\star')^{-1}\|_{\textrm{op}}\right)
E\left(\textrm{Tr}\left(\widetilde{\Sigma}\widetilde{\Sigma}'\right)\right)\nonumber\\
&=&O((T^3N)^{-1/2}+(TN)^{-1})T+O(T^{-1/2}\sqrt{T/N})+O(T^{-1}(T/N))=O(N^{-1/2}).\nonumber\\
\label{eq:lemma:P:minus:Pstar:eq1}
\end{eqnarray}
This proves (\ref{P:and:Pstar}). Next we show (\ref{F2:P:and:Pstar}).
For any $i\in[N]$,
\begin{eqnarray*}
&&E\{\gamma_{2i}'F_2'(P-P_\star)K_{\mathbb{X}_i}|\mathcal{F}_1^T\}\\
&=&E\{\gamma_{2i}'F_2'\Sigma'[(\Sigma\Sigma')^{-1}-(\Sigma_\star\Sigma_\star')^{-1}]\Sigma K_{\mathcal{X}_i}|\mathcal{F}_1^T\}
+E\{\gamma_{2i}'F_2'\Sigma_\star'(\Sigma_\star\Sigma_\star')^{-1}
\widetilde{\Sigma}K_{\mathcal{X}_i}|\mathcal{F}_1^T\}\\
&&+E\{\gamma_{2i}'F_2'\widetilde{\Sigma}'(\Sigma_\star\Sigma_\star')^{-1}\Sigma_\star K_{\mathbb{X}_i}|\mathcal{F}_1^T\}+
E\{\gamma_{2i}'F_2'\widetilde{\Sigma}'(\Sigma_\star\Sigma_\star')^{-1}\widetilde{\Sigma} K_{\mathbb{X}_i}|\mathcal{F}_1^T\}.
\end{eqnarray*}
 
By direct calculations it can be examined that 
\begin{eqnarray*}
&&E\left(\|\Sigma'[(\Sigma\Sigma')^{-1}-(\Sigma_\star\Sigma_\star')^{-1}]\Sigma F_2\|_{\textrm{op}}\right)
\\
&\le&E\left(\|(\Sigma\Sigma')^{-1}-(\Sigma_\star\Sigma_\star')^{-1}
\|_{\textrm{op}}\times \|\Sigma\Sigma'\|_{\textrm{op}}\times \|F_2\|_{\textrm{op}}\right)\\
&\le& E\left(\|(\Sigma\Sigma')^{-1}-(\Sigma_\star\Sigma_\star')^{-1}
\|_{\textrm{op}}^{1+\omega}\right)^{1/(1+\omega)}\\
&&\times E\left(\|\Sigma\Sigma'\|_{\textrm{op}}^{2(1+\omega)/\omega}\right)^{\omega/(2(1+\omega))}E\left(\|F_2\|_{\textrm{op}}^{2(1+\omega)/\omega}\right)^{\omega/(2(1+\omega))}\\
&=&O((T^3N)^{-1/2}+(TN)^{-1})T^{3/2}=O(N^{-1/2}+T^{1/2}/N),
\end{eqnarray*}
and
\begin{eqnarray*}
&&E\left(\|\Sigma_\star'(\Sigma_\star\Sigma_\star')^{-1}\widetilde{\Sigma}F_2\|_{\textrm{op}}\right)\\
&\le&E\left(\|(\Sigma_\star\Sigma_\star')^{-1}\|_{\textrm{op}}^{1/2}\textrm{Tr}(F_2'\widetilde{\Sigma}'\widetilde{\Sigma}F_2)^{1/2}\right)\\
&\le&E\left(\|(\Sigma_\star\Sigma_\star')^{-1}\|_{\textrm{op}}\right)^{1/2}
E\left(\textrm{Tr}(F_2'\widetilde{\Sigma}'\widetilde{\Sigma}F_2)\right)^{1/2}\\
&=&E\left(\|(\Sigma_\star\Sigma_\star')^{-1}\|_{\textrm{op}}\right)^{1/2}E\left(\textrm{Tr}(F_2'F_2)\right)^{1/2}O(N^{-1/2})\\
&=&E\left(\|(\Sigma_\star\Sigma_\star')^{-1}\|_{\textrm{op}}\right)^{1/2}E\left(\|F_2'F_2\|_{\textrm{op}}\right)^{1/2}O(N^{-1/2})=O(N^{-1/2}).
\end{eqnarray*}
For any $g\in\mathcal{H}$ with $\|g\|=1$ (implying $|g(x)|\le c_\varphi h^{-1/2}$ for any $x$),
we have
\begin{eqnarray*}
\|E\{F_2'\Sigma_\star'(\Sigma_\star\Sigma_\star')^{-1}
\widetilde{\Sigma}\tau_i g|\mathcal{F}_1^T\}\|_2\le\|F_2'\Sigma_\star'(\Sigma_\star\Sigma_\star')^{-1}\|_{\textrm{op}}\|E\{\widetilde{\Sigma}\tau_i g|\mathcal{F}_1^T\}\|_2=
O_P(\|E\{\widetilde{\Sigma}\tau_i g|\mathcal{F}_1^T\}\|_2).
\end{eqnarray*}
On the other hand, by direct examinations we have 
\begin{eqnarray*}
\|\widetilde{\Sigma}\tau_i g\|_2^2&=&\sum_{t,l=1}^T\bar{v}_t'\bar{v}_lg(x_{it})g(x_{il}).
\end{eqnarray*}
Meanwhile, for any $t\neq l$, $\bar{v}_t'g(x_{it})$ and $\bar{v}_lg(x_{il})$ are independent 
conditional on $\mathcal{F}_1^T$, and 
\[
E\{\bar{v}_lg(x_{il})|\mathcal{F}_1^T\}=\frac{1}{N}E\{v_{il}g(x_{il})|\mathcal{F}_1^T\}+\frac{1}{N}\sum_{k\neq i}E\{v_{kl}g(x_{il})|\mathcal{F}_1^T\}=\frac{1}{N}E\{v_{il}g(x_{il})|\mathcal{F}_1^T\}.
\]
The last equality holds because $v_{kl}$ and $g(x_{il})$ are conditional independent (on $\mathcal{F}_1^T$)
for $k\neq i$ and the former has mean zero.
This leads us to that
\begin{eqnarray*}
E\{\|\widetilde{\Sigma}\tau_i g\|_2^2|\mathcal{F}_1^T\}
&=&\sum_{t=1}^T E\{\bar{v}_t'\bar{v}_t g(x_{it})^2|\mathcal{F}_1^T\}+\sum_{t\neq l}
E\{\bar{v}_t'\bar{v}_lg(x_{it})g(x_{il})|\mathcal{F}_1^T\}\\
&=&\sum_{t=1}^T E\{\bar{v}_t'\bar{v}_t g(x_{it})^2|\mathcal{F}_1^T\}+\sum_{t\neq l}
E\{\bar{v}_t'g(x_{it})|\mathcal{F}_1^T\}E\{\bar{v}_lg(x_{il})|\mathcal{F}_1^T\}\\
&=&\sum_{t=1}^T E\{\bar{v}_t'\bar{v}_t g(x_{it})^2|\mathcal{F}_1^T\}+\frac{1}{N^2}\sum_{t\neq l}
E\{v_{it}'g(x_{it})|\mathcal{F}_1^T\}E\{v_{il}g(x_{il})|\mathcal{F}_1^T\}\\
&=&O_P\left(\frac{T}{Nh}+\frac{T^2}{N^2h}\right).
\end{eqnarray*}
Therefore,
\[
\|E\{F_2'\Sigma_\star'(\Sigma_\star\Sigma_\star')^{-1}
\widetilde{\Sigma}\tau_i g|\mathcal{F}_1^T\}\|_2=O_P\left(\sqrt{\frac{T}{Nh}}+\frac{T}{N\sqrt{h}}\right),
\]
where the $O_P$ term is free of $g$.

Similarly, we can show that
\begin{eqnarray}
&&E\left(\|\widetilde{\Sigma}'(\Sigma_\star\Sigma_\star')^{-1}\widetilde{\Sigma}F_2\|_{\textrm{op}}\right)\nonumber\\
&\le&E\left(\|\widetilde{\Sigma}\widetilde{\Sigma}'\|_{\textrm{op}}^2\right)^{1/4}
E\left(\|(\Sigma_\star\Sigma_\star')^{-1}\|_{\textrm{op}}^4\right)^{1/4}
E\left(F_2'\widetilde{\Sigma}'\widetilde{\Sigma}F_2\right)^{1/2}\nonumber\\
&=&O(\sqrt{T/N})O(1/T)O(\sqrt{T/N})=O(1/N). \label{eq:lemma:P:minus:Pstar:eq2}
\end{eqnarray}

Combining the above, we get that
\begin{eqnarray*}
\|E\{\gamma_{2i}'F_2'(P-P_\star)K_{\mathbb{X}_i}|\mathcal{F}_1^T\}\|=O_P\left(\sqrt{\frac{T}{Nh}}
+\frac{T}{N\sqrt{h}}\right),
\end{eqnarray*}
where the $O_P$ is free of $i\in[N]$.
Proof completed.
\end{proof}

\begin{lemma}\label{lemma:P:Pstar}
Suppose that Assumptions \ref{A1}, \ref{A5} and \ref{A7} hold.
Let $\psi$ satisfy the conditions in Lemma \ref{concentration:homo}. Then 
\[
\sup\limits_{\|g\|_{\sup}\le1} \frac{1}{\sqrt{N}}\|\sum_{i=1}^N
\psi(\mathbb{X}_i,g)'(P-P_\star)K_{\mathbb{X}_i}\|=O_P\left(1\right),
\]
and
\[
\sup\limits_{\|g\|_{\sup}\le1} \frac{1}{\sqrt{N}}\|\sum_{i=1}^N
E\left(\psi(\mathbb{X}_i,g)'(P-P_\star)K_{\mathbb{X}_i}\big|\mathcal{F}_1^T\right)\|=O_P\left(1\right).
\]
\end{lemma}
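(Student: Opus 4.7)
The plan is to reduce both claims to the operator-norm estimate $E(\|P-P_\star\|_{\textrm{op}})=O(N^{-1/2})$ already established in (\ref{eq:lemma:P:minus:Pstar:eq1}) of Lemma \ref{lemma:P:minus:Pstar}. First I would note that the Lipschitz hypothesis, together with $\psi(\mathbb{X},0)\equiv 0$, gives the uniform pointwise bound $\|\psi(\mathbb{X}_i,g)\|_2\le L\sqrt{h/T}\|g\|_{\sup}\le L\sqrt{h/T}$ whenever $\|g\|_{\sup}\le 1$. All subsequent bounds will depend on $g$ only through this quantity, so the supremum over $g$ will be automatic.

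For the first claim, I would dualize using the RKHS structure: since $\langle a'K_{\mathbb{X}_i},f\rangle=a'f(\mathbb{X}_i)$ for $f(\mathbb{X}_i)=(f(X_{i1}),\ldots,f(X_{iT}))'$, and $P-P_\star$ is symmetric,
\[
\|\sum_{i=1}^N\psi(\mathbb{X}_i,g)'(P-P_\star)K_{\mathbb{X}_i}\|
=\sup_{\|f\|=1}\sum_{i=1}^N[(P-P_\star)\psi(\mathbb{X}_i,g)]'f(\mathbb{X}_i).
\]
Cauchy-Schwarz on the $i$-sum and then in $\bbR^T$, combined with the sup-norm bound $\sum_{i,t}f(X_{it})^2\le NT\|f\|_{\sup}^2\le NTc_\varphi^2 h^{-1}\|f\|^2$ from (\ref{eq:bound:for:norm:K:sup:norm}) and Assumption \ref{A5}, together with $\sum_i\|\psi(\mathbb{X}_i,g)\|_2^2\le NL^2 h/T$, collapses to
\[
\|\sum_{i=1}^N\psi(\mathbb{X}_i,g)'(P-P_\star)K_{\mathbb{X}_i}\|
\le c_\varphi L N\,\|P-P_\star\|_{\textrm{op}}.
\]
Dividing by $\sqrt{N}$ and using $c_\varphi=O_P(1)$ together with $\|P-P_\star\|_{\textrm{op}}=O_P(N^{-1/2})$ (Markov applied to Lemma \ref{lemma:P:minus:Pstar}) yields $O_P(1)$, uniformly in $g$ since the right-hand side is free of $g$.

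For the second claim, since the uniform-in-$g$ bound above is deterministic as a function of $(\mathbb{X},P-P_\star,c_\varphi)$, I would apply the conditional Jensen inequality to the convex norm, then interchange $\sup_g$ with the conditional expectation (justified by restricting to a countable dense subset of $\{g:\|g\|_{\sup}\le 1\}$, which exists by separability of $\mathcal{H}$, together with the Lipschitz continuity of $g\mapsto\psi(\mathbb{X}_i,g)$). Writing $X_N(g)=\sum_i\psi(\mathbb{X}_i,g)'(P-P_\star)K_{\mathbb{X}_i}$,
\[
\sup_g\|E(X_N(g)|\mathcal{F}_1^T)\|
\le E\bigl(\sup_g\|X_N(g)\|\,\big|\,\mathcal{F}_1^T\bigr)
\le c_\varphi L N\,E(\|P-P_\star\|_{\textrm{op}}|\mathcal{F}_1^T),
\]
where the last step uses the $\mathcal{F}_1^T$-measurability of $c_\varphi$. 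Since the unconditional expectation of $\|P-P_\star\|_{\textrm{op}}$ is $O(N^{-1/2})$, conditional Markov gives $E(\|P-P_\star\|_{\textrm{op}}|\mathcal{F}_1^T)=O_P(N^{-1/2})$; dividing by $\sqrt{N}$ closes the argument.

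The only genuinely delicate point is the measurability bookkeeping for the second claim, specifically moving $\sup_g$ past the conditional expectation; this is handled by the countable-dense/Lipschitz reduction described above. Everything else is a direct consequence of Lemma \ref{lemma:P:minus:Pstar}, the reproducing identity $\langle a'K_{\mathbb{X}_i},f\rangle=a'f(\mathbb{X}_i)$, and the $O_P(1)$-uniform sup-norm bound $\|f\|_{\sup}\le c_\varphi h^{-1/2}\|f\|$ from Assumption \ref{A5}.
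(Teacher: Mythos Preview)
Your proposal is correct and follows essentially the same approach as the paper: both reduce to the pointwise bound $\|\psi(\mathbb{X}_i,g)\|_2\le L\sqrt{h/T}$, dualize the RKHS norm via $\langle a'K_{\mathbb{X}_i},\widetilde{g}\rangle=a'\tau_i\widetilde{g}$ together with $\|\tau_i\widetilde{g}\|_2\le\sqrt{T}c_\varphi h^{-1/2}$, and invoke $\|P-P_\star\|_{\textrm{op}}=O_P(N^{-1/2})$ from Lemma~\ref{lemma:P:minus:Pstar}. The only cosmetic difference is in the second claim: the paper dualizes first and then bounds each summand inside the conditional expectation directly, whereas you apply conditional Jensen to the Hilbert-space norm and then interchange $\sup_g$ with $E(\cdot|\mathcal{F}_1^T)$; the paper's route sidesteps your measurability bookkeeping, but both arrive at the same bound $Lc_\varphi\sqrt{N}\,E(\|P-P_\star\|_{\textrm{op}}|\mathcal{F}_1^T)=O_P(1)$.
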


\begin{proof}[Proof of Lemma \ref{lemma:P:Pstar}]
For any $g,\widetilde{g}$ satisfying $\|g\|_{\sup}\le1$ and $\|\widetilde{g}\|\le1$,
the former implies that $\|\psi(\mathbb{X}_i,g)\|_2\le L\sqrt{h/T}$ for each $i\in[N]$, and
the latter implies that $\|\widetilde{g}\|_{\sup}\le c_\varphi h^{-1/2}$, by (\ref{P:and:Pstar}) we have
\[
\frac{1}{\sqrt{N}}\bigg|\sum_{i=1}^N\psi(\mathbb{X}_i,g)'(P-P_\star)\tau_i\widetilde{g}\bigg|
\le \frac{1}{\sqrt{N}}\sum_{i=1}^N\|\psi(\mathbb{X}_i,g)\|_2\|\tau_i\widetilde{g}\|_2\|P-P_\star\|_{\textrm{op}}=O_P\left(1\right),
\]
and
\[
\frac{1}{\sqrt{N}}\bigg|\sum_{i=1}^NE\left(\psi(\mathbb{X}_i,g)'
(P-P_\star)\tau_i\widetilde{g}\big|\mathcal{F}_1^T\right)\bigg|
\le Lc_\varphi \sqrt{N}E\left(\|P-P_\star\|_{\textrm{op}}\big|\mathcal{F}_1^T\right)=O_P(1).
\]
Proof is completed.
\end{proof}

\begin{proof}[Proof of Lemma \ref{concentration:homo}]
It follows by Lemma \ref{lemma:P:Pstar} that we only need to consider the 
process $Z_M^\star(g)=\frac{1}{\sqrt{N}}\sum_{i=1}^N[\psi(\mathbb{X}_i,g)'P_\star K_{\mathbb{X}_i}-
E\{\psi(\mathbb{X}_i,g)'P_\star K_{\mathbb{X}_i}|\mathcal{F}_1^T\}]$ for $g\in\mathcal{H}$
where the items in summation are independent conditional on $\mathcal{F}_1^T$.
Let $\textbf{K}_i=[K(X_{it},X_{il})]_{1\le t,l\le T}$, a $T\times T$ matrix.
By Assumption \ref{A5} it follows that $\textbf{K}_i\le c_\varphi^2 h^{-1}T I_T$.
For any $g_1,g_2\in\mathcal{H}$, 
\begin{eqnarray*}
&&\|(\psi(\mathbb{X}_i,g_1)-\psi(\mathbb{X}_i,g_2))'P_\star K_{\mathbb{X}_i}\|^2\\
&=&(\psi(\mathbb{X}_i,g_1)-\psi(\mathbb{X}_i,g_2))'P_\star\textbf{K}_i P_\star
(\psi(\mathbb{X}_i,g_1)-\psi(\mathbb{X}_i,g_2))\\
&\le&(Lc_\varphi\|P_\star\|_{\textrm{op}}\|g_1-g_2\|_{\sup})^2=(Lc_\varphi\|g_1-g_2\|_{\sup})^2.
\end{eqnarray*}
The last equation follows by $\|P_\star\|_{\textrm{op}}=1$ since $P_\star$ is idempotent.
Notice that $\{X_{it}: i\in[N],t\in[T]\}$ are conditional independent given $\mathcal{F}_1^T$.
It follows by \citet[Theorem 3.5]{pinelis1994} that for any $r\ge0$,
\[
P\left(\|Z_M^\star(g_1)-Z_M^\star(g_2)\|\ge r\bigg
|\mathcal{F}_1^T\right)\le 2\exp\left(-\frac{r^2}{8L^2
c_\varphi^2\|g_1-g_2\|_{\sup}^2}\right).
\]
It follows by \citet[Lemma 8.1]{kosorok2008} that 
\[
\bigg\|\|Z_M^\star(g_1)-Z_M^\star(g_2)\|\bigg\|_{\mathcal{F}_1^T,\psi_2}\le 
5Lc_\varphi\|g_1-g_2\|_{\sup},
\]
where $\|\cdot\|_{\mathcal{F}_1^T,\psi_2}$ denotes the Orlicz-norm conditional on $\mathcal{F}_1^T$
 with respect to $\psi_2(s)=\exp(s^2)-1$. This in turn leads to, by \citet[Theorem 8.4]{kosorok2008}, that
for any $\delta>0$,
\begin{eqnarray*}
&&\bigg\|\sup_{\substack{g_1,g_2\in\mathcal{G}(p)\\
\|g_1-g_2\|_{\sup}\le\delta}}\|Z_M^\star(g_1)-Z_M^\star(g_2)\|
\bigg\|_{\mathcal{F}_1^T,\psi_2}\\
&\le& C\left[\int_0^\delta\psi_2^{-1}\left(D(\varepsilon,\mathcal{G}(p),\|\cdot\|_{\sup})\right)d\varepsilon+\delta\psi_2^{-1}\left(D(\delta,\mathcal{G}(p),\|\cdot\|_{\sup})^2\right)\right]\\
&=&CJ(p,\delta),
\end{eqnarray*}
where $C>0$ is a constant depending on $L,c_\varphi$ only.
Then we have
\[
\bigg\|\sup_{\substack{g\in\mathcal{G}(p)\\
\|g\|_{\sup}\le\delta}}\|Z_M^\star(g)\|\bigg\|_{\mathcal{F}_1^T,\psi_2}
\le CJ(p,\delta).
\]
It follows again from \citet[Lemma 8.1]{kosorok2008} that for all $\delta>0,r>0$,
\begin{equation}\label{Zstar:exp:inequality}
P\left(\sup_{\substack{g\in\mathcal{G}(p)\\
\|g\|_{\sup}\le\delta}}\|Z_M^\star(g)\|\ge r\bigg|\mathcal{F}_1^T\right)
\le 2\exp\left(-\frac{r^2}{C^2 J(p,\delta)^2}\right).
\end{equation}
Let $Q_N=\log(N^{1/2}J(p,1))-1$. It follows from the proof of (\ref{eqn:chaining:arg}) that
\begin{eqnarray}\label{result:of:chaining}
&&P\left(\sup_{g\in\mathcal{G}(p)}\frac{\sqrt{N}\|Z_M^\star(g)\|}{\sqrt{N}J(p,\|g\|_{\sup})+1}
\ge C\sqrt{18\log(Q_N)}\bigg|\mathcal{F}_1^T\right)\nonumber\\
&\le& 2(Q_N+2)\exp\left(-\frac{18C^2\log(Q_N)}{C^2\exp(2)}\right)\le \frac{2(Q_N+2)}{Q_N^2}.
\end{eqnarray}
Taking expectation on both sides of (\ref{result:of:chaining}), we get that
\[
P\left(\sup_{g\in\mathcal{G}(p)}\frac{\sqrt{N}\|Z_M^\star(g)\|}{\sqrt{N}J(p,\|g\|_{\sup})+1}
\ge C\sqrt{18\log(Q_N)}\right)=o(1),\,\,\textrm{as $N\rightarrow\infty$.}
\]
This shows that, with probability approaching one,
\[
\sup_{g\in\mathcal{G}(p)}\frac{\sqrt{N}\|Z_M^\star(g)\|}{\sqrt{N}J(p,\|g\|_{\sup})+1}
\le C\sqrt{18\log(Q_N)}.
\] 
Since $\|g\|_{\sup}\le1$ for any $g\in\mathcal{G}$ and $J(p,\delta)$
is increasing in $\delta$, the above inequality implies that, with probability approaching one,
\[
\sup_{g\in\mathcal{G}(p)}\|Z_M^\star(g)\|\le C\sqrt{18\log(Q_N)}(J(p,1)+N^{-1/2}).
\]
Combining with Lemma \ref{lemma:P:Pstar}, we get that
\begin{eqnarray*}
\sup_{g\in\mathcal{G}(p)}\|Z_M(g)\|
&\le&\sup_{g\in\mathcal{G}(p)}\|Z_M(g)-Z_M^\star(g)\|+\sup_{g\in\mathcal{G}(p)}\|Z_M^\star(g)\|\\
&=&O_P\left(1+\sqrt{\log\log\left(NJ(p,1)\right)}(J(p,1)+N^{-1/2})\right).
\end{eqnarray*}
Proof completed.
\end{proof}

\begin{proof}[Proof of Lemma \ref{lemma:S:M:eta:g:eta}]
By (\ref{basic:model 2}), we have
$e_i'=\epsilon_i'-\Delta_i, \bar{v}=\epsilon_i'-\Delta_i(\bar{X}-\bar{\Gamma}_1'F_1'-\bar{\Gamma}_2'F_2') $ and 
\begin{align}
	(Y_i-\tau_ig_{\eta})'P\kxi&=[\tau_i(g_0-g_{\eta})+\Sigma'\beta_i+e_i]'P\kxi \nonumber\\
	&=[\tau_i(g_0-g_{\eta})+e_i]'P\kxi \nonumber \\
	&=[\tau_i(g_0-g_{\eta})+\epsilon_i+F_2\bar{\Gamma}_2\Delta_i']'P\kxi. \label{eq:proof:lemma:S:M:eta:g:eta:eq1} 
\end{align}
By the definition of $g_\eta$ in the proof Theorem \ref{theorem:homo convergence:rate:result:1} and (\ref{eq:proof:lemma:S:M:eta:g:eta:eq1}), we get that
\begin{equation}\label{eq:proof:lemma:S:M:eta:g:eta:eq2} 
S_{M,\eta}(g_{\eta})=S_{M,\eta}(g_{\eta})-S^{\star}_{M,\eta}(g_{\eta})=T_1+T_2-T_3+W_{\eta}g_{\eta}-E(W_{\eta}g_{\eta}|\f1t)=T_1+T_2-T_3,
\end{equation}
where
\begin{align*}
	T_1&=\frac{1}{NT}\sum_{i=1}^N[\epsilon_i'P\kxi-E(\epsilon_i'P\kxi | \f1t)],\\
	T_2&=\frac{1}{NT}\sum_{i=1}^N[\Delta_i F_2'P\kxi-E(\Delta_i F_2'P\kxi | \f1t)],\\
	T_3&=\kappa(g_{\eta}-g_0).
\end{align*}
Recall that $\kappa$ is defined in the proof of  Theorem \ref{theorem:homo convergence:rate:result:1}. It is worthwhile to mention that the terms 
$W_{\eta}g_{\eta}$ and $E(W_{\eta}g_{\eta}|\f1t)$ cancel each other 
in (\ref{eq:proof:lemma:S:M:eta:g:eta:eq2}) thanks to 
$W_\eta g_\eta \in \f1t$. Next, we will bound $T_1, T_2, T_3$ respectively.

First of all, by (\ref{ghat:minus:g0:rate:geta:minus:g0}) and (\ref{ghat:minus:g0:rate:kappa}), it yields that 
\begin{equation}\label{eq:lemma:S:M:eta:g:eta:Rate:T3}
\|T_3\|=o_P(1) \| g_{\eta}-g_0\|=o_P(\frac{1}{\sqrt{NTh}}+\frac{1}{N\sqrt{h}}+\sqrt{\eta}).
\end{equation}

Secondly, the independence of $\epsilon_i$ and $\mathbb{X}_i, F_1, F_2$ tells us that
\begin{eqnarray*}
T_1=\frac{1}{NT}\sum_{i=1}^N\epsilon_i'P\kxi.
\end{eqnarray*}
Again by the independence assumption and direct calculations, we have
\begin{eqnarray*}
 E(\|T_1\|^2 |\f1t)&=&\frac{1}{N^2T^2}\sum_{i=1}^N E(\epsilon_i'P<\kxi,\kxi>P'\epsilon_i |\f1t)\\
&=& \frac{1}{N^2T^2}\sum_{i=1}^N E(\epsilon_i'P\textbf{K}_i P'\epsilon_i |\f1t)\\
&=& \frac{1}{N^2T^2}\sum_{i=1}^N Tr(E(P \textbf{K}_i P'\epsilon_i\epsilon_i') |\f1t)\\
&=&\frac{\sigma^2_{\epsilon}}{NT^2}E\{Tr(P\textbf{K}_iP')|\f1t\}\\
&\leq& \frac{\sigma^2_{\epsilon}}{NT^2}E\{Tr(\textbf{K}_i)|\f1t\}\\
&=&O_P(\frac{1}{NTh}),
\end{eqnarray*}
where we are using the facts that $\textbf{K}_i=[K(X_{it}, X_{il})]_{1\leq t,l \leq T}$ and $Tr(\textbf{K}_i)\leq Tc_\varphi^2h^{-1}$ derived from (\ref{eq:bound:for:norm:K:sup:norm}).
So it follows
\begin{equation}\label{eq:lemma:S:M:eta:g:eta:Rate:T1}
\|T_1\|=O_P(\frac{1}{\sqrt{NTh}})
\end{equation}

Lastly, we will handle $T_2$ as follows. Since $F_2'P_\star=0$ (see Section \ref{proof:subsec:rate:homo}), it follows that
$$T_2=\frac{1}{NT}\sum_{i=1}^N[\Delta_i F_2'(P-P_{\star})\kxi-E(\Delta_i F_2'P\kxi | \f1t)].$$
By the proof and notation in Lemma \ref{lemma:P:minus:Pstar},
it can be shown that
\begin{eqnarray*}
&&P_\star-P\\
&=&\Sigma'\left((\Sigma\Sigma')^{-1}-(\Sigma_\star\Sigma_\star')^{-1}\right)\Sigma
+\Sigma_\star'(\Sigma_\star\Sigma_\star')^{-1}\widetilde{\Sigma}+
\widetilde{\Sigma}'(\Sigma_\star\Sigma_\star')^{-1}\Sigma_\star+
\widetilde{\Sigma}'(\Sigma_\star\Sigma_\star')^{-1}\widetilde{\Sigma}.
\end{eqnarray*} 
Consequently, $T_2$ has following decomposition:
\begin{eqnarray*}
	&&T_2\\
	&=&\frac{1}{NT}\sum_{i=1}^N[\Delta_i F_2'(\Sigma'\left((\Sigma\Sigma')^{-1}-(\Sigma_\star\Sigma_\star')^{-1}\right)\Sigma)\kxi-E(\Delta_i F_2'(\Sigma'\left((\Sigma\Sigma')^{-1}-(\Sigma_\star\Sigma_\star')^{-1}\right)\Sigma)\kxi | \f1t)]\\
	&&+\frac{1}{NT}\sum_{i=1}^N[\Delta_i F_2'\Sigma_\star'(\Sigma_\star\Sigma_\star')^{-1}\widetilde{\Sigma}\kxi-E(\Delta_i F_2'\Sigma_\star'(\Sigma_\star\Sigma_\star')^{-1}\widetilde{\Sigma}\kxi | \f1t)]\\
	&&+\frac{1}{NT}\sum_{i=1}^N[\Delta_i F_2'\widetilde{\Sigma}'(\Sigma_\star\Sigma_\star')^{-1}\Sigma_\star\kxi-E(\Delta_i F_2'\widetilde{\Sigma}'(\Sigma_\star\Sigma_\star')^{-1}\Sigma_\star\kxi | \f1t)]\\
	&&+\frac{1}{NT}\sum_{i=1}^N[\Delta_i F_2'\widetilde{\Sigma}'(\Sigma_\star\Sigma_\star')^{-1}\widetilde{\Sigma}\kxi-E(\Delta_i F_2'\widetilde{\Sigma}'(\Sigma_\star\Sigma_\star')^{-1}\widetilde{\Sigma}\kxi | \f1t)]\\
	&\equiv&T_{21}+T_{22}+T_{23}+T_{24}.
\end{eqnarray*}
The rest of the proof proceeds to bound the terms $T_{2i}, i=1,2,3,4$.
By (\ref{eq:lemma:P:minus:Pstar:eq1}) in the proof of Lemma \ref{lemma:P:minus:Pstar}, we obtain the following:
\begin{eqnarray*}
E(\|F_2'\Sigma'\left((\Sigma\Sigma')^{-1}-(\Sigma_\star\Sigma_\star')^{-1}\right)\Sigma\|_{\textrm{op}})&=&O(N^{-1/2}+T^{1/2}/N),\\
E(\|F_2'\widetilde{\Sigma}'(\Sigma_\star\Sigma_\star')^{-1}\Sigma_\star\|_{\textrm{op}})&=&O(N^{-1/2}),\\
E(\|F_2'\widetilde{\Sigma}'(\Sigma_\star\Sigma_\star')^{-1}\widetilde{\Sigma}\|_{\textrm{op}})&=&O(1/N).
\end{eqnarray*}
Therefore, it follows that
\begin{eqnarray*}
	&&\|E(\frac{1}{NT}\sum_{i=1}^N \Delta_iF_2'\Sigma'\left((\Sigma\Sigma')^{-1}-(\Sigma_\star\Sigma_\star')^{-1})\right)\Sigma\kxi|\f1t)\|\\
	&\leq&\frac{1}{NT}\sum_{i=1}^NE(\|\Delta_iF_2'\Sigma'\left((\Sigma\Sigma')^{-1}-(\Sigma_\star\Sigma_\star')^{-1})\right)\Sigma \kxi\| |\f1t)\\
	&\leq&\frac{1}{NT}\sum_{i=1}^N\|\Delta_i\|_2E\left(\|F_2'\Sigma'\left((\Sigma\Sigma')^{-1}-(\Sigma_\star\Sigma_\star')^{-1})\right)\Sigma\|_{\textrm{op}} \sqrt{\sum_{t=1}^T\|K_{X_{it}}\|^2} |\f1t \right)\\	
	&\leq&\frac{1}{NT}\sum_{i=1}^N\|\Delta_i\|_2 E\left(\|F_2'\Sigma'\left((\Sigma\Sigma')^{-1}-(\Sigma_\star\Sigma_\star')^{-1})\right)\Sigma\|_{\textrm{op}} \sqrt{Tc_\varphi^2h^{-1}}  |\f1t \right)\\
	&\leq& \frac{1}{\sqrt{T}}\sup_{1\leq i \leq N}\|\Delta_i\|_2\sqrt{c_\varphi^2h^{-1}} E(\|F'\Sigma'\left((\Sigma\Sigma')^{-1}-(\Sigma_\star\Sigma_\star')^{-1}\right)\Sigma\|_{\textrm{op}}|\f1t)\\
	&=&O_P(\frac{1}{\sqrt{NTh}}+\frac{1}{N\sqrt{h}}).
\end{eqnarray*}
As a consequence, $\|T_{21}\|=O_P((NTh)^{-1/2}+N^{-1}h^{-1/2})$. Similarly,
\begin{eqnarray*}
	&&E\{\|E(\frac{1}{NT}\sum_{i=1}^N \Delta_iF_2'\widetilde{\Sigma}'(\Sigma_\star\Sigma_\star')^{-1}\Sigma_\star\kxi|\f1t)\|\}=O_P(\frac{1}{\sqrt{NTh}}),\\
	&&E\{\|E(\frac{1}{NT}\sum_{i=1}^N \Delta_iF_2'\widetilde{\Sigma}'(\Sigma_\star\Sigma_\star')^{-1}\widetilde{\Sigma}\kxi|\f1t)\|\}=O_P(\frac{1}{N\sqrt{Th}}).\\
\end{eqnarray*}
So it follows that $\|T_{23}\|=O_P((NTh)^{-1/2})$ and $\|T_{24}\|=O_P(N^{-1}(Th)^{-1/2})$.
Finally, we will handle $T_{22}$. Let $W=F_2'\Sigma_\star'(\Sigma_\star\Sigma_\star')^{-1}$. It can be easily seen from (\ref{eq:lemma:P:minus:Pstar:eq2}) that $W \in \f1t$ and $\|W\|_{\textrm{op}}=O_P(1)$.
To bound $T_{22}$, notice
$$\widetilde{\Sigma}\kxi=\left(\begin{matrix}
0_{q_1\times T}\cr
\sum_{t=1}^T\bar{v}_tK_{X_{it}}
\end{matrix}\right)=\left(\begin{matrix}
0_{q_1\times T}\cr
\sum_{t=1}^T\bar{v}_{t1}K_{X_{it}}\cr
\sum_{t=1}^T\bar{v}_{t2}K_{X_{it}}\cr
\cdots\cr
\sum_{t=1}^T\bar{v}_{td}K_{X_{it}}
\end{matrix}\right),$$
where $\bar{v}_{ti}$ is the $i$th element of vector $\bar{v}_t$. By direct calculations, it follows that
\begin{eqnarray}
\|T_{22}\|&=&\|\frac{1}{NT}\sum_{i=1}^N\{\Delta_iW\widetilde{\Sigma}\kxi-E(\Delta_iW\widetilde{\Sigma}\kxi|\f1t)\}\|\nonumber\\
&\leq& \frac{1}{NT}\sum_{i=1}^N\|\Delta_iW\widetilde{\Sigma}\kxi-E(\Delta_iW\widetilde{\Sigma}\kxi|\f1t)\|\nonumber \\
&\leq& \frac{1}{NT}\sum_{i=1}^N \|\Delta_i\|_2\|W\|_{\textrm{op}} \sqrt{\sum_{l=1}^d \|\sum_{t=1}^T\left(\bar{v}_{tl}K_{X_{it}}-E(\bar{v}_{tl}K_{X_{it}}|\f1t)\right)\|^2}. \label{eq:lemma:S:M:eta:g:eta:eq1}
\end{eqnarray}
By (\ref{eq:lemma:S:M:eta:g:eta:eq1}), it suffices to find the rate of 
\begin{equation}\label{eq:lemma:S:M:eta:g:eta:eq2}
\frac{1}{NT}\sum_{i=1}^N\sqrt{\sum_{l=1}^d \|\sum_{t=1}^T\left(\bar{v}_{tl}K_{X_{it}}-E(\bar{v}_{tl}K_{X_{it}}|\f1t)\right)\|^2}.
\end{equation}
Because $d$ is finite and fixed, to simplify
our technical arguments, assume $d=1$ without loss of generality.
Direct examinations give the following decomposition:
\begin{eqnarray*}
&&\|\sum_{t=1}^T\left(\bar{v}_{tl}K_{X_{it}}-E(\bar{v}_{tl}K_{X_{it}}|\f1t)\right)\|^2\\
&=&\frac{1}{N^2}\|\sum_{t=1}^T\sum_{j=1}^N\left(v_{jtl}K_{X_{it}}-E(v_{jtl}K_{X_{it}}|\f1t)\right)\|^2\\
&=&\frac{2}{N^2}\|\sum_{t=1}^T\sum_{j\neq i}^N\left(v_{jtl}K_{X_{it}}-E(v_{jtl}K_{X_{it}}|\f1t)\right)\|^2+\frac{2}{N^2}\|\sum_{t=1}^T\left(v_{itl}K_{X_{it}}-E(v_{itl}K_{X_{it}}|\f1t)\right)\|^2\\
&\equiv& T_{221}+T_{222}.
\end{eqnarray*}
When $i\neq j$, $v_{jtl}$ is independent of $X_{it}, \f1t$, so it follows that
\begin{eqnarray*}
&&E\{\|\sum_{t=1}^T\sum_{j\neq i}\left(v_{jtl}K_{X_{it}}-E(v_{jtl}K_{X_{it}}|\f1t)\right)\|^2 |\f1t \}\\
&=&E\{\|\sum_{t=1}^T\sum_{j\neq i}v_{jtl}K_{X_{it}}\|^2 |\f1t \}\\
&=&E\{\sum_{t,t'=1}^T\sum_{j,j'\neq i}v_{jtl}v_{j't'l}K(X_{it},X_{it'}) |\f1t \}\\
&=&\sum_{t=1}^T\sum_{j\neq i}E(v_{jtl}^2)E(K(X_{it},X_{it})\}|\f1t)\\
&\leq &NTE(v_{11l}^2)c_\varphi^2 h^{-1}, \\
\end{eqnarray*}
As a consequence, $T_{221}=O_P(T(Nh)^{-1})$. To deal with $T_{222}$, by Cauchy inequality, it yields that
\begin{eqnarray*}
	E\{\|\sum_{t=1}^T(v_{itl}K_{X_{it}})\|^2 |\f1t \}&\leq & E\{{\sum_{t=1}^T v_{itl}^2} {\sum_{t=1}^T \|K_{X_{it}}\|^2}|\f1t \}\\ 
	&\leq&  E({\sum_{t=1}^T v_{itl}^2}) Tc_\varphi^2 h^{-1}\\
	&=&E(v_{11l}^2)T^2c_\varphi h^{-1},
\end{eqnarray*}
which further implies $T_{222}=O_P(T^2(N^2h)^{-1})$.
By Jensen's inequality and $d=1$, it follows that
\begin{eqnarray}
	(\ref{eq:lemma:S:M:eta:g:eta:eq2})&=&E\left(\frac{1}{NT}\sum_{i=1}^N\sqrt{\|\sum_{t=1}^T\left(\bar{v}_{tl}K_{X_{it}}-E(\bar{v}_{tl}K_{X_{it}}|\f1t)\right)\|^2} | \f1t \right)\nonumber\\
	&\leq &\frac{1}{NT}\sum_{i=1}^N\sqrt{E\left(  \|\sum_{t=1}^T\left(\bar{v}_{tl}K_{X_{it}}-E(\bar{v}_{tl}K_{X_{it}}|\f1t)\right)\|^2|\f1t \right)}\nonumber\\
	&\leq & \sqrt{2c_\varphi^2E(v_{11l}^2)(\frac{1}{NTh}+\frac{1}{N^2h})}\nonumber\\
	&=&O_P(\frac{1}{\sqrt{NTh}}+\frac{1}{N\sqrt{h}}).\label{eq:lemma:S:M:eta:g:eta:eq3}
\end{eqnarray}
Combining (\ref{eq:lemma:S:M:eta:g:eta:eq1}) and (\ref{eq:lemma:S:M:eta:g:eta:eq3}), it follows that $\|T_{22}\|=O_P((NTh)^{-1/2}+(Nh^{1/2})^{-1}).$ As a consequence, we have
\begin{equation} \label{eq:lemma:S:M:eta:g:eta:Rate:T2}
\|T_2\|=O_P(\frac{1}{\sqrt{NTh}}+\frac{1}{N\sqrt{h}}).
\end{equation}
Combining (\ref{eq:lemma:S:M:eta:g:eta:Rate:T3}), (\ref{eq:lemma:S:M:eta:g:eta:Rate:T1}) and (\ref{eq:lemma:S:M:eta:g:eta:Rate:T2}), it yields that
\begin{eqnarray*}
\|S_{M,\eta}(g_{\eta})\|&=&O_P(\frac{1}{\sqrt{NTh}}+\frac{1}{N\sqrt{h}})+o_P(\sqrt{\eta}).
\end{eqnarray*}
Proof completed.
\end{proof}

Next we will prove Lemmas \ref{lemma:ANTm:2:Im}, \ref{lemma:bound:ANT} and \ref{lemma:normality:epsilon:P:Kxi}.
For this purpose, let us introduce a set of notation.
Define $V_{NT\star}, A_{NT\star}, V_{NTm\star}, A_{NTm\star}, H_{NTm\star}$ as follows,
\begin{eqnarray*}
V_{NT\star}=\frac{1}{NT}\sum_{i=1}^N\kxi(x_0)'P_\star\kxi(x_0),   A_{NT\star}=V_{NT\star}^{-1/2},\\ V_{NTm\star}=\frac{1}{NT}\sum_{i=1}^N\phi_m^{\prime}\Phi_i'P_\star\Phi_i\phi_m, A_{NTm\star}=V_{NTm\star}^{-1/2},H_{NTm\star}=\frac{1}{NT}\sum_{i=1}^N\Phi_i' P_\star \Phi_i.\\
\end{eqnarray*} 
\begin{proof}[Proof of Lemma \ref{lemma:ANTm:2:Im}]
Define 
\begin{eqnarray*}
	Q_{i\star}=E(\frac{\Phi_i'P_{\star}\Phi_i}{T}|\f1t), \bar{Q}_{\star}=\frac{1}{N}\sum_{i=1}^N Q_{i\star}, Q_{i}=E(\frac{\Phi_i'P\Phi_i}{T}|\f1t), \bar{Q}=\frac{1}{N}\sum_{i=1}^N Q_{i}=I_m.
\end{eqnarray*} 

Notice that, conditioning on $\f1t$, $\Phi_i$ are independent.
Hence, by Chebyshev's inequality, it follows that
\begin{eqnarray*}
	P(\|H_{NTm\star}-\bar{Q}_{\star}\|_F>\epsilon|\f1t)&=&P(\|\frac{1}{N}\sum_{i=1}^N(\frac{\Phi_i'P_{\star}\Phi_i}{T}-Q_{i\star})\|_F>\epsilon|\f1t)\\
	&\leq& \frac{1}{\epsilon^2N^2}E\{Tr \left([\sum_{i=1}^N(\frac{\Phi_i'P_{\star}\Phi_i}{T}-Q_{i\star})]^2\right) |\f1t\}\\
	&=& \frac{1}{\epsilon^2N^2}\sum_{i=1}^NTr\{ E\left([\frac{\Phi_i'P_{\star}\Phi_i}{T}-Q_{i\star}]^2\right) |\f1t\}\\
	&=&\frac{1}{\epsilon^2N^2}\sum_{i=1}^N E\left(\|\frac{\Phi_i'P_{\star}\Phi_i}{T}-Q_{i\star}\|_F^2 |\f1t\right) \\
	&\leq &\frac{1}{\epsilon^2N^2}\sum_{i=1}^N E\left(\|\frac{\Phi_i'P_{\star}\Phi_i}{T}\|_F^2 |\f1t\right) \\
	&\leq &\frac{1}{\epsilon^2N^2T^2}\sum_{i=1}^N E\left(\|{\Phi_i}\|_F^4 |\f1t\right)\\
	&\leq & \frac{m^2(c_\varphi+1)^4}{\epsilon^2N}.
\end{eqnarray*}
As a consequence, it follows that,
\begin{eqnarray*}
P(\|H_{NTm\star}-\bar{Q}_{\star}\|_F>\frac{\epsilon m (c_\varphi+1)^2}{\sqrt{N}} |\f1t)\leq \frac{1}{\epsilon^2}. 
\end{eqnarray*}
By taking expectation on both sides, we have
\begin{eqnarray*}
P(\|H_{NTm\star}-\bar{Q}_{\star}\|_F>\frac{\epsilon m (c_\varphi+1)^2}{\sqrt{N}})&\leq& \frac{1}{\epsilon^2}.
\end{eqnarray*}
Since $c_\varphi=O_P(1)$, we obtain 
\begin{equation}\label{eq:proof:lemma:ANTm:2:Im:eq1}
\|H_{NTm\star}-\bar{Q}_{\star}\|_F=O_P(mN^{-1/2}).
\end{equation}
By Lemma \ref{lemma:P:minus:Pstar}, we have
\begin{eqnarray}
	E(\|H_{NTm}-H_{NTm\star}\|_F|\f1t)&\leq&\frac{1}{NT}\sum_{i=1}^N E(\|\Phi_i'(P-P_{\star})\Phi_i\|_F|\f1t) \nonumber\\
	&\leq&\frac{1}{NT}\sum_{i=1}^N E(\|(P-P_{\star})\|_{\textrm{op}}\|\Phi_i\|_F^2 |\f1t)\nonumber\\
	&\leq& \frac{1}{NT}\sum_{i=1}^N  mTc_\varphi^2  E(\|(P-P_{\star})\|_{\textrm{op}}|\f1t)\nonumber\\
	&=&O_P(\frac{m}{\sqrt{N}}).\label{eq:proof:lemma:ANTm:2:Im:eq2}
\end{eqnarray}
Again by Lemma \ref{lemma:P:minus:Pstar} and similar calculations, it follows that
\begin{eqnarray}
\|\bar{Q}-\bar{Q}_{\star}\|_F&=&\|E(\frac{1}{NT}\sum_{i=1}^N\Phi_i'(P-P_{\star})\Phi_i |\f1t)\|_F\nonumber\\
&\leq& \frac{1}{NT}\sum_{i=1}^N E(\|\Phi_i'(P-P_{\star})\Phi_i\|_F|\f1t)\nonumber\\
&=&O_P(\frac{m}{\sqrt{N}}).\label{eq:proof:lemma:ANTm:2:Im:eq3}
\end{eqnarray}
Combining (\ref{eq:proof:lemma:ANTm:2:Im:eq1}), (\ref{eq:proof:lemma:ANTm:2:Im:eq2}) and (\ref{eq:proof:lemma:ANTm:2:Im:eq3}), it yields that $$\|H_{NTm}-I_m\|_F=\|H_{NTm}-\bar{Q}\|_F=O_P(\frac{m}{\sqrt{N}})=o_P(1).$$

To the end of the proof, we quantify the minimal and maximal eigenvalues of $H_{NTm}$ as follows.
\begin{eqnarray*}
	\lambda_{\min}(H_{NTm})&=&\min_{\|u\|_2=1}u'H_{NTm}u\\
	&\geq& \min_{\|u\|_2=1}u'I_mu-\min_{\|u\|_2=1}|u'(H_{NTm}-I_m)u|\\
	&=&1-\|H_{NTm}-I_m\|_{\textrm{op}}\\
    &=&1+o_P(1),
\end{eqnarray*}
and
\begin{eqnarray*}
	\lambda_{\max}(H_{NTm})&=&\max_{\|u\|_2=1}u'H_{NTm}u\\
	&\leq& \max_{\|u\|_2=1}u'I_mu+\max_{\|u\|_2=1}|u'(H_{NTm}-I_m)u|\\
	&=&1+\|H_{NTm}-I_m\|_{\textrm{op}}\\
    &=&1+o_P(1),
\end{eqnarray*}
where have used the trivial
inequality $\|H_{NTm}-I_m\|_{\textrm{op}}\leq \|H_{NTm}-I_m\|_{F}=o_P(1)$.
Proof completed.
\end{proof}

\begin{proof}[Proof of Lemma \ref{lemma:bound:ANT}]
By Lemma \ref{lemma:ANTm:2:Im}, we find a lower bound for $V_{NTm}$ and an upper bound for
$A_{NTm}$ as follows:
\begin{eqnarray}\label{eq:proof:lemma:bound:ANT:eq2}
	V_{NTm}&=&\phi_m'H_{NTm}\phi_m\geq \lambda_{\min}(H_{NTm})\|\phi_m\|_2^2\geq \lambda_{\min}(H_{NTm})C,\nonumber\\
	A_{NTm}&=&V_{NTm}^{-1/2}\leq \lambda_{\min}^{-1/2}(H_{NTm})\|\phi_m'\|_2^{-1}=O_P(1)(\sum_{\nu=1}^m\frac{\varphi_\nu^2(x_0)}{(1+\eta\rho_\nu)^2})^{-1/2}=O_P(1).
\end{eqnarray}
Define $L_i(x_0)=\kxi(x_0)-\Phi_i \phi_m$. Then it follows that
$$E(\|L_i\|_2^2|\f1t)\leq Tc_\varphi^4(\sum_{\nu=m+1}^\infty \frac{1}{1+\eta\rho_\nu})^2 \equiv Tc_\varphi^4D_m^2.$$
Directly calculation shows that
\begin{eqnarray}\label{eq:proof:lemma:bound:ANT:eq1}
	|V_{NT}-V_{NTm}|\leq |\frac{2}{NT}\sum_{i=1}^NL_i'P\kxi|+|\frac{1}{NT}\sum_{i=1}^NL_i'PL_i|\equiv
	2|T1|+|T2|.
\end{eqnarray}

Let $R_{x_0}(\cdot)=\sum_{\nu=m+1}^\infty\frac{\varphi_\nu(x_0)\varphi_\nu(\cdot)}{1+\eta \rho_\nu}$. Notice  $L_i=\tau_iR_{x_0}$ and  $E(T_1| \f1t)=V(K_{x_0}, R_{x_0})$. Similar to the proof of Lemma \ref{lemma:ANTm:2:Im}, 
we can show that
\begin{eqnarray*}
	E(|T_1-V(K_{x_0}, R_{x_0})||\f1t)&=&O_P(\frac{D_m}{\sqrt{N}h}).
\end{eqnarray*}
Meanwhile we have the following 
$$V(K_{x_0}, R_{x_0})=\sum_{\nu=m+1}^\infty \frac{\varphi_\nu^2(x_0)}{(1+\eta \rho_\nu)^2}\leq c_\varphi^2D_m.$$
As a consequence, it follows that $|T_1|=O_P(D_m)$. 

A bound for $T_2$ is given by the following inequality,
\begin{eqnarray*}
	E(|T_2||\f1t)\leq \frac{1}{NT}\sum_{i=1}^NE(\|L_i\|_2^2|\f1t)=O_P(D_m^2).
\end{eqnarray*}

So (\ref{eq:proof:lemma:bound:ANT:eq1}) becomes $V_{NT}-V_{NTm}=O_P(D_m)=o_P(1)$. Hence $A_{NT}=A_{NTm}+o_P(1)=O_P(1)$, where last equality is from (\ref{eq:proof:lemma:bound:ANT:eq2}). Proof completed.
\end{proof}

\begin{proof}[Proof of Lemma \ref{lemma:normality:epsilon:P:Kxi}] The proof of this lemma is based on Lyapunov C.L.T.
	Let $c_i=A_{NTm}P\Phi_i \phi_m/(NT)$. We have $$ \sqrt{NT} A_{NTm}(\frac{1}{NT}\sum_{i=1}^N\phi_m' \Phi_i' P \epsilon_i)=\sum_{i=1}^N \sqrt{NT} c_i'\epsilon_i.$$ 
	Since $c_i \in \d1t$ and $\epsilon_i$ is independent of $\d1t$, it follows that
	\begin{eqnarray*}
		E[(\sum_{i=1}^N\sqrt{NT} c_i'\epsilon_i)^2| |\d1t]&=&NT\sigma_{\epsilon}^2\sum_{i=1}^N c_i'c_i\\
		&=& NT\sigma_{\epsilon}^2 A_{NTm}^2\frac{1}{N^2T^2}\sum_{i=1}^N\phi_m' \Phi_i' P \Phi_i \phi_m\\
		&=&\sigma_{\epsilon}^2.
\end{eqnarray*}	 

Let $c_{it}$ be the $t$th element of $c_i$. By direct examinations, it follows that 
\begin{eqnarray}
	\sum_{i=1}^NE[(\sqrt{NT} c_i'\epsilon_i)^4| |\d1t]&=&N^2T^2\sum_{i=1}^N\sum_{t=1}^T c_{it}^4E(\epsilon_{it}^4)\nonumber\\&&+3N^2T^2\sum_{i=1}^N\sum_{t=1}^T\sum_{t'\neq t}c_{it}^2c_{it'}^2E(\epsilon_{it}^2\epsilon_{it'}^2).\label{eq:proof:lemma:normality:epsilon:P:Kxi:eq3}
\end{eqnarray}
Next we are going to find a bound for $c_{it}$. By direct calculation, we have
\begin{eqnarray*}
	|c_{it}|&=& |A_{NTm}\frac{1}{NT}p_{t\cdot}\Phi_i \phi_m|\\
	&\leq& \|A_{NTm} \phi_m\|_2 \|\frac{1}{NT}\sum_{s=1}^Tp_{ts}\Phi_{i,s\cdot}\|_2\\
	&\leq& \lambda_{\min}^{-1/2}(H_{NTm})\frac{1}{NT}\|\sum_{s=1}^Tp_{ts}\Phi_{i,s\cdot}\|_2,
\end{eqnarray*}
where $p_{t\cdot}$ is the $t$th row of $P$, $p_{ts}$ is the $(t, s)$th element of $P$ and 
$\Phi_{i,s\cdot}$ is the $s$th row of $\Phi_i$.
Meanwhile, $p_{ts}=\delta_{ts}-Z_s'(\Sigma \Sigma')^{-1}Z_t$, hence
\begin{eqnarray*}
	\|\sum_{s=1}^Tp_{ts}\Phi_{i,s\cdot}\|_2&=&\|\Phi_{i, t\cdot}-\frac{1}{T}Z_t' (\frac{\Sigma\Sigma'}{T})^{-1}\sum_{s=1}^TZ_s \Phi_{i,s\cdot}\|_2\\
	&\leq& \|\Phi_{i, t\cdot}\|_2+\|Z_t\|_2\|(\frac{\Sigma\Sigma'}{T})^{-1}\|_{\textrm{op}}\sqrt{\frac{1}{T}\sum_{s=1}^T\|Z_s\|_2^2}\sqrt{\frac{1}{T}\sum_{s=1}^T\|\Phi_{i,s\cdot}\|_2^2}\\
	&\leq&\sqrt{mc_\varphi^2}+\|Z_t\|_2\|(\frac{\Sigma\Sigma'}{T})^{-1}\|_{\textrm{op}}\sqrt{\frac{1}{T}\sum_{s=1}^T\|Z_s\|_2^2}\sqrt{mc_\varphi^2}\\
	&\leq &\sqrt{mc_\varphi^2}(1+b\|Z_t\|_2),
\end{eqnarray*}
where $b=\|(\frac{\Sigma\Sigma'}{T})^{-1}\|_{\textrm{op}}\sqrt{\frac{1}{T}\sum_{s=1}^T\|Z_s\|_2^2}=O_P(1)$ by Assumption \ref{A7}. So $|c_{it}|\leq a(1+b\|Z_t\|_2)$, where $a=\lambda_{\min}^{-1/2}(H_{NTm})\frac{1}{NT}\sqrt{mc_\varphi^2}$. By Lemma \ref{lemma:ANTm:2:Im}, we have
\begin{eqnarray}\label{eq:proof:lemma:normality:epsilon:P:Kxi:eq1}
	\sum_{i=1}^N\sum_{t=1}^T c_{it}^4\leq \sum_{i=1}^N\sum_{t=1}^T 8a^4(1+b^4\|Z_t\|_2^4)=O_P(\frac{m^2}{N^3T^3}),
\end{eqnarray}
and 
\begin{eqnarray}
	\sum_{i=1}^N\sum_{t=1}^T\sum_{t'\neq t}c_{it}^2c_{it'}^2&\leq&\sum_{i=1}^N \sqrt{\sum_{t=1}^T\sum_{t'\neq t}c_{it}^4}\sqrt{\sum_{t=1}^T\sum_{t'\neq t}c_{it'}^4}\nonumber\\
		&\leq& T\sum_{i=1}^N\sum_{t=1}^T c_{it}^4\nonumber\\
		&=&O_P(\frac{m^2}{N^3T^2}).\label{eq:proof:lemma:normality:epsilon:P:Kxi:eq2}
\end{eqnarray}
Combining (\ref{eq:proof:lemma:normality:epsilon:P:Kxi:eq3}), (\ref{eq:proof:lemma:normality:epsilon:P:Kxi:eq1}) and (\ref{eq:proof:lemma:normality:epsilon:P:Kxi:eq2}), we have
$\sum_{i=1}^NE[(\sqrt{NT} c_i'\epsilon_i)^4| |\d1t]=O_P(m^2/N)$. 
And by Lyapunov C.L.T, the result follows. Proof completed.
\end{proof}
\end{document}